\lstdefinelanguage{Magma}{
  morekeywords={
    function,return,for,in,to,do,end,if,then,else,elif,while,
    repeat,until,break,continue,and,or,not,true,false,local,load
  },
  sensitive=true,
  morecomment=[l]{//},
  morecomment=[s]{/*}{*/},
  morestring=[b]",
}
\bfseries\color{teal!70!black},
\itshape\color{gray!60},
\tiny\color{gray!70},
\newcommand{\sslash}{\mathbin{/\mkern-6mu/}}
\newtheorem{theorem}{Theorem}[subsection]
\newtheorem{proposition}[theorem]{Proposition}
\newtheorem{lemma}[theorem]{Lemma}
\newtheorem*{theorem*}{Theorem}
\newtheorem*{corollary*}{Corollary}
\newtheorem*{example*}{Example}
\newtheorem*{maintheorem*}{Main Theorem}
\newtheorem*{corollaryA*}{Corollary A}
\newtheorem*{corollaryB*}{Corollary B}
\newtheorem*{corollaryC*}{Corollary C}
\newtheorem*{corollaryD*}{Corollary D}
\newtheorem*{corollaryE*}{Corollary E}
\newtheorem*{corollaryF*}{Corollary F}
\newtheorem{corollary}[theorem]{Corollary}
\newtheorem{conjecture}[theorem]{Conjecture}
\newtheorem*{proposition*}{Proposition}
\theoremstyle{definition}
\newtheorem{example}[theorem]{Example}
\newtheorem{definition}[theorem]{Definition}
\theoremstyle{remark}
\newtheorem{remark}[theorem]{Remark}
\newtheorem*{remark*}{Remark}
\sloppy\pagestyle{plain}
\makeatletter\@addtoreset{equation}{section} \makeatother
\newcommand{\vol}{\operatorname{vol}}
\newcommand{\ord}{\operatorname{ord}}
\newcommand{\vvol}[2]{\ensuremath{\mathrm{vol}_{#1}(#2)}}
\newcommand{\deq}{\ensuremath{\stackrel{\mathrm{def}}{=}}}
\newcommand{\equ}{\ensuremath{\,=\,}}
\newcommand{\N}{\ensuremath{\mathbb{N}}}
\newcommand{\Q}{\ensuremath{\mathbb{Q}}}
\newcommand{\R}{\ensuremath{\mathbb{R}}}
\newcommand{\PP}{\ensuremath{\mathbb{P}}}
\newcommand{\bH}{\ensuremath{\mathbb{H}}}
\newcommand{\bQ}{\ensuremath{\mathbb{Q}}}
\newcommand{\bP}{\ensuremath{\mathbb{P}}}
\newcommand{\bG}{\ensuremath{\mathbb{G}}}
\newcommand{\mtc}[1]{\mathcal{#1}}
\newcommand{\mts}[1]{\mathscr{#1}}
\DeclareMathOperator{\PGL}{PGL}
\DeclareMathOperator{\Amp}{Amp}
\DeclareMathOperator{\Proj}{Proj}
\DeclareMathOperator{\Supp}{Supp}
\DeclareMathOperator{\Pic}{Pic}
\DeclareMathOperator{\Div}{Div}
\DeclareMathOperator{\GIT}{GIT}
\DeclareMathOperator{\Bs}{Bs}
\DeclareMathOperator{\sst}{ss}
\DeclareMathOperator{\CM}{CM}
\DeclareMathOperator{\Bl}{Bl}
\newcommand{\cO}{\ensuremath{\mathscr{O}}}
\newcommand{\cI}{\ensuremath{\mathscr{I}}}
\DeclareMathOperator{\Bbig}{Big}
\author{Ivan Cheltsov, DongSeon Hwang, Alex K\"uronya, Antonio Laface, \\ Fr\'ed\'eric Mangolte, Alex Massarenti, Jihun Park, Junyan Zhao}
\thanks{Throughout this paper, all varieties are assumed to be projective and defined over~$\mathbb{C}$.}
\title{On K-stability of Fano's last Fanos}
\let\origmaketitle\maketitle
\def\maketitle{
  \begingroup
  \def\uppercasenonmath##1{} 
  \let\MakeUppercase\relax 
  \origmaketitle
  \endgroup
}
\begin{document}

\begin{abstract}
We study K-stability of smooth Fano threefolds of Picard rank $2$ and degree $22$ which can be obtained by blowing up
a smooth complete intersection of two quadrics in $\mathbb{P}^5$ along a conic. We also describe the automorphism groups of these threefolds.
\end{abstract}



\address{ \emph{Ivan Cheltsov}\newline
\textnormal{University of Edinburgh, Edinburgh, Scotland
\newline
\texttt{i.cheltsov@ed.ac.uk}}}

\address{ \emph{DongSeon Hwang}\newline
\textnormal{Center for Complex Geometry, Institute for Basic Science, Daejeon, Korea
\newline
\texttt{dshwang@ibs.re.kr}}}

\address{ \emph{Alex K\"uronya}\newline
\textnormal{Goethe University Frankfurt, Frankfurt, Germany
\newline
\texttt{kuronya@math.uni-frankfurt.de}}}

\address{\emph{Antonio Laface}\newline
\textnormal{Universidad de Concepci\'on, Concepci\'on, Chile
\newline
\texttt{alaface@udec.cl}}}

\address{\emph{Fr\'ed\'eric Mangolte}\newline
\textnormal{Aix Marseille University, CNRS, I2M, Marseille, France
\newline
\texttt{frederic.mangolte@univ-amu.fr}}}

\address{\emph{Alex Massarenti}\newline
\textnormal{University of Ferrara, Ferrara, Italy
\newline
\texttt{msslxa@unife.it}}}

\address{ \emph{Jihun Park}\newline
\textnormal{Institute for Basic Science, Pohang, Korea \newline POSTECH, Pohang, Korea \newline
\texttt{wlog@postech.ac.kr}}}

\address{ \emph{Junyan Zhao}\newline
\textnormal{University of Maryland, College Park, Maryland
\newline
\texttt{jzhao81@umd.edu}}}

\maketitle

\tableofcontents

\newpage

\section{Introduction}
\label{section:intro}

In his last published paper \cite{Fano}, Fano constructed smooth 3-folds of degree $22$ in $\mathbb{P}^{13}$ whose twice hyperplane sections are canonically embedded curves of genus $12$. In modern language, these 3-folds are smooth Fano 3-folds of degree $22$. For very long time, Fano's last paper has been ignored by mathematicians until Andreatta and Pignatelli revised Fano's construction in modern language \cite{AndPig}. In particular, they showed that Fano's last Fanos can be obtained as divisors in the linear system $|H|$ on the smooth Fano 4-fold $Y$ with $-K_Y\sim 2H$, where $Y$ is a blow up of the smooth quadric 4-fold in a conic \cite{Mukai1989,Wisniewski1990}. Alternatively, these Fano 3-folds can be constructed as the blow-up of a smooth complete intersection of two quadrics in $\mathbb{P}^5$ along a conic. In the notations of \cite{fanography}, these are smooth Fano 3-folds in Family \textnumero 2.16.

The Fano 4-fold $Y$ is known to be K-polystable \cite{Delcroix2022}. So, a natural question is whether all Fano's last Fanos are K-polystable or not. This question has been already addressed in the book \cite{Book}, where it has been conjectured that all Fano's last Fano's are K-stable, and the following result has been proven.

\begin{theorem*}[{\cite[Section~5.6]{Book}}]
A general smooth Fano 3-fold in Family \textnumero 2.16 is K-stable.
\end{theorem*}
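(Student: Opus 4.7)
The plan is to verify the local valuative criterion of Fujita and Li by proving $\delta_P(X) > 1$ at every point $P$ of a general member $X$ of Family \textnumero~2.16. The engine is the inductive Abban-Zhuang estimate: for each $P$, choose a flag $P \in C \subset S \subset X$ with $S$ a prime divisor on $X$ and $C$ an irreducible curve on $S$, and bound
\[
\delta_P(X) \;\geq\; \min\left\{\frac{A_X(S)}{S_X(S)},\;\frac{A_S(C)}{S\!\left(W^S_{\bullet,\bullet};C\right)},\;\frac{1}{S\!\left(W^{S,C}_{\bullet,\bullet,\bullet};P\right)}\right\}
\]
from below. The goal in each case is to exhibit a flag that makes the right-hand side strictly greater than $1$.

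\textbf{Setup.} Write $\pi\colon X \to V$ for the blow-up of a smooth complete intersection of two quadrics $V \subset \mathbb{P}^5$ along a smooth conic $\Gamma$, let $E = \pi^{-1}(\Gamma)$, and set $H = \pi^{*}\mathcal{O}_V(1)$. Then $-K_X \sim 2H - E$ with $(-K_X)^3 = 22$. The natural divisors for the flag analysis are: the exceptional divisor $E$ (a Hirzebruch surface fibered over $\Gamma \cong \mathbb{P}^1$); the proper transforms $\widetilde{Q} \in |{-K_X}|$ of quadric sections of $V$ containing $\Gamma$ (generically smooth K3 surfaces, for which one checks $S_X(\widetilde{Q}) = 1/4$ by a direct volume computation); and the proper transforms $\widetilde{T} \in |H - E|$ of hyperplane sections of $V$ containing $\Gamma$ (generically smooth del Pezzo surfaces of degree $4$).

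\textbf{Case division.} For a general point $P \notin E$, take $S = \widetilde{Q}$ a smooth anticanonical K3 surface through $P$, so that $A_X(S)/S_X(S) = 4$ and the whole burden lies in the analysis on $S$ and on $C$, where standard intersection-theoretic estimates on K3 surfaces (choosing $C$ to be a smooth rational curve through $P$) yield the required bound. For a point $P \in E$, switch to $S = \widetilde{T}$ or, at the most degenerate points, $S = E$ itself, with curves $C$ on $S$ chosen to match the local fibration geometry. In each case the surface $S$ and curve $C$ are chosen so that the three ratios are simultaneously $> 1$.

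\textbf{Main obstacle.} The hardest points are those lying on the intersection of $E$ with the proper transforms of lines $\ell \subset V$ meeting $\Gamma$, together with a small number of distinguished fibres of $E \to \Gamma$; these are the loci that witness the extremal valuations, and a naive flag does not suffice there. This is precisely where the generality hypothesis enters: for a generic pair $(V,\Gamma)$, only finitely many lines of $V$ meet $\Gamma$ and they do so in general position, so the problematic locus reduces to an explicit finite list of points that can be treated individually with a tailored flag (typically by switching the first divisor to $E$ and exploiting its ruling, or by using a pencil of quadric sections singular along $\ell$). Stitching together the local estimates then gives $\delta_P(X) > 1$ for all $P \in X$, and hence K-stability of a general $X$ in Family \textnumero~2.16.
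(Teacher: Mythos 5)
Your overall strategy (pointwise Abban--Zhuang estimates giving $\delta_P(X)>1$ everywhere on a general member) is viable in principle --- it is essentially what the paper carries out in Section~5 to prove its Main Theorem, of which the general-member statement follows as a special case via Corollary~B --- but the paper's actual proof of the quoted theorem is much shorter and different in kind: it exhibits a single explicit member with an $\mathfrak{A}_4$-action, verifies its K-stability equivariantly (Zhuang's theorem reduces the test to $G$-invariant divisors), and then invokes Zariski openness of K-stability in families. Your proposal bypasses both the example and the openness argument.

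As a proof, however, your write-up has genuine gaps. First, none of the numerical bounds is actually established: the quantities $S(W^{S}_{\bullet,\bullet};C)$ and $S(W^{S,C}_{\bullet,\bullet,\bullet};P)$ are exactly where the difficulty sits, and ``standard intersection-theoretic estimates on K3 surfaces yield the required bound'' is not a step one can take on faith --- for several families in the book the anticanonical K3 flag produces bounds $\leqslant 1$ and fails. Second, your flag at a general point is not available: through a general point $P$ of a K3 surface $S\in|-K_X|$ there is no smooth rational curve (the rational curves on a projective K3 form a countable union of curves, so they miss a very general point), hence ``choosing $C$ to be a smooth rational curve through $P$'' cannot be done for all $P\notin E$. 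The paper avoids both problems by taking $S\in|g^*\mathcal{O}_{\mathbb{P}^2}(1)|=|H-E|$, a del Pezzo surface of degree $4$ through $P$, with $C$ the conic-bundle fibre through $P$; there $S_X(S)=\tfrac{13}{22}$ and the remaining integrals are computed in closed form using known $\delta$-estimates for degree-$4$ del Pezzo surfaces. With that choice the only locus where the method degenerates is the set of non-reduced fibres of $g$, i.e.\ the fibres over $\mathrm{Sing}(\Delta)$, which is empty for a general member since the discriminant quartic is then smooth; your identification of the ``hardest points'' (intersections of $E$ with lines meeting the conic) does not match where the estimates actually break down.
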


To be precise, \cite[Section~5.6]{Book} presents the following example of a K-stable Fano's last Fano.

\begin{example*}[{\cite[Section~5.6]{Book}}]
Let $Q_1$ be the~smooth quadric $\{u_1u_4+u_2u_5+u_3u_6=0\}\subset\mathbb{P}^5$, and let $Q_2$ be the~smooth quadric
$$
\{u_1^2+\omega u_2^2+\omega^2u_3^2+u_4^2+\omega u_5^2+\omega^2u_6^2+u_1u_4+\omega u_2u_5+\omega^2u_3u_6=0\}\subset\mathbb{P}^5,
$$
where $\omega$ is a~primitive cube root of unity, and $u_1,\ldots,u_6$ are coordinates on $\mathbb{P}^5$. Let $V=Q_1\cap Q_2$. Then $V$ is smooth.
Let $C_2$ be the smooth conic $V\cap\{u_1=u_2=u_3=0\}$,
let $f\colon X\to V$ be the~blow up of this conic.
Then $X$ is a~smooth Fano 3-fold in Family \textnumero 2.16,
the group $\mathrm{Aut}(X)$ contains a subgroup isomorphic to $\mathfrak{A}_4$, and $X$ is K-stable.
\end{example*}

Since K-stability is an open property (see Section~\ref{section:openness} for more details), it follows from this example that a general Fano's last Fano is K-stable. However, the notion of ``generality'' here is non-effective: the argument only guarantees the existence of a Zariski open subset in the moduli space parametrizing K-stable members, without providing an explicit criterion to determine whether a given Fano's last Fano lies in this locus. In particular, the result cannot be directly applied to verify K-stability for a specific example. The goal of this paper is to remove this gap.

Namely, in this paper, we prove an explicit (a bit technical) generalization of the theorem above, which can be applied to the vast majority of Fano's last Fanos.
To state our main result, let $X$ be a smooth Fano 3-fold in Family \textnumero 2.16 that is defined over a subfield $\Bbbk\subset\mathbb{C}$,
let $X_\mathbb{C}$ be its geometric model, and let $G$ be a (finite) subgroup in $\mathrm{Aut}(X)$.
Then we have the~following $G$-equivariant Sarkisov link:
$$
\xymatrix{
&X\ar[ld]_{f}\ar[rd]^{g}&\\
V&&\mathbb{P}^2}
$$
where $V$ is a smooth complete intersection of two quadrics $Q_1$ and $Q_2$ in $\mathbb{P}^5$,
$f$ is a blow up of a smooth conic $C_2\subset V$, and $g$ is a conic bundle.
Let $\Delta$ be the~discriminant curve of the~conic bundle $g$.
Then $\Delta$ is a reduced (possibly reducible) quartic curve in $\mathbb{P}^2$, which is defined over $\Bbbk$.

\begin{maintheorem*}
\label{theorem:main}
Suppose that $G$ does not fix $\Bbbk$-points in $\mathrm{Sing}(\Delta)$. Then $X_{\mathbb{C}}$ is K-stable.
\end{maintheorem*}

\begin{corollaryA*}
Suppose that $\mathrm{Sing}(\Delta)$ does not have $\Bbbk$-points. Then $X_{\mathbb{C}}$ is K-stable.
\end{corollaryA*}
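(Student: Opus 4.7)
The plan is to deduce Corollary A as a direct specialization of the Main Theorem. Since the Main Theorem is stated for an arbitrary finite subgroup $G \subseteq \mathrm{Aut}(X)$, I would choose the most convenient one, namely the trivial subgroup $\{1\}$, which is automatically finite and imposes no equivariant hypothesis on $X$. No refinement of the Sarkisov link or any further input on $\Delta$ is needed for this step.

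Next I would verify that under the hypothesis of Corollary A the condition required by the Main Theorem is automatically satisfied. By assumption the set of $\Bbbk$-rational points of $\mathrm{Sing}(\Delta)$ is empty, so the subset of $\Bbbk$-points of $\mathrm{Sing}(\Delta)$ that are fixed by $G$ is trivially empty for any choice of $G$; in particular the hypothesis of the Main Theorem holds vacuously. Applying the Main Theorem with $G = \{1\}$ then yields K-stability of $X_{\mathbb{C}}$, which is precisely the conclusion of Corollary A.

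The only subtlety worth recording is that $\mathrm{Sing}(\Delta)$ may still be nonempty over $\mathbb{C}$ or over $\overline{\Bbbk}$---the hypothesis only forbids $\Bbbk$-rational singular points---but this does not affect the argument, since the Main Theorem is phrased entirely in terms of $\Bbbk$-points. Beyond this there is no obstacle: the entire content of Corollary A is the observation that the absence of $\Bbbk$-rational singular points on $\Delta$ is a strictly stronger hypothesis than the absence of $G$-fixed $\Bbbk$-rational singular points for any chosen $G$, so Corollary A is a formal consequence of the Main Theorem with no additional geometric input required.
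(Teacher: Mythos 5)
Your proof is correct and is exactly the intended argument: the paper states Corollary A without proof precisely because, taking $G=\{1\}$ (for which every point is $G$-fixed), the hypothesis of the Main Theorem reduces to the emptiness of $\mathrm{Sing}(\Delta)(\Bbbk)$, which is the hypothesis of Corollary A. Nothing further is needed.
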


\begin{corollaryB*}
Suppose that $\Delta$ is smooth. Then $X_{\mathbb{C}}$ is K-stable.
\end{corollaryB*}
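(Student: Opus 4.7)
Corollary B is an immediate specialization of Corollary A. Indeed, if the plane quartic $\Delta \subset \mathbb{P}^2$ is smooth, then $\mathrm{Sing}(\Delta) = \emptyset$ as a scheme, and so $\mathrm{Sing}(\Delta)$ contains no $\Bbbk$-points for any subfield $\Bbbk \subset \mathbb{C}$. The hypothesis of Corollary A is therefore vacuously satisfied, and Corollary A then yields K-stability of $X_{\mathbb{C}}$.

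The only thing to check in this reduction is the compatibility of smoothness with base change. The quartic $\Delta$ is defined over $\Bbbk$ because it arises as the discriminant of the conic bundle $g : X \to \mathbb{P}^2$, which is defined over $\Bbbk$. Since smoothness of a plane curve is detected by the nonvanishing of partial derivatives (the Jacobian criterion), it is both preserved and reflected under the extension $\Bbbk \subset \mathbb{C}$. Hence "$\Delta$ is smooth" in the statement of Corollary B can be interpreted either over $\Bbbk$ or over $\mathbb{C}$ without ambiguity, and in either case the scheme $\mathrm{Sing}(\Delta)$ is empty.

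Consequently there is essentially nothing to prove beyond invoking Corollary A; the plan is simply the observation above. The actual difficulty lies entirely in the Main Theorem, and there the hard part will be extracting, from any hypothetical K-unstable equivariant test configuration of $X_{\mathbb{C}}$, a $G$-invariant geometric point that descends to a $\Bbbk$-point of $\mathrm{Sing}(\Delta)$. This is presumably achieved by exploiting the $G$-equivariant Sarkisov link in the displayed diagram, relating valuations on $X$ centred on components of the reducible fibres of the conic bundle $g$ to the singular points of $\Delta$, together with the openness of K-stability (Section~\ref{section:openness}) to handle the passage between arithmetic and geometric models.
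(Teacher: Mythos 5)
Your reduction is exactly the intended one: the paper states Corollary B without proof precisely because, when $\Delta$ is smooth, $\mathrm{Sing}(\Delta)=\varnothing$ has no $\Bbbk$-points, so the hypothesis of Corollary A (and hence of the Main Theorem) holds vacuously. Your additional remark on the compatibility of smoothness with the base change $\Bbbk\subset\mathbb{C}$ is correct and harmless, so the proposal matches the paper's (implicit) argument.
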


\begin{corollaryC*}[{\cite[Lemma~5.7]{AbbanCheltsovKishimotoMangolte1}}]
Suppose that $X(\Bbbk)=\varnothing$. Then $X_{\mathbb{C}}$ is K-stable.
\end{corollaryC*}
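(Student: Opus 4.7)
The plan is to derive Corollary~C directly from Corollary~A via the implication
\[
X(\Bbbk)=\varnothing \;\Longrightarrow\; \Delta(\Bbbk)=\varnothing,
\]
which in particular gives $\mathrm{Sing}(\Delta)(\Bbbk)=\varnothing$ and triggers Corollary~A. Equivalently, I would prove the contrapositive: any $\Bbbk$-point $p$ of $\Delta$ forces the fiber $F=g^{-1}(p)$ of the conic bundle $g$ to contain a $\Bbbk$-rational point, in violation of $X(\Bbbk)=\varnothing$.

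So fix $p \in \Delta(\Bbbk)$. Since $g$ arises from the Sarkisov link displayed above, its ambient projective-plane bundle is the projectivisation of an honest rank-$3$ vector bundle on $\mathbb{P}^2$, so its fiber at $p$ is $\mathbb{P}^2_\Bbbk$. The conic $F \subset \mathbb{P}^2_\Bbbk$ is cut out by a quadratic form $q \in \Bbbk[x,y,z]$ of rank at most $2$. If $q$ has rank $2$, then over $\overline{\Bbbk}$ we have $F = L_1 \cup L_2$ with $L_1 \cap L_2 = \{v\}$; the Galois group either fixes each $L_i$ (in which case both are $\Bbbk$-lines in $\mathbb{P}^2_\Bbbk$, contributing many $\Bbbk$-points) or swaps them, making the vertex $v$ Galois-invariant and hence $\Bbbk$-rational. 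If $q$ has rank $1$, its symmetric matrix takes the form $\lambda\, v v^T$ for some $\lambda \in \Bbbk^\times$ and nonzero $v \in \Bbbk^3$ (a direct linear-algebra consequence of symmetry together with rank $1$), hence $q = \lambda\, \ell^2$ with $\ell(x)=v^T x \in \Bbbk[x,y,z]$, and $F_{\mathrm{red}} = \{\ell = 0\} \cong \mathbb{P}^1_\Bbbk$ carries infinitely many $\Bbbk$-rational points. In every case $F(\Bbbk) \neq \varnothing$, as required.

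The main obstacle is the identification of the ambient bundle fiber at $p$ with $\mathbb{P}^2_\Bbbk$: in general a conic bundle need only sit inside a Brauer-Severi scheme, and a non-trivial Brauer class at $p$ could in principle prevent the plane fiber from having $\Bbbk$-rational points, which would derail the rank-$2$ analysis above. For Family~\textnumero 2.16 this is resolved by the explicit description of $g$ coming from the Sarkisov link, which realizes the ambient bundle as the projectivisation of a genuine rank-$3$ vector bundle on $\mathbb{P}^2$. The remainder is standard linear algebra over $\Bbbk$, and the whole reduction is in fact the content of \cite[Lemma~5.7]{AbbanCheltsovKishimotoMangolte1}; after it one simply applies Corollary~A to conclude K-stability of $X_{\mathbb{C}}$.
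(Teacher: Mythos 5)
Your argument is correct, but it takes a different route from the paper's. The paper argues by contradiction: if $X_{\mathbb{C}}$ is not K-stable, the Main Theorem produces a $\Bbbk$-point $p\in\mathrm{Sing}(\Delta)$; the reduced fiber $Z$ of $g$ over $p$ is then a line whose image $f(Z)\subset V$ meets the conic $C_2$ in a single point, which is therefore a $\Bbbk$-point of $V$, and the Lang--Nishimura theorem transfers this to a $\Bbbk$-point of $X$. You instead work entirely on the $\mathbb{P}^2$-side and prove the stronger contrapositive $\Delta(\Bbbk)\neq\varnothing\Rightarrow X(\Bbbk)\neq\varnothing$, locating a $\Bbbk$-point directly on the degenerate conic fiber (the singular point of a rank-$2$ conic, or the reduced double line in the rank-$1$ case) and then invoking Corollary~A. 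This avoids Lang--Nishimura altogether, at the cost of having to rule out a Brauer obstruction on the ambient plane fiber; you correctly identify this as the delicate point, and it does hold here because $\widetilde{Q}_1$ is fiberwise a hyperplane in the $\mathbb{P}^3$-bundle $\eta$ with coefficients $\alpha_0,\dots,\alpha_3$ that never vanish simultaneously (a quadric of rank $\geqslant 5$ in $\mathbb{P}^5$ contains no $\mathbb{P}^3$), so the fiber of the ambient over a $\Bbbk$-point is an honest $\mathbb{P}^2_\Bbbk$. Your version buys a slightly stronger and more self-contained statement; the paper's version is shorter because it only needs the non-reduced fibers and can lean on Lang--Nishimura.
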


\begin{proof}
Suppose that $X_{\mathbb{C}}$ is not K-stable.
Then, by Main Theorem, $\Delta$ is singular, and $\mathrm{Sing}(\Delta)$ contains a $\Bbbk$-point $p$.
Let $Z$ be the~fiber of the~conic bundle $g$ over $p$ with reduced structure. Then $Z$ is defined over $\Bbbk$,
and $f(Z)$ is a line in $V$ that intersects  the conic $C_2$ at one point, which must be defined over $\Bbbk$.
In particular, we see that $V(\Bbbk)\ne\varnothing$, so $X(\Bbbk)\ne\varnothing$ by the~Lang--Nishimura theorem,
which contradicts our assumption.
\end{proof}

\begin{corollaryD*}
Suppose that $G$ does not fix $\Bbbk$-points in $\Delta$. Then $X_{\mathbb{C}}$ is K-stable.
\end{corollaryD*}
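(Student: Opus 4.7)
The plan is to derive Corollary D as an immediate formal consequence of the Main Theorem by using the inclusion $\mathrm{Sing}(\Delta)\subseteq\Delta$. First I would note that every $\Bbbk$-point of $\mathrm{Sing}(\Delta)$ is in particular a $\Bbbk$-point of $\Delta$, so the set of $G$-fixed $\Bbbk$-points in $\mathrm{Sing}(\Delta)$ is contained in the set of $G$-fixed $\Bbbk$-points in $\Delta$. Under the hypothesis of Corollary D the latter set is empty, and hence so is the former. This is precisely the hypothesis of the Main Theorem, so I would then invoke the Main Theorem to conclude that $X_\mathbb{C}$ is K-stable.

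There is essentially no obstacle in this argument: Corollary D is a strict logical weakening of the Main Theorem's hypothesis via the inclusion of the singular locus into the full discriminant quartic, and no additional geometric input beyond the Main Theorem itself is needed. In contrast to Corollary C, whose proof uses the Lang--Nishimura theorem to transfer a rational point from $X$ down to a $\Bbbk$-point of $\mathrm{Sing}(\Delta)$, in Corollary D the assumption is already phrased directly in terms of $\Delta$ itself, so no such transfer or analysis of the conic bundle $g$ is required. The only point worth flagging is that one is really using the inclusion of $\Bbbk$-point sets $\mathrm{Sing}(\Delta)(\Bbbk)\subseteq\Delta(\Bbbk)$, which is immediate from the corresponding scheme-theoretic inclusion, together with the fact that the $G$-action on $\Delta$ restricts to the $G$-action on $\mathrm{Sing}(\Delta)$ (since $G$ preserves $\Delta$ and hence its singular locus).
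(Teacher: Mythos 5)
Your proof is correct and is exactly the intended argument: the paper states Corollary D without proof precisely because it is an immediate formal consequence of the Main Theorem via the inclusion $\mathrm{Sing}(\Delta)\subseteq\Delta$. Nothing further is needed.
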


\begin{corollaryE*}
Suppose that $G$ does not fix $\Bbbk$-points in $X$. Then $X_{\mathbb{C}}$ is K-stable.
\end{corollaryE*}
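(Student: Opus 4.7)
The plan is to argue by contradiction, refining the proof of Corollary~C so that the $\Bbbk$-point it produces on $X$ is not only $\Bbbk$-rational but also $G$-fixed. Suppose $X_{\mathbb{C}}$ is not K-stable; by the Main Theorem there exists a $\Bbbk$-point $p \in \mathrm{Sing}(\Delta)$ that is fixed by $G$. Let $Z$ be the reduced fibre of $g$ over $p$ and set $L := f(Z)$. Then, exactly as in the proof of Corollary~C, $Z$ is defined over $\Bbbk$ and $L$ is a line in $V$ meeting the conic $C_2$ in exactly one point $q$. Because $g$ is $G$-equivariant and $p$ is $G$-fixed, $Z$ and therefore $L$ are $G$-invariant, so the single point $q = L \cap C_2$, being the intersection of two $G$-invariant, $\Bbbk$-defined subvarieties of $V$, is simultaneously a $G$-fixed point and a $\Bbbk$-point.

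To lift $q$ to $X$, I would take $\widetilde{q} := Z \cap E$, where $E$ is the exceptional divisor of the blow-up $f$. Since $L$ is a smooth rational curve not contained in $C_2$, its strict transform $Z = \widetilde{L}$ is isomorphic to $L$ via $f$, and so the unique point $q$ of $L \cap C_2$ lifts to a unique point $\widetilde{q}$ of $Z \cap E$. Both $Z$ and $E$ are $G$-invariant and defined over $\Bbbk$, and $\widetilde{q}$ is their unique intersection point, so $\widetilde{q} \in X(\Bbbk)$ and $\widetilde{q}$ is $G$-fixed. This contradicts the hypothesis of Corollary~E.

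The main technical point is ensuring singleton-ness of the intersections at each stage: once an intersection consists of a single geometric point, Galois-invariance gives $\Bbbk$-rationality and $G$-invariance gives $G$-fixedness automatically, so no Lang--Nishimura detour is needed. Singleton-ness of $L \cap C_2$ is already part of the Corollary~C argument, and singleton-ness of $Z \cap E$ follows because the strict transform of a smooth curve under the blow-up of a smooth subvariety is isomorphic to the original curve, so each point of $L \cap C_2$ contributes exactly one point to $Z \cap E$. Everything else is bookkeeping.
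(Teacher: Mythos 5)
Your proposal is correct and follows essentially the same route as the paper: deduce from the Main Theorem a $G$-fixed $\Bbbk$-point $p\in\mathrm{Sing}(\Delta)$, take the reduced fibre $Z$ over $p$, and extract a $G$-fixed $\Bbbk$-point of $X$ from $Z\cap E$. The only cosmetic difference is that the paper justifies $\#(Z\cap E)=1$ via $Z\not\subset E$ and $Z\cdot E=1$, whereas you argue through the strict transform of the line $f(Z)$; both are valid.
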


\begin{proof}
Suppose that $X_{\mathbb{C}}$ is not K-stable.
Then, by Main Theorem, $\Delta$ is singular, and $\mathrm{Sing}(\Delta)$ contains a $\Bbbk$-point $p$ that is fixed by $G$.
Let $Z$ be the~fiber of the~conic bundle $g$ over $p$ with reduced structure, and let $E$ be the $f$-exceptional surface.
Then $Z$ and $E$ are defined over $\Bbbk$, both $Z$ and $E$ are $G$-invariant, and $f(Z)$ is a line in $V$ that intersects  the conic $C_2$ at one point.
In particular, we see that $Z\not\subset E$. On the other hand, we have $Z\cdot E=1$, so the intersection $Z\cap E$ consists of a single point,
which must be $G$-invariant and defined over $\Bbbk$. This contradicts our assumption.
\end{proof}

\begin{corollaryF*}[{\cite[Lemma~4.5]{AbbanCheltsovKishimotoMangolte2}}]
\label{corollary:main-6}
If $G$ is abelian, $\Bbbk=\mathbb{C}$, $G$ fixes no points in $X$, then $X$ is K-stable.
\end{corollaryF*}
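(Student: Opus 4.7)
The plan is to deduce Corollary F as an immediate consequence of Corollary E, without invoking the hypothesis that $G$ is abelian. Since $\Bbbk=\mathbb{C}$, every point of $X$ is a $\Bbbk$-point, so the condition ``$G$ fixes no points in $X$'' coincides verbatim with the hypothesis of Corollary E that $G$ does not fix any $\Bbbk$-point of $X$. Applying Corollary E then gives K-stability of $X=X_{\mathbb{C}}$. The abelian assumption, essential in the original proof of \cite[Lemma~4.5]{AbbanCheltsovKishimotoMangolte2}, plays no role in the present approach; it is retained in the statement only to signal that we recover and strengthen that earlier lemma.

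To make the argument self-contained, I would also unpack it directly from the Main Theorem rather than chaining through Corollary E. Suppose, for contradiction, that $X$ is not K-stable. The Main Theorem then produces a point $p\in\mathrm{Sing}(\Delta)$ fixed by $G$; its $\mathbb{C}$-rationality is automatic. I would next transfer this fixed point from the discriminant curve up to $X$ along the conic bundle, repeating the geometric step from the proof of Corollary E. Letting $Z$ be the reduced fiber of $g\colon X\to\mathbb{P}^2$ over $p$ and $E$ the exceptional divisor of $f\colon X\to V$, both $Z$ and $E$ are $G$-invariant; moreover $f(Z)$ is a line in $V$ meeting the conic $C_2$ in a single point, so $Z\not\subset E$ while $Z\cdot E=1$. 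Hence $Z\cap E$ consists of a single point, which is necessarily fixed by $G$, contradicting the hypothesis.

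The characteristic feature of this plan, rather than an obstacle, is the observation that once $\Bbbk=\mathbb{C}$ there is nothing beyond Corollary E to do. The substantive geometric input---that a singular point of $\Delta$ lifts to a $G$-fixed point on $X$---is already encoded in the Sarkisov-link picture underlying the Main Theorem, and the abelian hypothesis of \cite[Lemma~4.5]{AbbanCheltsovKishimotoMangolte2} plays no role in that geometry.
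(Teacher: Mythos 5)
Your proposal is correct and matches the paper's (implicit) argument: Corollary F is stated without proof precisely because, once $\Bbbk=\mathbb{C}$, every closed point of $X$ is a $\Bbbk$-point and $X=X_{\mathbb{C}}$, so it is the special case of Corollary E, whose proof you reproduce accurately. Your observation that the abelian hypothesis is not needed here is also consistent with the paper, which retains it only to match the cited statement of \cite[Lemma~4.5]{AbbanCheltsovKishimotoMangolte2}.
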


Let us show how to compute the equation of the curve $\Delta$. Let $\Pi$ be the plane in $\mathbb{P}^5$ that contains the conic $C_2$.
Replacing $Q_1$ by another quadric in the pencil generated by $Q_1$~and~$Q_2$ if necessary, we can additionally assume that $Q_1$ contains $\Pi$, but $Q_2$ does not contain $\Pi$.
Then $Q_1$ is given by
$$
\alpha_0(u_1,u_2,u_3)+\alpha_1(u_1,u_2,u_3)u_4+\alpha_2(u_1,u_2,u_3)u_5+\alpha_3(u_1,u_2,u_3)u_6=0,
$$
where $\alpha_0$ is a quadratic form in $\Bbbk[u_1,u_2,u_3]$,
and $\alpha_1$, $\alpha_2$, $\alpha_3$ are some linear forms in $\Bbbk[u_1,u_2,u_3]$.
Moreover, up to a change of coordinates on $\mathbb{P}^5$, we may assume that
$$
C_2=\big\{u_1=u_2=u_3=u_4^2-u_5u_6=0\big\}.
$$
Then $Q_2$ is given by
$$
\beta_0(u_1,u_2,u_3)+\beta_1(u_1,u_2,u_3)u_4+\beta_2(u_1,u_2,u_3)u_5+\beta_3(u_1,u_2,u_3)u_6+u_4^2-u_5u_6=0,
$$
where $\beta_0$ is a quadratic form in $\Bbbk[u_1,u_2,u_3]$,
and $\beta_1$, $\beta_2$, $\beta_3$ are some linear forms in $\Bbbk[u_1,u_2,u_3]$.
Now, we let $\pi\colon \widetilde{\mathbb{P}}^5\to \mathbb{P}^5$ be the blow up of the plane $\Pi$,
let $\widetilde{Q}_1$ and $\widetilde{Q}_2$ be the strict transforms of the quadrics $Q_1$ and $Q_2$ on $\widetilde{\mathbb {P}}^5$, respectively.
Then we have the following commutative diagram:
$$
\xymatrix{
&\widetilde{\mathbb{P}}^5\ar[ld]_{\pi}\ar[rd]^{\eta}&\\
\mathbb{P}^5\ar@{-->}[rr]_{\chi}&&\mathbb{P}^2}
$$
where $\eta$ is a $\mathbb{P}^3$-bundle, and $\chi$ is a projection from $\Pi$.
Identify $X=\widetilde{Q}_1\cap\widetilde{Q}_2$, $\pi\vert_{X}=f$, $\eta\vert_{X}=g$.
The Cox ring of $\widetilde{\mathbb{P}}^5$ together with  its grading matrix and irrelevant ideal are
$$
 \mathcal{R}\big(\widetilde{\mathbb{P}}^5\big)=\mathbb{C}[x_1,\dots,x_7],
 \qquad
 \begin{bmatrix*}[r]
 1&1&1&1&1&1&0\\
 -1&-1&-1&0&0&0&1
 \end{bmatrix*},
 \qquad
 \mathscr I_{\rm irr}(\widetilde{\mathbb P}^5)
 =(x_1,x_2,x_3)\cap (x_4,x_5,x_6,x_7).
$$
See Section~\ref{section:ZD} for details.
In these coordinates, the blow up $\pi$ is given by
$$
\pi(x_1,\dots,x_7)=[x_1x_7:x_2x_7:x_3x_7:x_4:x_5:x_6]
$$
and $\eta$ is given by $\eta(x_1,\dots,x_7)=[x_1:x_2:x_3]$.
This implies that $\widetilde{Q}_1$ is given by
\begin{equation}
\label{eq1}
\alpha_0(x_1,x_2,x_3)x_7+\alpha_1(x_1,x_2,x_3)x_4+\alpha_2(x_1,x_2,x_3)x_5+\alpha_3(x_1,x_2,x_3)x_6=0;
\end{equation}
and $\widetilde{Q}_2$ is given by
\begin{equation}\label{eq2}
\beta_0(x_1,x_2,x_3)x_7^2+(\beta_1(x_1,x_2,x_3)x_4+\beta_2(x_1,x_2,x_3)x_5+\beta_3(x_1,x_2,x_3)x_6)x_7+x_4^2-x_5x_6=0.
\end{equation}
Moreover, in the open subset $\{\alpha_1(x_1,x_2,x_3)\neq 0\}$, we have
$$
x_4=-\frac{\alpha_0(x_1,x_2,x_3)x_7 + \alpha_2(x_1,x_2,x_3)x_5+\alpha_3(x_1,x_2,x_3)x_6}{\alpha_1(x_1,x_2,x_3)},
$$
so plugging this equality in the defining equation of $\widetilde{Q}_2$, we get a quadratic form in the variables $x_5,x_6,x_7$ whose matrix is
$$
\left(\begin{array}{ccc}
2\alpha_2^2 & 2\alpha_2\alpha_3-1  & \beta_2-\alpha_2\beta_1+2\alpha_0\alpha_2 \\
2\alpha_2\alpha_3-1 & 2\alpha_3^2 & \beta_3 - \alpha_3\beta_1 + 2\alpha_0\alpha_3 \\
\beta_2-\alpha_2\beta_1+2\alpha_0\alpha_2 & \beta_3 -\alpha_3\beta_1 + 2\alpha_0\alpha_3 & 2(\beta_0 + \alpha_0^2-\alpha_0\beta_1)
\end{array}\right).
$$
Computing the determinant of this matrix, we see that
$$
\Delta=\Big\{\Big(\alpha_0 - \frac{1}{2}\alpha_1\beta_1 + \alpha_2\beta_3 + \alpha_3\beta_2\Big)^2+\Big(\alpha_1^2 - 4\alpha_2\alpha_3\Big)\Big(\beta_0-\frac{1}{4}\beta_1^2+\beta_2\beta_3\Big)=0\Big\}\subset\mathbb{P}^2_{x_1,x_2,x_3}.
$$
This quartic curve is singular if and only if $g$ admits non-reduced fibers.

\begin{remark*}
Let $E$ be the~$f$-exceptional surface. Then $E$ is cut out on $X$ by $x_7=0$, so that
$$
E=\big\{\alpha_1(x_1,x_2,x_3)x_4+\alpha_2(x_1,x_2,x_3)x_5+\alpha_3(x_1,x_2,x_3)x_6=x_4^2-x_5x_6=x_7=0\big\}\subset\widetilde{\mathbb{P}}^5.
$$
When the linear forms $\alpha_1,\alpha_2, \alpha_3$ are linearly independent, the divisor $E$ is isomorphic to $\mathbb P^1\times\mathbb P^1$.
On the other hand, if $\alpha_1,\alpha_2,\alpha_3$ are linearly dependent, then $E$ is the second Hirzebruch surface~$\mathbb{F}_2$,
whose negative curve is the smooth fiber of the conic bundle $g$, and it has equations
$$
\alpha_1(x_1,x_2,x_3)=\alpha_2(x_1,x_2,x_3)=\alpha_3(x_1,x_2,x_3)=x_4^2-x_5x_6=x_7=0.
$$
\end{remark*}

Now, we can easily apply Corollaries~A and B to construct K-stable Fano's last Fanos.
However, to apply Corollaries~D and E, we have to know more about symmetry groups of Fano's last~Fanos.
In Section~\ref{section:Aut}, we will describe all these groups.
Namely, we will show that all possible non-trivial automorphism groups of complex Fano's last Fanos are the following $17$ finite groups:
\begin{center}
$C_2$, $C_3$, $C_2^2$, $C_4$, $C_5$, $C_6$, $C_8$, $C_2\times C_4$, $C_2^3$, $\mathrm{D}_4$, $C_{10}$, \\$\mathfrak{A}_4$, $C_2^2\times C_{4}$, $C_2^2 \rtimes C_4$, $C_2\times C_8$, $C_2\times\mathfrak{A}_4$, $C_2^2\rtimes C_8$,
\end{center}
where $C_m$ is the cyclic group of order $m$, $\mathrm{D}_4$ is the dihedral group of order $8$, and $\mathfrak{A}_4$ is the alternating group of degree $4$.
See Section~\ref{section:Aut} for more details.

\medskip
\noindent
\textbf{Postscript remarks.} After the first version of this article was posted, Conjecture~\ref{conj:K-ss degeneration} was verified in \cite[Appendix A]{KLPZ26}. This provides strong evidence toward a complete description of the K-moduli stack.

\medskip
\noindent
\textbf{Acknowledgements.}
Ivan Cheltsov was supported by Simons Collaboration grant \emph{Moduli of varieties}. DongSeon Hwang was supported by the National Research Foundation of Korea(NRF) grant funded by the Korea government(MSIT) (2021R1A2C1093787) and the Institute for Basic Science (IBS-R032-D1).  Alex K\"uronya acknowledges support by the Deutsche  Forschungsgemeinschaft  (DFG) via  the Collaborative Research Centre TRR 326 "Geometry and Arithmetic of Uniformized Structures", project number 444845124, and  by the project ANR-23-CE40-0026 ``Positivity on K-trivial varieties''.
Antonio Laface was partially supported by Proyecto FONDECYT Regular n. 1230287. Jihun Park was supported by IBS-R003-D1 from the Institute for Basic Science in Korea.
This work started during the winter school \emph{K-stability of Fano varieties
and birational invariants} held in Trento, has been developed in CIRM during of a semester-long \emph{Morlet Chair}, was continued in G\"okova during \emph{Thirtieth G\"okova Geometry / Topology Conference} and finished in Kracow during the conference \emph{Structures and Symmetries in Algebraic Geometry}.

\section{K-stability}
\label{section:K-stability}

K-polystability is a stability condition which characterizes the existence of a certain type of extremal K\"ahler metric whose K\"ahler form belongs to the anticanonical class:
a K\"ahler-Einstein metric. Moreover, it allows the construction of moduli spaces for Fano varieties that are projective in some cases where GIT fails. It is worth mentioning that K-stability is an algebro-geometric condition while the K\"ahler-Einstein equation is a highly nonlinear PDE.

To check that a variety is K-stable consists of considering all $\mathbb{G}_m$-equivariant degenerations over $\mathbb{A}^1$ and checking that the Donaldson-Futaki invariant (see Definition~\ref{definition:DF}) is positive, unless the degeneration is trivial (the precise notion of trivial leads to the many different flavours of K-stability). There is a vast body of complex geometry on this subject. We avoid detailing this aspect. One can consult \cite{Gabor18} for a survey. Likewise, many intertwined works have contributed to the results we discuss.  We make no attempt to cite all of them properly and refer to \cite{Xu21,Xubook} for appropriate references.

\subsection{K\"ahler-Einstein manifolds}
Let $X$ be an $n$-dimensional  compact complex manifold, and let $J$ be the endomorphism of the tangent bundle given by multiplication by $i$ on each tangent space. A real $(1,1)$-form $\omega$ on $X$ defines a Hermitian metric on $X$ if the symmetric bilinear form
$$
(v,u)\ \mapsto \ \omega\big(v, J(u)\big)
$$
defined for real tangent vectors $v,u$ is positive definite. A \emph{K\"ahler} metric on $X$ is given by a real $(1,1)$-form $\omega$ which can be given in a local coordinate system $(z_\alpha)_{1\leqslant \alpha\leqslant n}$ by
$$
\omega\ =\ \sqrt{-1}\sum_{1\leqslant \alpha,\beta\leqslant n}g_{\alpha\beta}dz_{\alpha}\wedge d{\bar z}_{\beta}
$$
where $(g_{\alpha\beta})_{1\leqslant \alpha,\beta\leqslant n}$ is a positive definite Hermitian matrix with $C^\infty$ entries and $\omega$ is a closed form.

\begin{example}
On $\mathbb{P}^n$ with homogeneous coordinates $[u_0:\ldots:u_n]$, the \emph{Fubini-Study} metric $\omega_{\textup{FS}}$ is the $U_{n+1}(\mathbb{C})$-invariant Hermitian metric defined locally near the point $[1:0\ldots:0]$ by
$$
\omega_{\textup{FS}}\ =\ \sqrt{-1}\partial\bar\partial\log\left(1+\sum_{1\leqslant \alpha\leqslant n}\vert z_{\alpha}\vert^2\right),
$$
where $z_\alpha=\frac{u_\alpha}{u_0}$. In particular, its value at $[1:0\dots:0]$ is $\sqrt{-1}\sum_{1\leqslant \alpha \leqslant n}dz_{\alpha}\wedge d{\bar z}_{\alpha}$ so it is positive definite and clearly the form $\omega_{FS}$ is closed, and thus the Fubini-Study metric is K\"ahler.
\end{example}

\noindent The \emph{Ricci curvature} of  K\"ahler metric $\omega$ is the  $(1,1)$-form given in local coordinates by
$$
\operatorname{Ric}(\omega)\ =\ -\sqrt{-1}\partial\bar\partial\log\det(g_{\alpha \beta})\;.
$$
We say that $\omega$ is \emph{K\"ahler-Einstein} if
\begin{equation}
\label{eq:ke}
 \operatorname{Ric}(\omega)\ =\ \lambda\omega
\end{equation}
for some constant $\lambda\in \mathbb{R}$.

\begin{example}
\label{example:FS}
The Fubini-Study metric on the complex projective space $\mathbb{P}^n$ is an example of K\"ahler-Einstein metric with $\lambda=2n+1$ in equality~\eqref{eq:ke}
\end{example}

\begin{remark}
\label{remark:cscK}\textup{
The \emph{scalar curvature function} of $\omega$ is }
$$
S(\omega)\ =\ \operatorname{Tr}_{\omega}\big(\operatorname{Ric}(\omega)\big)\ = \ n\cdot \frac{\operatorname{Ric}(\omega)\wedge\omega^{n-1}}{\omega^n}.
$$
The K\"ahler metric $\omega$ is said to be a \emph{constant scalar curvature metric} (or \emph{cscK metric}) if $S(\omega)$ is constant.
If $\omega$ is a K\"ahler-Einstein metric, then $S(\omega)=n\lambda$, so $\omega$ is a cscK metric.
\end{remark}

Note that $\omega^n$ is a volume form and that $\operatorname{Ric}(\omega)$ only depends on this volume form.
In fact, the form $\operatorname{Ric}(\omega)$  is, up to the multiplication by a constant, the curvature form of the Hermitian metric given by $\omega^n$ on the canonical line bundle $K_X=\bigwedge^n T^*_X$. Therefore, the form $\operatorname{Ric}(\omega)$ defines a cohomology class which satisfies the relation
$$
c_1(X)\ =\ \frac1{2\pi}\cdot [\operatorname{Ric}(\omega)]\ \in\  H^{1,1}\big(X,\mathbb{R}\big)
$$
where $c_1(X)$ is the first Chern class of the tangent bundle.
A necessary condition then arises to solve equation~\eqref{eq:ke}: that the first Chern class of $X$ is zero or contains a positive (resp. negative) definite form according to $\lambda=0$, $\lambda>0$ (resp. $\lambda<0$).
By Kodaira embedding theorem, we get that if $X$ is K\"ahler-Einstein, then one of the following three cases holds:
\begin{itemize}
\item $\lambda=0$ and $K_X\sim_{\mathbb{Q}} 0$;
\item $\lambda<0$ and $K_X$ is ample, so $X$ is projective;
\item $\lambda>0$ and $-K_X$ is ample, so $X$ is projective, and $X$ is a Fano manifold.
\end{itemize}
In the $\lambda\leqslant 0$ case, equivalent conditions of existence of K\"ahler-Einstein metrics on a compact variety where elucidated by Aubin \cite{Au78} and Yau \cite{Ya78} (the $\lambda=0$ case being a consequence of the proof of Calabi Conjecture see \cite{Bo97} for details). These two results are generalized for singular complex varieties in \cite{EGZ09}. In summary, we have: if $K_X\sim_{\mathbb{Q}} 0$ or $K_X$ is ample, then $X$ is always K\"ahler-Einstein. However, if $-K_X$ is ample, then $X$ is not always K\"ahler-Einstein.
This follows from the obstruction for a compact K\"ahler manifold to have a constant scalar curvature metric, for example a K\"ahler-Einstein one, which is due to Matsushima:

\begin{theorem}[\cite{Ma57}]
If $X$ is a K\"ahler-Einstein Fano manifold, then $\mathrm{Aut}(X)$ is reductive.
\end{theorem}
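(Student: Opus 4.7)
The plan is to prove Matsushima's theorem by following his original strategy: exhibit the identity component $\mathrm{Aut}^0(X)$ as the complexification of a compact real Lie group, which is equivalent to reductivity. Concretely, I will show that the Lie algebra $\mathfrak{h}$ of real holomorphic vector fields on $X$ decomposes as
\[
\mathfrak{h} \;=\; \mathfrak{k} \,\oplus\, J\mathfrak{k},
\]
where $\mathfrak{k}$ is the Lie algebra of the isometry group $\mathrm{Isom}(X,\omega)$ of the K\"ahler--Einstein metric. Since $X$ is compact, $\mathrm{Isom}(X,\omega)$ is a compact Lie group by Myers--Steenrod, so this decomposition identifies $\mathfrak{h}$ with the complexification $\mathfrak{k}\otimes_{\mathbb R}\mathbb C$ of a compact Lie algebra, forcing $\mathrm{Aut}^0(X)$ to be reductive; reductivity of $\mathrm{Aut}(X)$ then follows because it depends only on the identity component.

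The first step is setup: fix a K\"ahler--Einstein form $\omega$ with $\mathrm{Ric}(\omega)=\lambda\omega$, $\lambda>0$ since $X$ is Fano. Every Killing field preserves both $\omega$ and $J$, hence is a real holomorphic vector field, giving the inclusion $\mathfrak{k}\subset\mathfrak{h}$. To build the complementary piece, I will use the standard \emph{Hodge-theoretic potential} of a holomorphic vector field: for $V\in\mathfrak{h}$, the contraction $\iota_V\omega$ is a $\bar\partial$-closed $(0,1)$-form, and Hodge theory on the compact K\"ahler manifold $X$ decomposes it as the sum of a harmonic part and a $\bar\partial$-exact part $\bar\partial u$ for some smooth complex function $u$, normalized by $\int_X u\,\omega^n=0$.

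The analytic heart of the argument is to exploit the K\"ahler--Einstein identity $\mathrm{Ric}(\omega)=\lambda\omega$ via the Bochner--Weitzenb\"ock formula. A computation shows that $\mathrm{grad}\,u$ is a holomorphic vector field if and only if $u$ satisfies the eigenvalue equation $\Delta u = \lambda u$ (for the appropriate $\partial$-Laplacian), and in that case integration by parts combined with $\lambda>0$ forces $\mathrm{Re}(\mathrm{grad}\,u)$ and $\mathrm{Im}(\mathrm{grad}\,u)$ to be Killing fields times $J$, i.e.\ elements of $J\mathfrak{k}$. Writing $V = (V - \mathrm{grad}\,u) + \mathrm{grad}\,u$, the first summand has vanishing potential and is shown, using the same Bochner formula, to be a Killing field, hence lies in $\mathfrak{k}$. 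Uniqueness of the decomposition follows because $\mathfrak{k}\cap J\mathfrak{k}=0$: a field in the intersection would have both vanishing and purely imaginary potential.

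The main obstacle is the analytic step above: controlling the decomposition of $V$ into its Killing and gradient parts via the Bochner identity and using the positivity $\lambda>0$ to force the gradient part to lie in $J\mathfrak{k}$. Once this is in hand, the conclusion is immediate: $\mathfrak{h}=\mathfrak{k}\oplus J\mathfrak{k}$ shows $\mathrm{Aut}^0(X)$ is the complexification of the compact group $\mathrm{Isom}(X,\omega)^0$, hence a reductive complex Lie group; passing to the algebraic structure on $\mathrm{Aut}(X)$ (which exists because $-K_X$ is ample) and using that the component group is finite, we conclude that $\mathrm{Aut}(X)$ is reductive, as claimed.
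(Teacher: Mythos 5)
The paper offers no proof of this statement---it simply cites Matsushima's 1957 article---and your sketch is precisely Matsushima's classical argument (Hodge-theoretic holomorphy potentials, the Lichnerowicz-type eigenvalue equation forced by $\mathrm{Ric}(\omega)=\lambda\omega$, and the resulting splitting $\mathfrak{h}=\mathfrak{k}\oplus J\mathfrak{k}$ exhibiting $\mathrm{Aut}^0(X)$ as the complexification of the compact isometry group), so it is correct in outline. Two points deserve tightening: on a Fano manifold $H^{0,1}(X)=0$, so the harmonic part of $\iota_V\omega$ vanishes and the summand $V-\mathrm{grad}\,u$ is identically zero---the real content of the splitting comes from separating the complex potential $u$ into its real and imaginary parts---and the inclusion $\mathfrak{k}\subset\mathfrak{h}$ is not automatic from "Killing fields preserve $\omega$ and $J$" but itself requires the standard Bochner comparison showing that Killing fields on a compact K\"ahler manifold are real holomorphic.
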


\begin{example}
\label{example:F1-dP7}
If $X$ is obtained by blowing up $\mathbb{P}^2$ in one or two distinct points, then $-K_X$ is ample, so $X$ is a two-dimensional Fano manifold, but $\mathrm{Aut}(X)$ is not reductive, so $X$ is not K\"ahler-Einstein.
\end{example}

The Yau-Tian-Donaldson Conjecture, which is now proved, asserts that the existence of a K\"ahler-Einstein metric on a Fano manifold $X$ is equivalent to $X$ being K-polystable, see Definition~\ref{definition:K-stability} and Theorem~\ref{theorem:CDS} below.

\subsection{K-stability via Donaldson--Futaki invariant}
\label{section:DF-invariant}
Let $X$ be a $\bQ$-Fano variety, i.e. a normal projective variety whose anticanonical divisor $-K_X$ is $\mathbb{Q}$-Cartier and ample. Here, we do not need to assume any restriction on the singularities of $X$ except that the divisor $-K_X$ is assumed to be $\mathbb{Q}$-Cartier in order to define its ampleness. However, for simplicity, let us assume that $X$ has at most Kawamata log terminal (klt) singularities.
For standard terminology of higher dimensional geometry, see e.g. \cite{KM98,Ko13,Ko-modbook}.
Our assumption that $X$ has klt singularities is not restrictive for our purposes by Theorem~\ref{theorem:oda} below.

\begin{definition}[Donaldson--Futaki invariant]
\label{definition:DF}
Notation as above and set $L=-K_X$.
\begin{itemize}
\item A \emph{normal test configuration} $(\mathcal{X},\mathcal{L})$ of the polarized pair $(X,L)$ consists of
\begin{enumerate}
\item a normal projective variety $\mathcal{X}$ with a $\mathbb{G}_m$ action,
\item a flat $\mathbb{G}_m$-equivariant morphism $p: \mathcal{X}\to \mathbb{P}^1$, where $\mathbb{G}_m$ acts on $\mathbb{P}^1$ by
$$
(t,[x:y])\mapsto [tx:y]\;,
$$
\item a $\mathbb{G}_m$-invariant $p$-ample $\mathbb{Q}$-line bundle $\mathcal{L}\to \mathcal{X}$ and a $\mathbb{G}_m$-equivariant isomorphism
$$
\bigl(\mathcal{X}\setminus p^{-1}(0),\mathcal{L}\vert_{\mathcal{X}\setminus p^{-1}(0)}\bigr)\ \cong\ \bigl(X\times(\mathbb{P}^1\setminus \{0\}),\operatorname{pr}_1^*(L)\bigr)
$$
where $\operatorname{pr}_1$ is the first projection and $0=[0:1]$.
\end{enumerate}

\item 
The \emph{Donaldson--Futaki invariant} of the test configuration $(\mathcal{X},\mathcal{L})$ is
$$
\operatorname{DF}(\mathcal{X},\mathcal{L})\ :=\
\frac{1}{(L^n)}\bigl((\mathcal{L}^{n})\cdot K_{\mathcal{X}/\mathbb{P}^1} +\frac{n}{n+1} (\mathcal{L}^{n+1})\bigr)\;.
$$
\end{itemize}
\end{definition}

\begin{remark}
Definition~\ref{definition:DF} differs from the original one and uses many works by several authors. Using semistable reduction and birational geometry,
\cite{LX14} reduces the necessary test conditions needed to prove K-(semi, poly)stability to the ones with klt central fibers; see Definition~\ref{definition:K-stability} below. Moreover, in the literature, the definition of the Donaldson--Futaki invariant may differ by a positive constant.
\end{remark}

Let $\mathcal{X}_\infty=p^{-1}(\infty)$ in Definition~\ref{definition:DF}, where $\infty=[1:0]$.
Then the test configuration $(\mathcal{X},\mathcal{L})$ is said to be
\emph{trivial} if there is a $\mathbb{G}_m$-equivariant isomorphism
$$
\bigl(\mathcal{X},\mathcal{L}\bigr) \ \cong\ \bigl(X\times\mathbb{P}^1 ,\operatorname{pr}_1^*(-K_X)\bigr).
$$
Similarly, the test configuration $(\mathcal{X},\mathcal{L})$ is said to be of \emph{product-type} if $\mathcal{X}\setminus \mathcal{X}_\infty\cong X\times (\mathbb{P}^1\setminus \{\infty\})$,

\begin{remark}
\label{remark:product-type}
 Note that the Donaldson--Futaki invariant of a trivial test configuration is trivial.
Moreover, if a product-type test configuration of a pair $(X,-K_X)$ has positive Donaldson--Futaki invariant then by  composing the $\mathbb{G}_m$-action by $t\mapsto \frac1t$ we get a product-type test configuration of  $(X,-K_X)$ with negative Donaldson--Futaki invariant.
\end{remark}

\begin{definition}[K-stability]
\label{definition:K-stability}
The Fano variety $X$ is
\begin{enumerate}
\item \emph{K-semistable} if for all normal (or klt) test configuration $(\mathcal{X},\mathcal{L})$ of $(X,-K_X)$, one has
$$
\operatorname{DF}(\mathcal{X},\mathcal{L})\geqslant 0\;.
$$
\item \emph{K-polystable} if it is K-semistable and $\operatorname{DF}(\mathcal{X},\mathcal{L})=0$ if and only if $(\mathcal{X},\mathcal{L})$ is  of product type.
\item \emph{K-stable} if it is K-semistable and $\operatorname{DF}(\mathcal{X},\mathcal{L})=0$ if and only if $(\mathcal{X},\mathcal{L})$ is trivial.
\end{enumerate}
\end{definition}

By Remark~\ref{remark:product-type}, if $X$ is K-stable, then it is K-polystable. If $X$ is K-polystable, then by definition it is K-semistable.
Moreover, by \cite[Corollary 1.3]{BX19}, if $X$ is K-stable, then $\mathrm{Aut}(X)$ is finite, which gives the following.

\begin{corollary}
\label{corollary:finitepoly}
The Fano variety $X$ is K-stable if and only if it is K-polystable and $\operatorname{Aut}(X)$ is finite.
\end{corollary}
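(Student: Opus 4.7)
The plan is to establish both implications directly from results already invoked in Section~\ref{section:K-stability}.

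For the forward implication, assume $X$ is K-stable. First, every trivial test configuration is of product-type (take the $\mathbb{G}_m$-action on the $X$-factor to be trivial), so the vanishing locus of the Donaldson--Futaki invariant forced by K-polystability contains the locus forced by K-stability; hence $X$ is K-polystable. Second, the finiteness of $\operatorname{Aut}(X)$ is exactly \cite[Corollary~1.3]{BX19}, which was cited immediately before the statement.

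For the reverse implication, assume $X$ is K-polystable and $\operatorname{Aut}(X)$ is finite. Let $(\mathcal{X},\mathcal{L})$ be a normal test configuration of $(X,-K_X)$ with $\operatorname{DF}(\mathcal{X},\mathcal{L})=0$; we must show $(\mathcal{X},\mathcal{L})$ is trivial. By K-polystability, $(\mathcal{X},\mathcal{L})$ is of product-type, so there is a $\mathbb{G}_m$-equivariant isomorphism
$$
\mathcal{X}\setminus \mathcal{X}_\infty \ \cong\ X\times (\mathbb{P}^1\setminus\{\infty\})
$$
compatible with the polarization. The $\mathbb{G}_m$-action on the right-hand side is determined by its action on the two factors: the standard multiplication action on $\mathbb{P}^1\setminus\{\infty\}\cong \mathbb{A}^1$, and an action on $X$ recorded by a homomorphism of algebraic groups $\rho\colon \mathbb{G}_m\to \operatorname{Aut}(X)$.

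Here is where the hypothesis enters. The group $\mathbb{G}_m$ is connected, while $\operatorname{Aut}(X)$ is a finite (hence discrete) group by assumption; therefore $\rho$ is constant, equal to the identity. Consequently the product structure on $\mathcal{X}\setminus \mathcal{X}_\infty$ is $\mathbb{G}_m$-equivariant for the trivial action on the $X$-factor, and by $\mathbb{G}_m$-equivariant properness this extends across the central fiber, yielding the required isomorphism
$$
(\mathcal{X},\mathcal{L})\ \cong\ \bigl(X\times\mathbb{P}^1,\operatorname{pr}_1^*(-K_X)\bigr).
$$
So $(\mathcal{X},\mathcal{L})$ is trivial, and $X$ is K-stable.

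The argument is almost tautological; the only nontrivial content is the appeal to \cite[Corollary~1.3]{BX19} for the forward direction, and the observation that a morphism from a connected algebraic group to a finite group is necessarily trivial, which forces any product-type degeneration to be trivial. There is no substantive obstacle beyond correctly unwinding the definitions of product-type and trivial test configurations in Definition~\ref{definition:DF}.
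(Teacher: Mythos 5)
Your proof is correct and follows the same route as the paper: the forward direction is the cited result of Blum--Xu together with the observation that trivial configurations are of product type, and the reverse direction is the paper's one-line argument that finiteness of $\operatorname{Aut}(X)$ forces every product-type configuration to be trivial, which you justify (correctly) by noting that a morphism from the connected group $\mathbb{G}_m$ to a finite group is constant. You have merely expanded the paper's "Indeed, if $\operatorname{Aut}(X)$ is finite then any product-type configuration is trivial" into its standard justification.
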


\begin{proof}
Indeed, if $\operatorname{Aut}(X)$ is finite then any product-type configuration is trivial.
\end{proof}

More generally by \cite{Ma57} in the smooth case and by \cite{ABHLX20} in the singular case, we have

\begin{theorem}
\label{theorem:reduct}
If the Fano variety $X$ is K-polystable, then $\operatorname{Aut}(X)$ is reductive.
\end{theorem}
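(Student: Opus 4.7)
The plan is to argue by contradiction, assuming $X$ is K-polystable while $\operatorname{Aut}(X)$ is not reductive. By the Levi decomposition, the identity component $\operatorname{Aut}^0(X)$ then has a non-trivial unipotent radical $U$, and one can choose a non-trivial algebraic one-parameter subgroup $\sigma\colon \mathbb{G}_a \hookrightarrow U \subset \operatorname{Aut}(X)$. The task is to convert $\sigma$ into a normal, non-product test configuration $(\mathcal{X},\mathcal{L})$ of $(X,-K_X)$ with vanishing Donaldson--Futaki invariant, which will contradict K-polystability via Definition~\ref{definition:K-stability}.

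To construct the test configuration, I would first fix an embedding $X \hookrightarrow \mathbb{P}^N$ via $|{-mK_X}|$ for $m \gg 0$; the action $\sigma$ extends to a one-parameter unipotent subgroup in $\operatorname{PGL}_{N+1}$. Since $\sigma$ is unipotent, one can find a one-parameter subgroup $\lambda\colon \mathbb{G}_m \to \operatorname{PGL}_{N+1}$ that normalises but does not centralise the image of $\sigma$; equivalently, conjugation by $\lambda(t)$ rescales the unipotent parameter with non-zero weight. The key point is to form the closure in $X \times \mathbb{A}^1$ (or, more precisely, in the Hilbert scheme) of the family $\{\lambda(t) \cdot X\}_{t \neq 0}$ as $t \to 0$. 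After normalisation and equivariant semistable reduction following \cite{LX14}, one obtains a normal $\mathbb{G}_m$-equivariant family $(\mathcal{X}, \mathcal{L}) \to \mathbb{A}^1$ whose generic fibre is $(X,-K_X)$; the central fibre $\mathcal{X}_0$ is a Kawamata log terminal degeneration of $X$ extracted from the $\mathbb{G}_a$-action.

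The next step is to verify that $(\mathcal{X},\mathcal{L})$ is not of product type, yet satisfies $\operatorname{DF}(\mathcal{X},\mathcal{L}) = 0$. For each $s \in \mathbb{G}_a$, the configuration $\sigma(s) \cdot (\mathcal{X},\mathcal{L})$ is of product type since it arises from the $\mathbb{G}_m$-action $\lambda$ after conjugation, and hence has vanishing Donaldson--Futaki invariant by Remark~\ref{remark:product-type}. By continuity and $\mathbb{G}_a$-invariance of the DF invariant under such a family, $\operatorname{DF}(\mathcal{X},\mathcal{L}) = 0$. On the other hand, if $(\mathcal{X},\mathcal{L})$ were product type, then the specialisation would force $\sigma$ to be conjugate into a maximal torus of $\operatorname{Aut}(X)$, contradicting the fact that $\sigma$ is a non-trivial unipotent element. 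This contradicts K-polystability as defined in Definition~\ref{definition:K-stability}(2).

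The main obstacle is to carry out the degeneration construction rigorously: producing a normal test configuration whose central fibre is genuinely distinct from $X$, while simultaneously showing its DF invariant vanishes. In the Fano manifold case, this difficulty is bypassed by Matsushima's classical differential-geometric argument \cite{Ma57} using the Kähler--Einstein metric guaranteed by the Yau--Tian--Donaldson correspondence (see Theorem~\ref{theorem:CDS}), which yields reductivity of $\operatorname{Aut}(X)$ as the complexification of the compact isometry group. In the $\mathbb{Q}$-Fano (klt) case, the purely algebraic execution—including the normality of the central fibre and the equivariant semistable reduction needed to produce the test configuration—is the content of \cite{ABHLX20}, which builds on the foundational work of \cite{LX14}.
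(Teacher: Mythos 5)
The paper itself offers no proof of this statement: it simply quotes it from \cite{Ma57} in the smooth case and from \cite{ABHLX20} in the singular case. Your closing paragraph correctly identifies where the real content lies, but the construction you propose in the two middle paragraphs contains a genuine gap and does not reflect how \cite{ABHLX20} actually argues.

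The gap is in the passage from the unipotent subgroup to a destabilizing test configuration. Having chosen $\lambda\colon\mathbb{G}_m\to\operatorname{PGL}_{N+1}$ normalising but not centralising $\sigma(\mathbb{G}_a)$, there is no reason for $\lambda$ to lie in $\operatorname{Aut}(X)$; the flat limit $\lim_{t\to 0}\lambda(t)\cdot X$ is then merely \emph{some} test configuration, and nothing forces its Donaldson--Futaki invariant to vanish. Your justification --- that $\sigma(s)\cdot(\mathcal{X},\mathcal{L})$ is of product type ``since it arises from the $\mathbb{G}_m$-action $\lambda$ after conjugation,'' hence $\operatorname{DF}=0$ by continuity --- does not hold: acting by an automorphism $\sigma(s)$ of $X$ on a test configuration produces an isomorphic test configuration, so this tells you nothing new, and such a configuration is not of product type merely because $\lambda$ normalises $\sigma$. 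If the argument worked as stated, it would show that \emph{any} polarized variety with non-reductive automorphism group is K-unstable, a statement that is not known in general and whose proof in the Fano case requires the full strength of the moduli theory. The actual argument of \cite{ABHLX20} is of a different nature: it establishes $\Theta$-reductivity and S-completeness of the moduli stack of K-semistable Fano varieties, deduces the existence of a good moduli space, and then applies Alper's theorem that closed points of an Artin stack admitting a good moduli space have linearly reductive stabilizers; K-polystable Fanos are precisely these closed points. In the smooth case, \cite{Ma57} combined with Theorem~\ref{theorem:CDS} gives reductivity via the K\"ahler--Einstein metric, as you note. Your proposal is therefore acceptable only insofar as it ultimately defers to these references; the intermediate test-configuration construction should be removed or replaced.
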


Originally the name K-stability was introduced in \cite{Ti97} for Fano varieties. It was defined in reference to the K-energy functional, previously introduced by Mabuchi, on the space of K\"ahler metrics on a compact complex manifold with fixed K\"ahler class. It was generalized to polarized manifolds in \cite{Do02}. In K-energy functional ``K'' stands for the first letter of the German word ``Kanonisch'' but also for ``Kinetic'' and ``K\"ahler''.

\subsection{Yau-Tian-Donaldson Conjecture}

The Yau--Tian--Donaldson Conjecture predicts that the existence of a K\"ahler-Einstein metric on a complex Fano manifold is equivalent to being K-polystable.
The solution was published in \cite{CDS15} and \cite{Ti15,Ti15corr}. See \cite{Es16} for an overview of the proof.

\begin{theorem}[Chen--Donaldson--Sun, Tian]
\label{theorem:CDS}
A complex Fano manifold is K\"ahler-Einstein if and only if it is K-polystable.
\end{theorem}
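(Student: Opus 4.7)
The plan is to prove the two implications separately, both being deep and quite different in spirit. Since the paper only cites Theorem~\ref{theorem:CDS}, I will sketch the overall strategy rather than grind calculations, and flag where the real difficulty lies.

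For the easier direction, Kähler--Einstein $\Rightarrow$ K-polystable, I would first invoke the slope formula of Tian and Paul--Tian that identifies the asymptotic slope at infinity of the Mabuchi K-energy $\mathcal{M}$ along the one-parameter ray of potentials induced by a normal test configuration $(\mathcal X,\mathcal L)$ with the Donaldson--Futaki invariant $\operatorname{DF}(\mathcal X,\mathcal L)$. The next ingredient is the convexity of $\mathcal M$ along weak geodesics in the space of Kähler potentials, due to Berman--Berndtsson, which upgrades the finite-energy theory of Darvas. If a Kähler--Einstein metric $\omega$ exists, then $\omega$ minimizes $\mathcal{M}$ on its class; combined with convexity, this forces the asymptotic slope to be nonnegative, yielding $\operatorname{DF}(\mathcal X,\mathcal L)\geqslant 0$. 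Finally, Berman's strict convexity result, refined by Berman--Boucksom--Jonsson, shows that the slope vanishes only along product configurations, giving K-polystability.

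For the harder direction, K-polystable $\Rightarrow$ Kähler--Einstein, I would follow the Chen--Donaldson--Sun continuity method along conic angles. Fix a smooth divisor $D\in |-mK_X|$ for $m\gg 0$ and solve the twisted equation
\[
\operatorname{Ric}(\omega_\beta)\;=\;\beta\,\omega_\beta\;+\;(1-\beta)\,\frac{1}{m}[D]
\]
for $\beta$ in an interval $I\subset(0,1]$. The Aubin--Yau theorem handles small $\beta$. Openness of $I$ follows from an implicit function theorem adapted to conic metrics. The crucial step is closedness: given a sequence $\beta_i\to\beta_\infty$ with conic Kähler--Einstein metrics $\omega_{\beta_i}$, study the pointed Gromov--Hausdorff limit of $(X,\omega_{\beta_i})$. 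The Cheeger--Colding--Tian regularity theory, together with the Donaldson--Sun partial $C^0$-estimate (via uniform Bergman kernel bounds), shows that the limit is a normal $\mathbb Q$-Fano variety $X_\infty$ with a singular Kähler--Einstein metric, and that it is algebraic, embedded by the same pluri-anticanonical linear system as $X$. If $X_\infty\cong X$, the continuity method continues past $\beta_\infty$; otherwise $X$ degenerates to $X_\infty$ via a nontrivial, non-product normal test configuration whose central fiber is K-semistable, and a direct computation (or an equivariant analogue thereof) shows $\operatorname{DF}\leqslant 0$, contradicting K-polystability.

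The main obstacle is unquestionably the closedness step above: constructing the limit test configuration from the Gromov--Hausdorff limit is the hardest part, requiring the partial $C^0$-estimate to embed all $\omega_{\beta_i}$ uniformly by sections of $-mK_X$, compactness in the Hilbert/Chow scheme to produce $\mathcal X\to\mathbb A^1$, and a version of the Paul--Tian slope formula valid in the conic/singular setting to extract the sign of $\operatorname{DF}$. Tian's independent proof uses the same architecture with different analytic inputs (notably the CM-line bundle and a weak compactness theorem for Kähler--Einstein varieties). An alternative, now widely adopted, is the variational approach of Berman--Boucksom--Jonsson, which bypasses the continuity method by characterizing Kähler--Einstein existence via properness of the Ding functional and then matching this to uniform K-stability, with equivalence to K-polystability in the smooth case following from Li--Xu's valuative criteria; I would outline this as a streamlined alternative, but in all approaches the core analytic difficulty remains controlling the singular limit.
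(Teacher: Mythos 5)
The paper does not prove this theorem; it is quoted from the literature (Chen--Donaldson--Sun \cite{CDS15} and Tian \cite{Ti15,Ti15corr}), and your outline is a faithful summary of exactly those proofs --- the Mabuchi-energy slope/convexity argument for the forward direction and the conic continuity method with partial $C^0$-estimates and Gromov--Hausdorff limits for the converse, together with the Berman--Boucksom--Jonsson variational alternative. As such it is consistent with the paper's treatment, with the understanding that what you have written is a correct architectural sketch of a very deep result rather than a self-contained proof.
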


In Definition~\ref{definition:K-stability}, we assumed that the Fano variety $X$ has klt singularities.
However, this definition makes sense without this assumption. This was a rather surprising fact that klt singularities, which are natural to consider from the perspective of MMP, are the ones which also naturally appear in K-stability by the following result of Odaka.

\begin{theorem}[{\cite[Theorem 1.3]{Od13klt}}]
\label{theorem:oda}
Let $X$ be a Fano variety. If $X$ is K-semistable, then $X$ has klt singularities.
\end{theorem}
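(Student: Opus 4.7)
I would prove the contrapositive: if $X$ is a Fano variety whose singularities are not klt, then $X$ admits a normal test configuration of $(X,-K_X)$ with strictly negative Donaldson--Futaki invariant, so $X$ is not K-semistable. The starting data is a projective birational morphism $\mu\colon Y\to X$ from a normal variety $Y$ together with a prime divisor $E\subset Y$ satisfying $a(E;X):=\operatorname{ord}_E(K_{Y/X})+1\leqslant 0$. For a sufficiently divisible $m>0$, the ideal sheaf $\mathfrak{a}:=\mu_{*}\mathcal{O}_Y(-mE)$ is coherent on $X$ and its normalised blow-up dominates $\mu$.

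Next, I build the test configuration by \emph{deformation to the normal cone} of $\mathfrak{a}$. Form the ideal $\mathcal{I}:=\mathfrak{a}\cdot\mathcal{O}_{X\times\mathbb{A}^1}+(t)$ on $X\times\mathbb{A}^1$ and let $\mathcal{X}^{\circ}\to X\times\mathbb{A}^1$ be its normalised blow-up. Compactifying over $\mathbb{P}^1$ by gluing with the trivial family $X\times(\mathbb{P}^1\setminus\{0\})$ yields a flat, $\mathbb{G}_m$-equivariant projective morphism $p\colon\mathcal{X}\to\mathbb{P}^1$. If $F$ denotes the exceptional Cartier divisor, then for rational $0<\epsilon\ll 1$ the $\mathbb{Q}$-line bundle $\mathcal{L}_\epsilon:=-K_{\mathcal{X}/\mathbb{P}^1}-\epsilon F$ is $p$-ample by a standard Kodaira-type argument for blow-ups of ideals, so $(\mathcal{X},\mathcal{L}_\epsilon)$ is a normal test configuration of $(X,-K_X)$.

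The heart of the argument is the intersection-theoretic computation of $\operatorname{DF}(\mathcal{X},\mathcal{L}_\epsilon)$. Using the projection formula along the birational map $\mathcal{X}^{\circ}\to Y\times\mathbb{A}^1$, both $(\mathcal{L}_\epsilon^{n+1})$ and $(\mathcal{L}_\epsilon^{n}\cdot K_{\mathcal{X}/\mathbb{P}^1})$ reduce to intersection numbers on $Y$ involving $\mu^{*}(-K_X)$, $E$ and $\operatorname{ord}_E(K_{Y/X})$. Expanding as polynomials in $\epsilon$, the degree-zero coefficients cancel and one obtains
$$
\bigl((-K_X)^n\bigr)\cdot\operatorname{DF}(\mathcal{X},\mathcal{L}_\epsilon)\;=\;C\cdot a(E;X)\cdot \epsilon\;+\;O(\epsilon^2),
$$
where $C>0$ is a positive combination of intersection numbers on $Y$. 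Since $a(E;X)\leqslant 0$, the leading term is $\leqslant 0$; if $a(E;X)<0$ strictly, this gives $\operatorname{DF}(\mathcal{X},\mathcal{L}_\epsilon)<0$ for small $\epsilon$. In the boundary case $a(E;X)=0$, a small perturbation of $\mathfrak{a}$ by an auxiliary ideal co-supported on $E$ strictly lowers the discrepancy and produces the strict inequality needed.

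The principal obstacle is the above asymptotic computation: one must keep careful track of $K_{\mathcal{X}/\mathbb{P}^1}$ under the two birational maps in play (to $X\times\mathbb{P}^1$ and to $Y\times\mathbb{P}^1$), show that the constant-in-$\epsilon$ term vanishes (reflecting the birational invariance built into the definition of DF), and identify the linear coefficient with $a(E;X)$ times a manifestly positive intersection number on $Y$. Verifying $p$-ampleness of $\mathcal{L}_\epsilon$ for small $\epsilon>0$, the flatness of $p$, and the coherence of $\mathfrak{a}$ are by comparison standard consequences of the normalised-blow-up construction, as is the appeal to \cite{LX14} to ensure the reduction to tests with klt central fiber does not shrink the class of obstructing configurations.
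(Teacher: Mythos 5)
Your proposal is correct in outline but takes a genuinely different route from the paper. The paper does not reprove Odaka's theorem via test configurations: it states it as a citation and then observes, immediately after Theorem~\ref{theorem:Fujita-Li}, that it follows from the Fujita--Li valuative criterion --- if $X$ is not klt there is a prime divisor $E$ over $X$ with $A_X(E)\leqslant 0$, and since $S_X(E)>0$ for the anticanonical polarization of a Fano variety, $\beta(E)=A_X(E)-S_X(E)<0$, so $X$ is not K-semistable. Your argument is essentially Odaka's original one: deformation to the normal cone of $\mathfrak{a}=\mu_{*}\mathcal{O}_Y(-mE)$, an explicit Donaldson--Futaki computation, and an expansion in $\epsilon$. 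What this buys is a proof that is self-contained relative to Definition~\ref{definition:DF} and does not invoke the much heavier valuative criterion; the cost is the intersection-theoretic bookkeeping you describe, which is the real content of Odaka's paper.

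One detail in your sketch is wrong as stated: in the boundary case $A_X(E)=0$ (log canonical but not klt) you cannot ``strictly lower the discrepancy'' by perturbing $\mathfrak{a}$ by an auxiliary ideal --- the discrepancies of divisors over $X$ are intrinsic to $X$ and do not depend on which ideal you blow up. The strict negativity in that case has to come from the $O(\epsilon^2)$ term of your expansion, which is strictly negative precisely because $-K_X$ is ample and $E$ has positive pseudo-effective threshold; in the valuative language this is the contribution $-S_X(E)<0$, and it is also how Odaka's original computation closes the lc case. With that correction your argument goes through.
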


By Theorems~\ref{theorem:CDS}, the product of K-polystable smooth Fano varieties is K-polystable.
This result remains valid for singular Fano varieties:

\begin{theorem}[\cite{Zh20}]
\label{theorem:Zhuang20}
Let $X$ and $Y$ be Fano varieties. If both $X$ and $Y$ are K-polystable, then $X\times Y$ is K-polystable.
\end{theorem}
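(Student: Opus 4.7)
The plan is to combine the valuative criterion for K-stability with a uniqueness theorem for K-polystable degenerations. First, for K-semistability of $X \times Y$: by the Fujita--Li criterion, K-semistability of a $\mathbb{Q}$-Fano variety $Z$ is equivalent to $\delta(Z) \geq 1$, and for products one has the inequality $\delta(X \times Y) \geq \min(\delta(X), \delta(Y))$, provable via a basis-type divisor computation or via Abban--Zhuang-style adjunction applied to the two projections. Since $X$ and $Y$ are K-polystable, $\delta(X), \delta(Y) \geq 1$, so K-semistability of the product follows.

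For K-polystability, let $(\mathcal{W}, \mathcal{M})$ be a normal test configuration of $(X \times Y, -K_{X\times Y})$ with $\operatorname{DF}(\mathcal{W}, \mathcal{M}) = 0$; I need to show it is of product type. By the uniqueness theorem for K-polystable degenerations within a K-semistable family (Li--Wang--Xu / Blum--Liu--Xu type results), the central fiber $\mathcal{W}_0$ admits a unique K-polystable specialization $W_\infty$, which is then also the K-polystable degeneration of $X \times Y$ itself. If I can show $W_\infty \cong X \times Y$, then uniqueness forces the original test configuration to be of product type. To identify $W_\infty$, I would extend the two projections $\pi_X$ and $\pi_Y$ to $\mathbb{G}_m$-equivariant morphisms from a birational modification of $\mathcal{W}$ onto test configurations of $X$ and $Y$ respectively. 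Passing to central fibers yields K-polystable degenerations of $X$ and $Y$; by hypothesis these must coincide with $X$ and $Y$ themselves. A local product-structure argument, compatible with the action of $\Aut(X) \times \Aut(Y)$ (which is reductive by Theorem~\ref{theorem:reduct}), then shows $W_\infty \cong X \times Y$.

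The main obstacle is the equivariant extension of the projections: one must resolve indeterminacies while preserving flatness over $\mathbb{P}^1$ and the $\mathbb{G}_m$-action, and then verify that the two resulting degenerations on the factors glue into a genuine product decomposition of $W_\infty$ rather than merely a variety fibered in two different ways over $X$ and $Y$. Controlling this compatibility, which rests on the reductivity of $\Aut(X) \times \Aut(Y)$ and on the separation properties of the K-moduli stack, is the technical heart of Zhuang's argument in \cite{Zh20}.
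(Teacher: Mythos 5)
The paper does not actually prove this statement: it is quoted from \cite{Zh20}, with only the remark that the proof is ``purely algebraic'' (Zhuang's valuative argument) or can alternatively be deduced from the singular version of Theorem~\ref{theorem:CDS} (products of K\"ahler--Einstein metrics are K\"ahler--Einstein). Your semistability step is in the spirit of Zhuang's actual argument: one needs $\delta(X\times Y)\geqslant\min\{\delta(X),\delta(Y),1\}$, and the content of that inequality is a nontrivial analysis of basis-type divisors on the product, which you acknowledge but do not carry out. That part of the plan is at least pointed in the right direction.

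The polystability step, however, has a genuine gap. Your strategy hinges on extending the projections $\pi_X,\pi_Y$ to $\mathbb{G}_m$-equivariant morphisms from (a modification of) $\mathcal{W}$ onto test configurations of $X$ and $Y$. For a general test configuration of $X\times Y$ no such extension exists: a degeneration of the product need not respect the product structure in any way (equivalently, the induced filtration on $\bigoplus_m H^0(X,-mK_X)\otimes H^0(Y,-mK_Y)$ need not be a tensor product of filtrations on the factors). Showing that a \emph{special} test configuration with $\Fut=0$ \emph{does} respect the product structure is precisely the content of the theorem, so your plan assumes what must be proved. Zhuang closes exactly this gap by valuative means: he establishes an inequality bounding $\beta_{X\times Y}$ of a divisorial valuation from below in terms of data on the two factors, and then analyzes the equality case to show that any valuation computing $\beta=0$ on the product is (a quasi-monomial combination of) valuations pulled back from the factors; K-polystability of $X$ and $Y$ then forces the induced degenerations, hence the original one, to be of product type. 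A secondary, repairable issue: even granting $W_\infty\cong X\times Y$, concluding that the original test configuration is of product type requires the precise form of the Li--Wang--Xu/Blum--Xu uniqueness statement (a $\Fut=0$ special test configuration of a K-semistable Fano whose central fiber is isomorphic to the generic fiber is a product); as phrased, your appeal to ``uniqueness plus separatedness'' risks circularity with the definition of K-polystability.
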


The proof of this theorem is purely algebraic. It can also be derived from a generalization of Theorem~\ref{theorem:CDS} to Fano varieties with klt singularities, which is the endpoint of many works by many authors, see~\cite{LXZ22} and references therein.
We can go further on understanding connection between existence of K\"ahler-Einstein metrics and K-stability  via Chow-Mumford (CM)-stability, see \cite[\S~6]{Ti99}.

\subsection{Fujita--Li's valuative criterion}
\label{section:Fujita-Li}

Determining whether a given Fano manifold is K-(semi)stable is a quite challenging problem. The Fujita--Li valuative criterion (see Theorem~\ref{theorem:Fujita-Li}) gives an alternative definition of K-(semi)stability. From now on, we let $X$ be a Fano variety.

\begin{definition}
Let $f \colon Y\to X$ be a birational morphism such that $Y$ is projective and normal, and let $E$ be a prime divisor on $Y$.
We say that $E$ is a \emph{divisor over} $X$.
Thus, a divisor $E$ over $X$ comes together with a morphism $f \colon Y\to X$.
We say that $f(E)$ is the \emph{center} of $E$ in $X$, which is denoted by $c_X(E)$.
\end{definition}

The \emph{discrepancy} $a(E,X)=\ord_E(K_Y-f^*(K_X))$ of a prime divisor $E$ over $X$ is defined in \cite[Definition 2.22]{KM98}.
The \emph{log discrepancy} of a prime divisor $E$ over $X$ is
$$
A_X(E)\ =\ 1+\ord_E(K_Y-f^*(K_X))\ =\ 1+a(E,X).
$$
So $X$ being klt is equivalent to saying that $A_X(E)$ is positive for every prime divisor $E$ over $X$.
Similarly, we define the \textit{expected vanishing order} of the anticanonical divisor $-K_X$ at a prime divisor $E$ over $X$ as
$$
S_X(E)\ :=\ \frac1{\vol_X (-K_X)}\int_0^\infty \vol_Y \bigl(f^*(-K_X)-t E\bigr)dt\;.
$$
This definition uses volumes of $\mathbb{R}$-divisors, and we refer the reader to Section~\ref{section:ZD} for more details.
Finally, we define the $\beta$-invariant of a prime divisor $E$ over $X$ to be
$$
\beta(E)\ =\ A_X(E)-S_X(E)\;.
$$

\begin{theorem}[Fujita, Li, Blum--Xu]
\label{theorem:Fujita-Li} A Fano variety $X$ is K-semistable (resp. K-stable) if and only if $\beta(E)\geqslant 0$ (resp. $\beta(E)>0$) for every prime divisor $E$ over $X$.
\end{theorem}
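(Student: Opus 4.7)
The plan is to prove the two implications separately: the necessity direction goes back to Fujita, while the sufficiency direction relies on the Li--Xu reduction to special test configurations, refined by Blum--Xu for the strict inequality.

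For the necessity direction, starting from a prime divisor $E$ over $X$ with birational model $f\colon Y \to X$, I would produce a test configuration from $E$ via a Rees-algebra construction. Consider the decreasing filtration
$$
\mathcal{F}^t H^0\bigl(X, -mK_X\bigr) \ =\ \bigl\{s \in H^0(X, -mK_X)\ :\ \ord_E(f^\ast s) \geqslant t\bigr\}
$$
of the anticanonical section ring $R = \bigoplus_m H^0(X, -mK_X)$. The associated Rees algebra, suitably graded and homogenized by a parameter $t$, defines a $\mathbb{G}_m$-equivariant flat degeneration $\mathcal{X} \to \mathbb{A}^1$ of $X$ which compactifies to a normal test configuration $(\mathcal{X},\mathcal{L})$ of $(X,-K_X)$. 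The core computation is then to show that, up to a positive factor depending only on $n=\dim X$ and $((-K_X)^n)$, the top self-intersection $(\mathcal{L}^{n+1})$ is proportional to $S_X(E)$ (by integrating the graded dimensions $h^0(X,-mK_X-tE)$ that appear in the Rees decomposition, giving the volume integral in the definition of $S_X$), while $(\mathcal{L}^n\cdot K_{\mathcal{X}/\mathbb{P}^1})$ is proportional to $-A_X(E)$ (via the log-canonical formula for the blow-up of $X\times\mathbb{A}^1$ along the ideal defining the Rees degeneration). Combining these shows $\operatorname{DF}(\mathcal{X},\mathcal{L})$ is a positive multiple of $\beta(E)$, so K-semistability forces $\beta(E)\geqslant 0$.

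For the sufficiency direction, suppose $\beta(E) \geqslant 0$ for every prime divisor $E$ over $X$ and, for contradiction, let $(\mathcal{X},\mathcal{L})$ be a normal test configuration of $(X,-K_X)$ with $\operatorname{DF}(\mathcal{X},\mathcal{L}) < 0$. The strategy is to apply the Li--Xu MMP machinery to replace $(\mathcal{X},\mathcal{L})$ by a \emph{special} test configuration $(\mathcal{X}^{\mathrm{sp}},\mathcal{L}^{\mathrm{sp}})$, meaning one whose central fiber $\mathcal{X}_0^{\mathrm{sp}}$ is an irreducible normal klt Fano variety and $\mathcal{L}^{\mathrm{sp}} = -K_{\mathcal{X}^{\mathrm{sp}}/\mathbb{P}^1}$, without increasing the Donaldson--Futaki invariant. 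The irreducible central fiber $\mathcal{X}_0^{\mathrm{sp}}$ defines a divisorial valuation on the function field of $X$ by restriction, hence a prime divisor $E_0$ over $X$, and the computation from the first direction reverses to give $\operatorname{DF}(\mathcal{X}^{\mathrm{sp}},\mathcal{L}^{\mathrm{sp}}) = c \cdot \beta(E_0)$ with $c > 0$, contradicting $\beta(E_0) \geqslant 0$. The K-stable case is handled by the same argument, combined with Blum--Xu's refinement showing that product-type special test configurations correspond exactly to equality $\beta(E_0)=0$ for divisors arising from $\mathbb{G}_m$-actions.

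The main obstacle is clearly the Li--Xu reduction step in the second direction. It proceeds via a semistable reduction (after a base change $t\mapsto t^d$) to reach a $\mathbb{Q}$-factorial dlt model, followed by a relative MMP with scaling that successively contracts all but one component of the central fiber, and a termination/boundedness argument to land on a special test configuration. Monotonicity of $\operatorname{DF}$ under each step of this birational surgery, together with the correct base-change scaling, is the genuinely hard part that requires the full apparatus of higher-dimensional MMP.
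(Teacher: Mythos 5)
The paper offers no proof of this theorem beyond the citation to \cite{Fu19,Li17,BX19}, and your outline faithfully reconstructs the strategy of those references: the Rees-algebra/flag-ideal construction identifying $\operatorname{DF}$ with a positive multiple of $\beta(E)$ for the necessity direction, and the Li--Xu reduction to special test configurations for sufficiency, with Blum--Xu supplying the strict-inequality refinement. The one point worth flagging is that the filtration by $\mathrm{ord}_E$ need not be finitely generated, so the necessity direction does not literally produce a single test configuration from $E$; instead one approximates by the truncated (hence finitely generated) Rees algebras and passes to the limit in the Donaldson--Futaki invariants, which is exactly how Fujita carries out this step.
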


\begin{proof}
See \cite{Fu19,Li17,BX19}.
\end{proof}

Note that this theorem implies Theorem~\ref{theorem:oda}.
Indeed, if the singularities of $X$ are worse than klt, then there exists a prime divisor $E$ over $X$ with $A_X(E)\leqslant 0$,
which gives $\beta(E)<0$, so that $X$ is not K-semistable by Theorem~\ref{theorem:Fujita-Li}.

Recall that from Section~\ref{section:ZD} that the volume of a divisor is positive if and only if the divisor is big.
Keeping this in mind, let $\tau(E)$ be the \emph{pseudo-effective threshold} of a divisor $E$ over $X$ with respect to $-K_X$ as
$$
\tau(E)\ :=\ \sup\bigl\{t\in \mathbb{Q}_{>0} \ \vert\ f^*(-K_X)-tE \ \textrm{is big} \bigr\},
$$
so we have
$$
S_X(E)\ =\ \frac1{(-K_X)^n}\int_0^{\tau(E)}  \vol \bigl(f^*(-K_X)-tE\bigr)dt.
$$

\subsection{Global log canonical threshold and stability threshold}
\label{section:Tian}

Let $X$ be a Fano variety with klt singularities. We shall consider two natural ``thresholds'' of $X$:
\begin{enumerate}
\item the global log canonical threshold of $X$, also known as $\alpha$-invariant, measuring the worst singularities of the divisors in the linear system $|-mK_X|$ as $m \to \infty$,
\item the stability threshold, also known as $\delta$-invariant, which measures the ``average'' singularities of these divisors.
\end{enumerate}
Recall from \cite[\S~8]{Ko97} that the log canonical threshold (lct) of an effective $\mathbb{Q}$-divisor $D$ on $X$ is the following positive rational number:
$$
\operatorname{lct}(X, D)\ :=\  \sup\bigl\{ \lambda\in\mathbb{Q}^+ \,\vert\, (X,\lambda D) \,\text{is log canonical} \bigr\}\;.
$$

\begin{definition}[$\alpha$-invariant]
The \emph{global log canonical threshold} $\alpha(X)$ of $X$ is given by
$$
\alpha(X)\ =\ \inf\bigl\{\operatorname{lct}(X, D)\,\vert\, D \,\text{is an effective $\mathbb{Q}$-divisor such that}\, D \sim_\mathbb{Q} -K_X\bigr\}
$$
\end{definition}

Observe that the invariant $\alpha(X)$ has a global nature. It measures the singularities of effective $\mathbb{Q}$-divisors on $X$ that are $\mathbb{Q}$-linearly equivalent to the anticanonical divisor $-K_X$.  This definition appeared in \cite{Cheltsov,CheltsovGAFA,CP}, and Demailly proved in the appendix to \cite{CS} that $\alpha(X)$ coincides with the $\alpha$-invariant introduced much earlier by Tian in \cite{Ti87}.

To describe the stability threshold of $X$, by \cite{FO18}, an effective $\mathbb{Q}$-divisor $D \sim_\mathbb{Q} -K_X$ on $X$ is called of \emph{$m$-basis type}, where $m \geqslant 1$, if there exists a basis $s_1, . . . , s_{N_m}$
of $H^0(X, -mK_X)$ such that
$$
D=\frac{\{s_1 = 0\} + \{s_2 = 0\} +\dots+ \{s_{N_m}= 0\}}{mN_m}
$$
where $N_m=h^0(X, -mK_X)$. Define
$$
\delta_m(X):= \inf\bigl\{\operatorname{lct}(X,D) \,\vert\, D \sim_\mathbb{Q} -K_X \ \text{of $m$-basis type}\bigr\}\;.
$$
By \cite[Theorem A]{BJ20}, $\lim_{m\to \infty} \delta_m(X)$ exists.

\begin{definition}[$\delta$-invariant]
The \emph{stability threshold} $\delta(X)$ of $X$ is given by
$$
\delta(X)= \lim_{m\to \infty} \delta_m(X)
$$
\end{definition}

Global log canonical threshold of $X$ and stability threshold of $X$ are related as follows:

\begin{theorem}[\relax{\cite[Theorem A]{BJ20}}]
\label{theorem:ineq}
Let $n$ be the dimension of the Fano variety $X$. Then
$$
0\ <\ \frac{n+1}n\alpha(X)\ \leqslant\  \delta(X)\ \leqslant\ (n+1)\alpha(X)\;.
$$
\end{theorem}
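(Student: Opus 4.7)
The plan is to reduce both invariants to infima of ratios indexed by divisorial valuations, and then to sandwich the two denominators against one another using a convex-body estimate. First I would establish the two valuative reformulations
\[
\alpha(X) \;=\; \inf_E \frac{A_X(E)}{\tau(E)}, \qquad
\delta(X) \;=\; \inf_E \frac{A_X(E)}{S_X(E)},
\]
where $E$ runs over all prime divisors over $X$. The identity for $\alpha(X)$ is the classical valuative characterization of log canonical thresholds: one direction uses that on a log resolution extracting $E$ one has $\lct(X,D) \leq A_X(E)/\ord_E(D)$ for every $D \sim_\mathbb{Q} -K_X$, and the other direction produces nearly extremal $D$ from sections vanishing to high order along $E$. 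The identity for $\delta(X)$ is the more substantial Blum--Jonsson theorem, proved by showing that an $m$-basis divisor vanishes on average to order $S_{X,m}(E)$ along $E$ and that $S_{X,m}(E) \to S_X(E)$ uniformly in $E$ as $m \to \infty$.

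The heart of the argument is then the two-sided estimate
\[
\frac{\tau(E)}{n+1} \;\leq\; S_X(E) \;\leq\; \frac{n\,\tau(E)}{n+1}
\]
for every prime divisor $E$ over $X$. I would prove this using Newton--Okounkov bodies: attach a convex body $\Delta \subset \mathbb{R}^n_{\geq 0}$ to $-K_X$ via an admissible flag whose first component is (a birational model of) $E$, so that
\[
\vol_Y\bigl(f^*(-K_X) - tE\bigr) \;=\; n!\cdot\vol_n\bigl(\Delta \cap \{x_1 \geq t\}\bigr).
\]
A Fubini computation then identifies $S_X(E)$ with the first-coordinate centroid $\bar{x}_1$ of $\Delta$, while $\tau(E) = \max_{x\in\Delta} x_1$. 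The desired bound reduces to the classical convex-geometric fact that the first-coordinate centroid of any convex body lying in the slab $\{0 \leq x_1 \leq \tau\}$ belongs to $[\tau/(n+1),\,n\tau/(n+1)]$, with both extrema realised by cones whose apex sits on one of the bounding hyperplanes. This is in turn a consequence of the Brunn--Minkowski concavity of the slice function $s \mapsto \vol_{n-1}(\Delta \cap \{x_1 = s\})^{1/(n-1)}$.

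Combining the two steps, dividing $A_X(E)$ by the bounds above and taking the infimum over $E$ yields
\[
\tfrac{n+1}{n}\,\alpha(X) \;\leq\; \delta(X) \;\leq\; (n+1)\,\alpha(X),
\]
and positivity $\alpha(X) > 0$ follows because $X$ has klt singularities (so $A_X(E) > 0$ for every $E$) and $-K_X$ is big (so $\tau(E) < \infty$). The main obstacle is the valuative identity for $\delta(X)$ in the first step, which is the genuinely deep input; the centroid bound in the second step is elementary once one is working inside the Newton--Okounkov framework.
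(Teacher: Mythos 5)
The paper does not prove this statement itself --- it is quoted directly from \cite[Theorem~A]{BJ20} --- and your proposal is essentially a faithful reconstruction of the Blum--Jonsson/Fujita argument: valuative formulas for $\alpha$ and $\delta$, the Okounkov-body identification of $S_X(E)$ with the first-coordinate barycenter and of $\tau(E)$ with the maximal first coordinate, and the Minkowski--Radon centroid bound giving $\tau(E)/(n+1)\leqslant S_X(E)\leqslant n\,\tau(E)/(n+1)$. The only slip is the final positivity claim: $A_X(E)>0$ for each $E$ and $\tau(E)<\infty$ do not by themselves give $\inf_E A_X(E)/\tau(E)>0$, since an infimum of positive ratios can vanish; you need the standard uniform bound, e.g.\ $\alpha(X)\geqslant\frac{1}{m}\operatorname{lct}\bigl(X,|{-}mK_X|\bigr)>0$ for $m$ with $-mK_X$ very ample. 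Otherwise the argument is correct and is exactly the route taken in the cited source.
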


Furthermore, there are valuative characterizations of these thresholds. The theorem below follows from \cite{FO18} and \cite[Theorem C]{BJ20}.

\begin{theorem}
We have
$$
\alpha(X)\ =\ \inf_{E/X} \frac{A_X(E)}{\tau(E)}\,,
\qquad
\delta(X)\ =\ \inf_{E/X} \frac{A_X(E)}{S_X(E)} \,,
$$
where both infima are taken through all prime divisors $E$ over $X$.
\end{theorem}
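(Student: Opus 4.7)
The plan is to handle the $\alpha$ and $\delta$ parts separately; in each case the strategy is to convert an infimum over effective anticanonical $\mathbb{Q}$-divisors into an infimum over divisorial valuations by swapping the order of infima and identifying the resulting pointwise extremum as an asymptotic invariant of the graded linear series of $-K_X$.

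For the $\alpha$-invariant I would start from the log-resolution formula
$$
\operatorname{lct}(X,D)\ =\ \inf_{E/X}\frac{A_X(E)}{\ord_E(D)}
$$
valid for any effective $\mathbb{Q}$-divisor $D$ on $X$, the infimum being taken over prime divisors $E$ over $X$ with $\ord_E(D)>0$; this reduces log canonicity to a divisor-by-divisor check on any log resolution of $(X,D)$, combined with the fact that any divisor over $X$ can be extracted by a further blow-up. Inserting this into the definition of $\alpha(X)$ and exchanging the two infima yields $\alpha(X)=\inf_{E/X}A_X(E)/\sup_D \ord_E(D)$, and it remains to identify $\sup_D\ord_E(D)=\tau(E)$. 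For $f\colon Y\to X$ extracting $E$ and $D\sim_{\mathbb{Q}}-K_X$ effective, the class $f^{*}(-K_X)-\ord_E(D)\cdot E$ is $\mathbb{Q}$-linearly equivalent to the effective $\mathbb{Q}$-divisor $f^{*}D-\ord_E(D)\cdot E$ and hence pseudo-effective, giving $\ord_E(D)\le\tau(E)$. Conversely, for any rational $t<\tau(E)$ the class $f^{*}(-K_X)-tE$ is big, so contains an effective $\mathbb{Q}$-divisor $D'$; then $f_{*}(D'+tE)$ is an effective anticanonical $\mathbb{Q}$-divisor on $X$ whose valuation at $E$ is at least $t$.

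For the $\delta$-invariant I would establish the analogue at each finite level $m$:
$$
\delta_m(X)\ =\ \inf_{E/X}\frac{A_X(E)}{S_{X,m}(E)},\qquad S_{X,m}(E)\ =\ \frac{1}{mN_m}\sum_{j\ge 1} h^{0}\bigl(Y,\,f^{*}(-mK_X)-jE\bigr).
$$
This comes from the same log-resolution formula for $\operatorname{lct}$ applied to $m$-basis type divisors, together with the Fujita--Odaka observation that the basis refining the filtration of $H^{0}(X,-mK_X)$ by vanishing order along $E$ maximizes $\ord_E$ of the associated divisor, the maximum being exactly $S_{X,m}(E)$. An asymptotic Riemann--Roch estimate for the volume function $t\mapsto\vol_Y(f^{*}(-K_X)-tE)$ yields $\lim_{m\to\infty} S_{X,m}(E)=S_X(E)$ for each fixed $E$; taking $m\to\infty$ in the finite-level formula would then complete the argument, provided the limit and the infimum over $E$ can be interchanged.

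The main technical obstacle is precisely that commutation of $\lim_m$ with $\inf_E$: pointwise convergence of $S_{X,m}(E)$ does not automatically imply convergence of the infima. The remedy, which is the heart of \cite[Theorem~C]{BJ20}, is to view both sides as functionals on the non-archimedean space of valuations of the function field of $X$, use Izumi-type estimates to confine any near-minimizing sequence of divisors to a region on which $A_X$ stays bounded, and then prove that $S_{X,m}\to S_X$ converges uniformly on such regions. This uniformity legitimizes the interchange and yields the identification $\delta(X)=\inf_{E/X}A_X(E)/S_X(E)$.
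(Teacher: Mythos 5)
Your proposal is correct: the $\alpha$-part is the standard and complete argument (swapping the two infima and identifying $\sup_{D\sim_{\mathbb{Q}}-K_X}\mathrm{ord}_E(D)$ with the pseudo-effective threshold $\tau(E)$, which agrees with the "big" threshold used in the paper since $-K_X$ is ample), and the $\delta$-part correctly isolates the finite-level identity $\delta_m(X)=\inf_E A_X(E)/S_{X,m}(E)$ of Fujita--Odaka together with the interchange of $\lim_m$ and $\inf_E$ as the genuine difficulty. This matches the paper exactly, since the paper offers no proof of its own and simply cites \cite{FO18} and \cite[Theorem~C]{BJ20} for precisely these two ingredients.
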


From Theorem~\ref{theorem:Fujita-Li}, the condition $\delta(X)\geqslant 1$ is equivalent to $X$ being K-semistable. Gathering \cite{FO18},\cite{BJ20} and \cite{LXZ22} we get:

\begin{theorem}
\label{theorem:kstab-delta}
A Fano variety $X$ is K-semistable (resp. K-stable) if and only if $\delta(X)\geqslant 1$ (resp. $\delta(X)>1$).
\end{theorem}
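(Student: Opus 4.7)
The plan is to deduce both equivalences by combining the two previously stated valuative descriptions: the Fujita--Li criterion (Theorem~\ref{theorem:Fujita-Li}), which recasts K-(semi)stability through positivity of $\beta(E)=A_X(E)-S_X(E)$, and the valuative formula $\delta(X)=\inf_{E/X} A_X(E)/S_X(E)$ from the preceding theorem. Since $S_X(E)>0$ whenever $-K_X$ is big (which holds because $X$ is Fano), the inequalities $A_X(E)\ge S_X(E)$ and $A_X(E)/S_X(E)\ge 1$ are equivalent, and similarly with strict inequalities at a fixed $E$.

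The K-semistable case is essentially formal. First I would observe that, by the valuative formula, $\delta(X)\ge 1$ is equivalent to the assertion that $A_X(E)\ge S_X(E)$ for every prime divisor $E$ over $X$, i.e.\ $\beta(E)\ge 0$ for every such $E$. By Theorem~\ref{theorem:Fujita-Li} this is in turn equivalent to $X$ being K-semistable.

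For the K-stable case, one direction is again immediate: if $\delta(X)>1$, then for every prime divisor $E$ over $X$ we have $A_X(E)/S_X(E)\ge \delta(X)>1$, hence $\beta(E)>0$ for all such $E$, and Theorem~\ref{theorem:Fujita-Li} gives K-stability. The reverse implication is the main obstacle: K-stability gives only the pointwise inequalities $\beta(E)>0$ for every prime divisor $E$ over $X$, and a priori one could have $\inf_{E/X} A_X(E)/S_X(E)=1$ without this infimum being attained. Ruling out this scenario is exactly the content of \cite{LXZ22}, which establishes the equivalence between K-stability and uniform K-stability for Fano varieties (i.e.\ an optimal destabilizing valuation exists whenever $\delta(X)\le 1$, forcing $\delta(X)>1$ as soon as $\beta(E)>0$ for every $E$). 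So the final step I would carry out is to invoke this result of Liu--Xu--Zhuang to upgrade the pointwise positivity $\beta(E)>0$ to the uniform bound $\delta(X)>1$, completing the proof.
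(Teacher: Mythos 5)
Your proposal is correct and matches the paper's intended argument: the paper itself gives no detailed proof but simply states that the result follows by combining the Fujita--Li valuative criterion with the valuative formula for $\delta(X)$ (for the semistable case and the easy direction of the stable case) and the finite-generation result of \cite{LXZ22} (for the hard direction, K-stable $\Rightarrow$ $\delta(X)>1$). You have correctly identified where the formal reductions end and where the deep input is needed.
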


\begin{corollary}
A Fano variety is K-stable if
$$
\alpha(X)\ >\ \frac{\dim (X)}{\dim (X)+1}.
$$
\end{corollary}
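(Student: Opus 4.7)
The statement is a direct consequence of the two results stated immediately before it, namely Theorem~\ref{theorem:ineq} relating $\alpha(X)$ and $\delta(X)$, and Theorem~\ref{theorem:kstab-delta} characterizing K-stability in terms of $\delta(X)$. So the plan is simply to chain these two inequalities together.

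Concretely, I would first set $n=\dim(X)$ and invoke the left-hand inequality of Theorem~\ref{theorem:ineq}, which yields
$$
\delta(X)\ \geqslant\ \frac{n+1}{n}\,\alpha(X).
$$
The hypothesis $\alpha(X)>\frac{n}{n+1}$ then multiplies through to give $\delta(X)>\frac{n+1}{n}\cdot\frac{n}{n+1}=1$. Finally, Theorem~\ref{theorem:kstab-delta} says that the strict inequality $\delta(X)>1$ is equivalent to $X$ being K-stable, so the conclusion follows immediately.

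There is really no obstacle here: this is a one-line corollary stitching together two named results of the preceding subsections. The only point worth flagging is that the hypothesis uses a \emph{strict} inequality, which is precisely what is needed so that the resulting bound on $\delta(X)$ is also strict, matching the K-stable (rather than merely K-semistable) half of Theorem~\ref{theorem:kstab-delta}. No extra hypothesis on the singularities of $X$ is required beyond the Fano assumption, because $\alpha(X)>0$ together with Theorem~\ref{theorem:ineq} already forces $\delta(X)>1$, whence $X$ is K-semistable and hence klt by Theorem~\ref{theorem:oda}, so all invariants involved are well-defined.
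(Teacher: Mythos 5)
Your proof is correct and is precisely the argument the paper intends: the paper's proof is the one-liner ``Immediate from Theorem~\ref{theorem:ineq} and Theorem~\ref{theorem:kstab-delta},'' and you have simply spelled out the chaining of the lower bound $\delta(X)\geqslant\frac{n+1}{n}\alpha(X)$ with the valuative characterization of K-stability. Nothing to add.
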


\begin{proof}
Immediate from Theorem~\ref{theorem:ineq} and Theorem~\ref{theorem:kstab-delta} above.
\end{proof}

Note that there are K-stable Fano varieties $X$ such that $\alpha(X)<\frac{\dim (X)}{\dim (X)+1}$.

\subsection{Equivariant K-stability}
\label{section:equivariant}

In Definition~\ref{definition:K-stability}, we assumed that $X$ is a possibly singular Fano variety defined over $\mathbb{C}$.
Suppose now that $X$ is a Fano variety with klt singularities that is defined over a subfield $\Bbbk\subset\mathbb{C}$.
Then we can repeat Definition~\ref{definition:K-stability} assuming that everything there is defined over $\Bbbk$.
This gives us definition of $X$ being K-semistable or K-polystable as a variety defined over $\Bbbk$.
Similarly, we may define $G$-equivariant K-semistability and equivariant K-polystability of $X$ for an algebraic subgroup $G\subset\mathrm{Aut}(X)$.

Let $X_{\mathbb{C}}$ be the geometric model of $X$.
There was a folklore conjecture that $X_{\mathbb{C}}$ is K-semistable (K-polystable, respectively) if and only if $X$ is $G$-equivariantly K-semistable ($G$-equivariantly K-polystable, respectively). This conjecture had been first confirmed for smooth complex Fano manifolds with a reductive group action,
and then for singular complex Fano varieties with a finite group action, and then proved in full generality in \cite{Zhuang21}.
From \cite[Theorem~1.1]{Zhuang21}, we extract the following.

\begin{theorem}
\label{theorem:equiv}
If $X$ is $G$-equivariantly K-semistable, then $X_{\mathbb{C}}$ is K-semistable.
If $G$ is reductive and $X$ is $G$-equivariantly K-polystable, then $X_{\mathbb{C}}$ is K-polystable.
\end{theorem}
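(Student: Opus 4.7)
The plan is to reduce both statements to the valuative criterion of Fujita--Li (Theorem~\ref{theorem:Fujita-Li}), or equivalently the $\delta$-invariant criterion (Theorem~\ref{theorem:kstab-delta}), and then execute a $G$-equivariant degeneration of valuations. First I would establish an equivariant version of the valuative criterion: $X$ is $G$-equivariantly K-semistable if and only if $\beta(E)\geqslant 0$ for every $G$-invariant prime divisor $E$ over $X$, and analogously $\delta_G(X)\geqslant 1$, where $\delta_G$ is the infimum of $A_X/S_X$ taken over $G$-invariant divisorial valuations. This reduction uses the Li--Xu reduction from general test configurations to those with klt (equivariant) central fibers, together with the fact that $G$-equivariant test configurations correspond to $G$-invariant divisorial valuations on the function field.

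The heart of the argument is then an equivariant degeneration: assuming $X_{\mathbb{C}}$ is not K-semistable, pick a prime divisor $E$ over $X_{\mathbb{C}}$ with $\beta(E)<0$, and produce from it a $G$-invariant valuation $v$ with
\[
\beta(v)\ \leqslant\ \beta(E)\ <\ 0,
\]
contradicting the equivariant hypothesis. Concretely, one takes the Rees-type initial degeneration of the graded sequence of ideals $\mathfrak{a}_\bullet(E)$ with respect to the $G$-action, obtaining a $G$-invariant quasi-monomial valuation $v$. The key technical input (due to Blum--Liu--Xu and Li--Xu, and used crucially in \cite{Zhuang21}) is the lower semicontinuity of $A_X/S_X$ under such degenerations, which implies $A_X(v)/S_X(v)\leqslant A_X(E)/S_X(E)<1$. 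Applying the equivariant valuative criterion to $v$ then contradicts $G$-equivariant K-semistability, proving the first statement.

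For the K-polystable statement one works at the level of test configurations rather than purely valuatively. Suppose $X_{\mathbb{C}}$ admits a non-product-type normal test configuration $(\mathcal{X},\mathcal{L})$ with $\operatorname{DF}(\mathcal{X},\mathcal{L})=0$. Using reductivity of $G$, one decomposes the $G$-representation on the section rings $R_m=H^0(\mathcal{X},m\mathcal{L})$ into weight spaces and constructs a one-parameter family of test configurations joining $(\mathcal{X},\mathcal{L})$ to a $G$-equivariant test configuration $(\mathcal{X}',\mathcal{L}')$, with $\operatorname{DF}(\mathcal{X}',\mathcal{L}')\leqslant \operatorname{DF}(\mathcal{X},\mathcal{L})=0$. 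By $G$-equivariant K-polystability, $(\mathcal{X}',\mathcal{L}')$ must be of product type, with the $\mathbb{G}_m$-action coming from a one-parameter subgroup $\lambda$ of $\operatorname{Aut}(X)$ that centralizes $G$ (this is where reductivity is again used: the centralizer of $G$ in $\operatorname{Aut}(X)$ is itself reductive, so $\lambda$ sits inside a maximal torus commuting with $G$). Transporting this one-parameter subgroup back along the degeneration shows $(\mathcal{X},\mathcal{L})$ was already of product type, giving the desired contradiction.

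The main obstacle is the equivariant degeneration step: one must exhibit a $G$-invariant valuation whose $\beta$ is no larger than the original, and similarly a $G$-equivariant test configuration whose DF is no larger. The valuative half relies on finite generation of Kollár components and lower semicontinuity of $S_X$ under Rees degenerations; the test-configuration half requires the center-of-$G$ argument above, which is precisely why reductivity enters the K-polystable statement but not the K-semistable one (the latter needs only invariant valuations, for which no centralizer argument is required).
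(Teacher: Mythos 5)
The paper offers no proof of this theorem: it is stated verbatim as an extraction of \cite[Theorem~1.1]{Zhuang21}, so there is nothing in the paper for your sketch to be compared against in the usual sense. What you have attempted is a reconstruction of Zhuang's argument. The semistability half is sound in outline and matches the strategy in \cite{Zhuang21}: assuming $X_{\mathbb{C}}$ is not K-semistable, the minimizer of $A_X/S_X$ exists (this is a theorem of Blum--Liu--Xu / Liu--Xu--Zhuang, not automatic), and the Rees degeneration together with lower semicontinuity of $A_X/S_X$ produces a $G$-invariant valuation with the same or smaller ratio; this is precisely the content of Theorem~\ref{theorem:deltamin}. The one imprecision there is that you should isolate the existence of the minimizer and the semicontinuity as separate cited inputs rather than treating the degeneration as automatically preserving $\beta$.

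The polystability half, however, is not how Zhuang actually proceeds and contains a real gap. You propose to interpolate a given non-product test configuration $(\mathcal{X},\mathcal{L})$ with $\operatorname{DF}=0$ to a $G$-equivariant one $(\mathcal{X}',\mathcal{L}')$ with $\operatorname{DF}(\mathcal{X}',\mathcal{L}')\leqslant 0$ by decomposing section rings into $G$-weight spaces. Two things are unjustified here: first, the resulting $G$-equivariant filtration need not be the filtration of a normal test configuration (finite generation is not automatic); second, and more seriously, the step where you ``transport the one-parameter subgroup back along the degeneration'' to conclude that $(\mathcal{X},\mathcal{L})$ was of product type has no argument. This is exactly the delicate point. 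Zhuang's actual argument goes through the notion of an optimal destabilizing valuation and its uniqueness modulo twist by holomorphic vector fields; once uniqueness is established, reductivity of $G$ forces the optimal center to be $G$-invariant, and the product test configuration is recovered from the twist. Without that uniqueness theorem (which rests on finite generation results in the spirit of \cite{LXZ22}), your interpolation argument does not close.
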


\begin{corollary}
If $X$ is K-semistable (resp. K-polystable), then $X_{\mathbb{C}}$ is K-semistable (resp. K-polystable).
\end{corollary}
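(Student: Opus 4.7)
The plan is to deduce this corollary as the trivial-group specialization of Theorem~\ref{theorem:equiv}. The relevant observation is that K-semistability (respectively, K-polystability) of $X$ over the ground field $\Bbbk$ is literally the same notion as $\{e\}$-equivariant K-semistability (respectively, K-polystability), where $\{e\}$ denotes the trivial subgroup of $\mathrm{Aut}(X)$. Indeed, a $\{e\}$-equivariant normal test configuration of $(X, -K_X)$ is just a normal test configuration defined over $\Bbbk$ (with no further invariance constraints), so the defining inequalities on the Donaldson--Futaki invariant in Definition~\ref{definition:K-stability} and in its $\{e\}$-equivariant counterpart from Section~\ref{section:equivariant} are identical verbatim.

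With this identification in hand, the corollary is immediate: the trivial group is reductive, so both halves of Theorem~\ref{theorem:equiv} apply with $G = \{e\}$. The semistable half yields that K-semistability of $X$ implies K-semistability of $X_{\mathbb{C}}$, while the polystable half (which requires reductivity of $G$, a hypothesis that is trivially satisfied here) yields the analogous statement for K-polystability. No genuine obstacle arises; the corollary is purely a bookkeeping consequence of Zhuang's theorem \cite{Zhuang21} once the equivariant terminology is unpacked, and the only item to check is the harmless matching of definitions described above.
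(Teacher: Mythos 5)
Your argument is correct and is exactly the intended one: the paper states this corollary without proof as an immediate specialization of Theorem~\ref{theorem:equiv} to the trivial subgroup, which is reductive, so both halves of the theorem apply. Your identification of K-(semi/poly)stability over $\Bbbk$ with $\{e\}$-equivariant K-(semi/poly)stability is the only point to check, and you handle it correctly.
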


A very handy sufficient condition for K-polystability of $X_{\mathbb{C}}$, which we will use to prove the Main Theorem, is the following result.

\begin{theorem}[{\cite{Zhuang21}}]
\label{theorem:Zhuang}
Suppose that $\beta(E)>0$ for every $G$-invariant geometrically irreducible divisors $E$ over $X$ defined over $\Bbbk$.
Then $X_{\mathbb{C}}$ is K-polystable.
\end{theorem}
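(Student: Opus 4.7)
The plan is to use the hypothesis to verify $G$-equivariant K-stability of $X$ as a variety over $\Bbbk$, and then apply Theorem~\ref{theorem:equiv} to descend to K-polystability of $X_{\mathbb{C}}$. Since Theorem~\ref{theorem:equiv} requires reductivity of $G$ for the K-polystability descent, and since in the intended applications $G\subset\mathrm{Aut}(X)$ is finite (hence reductive), this descent step is immediately applicable; note also that K-stability implies K-polystability, so establishing the stronger condition over $\Bbbk$ is enough.

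The central step is an equivariant, arithmetic variant of Theorem~\ref{theorem:Fujita-Li}: for a Fano variety $X/\Bbbk$ equipped with a reductive $G$-action, $X$ is $G$-equivariantly K-stable over $\Bbbk$ if and only if $\beta(E)>0$ for every $G$-invariant geometrically irreducible divisor $E$ over $X$ defined over $\Bbbk$. The ``only if'' direction is immediate; for ``if'', I would apply two reductions to the usual criterion over $\mathbb{C}$. First, a $G$-averaging argument at the level of quasi-monomial valuations (equivalently, symmetrization of test configurations) replaces any competitor divisor over $X_{\mathbb{C}}$ by a $G$-invariant one whose $A_X/S_X$-ratio is no larger. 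Second, Galois descent: a prime divisor over $X/\Bbbk$ base-changes to a $\mathrm{Gal}(\overline{\Bbbk}/\Bbbk)$-orbit of prime divisors over $X/\overline{\Bbbk}$ all sharing the same log discrepancy and the same expected vanishing order, so $\beta$ is preserved under the correspondence, and $G$-invariant prime divisors over $X/\Bbbk$ are exactly the geometrically irreducible ones whose base change to $X/\overline{\Bbbk}$ is $G$-invariant.

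Combining these two symmetrizations---first by $G$, then by $\mathrm{Gal}(\overline{\Bbbk}/\Bbbk)$---reduces the check $\delta(X_{\mathbb{C}})>1$ to the displayed hypothesis. Thus $X$ is $G$-equivariantly K-stable over $\Bbbk$, hence $G$-equivariantly K-polystable over $\Bbbk$, and Theorem~\ref{theorem:equiv} concludes that $X_{\mathbb{C}}$ is K-polystable.

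The main obstacle is the equivariant valuative criterion itself: the passage from an arbitrary destabilizing valuation to one that is simultaneously $G$-invariant and defined over $\Bbbk$, without increasing the $A/S$-ratio. The subtlety is that $A_X$ and $S_X$ are linear along convex combinations of valuations only in the quasi-monomial regime, so one needs the existence of quasi-monomial (indeed, divisorial) minimizers of $\delta$ to perform the averaging---exactly the content of \cite{LXZ22}. Once this input is granted, the $G$-averaging and Galois-descent steps are essentially formal tracking of log discrepancies and volume functions through base change.
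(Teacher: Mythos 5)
The paper gives no proof of this statement: it is imported wholesale from \cite{Zhuang21}, so the relevant comparison is with Zhuang's argument. Your overall scheme (an equivariant--arithmetic version of the valuative criterion over $\Bbbk$, followed by descent via Theorem~\ref{theorem:equiv}) matches the source in spirit, but the engine you propose --- $G$-averaging of quasi-monomial valuations followed by Galois averaging --- is exactly the step that does not work and that \cite{Zhuang21} replaces by a different idea. Concretely, the $G$-translates $g\cdot v$ of a quasi-monomial valuation $v$ are quasi-monomial only with respect to the translated log smooth models, so they do not lie in a common simplex and there is no well-defined ``average''; and even inside a single simplex, concluding that an average has no larger $A_X/S_X$-ratio would require concavity of $v\mapsto S_X(v)$, whereas the standard toolkit only gives linearity of $A_X$ and continuity of $S_X$ there. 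Zhuang's proof avoids averaging altogether: it shows that the destabilizing datum is \emph{canonical} (the ``optimal destabilizing centre''), hence automatically invariant under $\mathrm{Aut}(X)$ and under Galois. The finite generation of \cite{LXZ22} (which postdates \cite{Zhuang21}) could be used as an alternative input, but again only through \emph{uniqueness} of the minimizer, not through any averaging; so the part of your argument you describe as ``essentially formal'' is in fact the entire content of the theorem.

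The second, more decisive gap is that your argument terminates with ``$\delta(X_{\mathbb{C}})>1$, hence $X$ is $G$-equivariantly K-stable'', and the hypothesis simply does not imply this. Take $X=\mathbb{P}^1$ with $G=\mathfrak{A}_5\subset\mathrm{PGL}_2(\mathbb{C})$ (or $\mathbb{P}^n$ with $G=\mathrm{Aut}(\mathbb{P}^n)$): there are no $G$-invariant prime divisors over $X$ at all, so the hypothesis holds vacuously and the conclusion (K-polystability) is true, yet $\delta(X)=1$ and $X$ is not K-stable. The point is that the divisors with $\beta(E)=0$ coming from product-type test configurations need not be $G$-invariant or defined over $\Bbbk$, so the hypothesis is blind to them; they form positive-dimensional families (orbits under a torus in $\mathrm{Aut}(X_{\mathbb{C}})$) and cannot be symmetrized into a single invariant one. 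Any correct proof must therefore treat the ``K-semistable but possibly not K-polystable'' case by a separate analysis of the $\beta=0$ valuations and product degenerations --- which is what \cite{Zhuang21} does --- rather than by upgrading the conclusion to K-stability, which would prove too much.
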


The main ingredient in the proof of Theorem~\ref{theorem:equiv} are the following results.

\begin{theorem}[{\cite[Theorem~1.2]{Zhuang21}}]
\label{theorem:deltamin}
Suppose that $X_{\mathbb{C}}$ is not K-semistable. Then
$$
\delta(X_{\mathbb{C}})\ =\ \inf_{E/X}\frac{A_{X}(E)}{S_X(E)}
$$
where the infimum runs over all $G$-invariant geometrically irreducible divisors $E$ over $X$ defined over $\Bbbk$.
\end{theorem}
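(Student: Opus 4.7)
The plan is to establish the two inequalities separately. The easy direction $\inf_{E/X} A_X(E)/S_X(E) \ge \delta(X_{\mathbb{C}})$ is essentially tautological: any $G$-invariant, geometrically irreducible divisor $E$ over $X$ defined over $\Bbbk$ base-changes to an irreducible divisor over $X_{\mathbb{C}}$ with the same log discrepancy and expected vanishing order, so its $A/S$ ratio bounds $\delta(X_{\mathbb{C}})$ from above by the valuative characterization of $\delta$ recalled in Section~\ref{section:Tian}.

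For the reverse inequality, the strategy is to exploit the \emph{canonicity of the $\delta$-minimizer}. Since $X_{\mathbb{C}}$ is not K-semistable, one has $\delta(X_{\mathbb{C}}) < 1$; by the finite generation theorem of Liu--Xu--Zhuang \cite{LXZ22} (building on Blum--Liu--Xu), the infimum over all valuations on $X_{\mathbb{C}}$ is realized by a quasi-monomial valuation $v^{*}$ whose associated graded algebra is finitely generated. I would then appeal to the uniqueness statement in the same body of work: the weighted K-semistable degeneration of $X_{\mathbb{C}}$ produced by $v^{*}$ is unique up to isomorphism, and consequently $v^{*}$ itself is canonical up to the action of the automorphism group of that degeneration.

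Next I would observe that $G\subset \Aut(X)$ acts on $X_{\mathbb{C}}$ by base change, and that the absolute Galois group $\Gamma = \mathrm{Gal}(\overline{\Bbbk}/\Bbbk)$ acts on $X_{\overline{\Bbbk}}$ fixing $X$; both actions preserve $A_{X_{\mathbb{C}}}$ and $S_{X_{\mathbb{C}}}$ and therefore permute the set of minimizing valuations on $X_{\mathbb{C}}$. By the canonicity above, the combined $G\times\Gamma$-action must fix $v^{*}$. Galois descent then produces a $G$-invariant, geometrically irreducible prime divisor $E$ over $X$ defined over $\Bbbk$ with $A_X(E)/S_X(E) = \delta(X_{\mathbb{C}})$, completing the reverse inequality.

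The main obstacle is the uniqueness of the optimal destabilizing valuation: without it, different $G\times\Gamma$-translates of a chosen minimizer would remain distinct minimizers, obstructing any descent. This uniqueness is the deep technical input of \cite{LXZ22} and the surrounding theory of weighted K-stability; once it is granted, the equivariance step is formal, since $A$ and $S$ are intrinsic invariants of the pair and are therefore preserved by $\Bbbk$-automorphisms and by Galois.
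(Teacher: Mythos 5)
The paper does not prove this statement at all: it is quoted verbatim from \cite[Theorem~1.2]{Zhuang21}, so there is no internal argument to compare against. Judged on its own terms, your sketch has the right easy direction (the inequality $\inf_{E}A_X(E)/S_X(E)\geqslant\delta(X_{\mathbb{C}})$ is indeed immediate from the valuative characterization of $\delta$), but the reverse inequality rests on a claim that is not available and is, in the form you need it, false.

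The gap is the assertion that the $\delta$-minimizing valuation $v^{*}$ is \emph{unique}, so that the $G\times\Gamma$-action must fix it. What \cite{LXZ22} provides is the existence of a quasi-monomial minimizer with finitely generated graded ring when $\delta(X_{\mathbb{C}})<1$; it does not provide uniqueness of the minimizer as a valuation. The weaker statement you fall back on --- uniqueness of the induced degeneration up to isomorphism, hence canonicity of $v^{*}$ ``up to the automorphism group of the degeneration'' --- only tells you that $g\cdot v^{*}$ is again a minimizer inducing an isomorphic degeneration; it does not force $g\cdot v^{*}=v^{*}$, which is exactly what your descent step requires. This is precisely the difficulty that Zhuang's paper is built to overcome: instead of uniqueness of the minimizer, he proves uniqueness of the \emph{optimal destabilizing center} (a subvariety, obtained as a minimal lc center of suitably perturbed basis-type divisors) via a convexity/tie-breaking argument, deduces that this center is $G$- and Galois-invariant, and then produces invariant divisors over $X$ whose $A/S$ ratios approximate $\delta(X_{\mathbb{C}})$. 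Note also a chronological point --- \cite{Zhuang21} predates \cite{LXZ22}, so the cited proof cannot and does not use finite generation --- and a secondary issue in your last step: even granting a canonical invariant $v^{*}$, it is in general quasi-monomial rather than divisorial, so ``Galois descent produces a prime divisor $E$ with $A_X(E)/S_X(E)=\delta(X_{\mathbb{C}})$'' requires a further approximation (or the separate fact, recorded in the paper's remark on the Postscript note of \cite{Zhuang21}, that the infimum is attained by a divisor). As written, the argument does not close.
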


By \cite[Postscript note]{Zhuang21}, the infimum in Theorem~\ref{theorem:deltamin} is in fact a minimum.

\subsection{Openness and K-moduli}
\label{section:openness}

We briefly explain how K-stability behaves in families. This was first of all understood for Fano manifolds with finite automorphism groups.
Namely, in  \cite[Theorem~1.3]{Od13open}, Odaka proved the Zariski openness of the K\"ahler-Einstein condition for complex Fano manifolds whose automorphism group is finite.  The proof uses Chen--Donaldson--Sun Theorem~\ref{theorem:CDS}.
Then Donaldson, \cite[Theorem~1]{Do15}, gave a more direct proof without using Theorem~\ref{theorem:CDS}.

\begin{theorem}[Odaka, Donaldson]
Let $\mathcal{X}\to T$ be a family of complex Fano manifolds, where the base $T$ is a quasi-projective variety.
Define $T^*\subset T$ to be the set of parameters $t$ such that $\mathcal{X}_t$ admits a K\"ahler-Einstein metric, and assume for each $t \in T$ the automorphism group of $\mathcal{X}_t$ is finite, then $T^*$ is a Zariski-open subset of~$T$.
\end{theorem}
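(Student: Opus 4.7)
The plan is to combine Theorem \ref{theorem:CDS} with the valuative criterion for K-stability in families. First I would use the Chen--Donaldson--Sun/Tian theorem to translate the analytic condition ``$\mathcal{X}_t$ admits a K\"ahler-Einstein metric'' into the algebraic condition ``$\mathcal{X}_t$ is K-polystable''. Since we assume $\mathrm{Aut}(\mathcal{X}_t)$ is finite for every $t\in T$, Corollary \ref{corollary:finitepoly} lets me replace K-polystability by K-stability, so
$$
T^*\ =\ \bigl\{ t\in T \,\vert\, \mathcal{X}_t \text{ is K-stable} \bigr\}.
$$
By Theorem \ref{theorem:kstab-delta} this equals $\{t\in T : \delta(\mathcal{X}_t)>1\}$, so it suffices to prove that $\delta$ is constructible and lower semicontinuous along the family $\mathcal{X}\to T$.

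Next I would establish lower semicontinuity of $\delta$. Via the valuative criterion (Theorem \ref{theorem:Fujita-Li}), for each $t$ one has $\delta(\mathcal{X}_t)=\inf_E A_{\mathcal{X}_t}(E)/S_{\mathcal{X}_t}(E)$ over prime divisors $E$ over $\mathcal{X}_t$. After a base change and shrinking $T$, each such divisor spreads out to a divisorial valuation on the total space, and both $A$ and $S$ vary upper/lower semicontinuously in $t$ (the log discrepancy is preserved under specialization of the valuation, while $S$, as an integral of volumes, is upper semicontinuous in flat families). Taking infimum, $t\mapsto\delta(\mathcal{X}_t)$ is lower semicontinuous; together with constructibility (which follows from boundedness of K-semistable Fano manifolds of fixed numerical type, or, historically, from Donaldson's implicit function theorem giving Euclidean openness near a K-E fiber), this yields Zariski openness of $T^*$.

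Alternatively, one could follow Donaldson's original analytic route: starting from a K-E metric $\omega_0$ on $\mathcal{X}_{t_0}$, use the finiteness of $\mathrm{Aut}(\mathcal{X}_{t_0})$ to remove obstructions and apply an implicit function theorem on the space of K\"ahler potentials, producing K-E metrics on all fibers $\mathcal{X}_t$ with $t$ in an analytic neighborhood of $t_0$. This gives Euclidean openness of $T^*$; to upgrade to Zariski openness one again invokes the constructibility of the K-stable locus in the base of a family of Fano manifolds.

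The main obstacle in either approach is precisely this upgrade from analytic/pointwise openness to Zariski openness: it requires knowing that the K-(semi)stable locus is constructible, which in turn rests on boundedness results for K-semistable Fano varieties of fixed volume and dimension. Granted that ingredient and Theorem \ref{theorem:CDS}, the statement follows from the semicontinuity of $\delta$ together with the finite-automorphism hypothesis that identifies K-polystability with K-stability on every fiber.
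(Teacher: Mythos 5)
The paper does not prove this theorem; it is quoted from Odaka \cite{Od13open} and Donaldson \cite{Do15}, with the remark that Odaka's proof uses Theorem~\ref{theorem:CDS} while Donaldson's is a direct analytic argument. Your reduction is the right one and matches Odaka's strategy: Theorem~\ref{theorem:CDS} converts the K\"ahler--Einstein condition into K-polystability, and the finite-automorphism hypothesis upgrades this to K-stability via Corollary~\ref{corollary:finitepoly}, so $T^*$ is the K-stable locus. Your second route (analytic perturbation of the KE metric using discreteness of the automorphism group to get Euclidean openness, then constructibility to promote this to Zariski openness) is also essentially the historical argument, and correctly isolates constructibility as the crux.

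The genuine gap is in your first route, in the sentence deriving lower semicontinuity of $\delta$. For a \emph{fixed} divisor spread out over the total space, $A$ is locally constant and $S$ is upper semicontinuous, so each ratio $A/S$ is lower semicontinuous in $t$; but $\delta(\mathcal{X}_t)$ is an \emph{infimum} of such ratios, and an infimum of lower semicontinuous functions is not lower semicontinuous in general (only suprema are preserved). Moreover, a divisorial valuation over the special fiber need not arise as the specialization of a family of divisorial valuations over nearby fibers, so "spreading out after shrinking $T$" does not account for all competitors in the infimum on the special fiber. Lower semicontinuity of $\delta$ in $\mathbb{Q}$-Gorenstein families is precisely the content of the Blum--Liu theorem quoted just after this statement in the paper, and it requires substantially more work (a uniform approximation of $\delta$ by $\delta_m$, plus invariance of log plurigenera); it cannot be obtained by the one-line infimum argument. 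If you instead lean on your alternative route, the statement follows, but you should be explicit that the constructibility input is itself a nontrivial boundedness/GIT statement and not a formal consequence of the valuative criterion.
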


Later \cite{BL22} proves that uniform K-stability is a Zariski open condition.
The authors use characterization from \cite{BJ20} of uniform K-stability by the value of the $\delta$-invariant $\delta(X)>1$.

\begin{theorem}[Blum--Liu] 
Let $\mathcal{X}\to T$ be a projective $\mathbb{Q}$-Gorenstein family of Fano varieties with klt singularities over a
normal base, then $$\{t\in T\, \vert\,  \mathcal{X}_t \ \text{is uniformly K-stable}\}$$ is an open subset of $T$.
\end{theorem}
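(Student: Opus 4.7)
The plan is to reduce the statement to a semicontinuity property of the stability threshold $\delta$. Indeed, by the characterization established in \cite{BJ20} and recorded essentially in Theorem~\ref{theorem:kstab-delta} (and its uniform variant from \cite{LXZ22}), uniform K-stability of a klt Fano variety $X$ is equivalent to $\delta(X)>1$. So the openness of the locus in question is equivalent to the lower semicontinuity (at threshold $1$) of the function $t\mapsto\delta(\mathcal{X}_t)$ on $T$. Since the question is local on the base and $T$ is normal, I would freely shrink $T$ to a smooth affine neighborhood, assume the Hilbert polynomial of $(\mathcal{X}_t,-K_{\mathcal{X}_t})$ is constant (cohomology and base change, combined with the flat $\mathbb{Q}$-Gorenstein hypothesis), and aim to show that $t\mapsto\delta(\mathcal{X}_t)$ is lower semicontinuous in the Zariski topology.

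The main engine is the approximation by the $m$-basis type thresholds $\delta_m$. For each fixed $m$, the choice of an $m$-basis type divisor on $\mathcal{X}_t$ is parametrized by the fibers of a Grassmann bundle over $T$ (built from $p_*\mathcal{O}_{\mathcal{X}}(-mK_{\mathcal{X}/T})$, whose rank is constant after shrinking). Using the constructibility of log canonical thresholds in flat families of pairs (Ambro's theorem, or the more recent boundedness results obtained as consequences of the MMP), one obtains that $\delta_m$ is a lower semicontinuous function on $T$. This is the \emph{pointwise in $m$} part of the argument. To pass from $\delta_m$ to $\delta$, I would invoke the effective quantitative estimate from \cite{BJ20}, namely $|\delta_m(\mathcal{X}_t)-\delta(\mathcal{X}_t)|\leqslant C/m$, with $C$ depending only on the dimension and on $(-K_{\mathcal{X}_t})^n$; since the latter is constant in a flat family, $C$ is uniform in $t$, and therefore $\delta_m\to\delta$ uniformly on $T$. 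A uniform limit of lower semicontinuous functions is lower semicontinuous, and the desired openness follows.

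The hard part will be the constructibility/lower semicontinuity of $\delta_m$ in families. While the lct of a single pair varies lower semicontinuously in families of pairs (one direction of Ambro's constructibility result), here one has to take an infimum over the entire bundle of $m$-basis type divisors, whose total space is quasi-projective but not proper in any useful way. I would handle this by showing first that the infimum is attained, or at least approximated, by a constructible family of divisors coming from a relative complete linear system, and then invoking semicontinuity of lct applied to the universal pair over that parameter space. An alternative route, which avoids dealing with the infimum directly, is to argue by contradiction using \cite{LXZ22} and boundedness of complements: if $\delta(\mathcal{X}_{t_0})>1$ but $\delta(\mathcal{X}_{t_n})\leqslant 1$ along a sequence $t_n\to t_0$, pick for each $n$ a quasi-monomial valuation $v_n$ over $\mathcal{X}_{t_n}$ minimizing $A/S$, extract via boundedness a family of $\mathbb{Q}$-complements that degenerates to a $\mathbb{Q}$-complement of $\mathcal{X}_{t_0}$, and exhibit a limiting valuation $v_\infty$ over $\mathcal{X}_{t_0}$ whose $\beta$-invariant is $\leqslant 0$, contradicting $\delta(\mathcal{X}_{t_0})>1$. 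This second route is closer in spirit to the argument in \cite{BL22}, but requires assembling the boundedness and limit steps carefully; either way, the crux is the same global control of minimizing valuations across the family.
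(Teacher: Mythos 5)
The paper does not actually prove this statement; it is quoted from \cite{BL22} without proof, so your proposal can only be measured against the argument in that reference. Your overall skeleton is indeed the Blum--Liu strategy: (i) lower semicontinuity of $\delta_m$ on $T$ via constructibility/semicontinuity of log canonical thresholds applied to a universal family of $m$-basis type divisors, (ii) passage from $\delta_m$ to $\delta$ uniformly over $T$, (iii) conclude lower semicontinuity of $\delta$ and hence openness of $\{\delta>1\}$. (Minor quibble: basis type divisors are parametrized by flags/bases, i.e.\ by a $\mathrm{GL}_{N_m}$- or flag-bundle over $T$, not a Grassmann bundle, but this does not affect the argument.)

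The genuine gap is in step (ii), which you dispose of by invoking ``the effective quantitative estimate from \cite{BJ20}, namely $|\delta_m-\delta|\leqslant C/m$ with $C$ depending only on the dimension and on $(-K_X)^n$.'' No such estimate is proved in \cite{BJ20}: that paper only establishes the pointwise convergence $\delta_m\to\delta$, and the constants entering its comparison of $S_m$ and $S$ depend on more than $n$ and the volume --- essentially on an integer $m_1$ with $-m_1K_X$ globally generated (or very ample) and on $\alpha$-invariant-type lower bounds. Since klt Fano varieties of fixed dimension and volume do not form a bounded family, such a constant cannot depend only on $(n,V)$; what is true, and what constitutes the technical heart of \cite{BL22}, is that these inputs can be chosen \emph{uniformly over the finite-type base $T$} (by relative base-point-freeness, generic flatness and Noetherian induction), yielding uniform convergence of $\delta_m$ to $\delta$ over the family. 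As written, you are assuming the main theorem you would need to prove, while flagging the comparatively more standard step (semicontinuity of $\delta_m$) as the hard part. Your alternative route via minimizing valuations, complements and boundedness is closer to the later arguments of \cite{LXZ22} and would also work, but it too is left at the level of a plan and relies on machinery (boundedness of complements, existence and limits of minimizers) that postdates and exceeds what \cite{BJ20} provides.
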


The following theorem is then the outcome of many works.

\begin{theorem}
\label{theorem:openness}
Let $\mathcal{X}\to T$ be a projective $\mathbb{Q}$-Gorenstein family of Fano varieties with klt singularities over a
normal base, then
\begin{enumerate}
\item
$\{t\in T\, \vert\,  \mathcal{X}_t \ \text{is K-stable}\}$ is a Zariski open subset of $T$.
\item
$\{t\in T\, \vert\,  \mathcal{X}_t \ \text{is K-semistable}\}$  is a Zariski open subset of $T$.
\item
$\{t\in T\, \vert\,  \mathcal{X}_t \ \text{is K-polystable}\}$  is a constructible subset of $T$.
\end{enumerate}
\end{theorem}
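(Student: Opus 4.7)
The plan is to reduce each openness claim to the behaviour of the stability threshold $\delta$ in families, and to invoke the existence of a good moduli space for the K-semistable locus. Combining Theorem~\ref{theorem:kstab-delta} with the equivalence between K-stability and uniform K-stability established in \cite{LXZ22}, a Fano fibre $\mathcal{X}_t$ is K-stable (resp.\ K-semistable) if and only if $\delta(\mathcal{X}_t) > 1$ (resp.\ $\delta(\mathcal{X}_t) \geqslant 1$). Hence parts~(1) and~(2) reduce to understanding how $t \mapsto \delta(\mathcal{X}_t)$ varies on $T$.

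For the semicontinuity step, I would follow \cite{BL22}: one computes $\delta$ using $m$-basis type divisors, shows that each $\delta_m$ is lower semicontinuous and constructible in families, and then passes to the limit $m \to \infty$ using the uniform convergence result of \cite[Theorem~A]{BJ20}. This yields openness of the locus where $\delta > 1$, giving (1) via the uniform-K-stability equivalence. For (2), one additionally needs that $\{\delta \geqslant 1\}$ is open and not merely constructible; here I would use that a K-semistable degeneration of a non-K-semistable family would produce, after Blum--Liu--Xu's specialization of valuations, a divisor over the special fibre with $\beta \leqslant 0$ that deforms to the generic fibre, contradicting openness of the non-semistable locus.

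For part~(3), the invariant $\delta$ alone is insufficient to detect K-polystability, because a strictly K-semistable fibre and its K-polystable degeneration share the same $\delta$-value. My approach would be to invoke the K-moduli stack $\mathcal{M}^{\mathrm{Kss}}$ together with its good moduli space $M^{\mathrm{Kps}}$ parametrizing K-polystable Fano varieties. Over the open subset $T^{\mathrm{Kss}} \subseteq T$ of K-semistable fibres, the family induces a classifying morphism $\phi \colon T^{\mathrm{Kss}} \to M^{\mathrm{Kps}}$, and the K-polystable locus in $T$ is the preimage under $\phi$ of the image of the closed points of $\mathcal{M}^{\mathrm{Kss}}$. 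By the general properties of good moduli spaces of algebraic stacks, this preimage is a constructible subset of $T^{\mathrm{Kss}}$, hence of $T$. Openness fails in general because a K-polystable point can be the limit of strictly K-semistable ones.

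The main obstacle is the depth of the background machinery being invoked: the lower semicontinuity of $\delta$ in families due to \cite{BL22}, the equivalence of K-stability with uniform K-stability from \cite{LXZ22}, and the construction of the proper K-moduli good moduli space by Xu and collaborators. Each of these is itself a substantial theorem; collectively they are exactly the \emph{many works} alluded to in the statement, and my proof would consist in assembling them rather than reproving any of the individual pieces.
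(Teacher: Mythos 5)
Your proposal assembles exactly the ingredients the paper itself relies on without writing out a proof: openness of uniform K-stability from \cite{BL22} combined with the equivalence of K-stability and uniform K-stability from \cite{LXZ22} for (1), openness of K-semistability via the behaviour of $\delta$ in families (\cite{BLX19,BJ20}) for (2), and the good moduli space of the K-moduli stack for the constructibility in (3). This matches the paper's (uncited-in-detail) intent, so the proposal is correct and takes essentially the same route.
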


We conclude this section with the following theorem, which is usually called the \emph{K-moduli Theorem} and is attributed to many people (cf. \cite{ABHLX20,BHLLX21,BLX19,BX19,CP21,Jia20,LWX21,LXZ22,Xu21,XZ20,XZ21}).

\begin{theorem}[K-moduli Theorem]
\label{kmoduli}
Fix two numerical invariants $n\in \mathbb{N}$ and $V\in \bQ_{>0}$. Consider the moduli functor $\mts{M}^K_{n,V}$ sending a base scheme $S$ to the groupoid
\[
\left\{\mts{X}/S\left| \begin{array}{l} \mts{X}\to S\textrm{ is a proper flat morphism, each geometric fiber}\\ \textrm{$\mts{X}_{\bar{s}}$ is an $n$-dimensional K-semistable $\bQ$-Fano variety of}\\ \textrm{volume $V$, and $\mts{X}\to S$ satisfies Koll\'ar's condition}\end{array}\right.\right\}.
\]
Then there is an Artin stack, still denoted by $\mts{M}^K_{n,V}$, of finite type over $\mathbb{C}$ with affine diagonal which represents the moduli functor. The $\mathbb{C}$-points of $\mts{M}^K_{n,V}$ parameterize K-semistable $\mathbb{Q}$-Fano varieties $X$ of dimension $n$ and volume $V$. Moreover, the Artin stack $\mts{M}^K_{n,V}$ admits a good moduli space $\overline{M}^K_{n,V}$, which is a projective scheme, whose $\mathbb{C}$-points parameterize K-polystable $\mathbb{Q}$-Fano varieties. The CM $\mathbb{Q}$-line bundle $\lambda_{\CM}$ on $\mts{M}^K_{n,V}$ descends to an ample $\mathbb{Q}$-line bundle $\Lambda_{\CM}$ on $\overline{M}^K_{n,V}$.
\end{theorem}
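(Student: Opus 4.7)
The plan is to assemble this theorem, which is the outcome of a long sequence of works, in four stages following the outline of \cite{Xu21}.

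First, I would establish that K-semistable $\mathbb{Q}$-Fano varieties of dimension $n$ and volume $V$ form a bounded family, which is Jiang's theorem \cite{Jia20}. This uses the $\delta$-invariant characterization of Theorem~\ref{theorem:kstab-delta}, the inequality $\delta(X) \leqslant (n+1)\alpha(X)$ from Theorem~\ref{theorem:ineq}, and Birkar's boundedness of $\alpha$-bounded Fanos. Combined with openness of K-semistability in $\mathbb{Q}$-Gorenstein families (Theorem~\ref{theorem:openness}), boundedness yields an Artin stack $\mts{M}^K_{n,V}$ of finite type over $\mathbb{C}$ with affine diagonal; the latter follows from the fact that automorphism schemes of polarized Fanos are affine algebraic groups (and are reductive on K-polystable fibers by Theorem~\ref{theorem:reduct}).

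Second, to construct the good moduli space, I would invoke the intrinsic stack-theoretic criterion of Alper--Halpern-Leistner--Heinloth, reducing the problem to verifying $\Theta$-reductivity and S-completeness for $\mts{M}^K_{n,V}$. $\Theta$-reductivity, that every $\bG_m$-equivariant family of K-semistable Fanos over $\mathbb{A}^1 \setminus \{0\}$ extends across the origin, amounts to the existence of a K-semistable degeneration; this was established in \cite{BHLLX21, LXZ22} and builds on the Fujita--Li valuative criterion (Theorem~\ref{theorem:Fujita-Li}) together with finite generation of the filtered anticanonical ring. S-completeness, that two extensions of such a family across the closed point of a DVR differ by a $\bG_m$-action on the central fiber, is the uniqueness of the K-polystable degeneration, proved in \cite{BX19, LWX21}. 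These together produce a separated good moduli space $\overline{M}^K_{n,V}$ whose $\mathbb{C}$-points parametrize K-polystable $\mathbb{Q}$-Fanos, and properness follows from the same two properties reinterpreted as the valuative criterion for properness of good moduli spaces.

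Finally, for projectivity I would use the CM line bundle: since $\lambda_{\CM}$ is trivial on the stabilizers of K-polystable fibers, it descends to a $\mathbb{Q}$-line bundle $\Lambda_{\CM}$ on $\overline{M}^K_{n,V}$. Nefness of $\Lambda_{\CM}$ is the content of Codogni--Patakfalvi \cite{CP21}, obtained via the slope formula for the Donaldson--Futaki invariant and positivity of direct images; strict positivity on proper curves whose generic point is K-stable is established in \cite{XZ20, XZ21}, after which ampleness follows by Nakai--Moishezon. The principal obstacle of the whole scheme lies in this last step: even nefness of $\Lambda_{\CM}$ requires substantial positivity machinery, and upgrading it to ampleness depends crucially on the recent resolution of the higher-rank finite generation conjecture in \cite{LXZ22}, which is also what underwrites both the existence of K-polystable degenerations and the openness of K-semistability in stage one.
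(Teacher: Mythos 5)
The paper gives no proof of this theorem: it is stated as a survey result and attributed directly to the literature (\cite{ABHLX20,BHLLX21,BLX19,BX19,CP21,Jia20,LWX21,LXZ22,Xu21,XZ20,XZ21}). Your outline is a correct and faithful reconstruction of how exactly those cited works assemble into the statement --- boundedness and openness for the finite-type Artin stack, $\Theta$-reductivity and S-completeness for the good moduli space, and the CM line bundle positivity results for projectivity --- so it is consistent with the paper's (purely citational) treatment.
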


\section{Volumes and Zariski decompositions}
\label{section:ZD}


\subsection{Zariski decomposition of divisors on surfaces}

Let $X$ be a smooth projective surface, and let $D$ be a pseudo-effective $\R$-divisor on $X$.
As observed by Zariski \cite{Zar} and later generalized in \cite{KMM}, the divisor $D$ can be decomposed in a very helpful way as follows (besides the original sources, further details can be found in \cite[Section 2.1]{GeomAsp} and \cite[Section 2.3.E]{PAGI}).

\begin{theorem}[{\cite[Theorem 7.3.1]{KMM}}]
\label{thm:ZD}
There exists a unique effective $\R$-divisor
$$
N_D=\sum_{i=1}^m a_iN_i
$$
such that
\begin{enumerate}
\item[(i)] $P_D=D-N_D$ is nef,
\item[(ii)] $N_D$ is either zero or its intersection matrix $(N_i\cdot N_j)$ is negative definite,
\item[(iii)] $P_D\cdot N_i=0$ for $i=1,\dots, m$.
\end{enumerate}
Furthermore, the divisor $N_D$ is uniquely determined as a cycle by the numerical equivalence class of $D$, and if $D$ is a $\Q$-divisor, then so are $P_D$ and $N_D$. The decomposition $D=P_D+N_D$ is called the {\em Zariski decomposition} of $D$.
\end{theorem}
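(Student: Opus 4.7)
The plan is to handle uniqueness, existence, and rationality in that order. Uniqueness is a clean intersection-theoretic computation; existence requires an iterative construction rooted in the Hodge index theorem; rationality then follows essentially for free.

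For uniqueness, suppose $D = P + N = P' + N'$ are two decompositions satisfying (i)--(iii). For any prime component $C$ of $N$, condition (iii) gives $P \cdot C = 0$ while nefness of $P'$ gives $P' \cdot C \geq 0$, so $(P - P') \cdot C \leq 0$; taking the appropriate non-negative combination over the components of $N$ yields $(P - P') \cdot N \leq 0$, and symmetrically $(P - P') \cdot N' \geq 0$. Combined with the identity $P - P' = N' - N$, these give $(N - N')^2 \geq 0$. On the other hand, write $N - N' = R^+ - R^-$ as a difference of effective divisors with disjoint support: then $R^+$ is supported on components of $N$ and $R^-$ on components of $N'$, so $(R^\pm)^2 \leq 0$ by (ii), while $R^+ \cdot R^- \geq 0$ because the supports are disjoint and both divisors effective. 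Hence $(N - N')^2 \leq 0$, and the two inequalities must be equalities, forcing $R^\pm = 0$ and therefore $N = N'$, $P = P'$.

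For existence, the plan is to build $N_D$ by iteration. Set $N^{(0)} := 0$ and $P^{(0)} := D$. At step $k$, if $P^{(k)}$ is nef, halt with $N_D := N^{(k)}$. Otherwise pick a prime $C$ with $P^{(k)} \cdot C < 0$; adjoin $C$ to the collection of components and solve the enlarged orthogonalization system $(D - N^{(k+1)}) \cdot C_j = 0$ over all currently chosen components. Two things must then be verified at each step: first, the enlarged intersection matrix remains negative definite, a Schur-complement computation that reduces to showing $\pi(C)^2 < 0$ for the orthogonal projection $\pi(C)$ of $C$ onto the complement of the span of the previously chosen components; second, the new coefficient of $C$ is strictly positive and the old coefficients remain non-negative. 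Both statements are obtained by applying the Hodge index theorem on $X$ to $P^{(k)}$ combined with a direct computation exploiting $P^{(k)} \cdot C < 0$. Because negative-definite configurations of distinct primes on $X$ are bounded in cardinality by $\rho(X)$, the process terminates, and the resulting $N_D$, $P_D$ satisfy (i)--(iii). Rationality in the $\Q$-divisor case follows because at each step the defining linear system has rational data and a unique (rational) solution.

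The main obstacle is the bookkeeping in the iterative step: verifying that adjoining a violating prime $C$ to the current collection preserves both the negative definiteness of the intersection matrix and the non-negativity of the coefficients of the enlarged solution. The control on signs is particularly delicate, because the old coefficients are modified by a correction term proportional to the new coefficient, and keeping these non-negative requires the interaction between pseudo-effectivity of $D$ and the Hodge index theorem on $X$. This is the technical heart of the proof.
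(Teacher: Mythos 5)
Your uniqueness argument is complete and correct, and your existence strategy is exactly the one the paper endorses: the theorem itself is quoted from [KMM], but the accompanying Remark on the (pseudo)algorithm states that verifying this very iteration "is precisely the proof". Your reduction of the sign bookkeeping is also sound: once one knows $\pi(C)^2<0$, the new coefficient is $\tfrac{P^{(k)}\cdot C}{\pi(C)^2}>0$, and the old coefficients can only increase because $(C_i\cdot C_j)$ is a negative-definite matrix with non-negative off-diagonal entries, so its inverse has non-positive entries and the projection of $C$ onto $\mathrm{span}(C_1,\dots,C_r)$ has non-positive coefficients. Termination via the bound $\rho(X)$ is fine.

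The genuine gap is the claim that $\pi(C)^2<0$ "is obtained by applying the Hodge index theorem to $P^{(k)}$ combined with a direct computation exploiting $P^{(k)}\cdot C<0$". Those two inputs alone do not suffice: in a lattice of signature $(1,\rho-1)$, knowing that $P^{(k)}$ lies in $\mathrm{span}(C_1,\dots,C_r)^{\perp}$ and that $P^{(k)}\cdot\pi(C)<0$ is compatible with $\pi(C)^2\geqslant 0$. Concretely, on the blow-up of $\mathbb{P}^2$ at two points take $C_1=E_1$ and $C=L-E_1-E_2$; then $\pi(C)=L-E_2$ has $\pi(C)^2=0$, and one can choose $P=\alpha L-\beta E_2$ orthogonal to $E_1$ with $P\cdot C=\alpha-\beta<0$. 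What rules this out is not the Hodge index theorem but the pseudo-effectivity of $D=P+a_1E_1$, which forces $\beta\leqslant\alpha$. So the inductive step requires converting the hypothesis that $D$ is pseudo-effective into a statement about the curves being adjoined (e.g.\ by perturbing to the big case, writing $D+\varepsilon H\equiv A_\varepsilon+E_\varepsilon$ with $A_\varepsilon$ ample via Kodaira's lemma, and showing every curve picked up by the algorithm is a component of a fixed effective divisor to which Zariski's combinatorial negative-definiteness lemma applies). This is the actual content of the existence proof, and your proposal asserts it with an insufficient justification rather than proving it; you also do not address the passage from big/effective to general pseudo-effective $\R$-divisors that such an argument requires. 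An alternative that avoids the issue entirely is Bauer's proof (cited in the paper's bibliography): take $P_D$ to be the coefficientwise supremum of all nef divisors $L$ with $D-L$ effective, and deduce (ii) and (iii) from maximality.
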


\begin{remark}
The following holds with the notation of Theorem~\ref{thm:ZD}.
\begin{enumerate}
\item If $D$ is nef then $P_D=D$ an $N_D=0$.
\item If $D=\sum_{i=1}^m a_iN_i$ such that each $a_i>0$ and the intersection matrix $(N_i\cdot N_j)$ is negative definite, then $P_D=0$ and $N_D=D$.
\end{enumerate}
In the context of Zariski decomposition, a negative curve is any reduced and irreducible curve with negative self-intersection.
\end{remark}

Even though positive and negative parts of a $\Q$-divisor are themselves $\Q$-divisors, there are examples of integral divisors with non-integral positive and negative parts.

\begin{example} Suppose that $X$ can be obtained by blowing up $\mathbb{P}^2$ at a point, let $E$ be the exceptional divisor, and let $H$ be the strict transform on $X$ of a general line in $\mathbb{P}^2$. Then $E$ is the only negative curve on the surface, $\mathrm{Pic}(X)$ is generated by $H$ and $E$,	
the cone $\mathrm{Eff}(X)$ is generated by $H$ and $H-E$, and the nef cone of $X$ is generated by $H$ and $H-E$. In particular, for $a,b>0$ the Zariski decomposition of the divisor $D=aH+bE$ is $P_D=aH$ and $N_D=bE$.
\end{example}

One of the most important properties of Zariski decomposition is that the positive part carries all the global sections.

\begin{proposition}\label{prop:ZD positive part}
Let $X$ be a smooth projective surface, $L$ a pseudo-effective integral Cartier divisor with Zariski decomposition $L=P_L+N_L$. Then for every $k\geqslant 1$, we have
\[
H^0(X,kL) \simeq H^0(X,\lfloor kP_L \rfloor)\ .
\]
\end{proposition}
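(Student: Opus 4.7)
My plan is to build mutually inverse linear maps between $H^0(X,\lfloor kP_L\rfloor)$ and $H^0(X,kL)$. I first record the integral divisor identity $\lfloor kP_L\rfloor+\lceil kN_L\rceil=kL$: since $kL$ is integral and $kL=kP_L+kN_L$ as $\R$-divisors, for every prime divisor $C$ the fractional parts of $\mult_C(kP_L)$ and $\mult_C(kN_L)$ either both vanish or sum to~$1$, and in both cases the floor--ceiling combination recovers $\mult_C(kL)$. Because $\lceil kN_L\rceil$ is effective, the sheaf inclusion $\cO_X(\lfloor kP_L\rfloor)\hookrightarrow\cO_X(kL)$ induces a natural injection
\[
\iota_k\colon H^0\bigl(X,\lfloor kP_L\rfloor\bigr)\ \hookrightarrow\ H^0(X,kL).
\]
The proof then reduces to showing $\iota_k$ is surjective, which amounts to proving $\mult_{N_i}(D)\geqslant ka_i$ for every component $N_i$ of $N_L$ and every effective $D\in|kL|$.

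Fix such a $D$ and write $D=D'+\sum_i c_iN_i$ with no $N_i$ occurring in $D'$; in particular $D'\cdot N_j\geqslant 0$ for every~$j$. The equivalence $D\sim kP_L+\sum_i ka_iN_i$ rearranges to $D'\sim kP_L+\sum_i(ka_i-c_i)N_i$. Let $S=\{i:c_i<ka_i\}$ and $M=\sum_{i\in S}(ka_i-c_i)N_i$; the objective is to show $M=0$. Moving the ``surplus'' components to the left, the effective divisor
\[
D''\ :=\ D'+\sum_{j\notin S}(c_j-ka_j)N_j\ \sim\ kP_L+M
\]
contains no $N_i$ with $i\in S$.

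Now I intersect with $M$. Property~(iii) of Theorem~\ref{thm:ZD} gives $P_L\cdot N_i=0$ for every $i$, so $P_L\cdot M=0$ and hence $D''\cdot M=M^2$. On the other hand, for every $i\in S$ one has $D'\cdot N_i\geqslant 0$ (since $N_i$ is not a component of $D'$), while each term $(c_j-ka_j)N_j\cdot N_i$ with $j\notin S$ is a product of non-negative factors (the second because $N_j\neq N_i$ are distinct prime divisors on a smooth surface). Summing over $i\in S$ with positive weights $ka_i-c_i$ yields $D''\cdot M\geqslant 0$, so $M^2\geqslant 0$. But property~(ii) of Theorem~\ref{thm:ZD} asserts that the intersection form on the span of the $N_i$ is negative definite, forcing $M=0$. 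Thus $c_i\geqslant ka_i$ for every $i$, and $D-\lceil kN_L\rceil\in|\lfloor kP_L\rfloor|$ is an effective preimage of $D$ under $\iota_k$.

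The main obstacle is the intersection-theoretic step. It requires partitioning the components of $N_L$ into those that appear with sufficient multiplicity in $D$ and those that do not, and then exploiting simultaneously the orthogonality $P_L\cdot N_i=0$ and the non-negativity of pairwise intersections of distinct prime divisors to obtain both $M^2\geqslant 0$ (from the effective equivalence $D''\sim kP_L+M$) and, if $M\neq 0$, $M^2<0$ via negative definiteness. Every other ingredient, including the floor--ceiling identity that sets up $\iota_k$, is essentially formal.
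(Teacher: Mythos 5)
Your proof is correct. The paper itself gives no argument here, only the citation to \cite[Proposition 2.3.21]{PAGI}; your write-up is a faithful, self-contained version of the standard proof found there: the floor--ceiling identity $\lfloor kP_L\rfloor+\lceil kN_L\rceil=kL$ sets up the injection, and surjectivity follows from the orthogonality $P_L\cdot N_i=0$ combined with the negative definiteness of $(N_i\cdot N_j)$, exactly as you argue via the auxiliary divisor $M$ supported on the components with deficient multiplicity.
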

\begin{proof}
This is \cite[Proposition 2.3.21]{PAGI}	
\end{proof}

It is a natural question to ask how Zariski decomposition changes as we move around the big or pseudo-effective cone. The following result gives a complete answer.

\begin{theorem}[Variation of Zariski decomposition, \cite{BKS04}, Theorem 1.1]\label{thm:Variation of ZD}
Let $X$ be a smooth projective surface over the complex numbers. Then there exists a locally finite decomposition of $\Bbig(X)$ into locally rational polyhedral chambers such that the support of the negative part of the Zariski decomposition is constant in each chamber. 	
\end{theorem}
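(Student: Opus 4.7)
The plan is to follow the strategy of Bauer--K\"uronya--Szemberg. First, I would control which negative curves can enter $\Supp(N_D)$ as $D$ varies in a bounded region of $\Bbig(X)$: fix $D_0\in\Bbig(X)$ and an ample class $A$, and prove that there is an open neighborhood $U$ of $D_0$ and a finite collection of negative curves $C_1,\dots,C_N$ on $X$ such that $\Supp(N_D)\subseteq\{C_1,\dots,C_N\}$ for every $D\in U$. Any curve appearing in $\Supp(N_D)$ satisfies $C^2<0$, and two numerically equivalent irreducible negative curves must coincide (since $C\cdot C'=C^2<0$ is incompatible with $C\neq C'$ both being effective), so it is enough to bound $A\cdot C$. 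Using that $P_D$ is nef and $D=P_D+N_D$ gives $N_D\cdot A\leqslant D\cdot A$, which controls the total $A$-degree of $N_D$ on $U$; a further step then shows that a new negative curve can only enter $\Supp(N_D)$ when some intersection number $P_D\cdot C$ crosses zero, which forces an explicit upper bound on $A\cdot C$ valid throughout $U$.

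For the second step, fix a finite collection $\mathcal{C}=\{C_1,\dots,C_r\}$ of negative curves whose intersection matrix $M_{\mathcal{C}}=(C_i\cdot C_j)$ is negative definite. The unique vector $\vec{a}(D)=(a_1(D),\dots,a_r(D))$ determined by the orthogonality relations $M_{\mathcal{C}}\,\vec{a}(D)=(D\cdot C_i)_i$ depends rationally and linearly on $D$, because $M_{\mathcal{C}}$ is invertible over $\mathbb{Q}$. Then, on the neighborhood $U$ furnished by the first step, the chamber
\[
\Sigma_{\mathcal{C}}=\bigl\{D\in U:\Supp(N_D)=\mathcal{C}\bigr\}
\]
is cut out by the strict inequalities $a_i(D)>0$ for $1\leqslant i\leqslant r$ together with the closed conditions $\bigl(D-\sum_j a_j(D)C_j\bigr)\cdot C\geqslant 0$ for each $C\in\{C_1,\dots,C_N\}\setminus\mathcal{C}$. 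All of these are rational linear inequalities in $D$, so $\Sigma_{\mathcal{C}}\cap U$ is a rational polyhedron.

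Assembling these pieces, the regions $\Sigma_{\mathcal{C}}$, indexed by finite sets of negative curves with negative definite intersection matrix, partition $\Bbig(X)$, and by construction $\Supp(N_D)$ is constant on each. The first step ensures that only finitely many chambers meet the neighborhood $U$, giving local finiteness, while the second step provides local rational polyhedrality. The main obstacle is precisely the local finiteness claim of the first step: on surfaces such as the blow-up of $\mathbb{P}^2$ at nine very general points there are infinitely many negative curves on $X$ in total, so one cannot enumerate them; the argument must exploit the interplay between the nefness of $P_D$, the negative definiteness of $M_{\mathcal{C}}$, and the boundedness of $D\cdot A$ on $U$ to exclude all but finitely many of them from ever contributing to $\Supp(N_D)$ near $D_0$.
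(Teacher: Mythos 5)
The paper does not actually prove this statement: it is quoted verbatim from Bauer--K\"uronya--Szemberg and used as a black box, so there is no in-paper argument to compare against. Judged on its own terms, your outline follows the right overall strategy (the BKS04 one): reduce to a local statement, show only finitely many negative curves can occur in $\Supp(N_D)$ near a fixed big class $D_0$, and then observe that for a fixed candidate support $\mathcal C$ with negative definite intersection matrix the coefficients $a_i(D)$ solve an invertible rational linear system, so the chamber is cut out by finitely many rational linear inequalities. That second step is essentially correct as written.

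The genuine gap is the one you yourself flag: local finiteness is asserted but not proved, and the two partial arguments you offer do not close it. The bound $N_D\cdot A\leqslant D\cdot A$ only controls $\sum_i a_i\,(A\cdot C_i)$; since the coefficients $a_i$ can be arbitrarily small, this does not bound $A\cdot C_i$ and hence does not exclude infinitely many curves (precisely the worry on a nine-point blow-up of $\mathbb P^2$). The claim that ``a new negative curve can only enter when $P_D\cdot C$ crosses zero'' is a description of the phenomenon, not an argument. The missing idea is a monotonicity statement: if $D-D'$ is nef (e.g.\ ample), then $\Supp(N_D)\subseteq\Supp(N_{D'})$; this follows, for instance, from the convexity of the asymptotic order functions $D\mapsto\ord_C(N_D)$ together with their vanishing on nef classes, or directly from the uniqueness in Theorem~\ref{thm:ZD}. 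Granting it, pick a small ample class $A$ with $D_0-A$ still big and let $U=(D_0-A)+\Amp(X)$, an open neighbourhood of $D_0$; then for every $D\in U$ the difference $D-(D_0-A)$ is ample, so $\Supp(N_D)\subseteq\Supp(N_{D_0-A})$, a single fixed finite set. With that lemma in place your step two goes through and the theorem follows; without it the proof is incomplete.
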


\begin{corollary}[Continuity of Zariski decomposition]\label{cor:cont of ZD}
With notation as above, the Zariski decomposition of pseudo-effective $\R$-divisors is continuous in $\Bbig(X)$.
\end{corollary}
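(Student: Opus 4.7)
The plan is to reduce continuity to each chamber from Theorem~\ref{thm:Variation of ZD}, exploit the linearity of the Zariski decomposition there, and then use the uniqueness statement in Theorem~\ref{thm:ZD} to glue at the chamber walls.

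Fix a chamber $\Sigma \subseteq \Bbig(X)$ and let $N_1,\dots,N_m$ be the negative curves whose span supports the negative part for every $D \in \Sigma$. For $D \in \Sigma$, write $N_D = \sum_{i=1}^m a_i(D)\,N_i$. The orthogonality condition $P_D \cdot N_j = 0$ from Theorem~\ref{thm:ZD}(iii) becomes the linear system
\[
D \cdot N_j \;=\; \sum_{i=1}^m a_i(D)\, (N_i \cdot N_j), \qquad j=1,\dots,m.
\]
Since the Gram matrix $(N_i\cdot N_j)$ is negative definite (hence invertible), the coefficients $a_i(D)$ are uniquely determined linear functions of the intersection numbers $D \cdot N_j$. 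In particular, $D\mapsto N_D$ and $D\mapsto P_D=D-N_D$ are linear, hence continuous, on $\Sigma$, and these linear expressions extend continuously to~$\overline{\Sigma}$.

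Next I would upgrade this to continuity on all of $\Bbig(X)$. Given any convergent sequence $D_n \to D$ inside $\Bbig(X)$, by the local finiteness part of Theorem~\ref{thm:Variation of ZD} only finitely many chambers meet a relatively compact neighbourhood of $D$. Passing to a subsequence, we may assume all $D_n$ lie in a single chamber $\Sigma$, so $D\in\overline{\Sigma}$. By the previous paragraph, $(P_{D_n},N_{D_n})$ converges to a pair $(P,N)$ with $P = D - N$, $N=\sum_i a_i(D)\,N_i$ effective (each $a_i(D)\geqslant 0$ as a limit of nonnegative reals), and $P\cdot N_i = 0$ for $i=1,\dots,m$.

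It remains to verify that $(P,N)$ is \emph{the} Zariski decomposition of $D$. Its support is contained in $\{N_1,\dots,N_m\}$, so the corresponding intersection matrix is a principal submatrix of the negative definite matrix $(N_i\cdot N_j)$, and therefore is itself negative definite (or the support is empty). Moreover $P$ is a limit of nef classes, and the nef cone is closed, so $P$ is nef. The three conditions of Theorem~\ref{thm:ZD} are now satisfied, so by the uniqueness clause of that theorem, $(P,N) = (P_D,N_D)$. Since every subsequence of $(P_{D_n},N_{D_n})$ has a further subsequence converging to $(P_D,N_D)$, the whole sequence converges to this pair, which yields continuity. The only conceptual point to be careful about is that when $D$ lies on a wall, some coefficients $a_i(D)$ may vanish and thus the support can drop; this is precisely what is absorbed by the uniqueness of the decomposition, so no genuine obstacle arises.
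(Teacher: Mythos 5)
Your argument is correct, but it is not the route the paper takes: the paper disposes of this corollary with a one-line citation to \cite[Proposition~1.14]{PAGI}, whereas you give a self-contained derivation from the chamber decomposition of Theorem~\ref{thm:Variation of ZD}. Within a chamber your observation that the orthogonality conditions form an invertible linear system (the Gram matrix of the supporting negative curves being negative definite) correctly yields that $D\mapsto N_D$ is linear there, and your gluing step is sound: the limit pair inherits effectivity, nefness (the nef cone is closed), negative definiteness of the support (a principal submatrix of a negative definite matrix), and orthogonality, so the uniqueness clause of Theorem~\ref{thm:ZD} identifies it with the Zariski decomposition of the limit divisor; the subsequence--subsubsequence argument then upgrades this to genuine continuity. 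The trade-off between the two approaches is that the textbook proof you are implicitly replacing works directly from the defining properties and does not need the (deeper) locally finite polyhedral chamber structure of \cite{BKS04}, while your argument buys more --- it exhibits the decomposition as piecewise linear, which is strictly stronger than continuity and is in fact the content one usually wants in applications. The only point worth flagging is logical order: since Theorem~\ref{thm:Variation of ZD} is a nontrivial result proved after the basic theory of Zariski decompositions, one should check (as is indeed the case for \cite{BKS04}) that its proof does not already invoke continuity, so that no circularity is introduced.
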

\begin{proof}
This is \cite[Proposition 1.14]{PAGI}.
\end{proof}

\begin{remark}[Non-continuity of Zariski decomposition]
The restriction in Corollary~\ref{cor:cont of ZD} is non-negotiable. There exist examples where Zariski decomposition is not continuous when converging towards the pseudo-effective boundary.
\end{remark}

\begin{example}[Two-point blowing-up of the plane] This item is taken from  \cite[Example 3.5]{BKS04}
\label{eg:two_points}
Let $X$ be the blow-up of the projective plane at two points; the corresponding exceptional divisors will be denoted by $E_1$ and $E_2$.
We denote the pullback of the hyperplane class on ${\PP}^2$ by $L$.
These divisor classes generate the Picard group of $X$
and their intersection numbers are: $L^2=1$, $(L.E_i)=0$ and $(E_i.E_j)=-{\delta}_{ij}$ for $1\leqslant i,j\leqslant 2$.
There are  three irreducible negative curves: the two exceptional divisors,
$E_1$,  $E_2$ and  the strict transform  of the line through the two blown-up points, $L-E_1-E_2$, and  the corresponding hyperplanes determine the
chamber structure on the big cone.  They divide the big cone into five regions
on each of which the support of the negative part of the Zariski decomposition remains constant.
In this particular case the chambers are simply described as the set of divisors
that intersect negatively the same set of negative curves (it happens for instance on K3 surfaces that the negative part of a divisor $D$ contains irreducible curves intersecting $D$ non-negatively). We will parametrize the chambers with big and nef divisors (in fact with faces of the nef cone containing big divisors). In our case, we pick big and nef divisors $A,Q_1,Q_2,L,P$
based on the following criteria:
\begin{eqnarray*}
	A\ : & \text{ample} \\
	Q_1\ : & (Q_1\cdot E_1)=0\ ,\ (Q_1\cdot C) > 0 \ \text{for all other curves} \\
	Q_2\ : & (Q_2\cdot E_2)=0\ ,\ (Q_2\cdot C) > 0 \ \text{for all other curves}  \\
	L\ : & (L\cdot E_1)=0\ ,\  (L\cdot E_2)=0\ ,\ (L\cdot C) > 0 \ \text{for all other curves} \\
	P\ : & (P\cdot L-E_1-E_2)=0\ ,\ (P\cdot C) > 0 \ \text{for all other curves} \; .
\end{eqnarray*}
The divisors  $L, P,Q_1,Q_2$ are big and nef divisors in the nef boundary (hence necessarily non-ample) which are in the relative interiors of the indicated faces.
For one of these divisors, say $P$,  the corresponding chamber consists of all $\R$-divisor classes whose negative part consists of curves orthogonal to $P$. This is listed in the following
table. Observe that apart from the nef cone,  the chambers do not contain the nef divisors they are associated to.
Note that  not all possible combinations of negative divisors  occur.
This in part is accounted for by the fact that certain faces of the
nef cone do not contain big divisors.
\begin{center}
	\begin{tabular}{|l|r|r|} \hline
		Chamber & curves in the negative part of $D$ & $(D\cdot C)<0$  \\ \hline
		$\Sigma_A$ & $\emptyset$ & none \\
		$\Sigma_{Q_1}$ & $E_1$ & $E_1$ \\
		$\Sigma_{Q_2}$ & $E_2$ & $E_2$ \\
		$\Sigma_{L}$ & $E_1,E_2$ & $E_1,E_2$ \\
		$\Sigma_{P}$ & $L-E_1-E_2$ & $L-E_1-E_2$ \\
		\hline
	\end{tabular}
\end{center}
The following picture describes a cross section of the effective cone of $X$ with the chamber structure indicated.
\begin{center}
	\begin{tikzpicture}[scale=0.5]
		\draw[very thick] (0,0) -- (12,0) -- (6,12) -- cycle;
		\filldraw[black] (0,0) circle (4pt) (12,0) circle (4pt) (6,12) circle (4pt);
		\draw (0,-1) node{$E_1$} (12,-1) node{$E_2$} (6,13) node{$L-E_1-E_2$};
		\draw[very thick] (0,0) -- (9,6) -- (3,6) --  (12,0) -- cycle;
		\filldraw[black] (9,6) circle (4pt) (3,6) circle (4pt) (6,4) circle (4pt);
		\draw (6,3) node{$L$} (11,6) node{$L-E_1$} (1,6) node{$L-E_2$};
		\filldraw[black] (6,5) circle (4pt) (6,6) circle (4pt) (7.5,5) circle (4pt) (4.5,5) circle (4pt);
		\draw (6.5,5.5) node{\scriptsize $A$} (8.5,4.5) node{\scriptsize $Q_2$} (3.5,4.5) node{\scriptsize $Q_1$} (6,7) node{\scriptsize $P$};
		\draw[very thin] (6,2) -- (13,3)  (14,3) node{$\Sigma_L$};
		\draw[very thin] (7,5) -- (9,7)  (10,7) node{$\Sigma_A$};
		\draw[very thin] (6,8) -- (9,9) (10,9) node{$\Sigma_P$};
		\draw[very thin] (10,3) -- (13,5) (14,5) node{$\Sigma_{Q_2}$};
		\draw[very thin] (2,2) -- (1,4) (0,4) node{$\Sigma_{Q_1}$};
	\end{tikzpicture}
\end{center}
Let $D=aL-b_1E_i-b_2E_2$ be a big $\R$-divisor. Then one can express
the volume of $D$ in terms of the coordinates $a,b_1,b_2$ as follows:
\[
\vvol{X}{D}= \left\{ \begin{array}{ll} D^2=a^2-b_1^2-b_2^2 & \textrm{ if $D$ is nef, i.e. $D\in\Sigma_A$} \\
	a^2-b_2^2 & \textrm{ if\ } D\cdot E_1<0 \mbox{ and } D\cdot E_2\geqslant 0 \textrm{ i.e. $D\in \Sigma_{Q_1}$}\\
	a^2-b_1^2 & \textrm{ if\ } D\cdot E_2<0 \mbox{ and } D\cdot E_1\geqslant 0 \textrm{ i.e. $D\in\Sigma_{Q_2}$}\\
	a^2 & \textrm{ if } D\cdot E_1<0 \mbox{ and } D\cdot E_2<0 \textrm{ i.e. $D\in\Sigma_L$}\\
	2a^2-2ab_1-2ab_2+2b_1b_2 & \textrm{ if\ } D\cdot(L-E_1-E_2)<0\
	\textrm{ i.e. $D\in\Sigma_P$}.
\end{array} \right.
\]	
\end{example}

\begin{remark}[Algorithm to compute Zariski decomposition]
Computing the Zariski decomposition of a divisors is algorithmic \emph{assuming} one knows the intersection form and enough of the negative curves on the surface. Of this information the knowledge of the negative curves on the underlying surface tends to be prohibitively difficult.
In any case, let us assume that we have determined the Mori cone of the surface $X$ we work with, and let $D$ be a pseudo-effective $\R$-divisor on $X$. In  this case the following (pseudo)algorithm delivers the Zariski decomposition of $D$.
In fact the proof of Theorem~\ref{thm:ZD} is precisely the verification that the algorithm above terminates and gives the right result.
\begin{enumerate}
	\item If $D$ is nef then set $P_D=D$, $N_D=0$ and stop.
	\item If $D$ is not nef then there are finitely many negative curves $C_1,\dots,C_r$ such that $(D\cdot C_i)<0$ for all $1\leqslant i\leqslant r$.
	Solve the system of linear equations
	\[
	(D-\sum_{i=1}^{r}a_iC_i\cdot C_j) \equ 0\ \ \text{for all $1\leqslant j\leqslant r$.}
	\]
	\item Set
	\[
	D \deq D-\sum_{i=1}^{r}a_iC_i\ ,
	\]
	and return to (1).
\end{enumerate}
The algorithm delivers $P_D$, from which we calculate $N_D=D-P_D$. Unsurprisingly, the latter happens to be the sum of the  divisors  $\sum_{i=1}^{r}a_iC_i$ picked up along the way. An implementation of the algorithm in the \texttt{Magma} computer algebra system is available in
Appendix~\ref{app:zdec}.
\end{remark}

\subsection{Zariski decomposition in higher dimensions}

Once we leave the realm of surfaces, Zariski decomposition will cease to exist in its original intersection-theoretic sense. An attempt to generalize it to higher dimensions via Proposition~\ref{prop:ZD positive part} leads to the following notion. A lot more information on the topic can be found in Nakayama's book \cite{Nakayama_ZD} and \cite{Prokhorov}.

\begin{definition}[Zariski decomposition in the sense of Cutkosky--Kawamata--Moriwaki]\label{defn:ZD CKM}
Let $X$ be a projective variety, $D$ an integral Cartier divisor on $X$.

We say that $D$ \emph{has a Zariski decomposition in the sense of CKM} if there exists a proper birational morphism $\pi\colon Y\to X$ with $Y$ smooth and an effective $\Q$-divisor $N$ on $Y$ such that   the following hold.
\begin{enumerate}
    \item The divisor $P\deq \pi^*D-N$ is a nef $\Q$-divisor on $Y$;
    \item for every $m\geqslant 1$, the natural maps
    \[
    H^0(Y,\pi^*(mD)-\lceil mN \rceil) \longrightarrow H^0(Y,\pi^*(mD))
    \]
    are isomorphisms.
\end{enumerate}
\end{definition}

\begin{remark}
    As opposed to the case of surfaces it is not automatic for (pseudo)effective divisors on higher-dimensional varieties to possess a Zariski decomposition in the sense of CKM. In fact, there exist many cases where the decomposition does not exist. One such obstacle to the existence of Zariski decomposition is the irrationality of the volume of the divisor, which we will discuss in Subsection 3.3.
\end{remark}

\begin{remark}
For our purposes we will often need a Zariski decomposition on the original variety $X$. In this case we say that \emph{$D$ has a Zariski decomposition on $X$}.	
\end{remark}

\begin{remark}[Zariski decomposition for finitely generated divisors]\label{rmk:ZD for fg}
    As explained in \cite[Example 2.1.31]{PAGI}, given divisor $D$ with a finitely generated section ring on a normal projective variety $X$, there exist a non-negative integer $p$, and proper birational map $\pi\colon Y\to X$ with $Y$ normal (after blowing up further we can safely assume it is smooth) and an effective integral divisor $N$ on $Y$ such that
    \[
    P \deq \pi^*(pD) - N
    \]
    is a base-point free divisor on $Y$ (in particular $P$ is nef), and
    \[
    R(X,D)^{(p)} \equ R(Y,P)\ .
    \]
    This implies in particular that $D$ has a Zariski decomposition in the sense of CKM on $Y$, where $\frac{1}{p}P$ plays the positive, and $\frac{1}{p}N$ the negative part.
\end{remark}

\begin{theorem}\label{thm:MDS ZD}
    Let $X$ be a Mori dream space, $D$ an effective divisor on $X$. Then $D$ has a Zariski decomposition in the sense of CKM over $X$.
\end{theorem}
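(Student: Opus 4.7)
The plan is to apply Remark \ref{rmk:ZD for fg} once we know that every effective divisor on $X$ has a finitely generated section ring. By the Hu--Keel characterization, $X$ being a Mori dream space means that the Cox ring $\mathcal{R}(X) = \bigoplus_{[L] \in \Cl(X)} H^0(X, \cO_X(L))$ is a finitely generated $\mathbb{C}$-algebra. Choosing homogeneous generators $s_1,\dots,s_r$ of classes $[D_1],\dots,[D_r]$, the section ring $R(X,D) = \bigoplus_{m \geqslant 0} H^0(X,mD)$ is spanned inside $\mathcal{R}(X)$ by those monomials $\prod s_i^{a_i}$ whose exponent vector satisfies the linear condition $\sum a_i [D_i] \in \mathbb{N}[D]$. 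Such exponent vectors form an affine sub-semigroup of $\mathbb{N}^r$, hence are finitely generated by Gordan's lemma, so $R(X,D)$ is a finitely generated $\mathbb{C}$-algebra.

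Remark \ref{rmk:ZD for fg} then produces a proper birational morphism $\pi \colon Y \to X$ with $Y$ smooth, an integer $p \geqslant 1$, and an effective integral divisor $N$ on $Y$ such that $P := \pi^*(pD) - N$ is base-point free and $R(X,D)^{(p)} = R(Y,P)$. Setting $P_D := \tfrac{1}{p} P$ and $N_D := \tfrac{1}{p} N$, we obtain a decomposition $\pi^* D = P_D + N_D$ in which $P_D$ is a nef $\Q$-divisor and $N_D$ an effective $\Q$-divisor; this is precisely condition (1) of Definition \ref{defn:ZD CKM}.

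The one step warranting genuine verification is condition (2) for \emph{arbitrary} $m \geqslant 1$, not just for multiples of $p$. Given a section $s \in H^0(Y,\pi^*(mD)) = H^0(X,mD)$ (equality by birationality of $\pi$ and normality of $Y$), raising to the $p$-th power places $s^p$ inside $R(X,D)^{(p)}_m = R(Y,P)_m = H^0(Y,mP)$, so $\pi^* s^p$ vanishes to order at least $m \cdot \ord_C(N)$ along every prime component $C$ of $N$. Dividing by $p$ gives $\ord_C(\pi^* s) \geqslant \ord_C(m N_D)$, and integrality of the left-hand side upgrades this to $\ord_C(\pi^* s) \geqslant \ord_C(\lceil m N_D \rceil)$, so $s$ lies in $H^0(Y,\pi^*(mD) - \lceil m N_D \rceil)$; the reverse inclusion is tautological. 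The main conceptual input throughout is the finite generation of $R(X,D)$, which is precisely the feature distinguishing Mori dream spaces from the general higher-dimensional setting where, as noted in the remark preceding Definition \ref{defn:ZD CKM}, CKM Zariski decomposition can genuinely fail.
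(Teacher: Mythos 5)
Your proposal is correct and follows essentially the same route as the paper, which likewise deduces the result from finite generation of the section ring on a Mori dream space together with Remark~\ref{rmk:ZD for fg}. You merely supply details the paper leaves implicit (the Gordan's-lemma argument for finite generation of $R(X,D)$ and the $p$-th-power/rounding check of condition (2) of Definition~\ref{defn:ZD CKM}), and these details are accurate.
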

\begin{proof}
    On a Mori dream space every effective divisor has a finitely generated section ring. The rest follows from Remark~\ref{rmk:ZD for fg}.
\end{proof}

\begin{remark}
    It is important to point out that the CKM Zariski decomposition of a divisor $D$ on a Mori dream space might not (and in general does not) exist on $X$, only on some proper birational model over $X$.
\end{remark}

\begin{corollary}
    Every effective divisor on a log Fano variety has a Zariski decomposition in the sense of CKM.
\end{corollary}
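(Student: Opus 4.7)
The plan is to reduce the statement directly to Theorem~\ref{thm:MDS ZD} by invoking the foundational result of Birkar--Cascini--Hacon--McKernan (BCHM) that identifies log Fano varieties as a very natural source of Mori dream spaces. Once this reduction is in place, there is nothing more to prove: the CKM Zariski decomposition for an arbitrary effective divisor is produced exactly as in Remark~\ref{rmk:ZD for fg}, by passing to a suitable proper birational model on which some integral multiple of the divisor has a base-point free positive part.

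More concretely, I would proceed as follows. First, recall that a \emph{log Fano variety} is a normal projective $\mathbb{Q}$-factorial variety $X$ for which there exists an effective $\mathbb{Q}$-divisor $\Delta$ such that $(X,\Delta)$ is klt and $-(K_X+\Delta)$ is ample. The key input is the BCHM theorem, which implies that every such $X$ is a Mori dream space: its Cox ring is finitely generated, or equivalently, every pseudo-effective divisor has a finitely generated section ring, and the cones $\overline{\mathrm{Eff}}(X)$ and $\overline{\mathrm{Mov}}(X)$ admit a finite rational polyhedral chamber decomposition compatible with the MMP. Second, given any effective divisor $D$ on $X$, apply Theorem~\ref{thm:MDS ZD} directly to $D$ on the Mori dream space $X$ to obtain its CKM Zariski decomposition on some proper birational model $\pi\colon Y\to X$, where the positive part is nef (in fact semiample up to rescaling) and the section rings of $D$ and of the positive part agree after taking a common multiple, as recorded in Remark~\ref{rmk:ZD for fg}.

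The only nontrivial ingredient is the BCHM theorem itself, and that is invoked as a black box; everything else is a formal reduction. In particular, I would emphasize in the write-up (as a small caveat matching the remark following Theorem~\ref{thm:MDS ZD}) that the CKM Zariski decomposition produced this way generally lives on a birational modification of $X$ rather than on $X$ itself, which is consistent with Definition~\ref{defn:ZD CKM}. No additional calculations or case analysis are required.
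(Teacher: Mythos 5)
Your argument is exactly the paper's: invoke BCHM (or Cascini--Lazi\'c) to conclude that log Fano varieties are Mori dream spaces, then apply Theorem~\ref{thm:MDS ZD}. The additional elaboration via Remark~\ref{rmk:ZD for fg} and the caveat about the decomposition living on a birational model are both consistent with the paper's treatment, so nothing further is needed.
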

\begin{proof}
    By \cite{BCHM} or \cite{CasciniLazic}, log Fano varieties are Mori dream spaces; apply Theorem~\ref{thm:MDS ZD}.
\end{proof}

\begin{remark}
    As opposed to the case of Zariski decompositon on surfaces, the existence of a Zariski decomposition in the sense of CKM is not determined by the numerical equivalence class of a divisor. For a closely related discussion about this subtle question see \cite[Section 4]{KKL12}.
\end{remark}


The following result is stated in~\cite[Theorem~3.3.4.8]{adhl}.

\begin{theorem}[Zariski decomposition via Cox rings]
\label{thm:zariski-cox}
Let $X$ be a Mori dream space, and let $D_1, \dots, D_r$
be the prime divisors associated with a minimal set of
generators of the Cox ring of $X$.
Let $D$ be an effective divisor on $X$.
Consider the cone
\[
\tau_i := \operatorname{cone}(w_D, -w_i) \cap \operatorname{cone}(w_1, \dots, w_{i-1}, w_{i+1}, \dots, w_r) \subseteq \operatorname{Cl}_{\mathbb{Q}}(X),
\]
where $w_D$ and $w_i$ denote the classes of $D$ and $D_i$ in
${\rm Cl}_{\mathbb{Q}}(X)$.
Let $\mu_i$ be the smallest non-negative rational number such that
$w_D - \mu_i w_i$ lies on an extremal ray of $\tau_i$.
Then $D$ admits a Zariski decomposition
\[
D = P + N, \quad \text{where} \quad N := \mu_1 D_1 + \cdots + \mu_r D_r, \quad P := D - N,
\]
with unique rational Weil divisors $P$ and $N$ on $X$.

Moreover, for any $n \in \mathbb{Z}_{\ge 0}$ such that $nN$ is an integral Weil divisor, the space of global sections
$H^0(X, \mathcal{O}_X(nN))$ has dimension at most one.
\end{theorem}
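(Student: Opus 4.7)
The idea is to exploit the Cox-ring presentation of $X$ to obtain an explicit combinatorial picture of global sections, from which the Zariski decomposition can be read off directly. Since $X$ is a Mori dream space, the Cox ring $R = R(X) = \bigoplus_{w \in \mathrm{Cl}(X)} H^0(X, \mathcal{O}_X(w))$ is a finitely generated $\mathrm{Cl}(X)$-graded factorial ring; the given minimal system of homogeneous prime generators $f_1, \dots, f_r$ with $\deg(f_i) = w_i$ corresponds to the prime divisors $D_i = \{f_i = 0\}$. Under the canonical identification $H^0(X, \mathcal{O}_X(E)) \cong R_{[E]}$, every homogeneous piece $R_w$ is spanned by monomials $\prod_i f_i^{a_i}$ with $a = (a_j) \in \mathbb{Z}_{\geq 0}^r$ satisfying $\sum_i a_i w_i = w$.

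The first step is to interpret $\mu_i$ combinatorially. Unwinding the definition of $\tau_i$, the quantity $\mu_i$ is the smallest rational $t \geq 0$ for which $w_D - t w_i \in \mathrm{cone}(w_j : j \neq i)$. Choosing an integer $m$ so that every $m\mu_i$ is integral, a straightforward linear-programming argument gives
\[
m\mu_i \;=\; \min\bigl\{a_i : a \in \mathbb{Z}_{\geq 0}^r,\ \textstyle\sum_j a_j w_j = m w_D\bigr\}.
\]
Combined with the monomial description, this is exactly the multiplicity of $D_i$ in the fixed part of $|mD|$. Setting $N := \sum_i \mu_i D_i$ and $P := D - N$, every section of $mD$ is therefore uniquely divisible by $\prod_i f_i^{m\mu_i}$, and division yields an isomorphism $H^0(X, \mathcal{O}_X(mD)) \cong H^0(X, \mathcal{O}_X(mP))$ for all sufficiently divisible $m \geq 1$. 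Together with Theorem~\ref{thm:MDS ZD}, this realizes $D = P + N$ as a Zariski decomposition in the sense of Definition~\ref{defn:ZD CKM} directly on $X$; uniqueness of $P$ and $N$ as rational Weil divisors follows from that of the fixed part of $|mD|$.

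For the final assertion, a section in $H^0(X, \mathcal{O}_X(nN)) \cong R_{[nN]}$ is a linear combination of monomials $\prod_j f_j^{b_j}$ with $b \in \mathbb{Z}_{\geq 0}^r$ and $\sum_j b_j w_j = \sum_j n\mu_j w_j$; the canonical monomial $b = (n\mu_1, \dots, n\mu_r)$ provides one such section. The point is that it is the only one, which I plan to deduce by reapplying the first part of the theorem with $nN$ in place of $D$: the extremality of each $\mu_i$ in $\tau_i$ forces the Zariski decomposition of $nN$ to have positive part zero, whence $H^0(X, \mathcal{O}_X(nN)) \cong H^0(X, \mathcal{O}_X(0)) = \mathbb{C}$.

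The main obstacle is this last step: the extremal-ray condition on $\tau_i$ constrains one coordinate $w_i$ at a time, but ruling out alternative monomial representations of the class $[nN]$ requires combining these conditions simultaneously across all indices. I expect to handle it by exploiting the factoriality of the Cox ring, which turns a hypothetical second monomial $\prod_j f_j^{b_j} \ne \prod_j f_j^{n\mu_j}$ of the same degree into a factorization that contradicts the minimality of $\mu_i$ for some $i$. The remaining steps are essentially formal consequences of the Cox-ring description of global sections.
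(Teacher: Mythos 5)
The paper itself does not prove this statement --- it is quoted from \cite[Theorem~3.3.4.8]{adhl} --- so your argument has to stand on its own. Your main line (identify $H^0(X,\mathcal O_X(mE))$ with the graded piece $R(X)_{[mE]}$, observe that each graded piece is spanned by monomials in the prime generators $f_1,\dots,f_r$, show that $m\mu_i$ is the minimal exponent of $f_i$ over monomials of degree $mw_D$, and divide out $\prod_i f_i^{m\mu_i}$) is the right one and is essentially the standard Cox-ring argument. Two caveats on that part. First, choosing $m$ so that each $m\mu_i$ is an integer does not make the minimum over \emph{integer} points equal to $m\mu_i$: with $w_1=(2,0)$, $w_2=(0,2)$, $w_3=(1,1)$, $w_D=(1,1)$ one has $\mu_3=0$, yet the only integral representation of $w_D$ has $a_3=1$; you need $m$ divisible enough that a minimizing \emph{vertex} of $\{a\in\mathbb{Q}_{\ge0}^r:\sum_j a_jw_j=mw_D\}$ is integral. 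Second, calling $D=P+N$ a CKM decomposition ``directly on $X$'' overstates what you have: you get the cohomological isomorphism $H^0(mD)\cong H^0(mP)$ on $X$, but $P$ is only movable, not nef on $X$ in general --- the remark following Theorem~\ref{thm:MDS ZD} stresses exactly this.

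The genuine gap is the final assertion $h^0(nN)\le1$, which you leave open. Your plan --- rerun the first part with $nN$ in place of $D$ and conclude that its positive part vanishes --- amounts to showing that \emph{every} representation $\sum_j b_jw_j=nw_N$ with $b\in\mathbb{Z}_{\ge0}^r$ satisfies $b_i\ge n\mu_i$ for \emph{every} $i$; pointedness of the effective cone then forces $b=n\mu$, so $R_{nw_N}$ is spanned by a single monomial. Factoriality alone will not deliver this: if some $b_i<n\mu_i$, the only way to contradict the minimality of $\mu_i$ is to add $nw_P$ back and land in $\operatorname{cone}(w_j:j\ne i)$, which requires precisely that $w_P\in\operatorname{cone}(w_j:j\ne i)$ for every $i$, i.e.\ that $w_P$ lies in the moving cone. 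That is the missing lemma. It does follow from what you already set up: since $m\mu_i$ is the multiplicity of $D_i$ in the fixed part of $|mD|$, and by factoriality no prime divisor other than the $D_i$ can be a fixed component, $|mP|$ has no fixed components; hence for each $i$ some section of $mP$ is not divisible by $f_i$, so some monomial of degree $mw_P$ has $a_i=0$, giving $w_P\in\operatorname{cone}(w_j:j\ne i)$. With that observation inserted your proof closes; as written, the last claim is unproved.
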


An implementation of the Zariski decomposition based on Theorem~\ref{thm:zariski-cox} in the \texttt{Magma} computer algebra system can be found in Appendix~\ref{app:zdec}.

\begin{theorem}\label{thm:ZD on Fano threefolds}
Let $X$ be a smooth Fano threefold. Then every effective divisor has a CKM Zariski decomposition on $X$.
\end{theorem}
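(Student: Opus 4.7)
The strategy is to reduce the assertion to the Mori dream space machinery developed earlier in this section.

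First, I would observe that any smooth Fano threefold $X$ is a log Fano variety: the pair $(X,0)$ is klt because $X$ is smooth, and $-K_X$ is ample by assumption. By~\cite{BCHM} (or~\cite{CasciniLazic}), $X$ is therefore a Mori dream space; in particular, the Cox ring $\mathcal{R}(X)$ is finitely generated and the pseudo-effective cone of $X$ admits a finite Mori chamber decomposition.

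Second, given an effective divisor $D$ on $X$, I would apply Theorem~\ref{thm:zariski-cox} directly rather than Theorem~\ref{thm:MDS ZD} (the latter only yields the decomposition on some birational model). This produces the decomposition $D = P + N$ on $X$ itself, with $N = \sum_i \mu_i D_i$ and $P = D - N$, where the $D_i$ are the prime divisors associated to a minimal generating set of $\mathcal{R}(X)$ and the rationals $\mu_i \ge 0$ are chosen minimally so that $w_P = w_D - \sum_i \mu_i w_i$ lies on an extremal ray of the relevant cone in $\Cl_\Q(X)$. Smoothness of $X$ automatically makes $P$ and $N$ $\Q$-Cartier, and the theorem supplies the rigidity $\dim H^0(X, \cO_X(nN)) \le 1$ for every $n$ with $nN$ integral.

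Third, I would check the two conditions of Definition~\ref{defn:ZD CKM} directly on $X$ (that is, with $Y = X$ and $\pi = \mathrm{id}_X$). The extremality of the $\mu_i$ places $w_P$ on the boundary of the nef cone of $X$, so $P$ is nef. For the isomorphism $H^0(X, \cO_X(mD - \lceil mN \rceil)) \xrightarrow{\sim} H^0(X, \cO_X(mD))$, the rigidity forces $\lceil mN \rceil$ to be contained in the fixed part of $|mD|$: any effective divisor linearly equivalent to a positive multiple of $N$ must coincide with that multiple of $N$, so every global section of $\cO_X(mD)$ vanishes along $\lceil mN \rceil$ and hence factors through $\cO_X(mD - \lceil mN \rceil)$.

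The main obstacle is justifying nefness of $P$ on $X$ itself rather than merely on some small $\Q$-factorial modification of $X$. On a general Mori dream space the positive part of the Zariski decomposition of an effective divisor may only live naturally on an SQM, and what allows the conclusion to be drawn on $X$ in the Fano threefold setting is the tailored Cox ring construction together with the combinatorial rigidity built into Theorem~\ref{thm:zariski-cox}. Once nefness is secured, all other verifications are routine consequences of the cited results.
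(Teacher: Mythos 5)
Your route is the same as the paper's: smooth Fano threefolds are log Fano, hence Mori dream spaces by \cite{BCHM}, and Theorem~\ref{thm:zariski-cox} then produces the decomposition $D=P+N$ on $X$ itself; the paper's proof is exactly this one-line deduction, and your verification of condition (2) of Definition~\ref{defn:ZD CKM} via the rigidity $h^0(X,\mathcal{O}_X(nN))\leqslant 1$ is in the right spirit.

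The weak point is your treatment of nefness of $P$, which you correctly single out as the main obstacle but do not actually resolve. The extremality of the $\mu_i$ in Theorem~\ref{thm:zariski-cox} places $w_P$ on the boundary of the cone spanned by the degrees of the Cox ring generators other than $w_i$; this puts $P$ in the \emph{movable} cone, not the nef cone. On a general Mori dream space the positive part of this decomposition is movable but need not be nef on $X$ --- that is exactly why Theorem~\ref{thm:MDS ZD} only asserts the decomposition \emph{over} $X$, on some birational model. Appealing to ``the tailored Cox ring construction together with the combinatorial rigidity'' to secure nefness is circular. The missing input is geometric and specific to dimension three: by Mori's theorem every $K$-negative birational extremal contraction of a smooth threefold is divisorial, so a smooth Fano threefold admits no small $\mathbb{Q}$-factorial modifications and its movable cone coincides with its nef cone; hence the movable divisor $P$ is automatically nef on $X$ itself. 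This is precisely the content of the remark the paper places immediately after the theorem, and it is the step your argument needs to make explicit.
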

\begin{proof}
    This is a direct consequence of Theorem~\ref{thm:zariski-cox} and the fact that Fano threefolds are Mori dream spaces.
\end{proof}

\begin{remark}
 Here is a sketch of an  alternative argument for Theorem~\ref{thm:ZD on Fano threefolds}, which has a more geometric flavour, and explains the special nature of Fano threefolds that is relevant for us. Instead of the cohomological generalization of Zariski decomposition to higher dimensions, Nakayama in \cite{Nakayama_ZD} introduced a numerical variant (called $\sigma$-decomposition) decomposing pseudo-effective divisors into effective and movable ones.

 On  Fano varieties movable divisors are strongly movable (that is, they have stable base loci of codimension at least two). Moreover, Fano threefolds do not possess small modifications, therefore strongly movable divisors are automatically nef. Hence, Nakayama's $\sigma$-decomposition yields a CKM-style Zariski decomposition on the Fano threefold in question (not just over it).
\end{remark}

\begin{remark}
Analogously to the surface case, regions where the support of the negative part of the CKM Zariski decomposition is constant give rise to a chamber structure, although the situation is more complicated in higher dimensions (partially for the non-numerical nature of CKM-style Zariski decomposition). Whenever there is sufficient information about it, the geography of proper birational models determines the variation of Zariski decomposition. This is certainly the case for Mori dream spaces, or whenever certain local Cox rings are finitely generated. For a model statement cf. \cite[Theorem 5.4]{KKL12}.
\end{remark}

\subsection{Volumes of divisors}

The volume of a Cartier divisor or a line bundle is the asymptotic rate of growth of its global sections; at the same time, this notion yields something that works as a replacement for both the self-intersection and the dimension of the space of global sections at the same time. In this section, we work over an algebraically closed field of characteristic zero. For further information,  the reader is invited to consult \cite{PAGI,GeomAsp}.

\begin{definition}[Volume of an integral divisor]
Let $X$ be a projective variety of dimension $d$, $L$ an integral Cartier divisor on $X$. We define the \emph{volume of $L$} as
\[
\vvol{X}{L} \deq \limsup_{m} \frac{h^0(X,mL)}{m^d/d!}\ .
\]
\end{definition}

\begin{remark}
By its very definition $\vvol{X}{L}\geqslant 0$. At the same time, since $X$ is projective, we will also have $\vvol{X}{L} < +\infty$. Although it is far from clear from the definition (and nontrivial to prove, see \cite{LM} or \cite[Example 11.4.7]{PAGI}), the upper limit in the definition of the volume is in fact a limit. 
\end{remark}

\begin{example}[Volume of divisors on curves]
Let $C$ be a smooth projective curve, $L$ a $\Q$-effective divisor on $C$. By the Riemann--Roch theorem
\[
h^0(C,mL) - h^1(C,mL) = \deg_C(mL) + 1 - g(C)\ ,
\]
in particular
\[
\frac{h^0(C,mL)}{m} \rightarrow  \deg_C(L) =  \vvol{C}{L}\ .
\]
If $L$ is not $\Q$-effective then $h^0(C,mL)=0$ for all $m\geqslant 1$ hence $\vvol{C}{L}=0$.
\end{example}

\begin{example}[Volume on projective spaces]
Let $X=\PP^d$, and $L$ a Cartier divisor from the hyperplane class. A quick computation shows that $\vvol{X}{L}=1$, this is in fact the reason for the normalization factor in the definition. 	
\end{example}

\begin{example}[Volume of a nef divisor]\label{ex:volume nef divisor}
Let $X$ be a projective variety of dimension $d$, $L$ a nef Cartier divisor on $X$. Then there exists a positive constant $\gamma$ such that
\[
h^i(X,mL) \leqslant \gamma \cdot m^{d-1}
\]
for all $1\leqslant i\leqslant d$ and all $m\geqslant 0$. Consequently,
\[
\vvol{X}{L} = \limsup_{m} \frac{h^0(X,mL)}{m^d/d!} = \limsup_{m} \frac{\chi(X,mL)}{m^d/d!} = (L^d)
\]
by the Asymptotic Riemann--Roch theorem \cite{PAGI}. Non-nef divisors can easily have negative self-intersection, which then is difficult to interpret as a measure of positivity.
\end{example}

\begin{example}[Volumes of divisors on surfaces]
Let $X$ be a smooth projective surface, $L$ a pseudo-effective line bundle on $L$ with Zariski decomposition $L=P_L+N_L$. Then
\[
\vvol{X}{L} \equ \vvol{X}{P_L} \equ (P_L^2)
\]
via Proposition~\ref{prop:ZD positive part}.  In particular, since $P_L$ is a $\Q$-divisor,  the volume of any line bundle on a surface is rational.
\end{example}

\begin{remark}[Finite generation and the rationality of the volume]
The Hilbert--Samuel theorem for graded rings implies that a Cartier divisor with a finitely generated section ring will have rational volume.
\end{remark}

\begin{remark}[Volume and Zariski decomposition]
Let $D$ be an effective integral (or $\Q$-) divisor with a CKM-style Zariski decomposition $D=P+N$; recall that $P$ and $N$ are $\Q$-divisors. It follows from the definition of $P$ that
\[
\vvol{X}{D} \equ \vvol{X}{P} \equ (P^n)\ ,
\]
in particular $\vvol{X}{D}\in \Q$.
\end{remark}

\begin{remark}[Arithmetic properties of volumes]
Even though the volume of a nef Cartier divisor is a non-negative integer, this observation fails for big line bundles in general. More concretely, irrationality of the volume serves as a strong obstruction to the existence of a Zariski decomposition in the sense of CKM.
\begin{enumerate}
	\item Every positive rational number occurs as the volume of a big line bundle on a smooth projective surface.
	\item It has been pointed out by Cutkosky  \cite{Cutkosky,PAGI}  that there exist threefolds with line bundles that have irrational algebraic volumes.
	\item \cite{KLM_volume} showed that there are (plenty of) big line bundles with transcendental volumes.
\end{enumerate}
\end{remark}

Next, we collect the fundamental properties of volumes of line bundles.

\begin{theorem}\label{thm:volume fundamental}
Let $X$ be a projective variety of dimension $d$, $L,L'$  integral Cartier divisors on $X$. Then the following hold.
\begin{enumerate}
	\item $L$ is big iff $\vvol{X}{L}>0$.
	\item If $a\in\N$ then $\vvol{X}{aL} = a^d\cdot \vvol{X}{L}$.
	\item If  $L\equiv L'$ then
	$\vvol{X}{L'} = \vvol{X}{L}$.
	\item (Log-concavity) $\vvol{X}{L+L'}^{1/d} \leqslant \vvol{X}{L}^{1/d} + \vvol{X}{L'}^{1/d}$.
\end{enumerate}
\end{theorem}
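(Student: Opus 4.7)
The plan is to treat the four assertions in order. Claims (1), (2), and (3) are essentially formal, whereas (4) is the genuinely deep statement and requires separate machinery.

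For (2), scaling is immediate: substituting $m\mapsto ma$ in the defining limit turns $m^d$ into $(ma)^d=a^d m^d$, which yields $\vvol{X}{aL}=a^d\vvol{X}{L}$ after a trivial manipulation. For (1), if $L$ is big then Kodaira's lemma gives $aL\sim A+E$ with $A$ ample and $E$ effective, so $h^0(X,maL)\geqslant h^0(X,mA)\sim (A^d)m^d/d!$ (using the nef case of Example~\ref{ex:volume nef divisor}) and hence $\vvol{X}{L}>0$; conversely, growth of order $m^d$ for $h^0(X,mL)$ forces the Iitaka dimension $\kappa(X,L)$ to equal $\dim X$, i.e.\ $L$ is big.

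For (3), the subtlety is that $h^0$ is not itself a numerical invariant. Write $L'=L+N$ with $N\equiv 0$. If $L$ is not big then neither is $L'$ (bigness is numerical), so both volumes vanish by (1). Assume $L$ is big; by Kodaira's lemma pick $aL\sim A+E$ with $A$ ample and $E$ effective, so that $a(L+N)\sim (A+aN)+E$, and $A+aN$ remains ample because ampleness depends only on the numerical class. A Fujita-approximation / asymptotic Riemann--Roch comparison then shows $\vvol{X}{L+N}=\vvol{X}{L}$. Alternatively, one invokes the fact that the volume function extends continuously to $N^1(X)_{\mathbb{R}}$ on the big cone and that numerical equivalence literally means equality of classes in $N^1(X)_{\mathbb{R}}$; the statement for integral $L,L'$ then follows by density.

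The main obstacle is (4). The cleanest modern proof uses Newton--Okounkov bodies due to Lazarsfeld--Mustaţă: after fixing an admissible flag on a birational model of $X$, each big divisor $D$ is assigned a convex body $\Delta(D)\subset\mathbb{R}^d$ satisfying the identity $\vvol{X}{D}=d!\cdot \vol_{\mathbb{R}^d}\bigl(\Delta(D)\bigr)$. The construction is additive in the Minkowski sense: $\Delta(L)+\Delta(L')\subseteq\Delta(L+L')$. Applying the classical Brunn--Minkowski inequality for convex bodies in $\mathbb{R}^d$ gives
$$
\vol_{\mathbb{R}^d}\bigl(\Delta(L+L')\bigr)^{1/d}\;\geqslant\;\vol_{\mathbb{R}^d}\bigl(\Delta(L)\bigr)^{1/d}+\vol_{\mathbb{R}^d}\bigl(\Delta(L')\bigr)^{1/d},
$$
and multiplying through by $(d!)^{1/d}$ delivers the stated log-concavity inequality. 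The boundary case where one of $L$ or $L'$ fails to be big is handled by continuity of the volume function. Setting up Newton--Okounkov bodies rigorously is the substantial technical ingredient; in a survey-style section like this one we would simply invoke \cite{PAGI} and Lazarsfeld--Mustaţă for the details.
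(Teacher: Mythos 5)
Your sketches of (1)--(3) follow the standard arguments (Kodaira's lemma for bigness, rescaling the limit, and numerical invariance via perturbation by an ample class); the paper itself offers no argument at all here, simply citing Lemma~2.2.3, Propositions~2.2.35 and 2.2.41, and Theorem~11.4.9 of \cite{PAGI}, so for these parts your proposal is a reasonable expansion of the intended references. One caution on (3): your ``alternative'' via continuity of the volume on $N^1(X)_\R$ is circular, since that continuity statement is itself established only after numerical invariance; keep the Kodaira-lemma comparison as the actual argument. For (4) you take a genuinely different route from the cited source: Theorem~11.4.9 of \cite{PAGI} proves log-concavity by Fujita approximation, reducing to nef classes where the Khovanskii--Teissier inequalities apply, whereas you invoke Newton--Okounkov bodies and Brunn--Minkowski \`a la Lazarsfeld--Musta\c{t}\u{a}. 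Both are legitimate; the Okounkov-body proof is cleaner once the bodies are constructed, while the Fujita-approximation proof stays within classical intersection theory.

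However, there is a real problem at the end of (4) that you must not paper over. Brunn--Minkowski together with $\Delta(L)+\Delta(L')\subseteq\Delta(L+L')$ yields
$$
\vvol{X}{L+L'}^{1/d}\ \geqslant\ \vvol{X}{L}^{1/d}+\vvol{X}{L'}^{1/d},
$$
i.e.\ \emph{super}additivity, which is the content of \cite[Theorem~11.4.9]{PAGI} --- but the theorem as printed asserts the inequality with $\leqslant$. Your derivation therefore does not ``deliver the stated inequality''; it delivers its reverse, and the stated $\leqslant$ is in fact false (on the blow-up of $\PP^2$ at a point, take $L=2H-E$ and $L'=E$: then $\vvol{X}{L+L'}^{1/2}=2$ while $\vvol{X}{L}^{1/2}+\vvol{X}{L'}^{1/2}=\sqrt{3}$). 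The sign in the statement is a typo that your proof should have exposed rather than absorbed. Relatedly, your claim that the non-big case is ``handled by continuity'' is too quick: for arbitrary integral divisors even the correct $\geqslant$ fails (take $L=H$, $L'=-H$ on $\PP^d$, where the left side is $0$ and the right side is $1$), so the bigness hypothesis of \cite[Theorem~11.4.9]{PAGI} is genuinely needed and should be added to the statement, not argued away.
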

\begin{proof}
Proofs to these statements can be found in  \cite{PAGI}, namely Lemma 2.2.3, Proposition 2.2.35, Proposition 2.2.41, and Theorem 11.4.9.
\end{proof}

\begin{remark}
By Theorem~\ref{thm:volume fundamental} (2), the volume extends uniquely to $\Q$-divisors in the following manner: let $D$ be a $\Q$-Cartier divisor $a$ an arbitrary positive integer such that $aD$ is integral. Then
\[
\vvol{X}{D} \deq \frac{\vvol{X}{aD}}{a^d}
\]
is well-defined.

Also, Theorem~\ref{thm:volume fundamental} (3) yields a definition of the volume on $N^1(X)$; the two observations combined give rise to a function
\[
\vvol{X}{} \colon N^1(X)_\Q \longrightarrow \R_{\geqslant 0}\ .
\]
\end{remark}

\begin{theorem}[Continuity of the volume]\label{thm:cont of volume}
Let $X$ be a projective variety. Then the volume of an integral divisor extends continuously to $N^1(X)_\R$. 	
\end{theorem}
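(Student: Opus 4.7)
The plan is to extend $\vol$ from $N^1(X)_\Q$ (where it is already defined via Theorem~\ref{thm:volume fundamental}(2) and (3)) to a continuous function on $N^1(X)_\R$, by first establishing local uniform continuity on rational classes in the big cone and then analysing the boundary behaviour separately. Fix any norm $\|\cdot\|$ on $N^1(X)_\R$.

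For the interior of the big cone, the log-concavity assertion of Theorem~\ref{thm:volume fundamental}(4), read in its Brunn--Minkowski form $\vol(\xi+\xi')^{1/d}\geqslant\vol(\xi)^{1/d}+\vol(\xi')^{1/d}$ for $\xi,\xi'\in\Bbig(X)$, combined with the degree-one homogeneity of Theorem~\ref{thm:volume fundamental}(2), makes $\vol^{1/d}$ a concave, positively $1$-homogeneous function on $\Bbig(X)_\Q$. Local boundedness is straightforward: for a bounded convex $K\subset\Bbig(X)_\R$, fix an ample class $A$ and a scalar $\lambda>0$ large enough that $\lambda A-\xi$ is ample for every $\xi\in K$; then monotonicity of $\vol$ (a consequence of log-concavity and nonnegativity) yields $\vol(\xi)\leqslant\vol(\lambda A)=\lambda^d (A^d)$ throughout $K$. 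The standard convex-analysis fact that a locally bounded concave function on an open convex subset of a finite-dimensional normed space is locally Lipschitz then guarantees that $\vol^{1/d}$ is Lipschitz on compact subsets of $\Bbig(X)_\Q$, and therefore extends uniquely to a continuous function on all of $\Bbig(X)_\R$. On the open complement $N^1(X)_\R\setminus\overline{\Bbig(X)}$, Theorem~\ref{thm:volume fundamental}(1) gives $\vol\equiv 0$ and continuity is automatic.

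The main obstacle is the boundary $\partial\Bbig(X)=\partial\operatorname{Pseff}(X)$. For $\xi_n\to\xi_0$ with $\xi_0$ on the boundary, I need $\vol(\xi_n)\to 0$. Applying Brunn--Minkowski to the big classes $\xi_n$ and $\epsilon A$ (for an ample $A$ and small $\epsilon>0$) gives $\vol(\xi_n)^{1/d}\leqslant\vol(\xi_n+\epsilon A)^{1/d}-\epsilon (A^d)^{1/d}$, and taking $\limsup_n$ while invoking the continuity already established on $\Bbig(X)_\R$ yields
\[
\limsup_{n\to\infty}\vol(\xi_n)^{1/d}\ \leqslant\ \vol(\xi_0+\epsilon A)^{1/d}-\epsilon (A^d)^{1/d}\qquad\text{for every }\epsilon>0.
\]
The matter then reduces to showing that this right-hand side tends to $0$ as $\epsilon\to 0^+$, which is upper semicontinuity of $\vol$ at boundary classes. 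Log-concavity only produces lower bounds, so this step requires an independent input; the cleanest route is Fujita approximation, which expresses $\vol(\xi)$ as a supremum of top self-intersections $(P^d)/\mu^d$ of nef parts on smooth birational models and forces these approximants to degenerate to zero as ample perturbations of a non-big boundary class shrink. Combining the interior case and the boundary case produces a continuous extension of $\vol$ to all of $N^1(X)_\R$, as claimed.
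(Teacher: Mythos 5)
The paper does not actually prove this statement: it is quoted from the literature (\cite[Theorem~2.2.44]{PAGI}), where the argument runs through a global Lipschitz-type estimate $|\vol(\xi)-\vol(\xi')|\leqslant C\,\max(\|\xi\|,\|\xi'\|)^{d-1}\,\|\xi-\xi'\|$ on rational classes rather than through concavity. Your treatment of the interior of the big cone (super-additivity of $\vol^{1/d}$ plus homogeneity gives concavity, local boundedness gives local Lipschitz continuity, hence a unique continuous extension) and of the open complement of the pseudo-effective cone is correct. You are also right to read Theorem~\ref{thm:volume fundamental}(4) in the Brunn--Minkowski direction $\vol(\xi+\xi')^{1/d}\geqslant\vol(\xi)^{1/d}+\vol(\xi')^{1/d}$; as printed in the paper the inequality points the wrong way.

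The genuine gap is the boundary step, which you locate precisely but do not close. You need $\vol(\xi_0+\epsilon A)\to 0$ as $\epsilon\to 0^{+}$ for a pseudo-effective, non-big class $\xi_0$, and the sentence invoking Fujita approximation is not an argument. Fujita approximation yields, for each $\epsilon$, a model $\mu$ and a decomposition $\mu^{*}(\xi_0+\epsilon A)=P+E$ with $(P^{d})$ close to $\vol(\xi_0+\epsilon A)$; the natural way to exploit this is the bound $\vol(\xi_0+\epsilon A)\leqslant (P^{d-1}\cdot\mu^{*}\xi_0)+\epsilon\,(P^{d-1}\cdot\mu^{*}A)+\delta$. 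The second term is $O(\epsilon)$ because $P$ is dominated by the pullback of a fixed ample class, but the first term $(\mu_{*}(P^{d-1})\cdot\xi_0)$ is merely nonnegative: for a non-nef boundary class $\xi_0$ there is no a priori reason it tends to zero, so the approximants do not visibly ``degenerate to zero.'' The standard independent input is instead the restriction estimate obtained by filtering $H^{0}(X,m(pD+A))$ along general members of $|A|$: if $\vol(D)=0$ and $\lambda A-D$ is effective, one gets $\vol(pD+A)\leqslant p^{d}\vol(D)+d(p\lambda+1)^{d-1}(A^{d})$, hence $\vol(D+\tfrac{1}{p}A)=p^{-d}\vol(pD+A)=O(1/p)\to 0$. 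With that lemma supplied your argument is complete; without it, the boundary case remains open.
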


\begin{remark}
Theorem~\ref{thm:cont of volume} implies in particular that it makes sense to talk about the volume of an $\R$-divisor.
\end{remark}

\begin{remark}
One can pursue the definition analogous to the volume for the higher cohomology of a divisor. As it turns out,  Theorem~\ref{thm:volume fundamental} (1)-(3) and Theorem~\ref{thm:cont of volume} all continue to hold \cite{Kuronya_ACF, dFKL}.
\end{remark}

\begin{remark}
One can ask the question how the volume behaves as a global function on $N^1(X)_\R$. Using Example~\ref{ex:volume nef divisor} we see that
the volume equals the self-intersection form when restricted to the nef cone of $X$, outside the big cone however, the volume is identically zero. Since $\vvol{X}{}$ is continuous over the whole real N\'eron--Severi space, the best that one can hope for is that it exhibits a piecewise polynomial behaviour with respect to some chamber decomposition.
\end{remark}

\begin{example}[Birational behaviour of volumes]
Let $X$ be a projective variety of dimension $d$, let $L$ be an ample line bundle on $X$, and let $x\in X$ be a (closed) non-singular point. Write $\pi\colon Y\to X$ for the blowing-up of $X$ at $x$ with exceptional divisor $E$.

To begin with,  the volume is a birational invariant, more specifically,
\[
\vvol{Y}{\pi^*L} = \vvol{X}{L}\ .
\]
Next,  for positive integers $m,k\geqslant 1$  we have
\[
H^0(X,\cO_X(mL)\otimes \cI_{x/X}^k) \simeq H^0(Y,\cO_Y(m\pi^*L-kE))\ .
\]
As a consequence, the number $\vvol{Y}{\pi^*L-tE}$ can be thought of as the asymptotic measure of the global sections of $mL$ vanishing at $x$ to order at least $mt$, and the ratio
\[
\frac{\vvol{Y}{\pi^*L-tE}}{\vvol{X}{L}}
\]
represents the proportion of such global sections (again, in an asymptotic sense).
\end{example}

The following result yields a clean way to argue for the piecewise polynomial nature of the volume function under suitable finiteness conditions. For further details on notation, definitions  and proofs we refer the reader to \cite{KKL12}.

\begin{definition}
Let $X$ be a $\Q$-factorial projective variety. A $\Q$-divisor $D$ is called \emph{gen} if for all $\Q$-Cartier $\Q$-divisors $D'$ on $X$ numerically equivalent to $D$ the section ring $R(X,D')$ is finitely generated.
\end{definition}

\begin{remark}
The significance of gen divisors comes from the following situation. Let $D$ and $D'$ be numerically equivalent divisors, if both of them have finitely generated section rings then $\Proj R(X,D) \simeq \Proj R(X,D')$, but otherwise it is not guaranteed (see \cite[Lemma 3.11]{KKL12}).  Being gen guarantees the $\Q$-factoriality of the ample model of $(X,D)$ (cf. \cite[Lemma 4.6]{KKL12}). The main sources of gen divisors are ample divisors, adjoint divisors, or in fact any divisor on a variety with $\Pic(X)_\Q\simeq N^1(X)_\Q$, like Mori dream spaces.
\end{remark}

\begin{theorem}[\cite{KKL12}, Theorem 5.4]\label{thm:KKL optimal models}
Let $X$ be a projective $\Q$-factorial variety, $\mathcal C\subseteq \Div_\R(X)$ a rational polyhedral cone. Write $\pi\colon \Div_\R(X)\to N^1(X)_\R$ for the natural projection. Assume that
\begin{enumerate}
    \item the local Cox ring  $\mathfrak R = R(X,\mathcal C)$ is finitely generated,
    \item $\Supp \mathfrak R$ contains an ample divisor,
    \item $\pi(\Supp \mathfrak R)$ spans $N^1(X)_\R$,
    \item every divisor in the interior of $\Supp \mathfrak R$ is gen.
\end{enumerate}
Under these conditions we can run a $D$-MMP for any $\Q$-divisor $D\in \mathcal C$ which terminates.
Furthermore, there exists  a finite decomposition
\[
\Supp \mathfrak R \equ \coprod \mathcal{N}_i
\]
into convex cones $\mathcal{N}_i$ satisfying the following properties.
\begin{enumerate}
    \item Each $\overline{\mathcal{N}_i}$ is a rational polyhedral cone,
    \item for every index $i$, there exists a $\Q$-factorial projective variety $X_i$ and a birational contraction $\phi_i\colon X\dashrightarrow X_i$ such that $\phi_i$ is an optimal model \textup{(cf. \cite[Definition 2.3(2)]{KKL12})} for every divisor in $\mathcal{N}_i$,
    \item every element of the cone $(\phi_i)_*\mathcal{N}_i$ is semiample.
\end{enumerate}
\end{theorem}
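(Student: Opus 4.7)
The statement is Theorem~5.4 of \cite{KKL12}, so my plan is to reproduce the structure of their argument, emphasising how the four hypotheses are used. The overall strategy is to extract from the finite generation of the local Cox ring $\mathfrak{R}=R(X,\mathcal{C})$ a geometric chamber decomposition of $\Supp\mathfrak{R}$ whose chambers record the birational model to which each divisor is sent by its MMP.

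First I would set up the MMP in this setting. Pick $D\in\mathcal{C}$ with $D\in\operatorname{int}\Supp\mathfrak{R}$. Hypothesis~(2) provides an ample divisor $A$ in $\Supp\mathfrak{R}$, and hypothesis~(4) guarantees that $D+\varepsilon A$ is gen for all small $\varepsilon>0$. Finite generation of the associated section rings, together with the gen condition, allows one to run the $D$-MMP step by step: each non-nef extremal ray gives either a divisorial contraction or a flip, and the existence (and $\mathbb{Q}$-factoriality) of the flip follows from the finite generation of the corresponding local Cox ring, exactly as in \cite{BCHM}. Termination is obtained from the finiteness of the set of birational contractions $\phi\colon X\dashrightarrow X'$ with $X'$ a $\mathbb{Q}$-factorial projective variety dominated birationally by $X$ and with $\phi_*$ of some divisor in $\mathcal{C}$ semiample; this finiteness is itself a consequence of finite generation of $\mathfrak{R}$, since each such $X'$ is cut out as a $\Proj$ of a finitely generated subring of $\mathfrak{R}$.

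Next I would define the chambers. Given a birational contraction $\phi_i\colon X\dashrightarrow X_i$ arising in this way, let
\[
\mathcal{N}_i\;=\;\bigl\{D\in\Supp\mathfrak{R}\;:\;\phi_i\text{ is an optimal model for }D\bigr\}.
\]
That $\overline{\mathcal{N}_i}$ is a rational polyhedral cone follows by pulling back the nef cone of $X_i$, which is rational polyhedral because $X_i$ inherits finite generation of its Cox ring from $X$ (via the projection formula applied to $\mathfrak{R}$), and then intersecting with the preimage in $\Div_{\mathbb{R}}(X)$ of the effective cone contracted by $\phi_i$. Property~(3), that $(\phi_i)_*\mathcal{N}_i$ consists of semiample classes, is then immediate from the definition of optimal model and the base-point-free theorem on the nef cone of $X_i$. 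Disjointness of the $\mathcal{N}_i$ and exhaustion of $\Supp\mathfrak{R}$ (up to boundary identifications) comes from the uniqueness of the ample model of a gen divisor (\cite[Lemma~4.6]{KKL12}), which is where hypothesis~(4) is genuinely needed.

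The main obstacle, as in \cite{KKL12}, is controlling the gen property along the MMP: one must show that the chamber decomposition does not become infinite through accumulation of walls, and that the optimal models $X_i$ are themselves $\mathbb{Q}$-factorial so that the induction step can be repeated. This is where hypothesis~(1)---finite generation of $\mathfrak{R}$---enters crucially, since it provides an a~priori bound on the number of birational models via a GIT-style description of $\Proj$-constructions from graded subalgebras. Once this is in place, the polyhedral combinatorics of the chamber decomposition and the semiampleness assertion follow in a uniform way from the polyhedrality of nef cones in the Mori-dream-space-like setting furnished by $\mathfrak{R}$.
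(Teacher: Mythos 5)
The paper does not actually prove this statement: it is quoted verbatim from \cite{KKL12} (Theorem~5.4 there) and used as a black box, so there is no internal proof to compare your sketch against. Judged on its own, your outline does capture the main strategy of \cite{KKL12}: finite generation of the local Cox ring $\mathfrak R$ yields a finite rational polyhedral decomposition of $\Supp\mathfrak R$; the gen hypothesis guarantees that each divisor in the interior has a unique $\mathbb{Q}$-factorial ample model (their Lemma~4.6); the chambers $\mathcal N_i$ are defined by divisors sharing an optimal model; and the $D$-MMP is realised as a path through finitely many chambers, which forces termination. All of this is consistent with how the argument actually runs.

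One step of your sketch is not quite right as stated: you derive the rational polyhedrality of $\overline{\mathcal N_i}$ by ``pulling back the nef cone of $X_i$, which is rational polyhedral because $X_i$ inherits finite generation of its Cox ring from $X$.'' The hypotheses here do not give finite generation of the full Cox ring of $X_i$ (only of the local ring $R(X,\mathcal C)$ and its pushforward), and $\mathrm{Nef}(X_i)$ need not be rational polyhedral; what is polyhedral is only the relevant piece, namely $(\phi_i)_*\overline{\mathcal N_i}\subseteq\mathrm{Nef}(X_i)$. In \cite{KKL12} the polyhedrality of the chambers comes directly from the finite rational polyhedral decomposition of $\Supp\mathfrak R$ induced by finite generation of $\mathfrak R$ (the ELMNP-type decomposition underlying their Theorem~4.2), not from any global statement about nef cones of the models. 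With that correction, and with the understanding that the substantive inputs (existence and $\mathbb{Q}$-factoriality of flips, uniqueness of ample models of gen divisors, finiteness of models) are being imported from \cite{BCHM} and \cite{KKL12} rather than reproved, your sketch is a fair account of the cited result.
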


\begin{theorem}\label{thm:volume pw poly}
Let $X$ be a projective $\Q$-factorial variety. Under the conditions of Theorem~\ref{thm:KKL optimal models}, the function $\vvol{X}{\,}$ restricted to the cone $\mathcal{N}_i$ is given by a homogeneous polynomial of degree $\dim\,X$.

Consequently, the volume function restricted to $\Supp \mathfrak R$ is pieceweise polynomial (homogeneous of degree equal $\dim\,X$ with respect to a finite rational polyhedral decomposition).
\end{theorem}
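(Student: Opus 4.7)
The plan is to reduce, on each chamber $\mathcal{N}_i$, the computation of $\vvol{X}{\cdot}$ to a top self-intersection number on the birational model $X_i$ provided by Theorem~\ref{thm:KKL optimal models}, where the volume of a nef divisor coincides with its self-intersection (Example~\ref{ex:volume nef divisor}); polynomiality will then follow at once from linearity of pushforward and multilinearity of the intersection form.

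First, I would fix an index $i$ and, for $D\in\mathcal{N}_i$, use part (3) of Theorem~\ref{thm:KKL optimal models} to note that $(\phi_i)_*D$ is semiample on the $\Q$-factorial projective variety $X_i$. Because $\phi_i\colon X\dashrightarrow X_i$ is a birational contraction realizing $D$ as an optimal model, the section rings of $D$ on $X$ and of $(\phi_i)_*D$ on $X_i$ are naturally identified in every sufficiently divisible multiple, so that
\[
\vvol{X}{D}\;=\;\vvol{X_i}{(\phi_i)_*D}.
\]
Since $(\phi_i)_*D$ is semiample, hence nef, Example~\ref{ex:volume nef divisor} gives $\vvol{X_i}{(\phi_i)_*D}=\bigl((\phi_i)_*D\bigr)^{\dim X_i}$, and $\dim X_i=\dim X$ as $\phi_i$ is birational. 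The pushforward $D\mapsto(\phi_i)_*D$ is $\R$-linear, so in any choice of linear coordinates on the rational polyhedral cone $\mathcal{N}_i$ the function $D\mapsto\bigl((\phi_i)_*D\bigr)^{\dim X}$ is a single homogeneous polynomial of degree $\dim X$, which proves the first assertion.

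The ``consequently'' statement then follows from the finite decomposition $\Supp\mathfrak{R}=\coprod \mathcal{N}_i$, together with continuity of the volume (Theorem~\ref{thm:cont of volume}) to match the resulting polynomials on common faces of adjacent closures $\overline{\mathcal{N}_i}$.

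The main obstacle, which I would formulate as a separate lemma, is the equality $\vvol{X}{D}=\vvol{X_i}{(\phi_i)_*D}$. Unwinding the definition of optimal model, one must check that, after passing to a common resolution, $\phi_i$ realises a CKM-style Zariski decomposition of $D$ whose negative part is $\phi_i$-exceptional; this is where the hypotheses of Theorem~\ref{thm:KKL optimal models}, in particular finite generation of the local Cox ring $\mathfrak{R}=R(X,\mathcal{C})$ and the gen property, intervene to pass from an isomorphism of section rings in a single multiple of $D$ to the asymptotic equality of volumes. Once this identification is in place, birational invariance of the volume under proper birational morphisms between normal projective varieties finishes the argument.
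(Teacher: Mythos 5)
Your proposal is correct and follows essentially the same route as the paper: reduce on each chamber $\mathcal{N}_i$ to the optimal model $X_i$ from Theorem~\ref{thm:KKL optimal models}, use birational invariance of the volume to identify $\vvol{X}{D}$ with $\vvol{X_i}{(\phi_i)_*D}$, and conclude via the top self-intersection of the semiample (hence nef) pushforward together with linearity of the induced map on N\'eron--Severi spaces. Your explicit isolation of the equality $\vvol{X}{D}=\vvol{X_i}{(\phi_i)_*D}$ as the key lemma is a reasonable elaboration of what the paper compresses into the phrase ``the volume of a divisor is a birational invariant.''
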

\begin{proof}
    Since the volume of a divisor is a birational invariant, the first statement follows from Theorem~\ref{thm:KKL optimal models} and the fact that pulling back induces a homogeneous linear map between the appropriate real N\'eron--Severi spaces. We obtain the second claim by considering the maximal-dimensional chambers under the $\mathcal{N}_i$'s.
\end{proof}

\begin{corollary}\label{thm:MDS volume pw poly}
Let $X$ be a Mori dream space. Then the volume function on $N^1(X)_\R$ is piecewise polynomial (homogeneous of degree $\dim X$) with respect to the Mori chamber decomposition of the effective cone of $X$.
\end{corollary}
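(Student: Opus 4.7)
The plan is to deduce the corollary directly from Theorem~\ref{thm:volume pw poly} by verifying the four hypotheses of Theorem~\ref{thm:KKL optimal models} when $X$ is a Mori dream space, and then matching the resulting chamber decomposition with the Mori chamber decomposition.

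First, I would take $\mathcal{C} \subseteq \Div_{\R}(X)$ to be a rational polyhedral cone whose image under the projection $\pi\colon \Div_{\R}(X) \to N^1(X)_{\R}$ is the pseudo-effective cone $\overline{\mathrm{Eff}}(X)$; since $\overline{\mathrm{Eff}}(X)$ is rational polyhedral on a Mori dream space, this is possible. By definition of a Mori dream space the Cox ring $R(X)$ is finitely generated, hence so is the local Cox ring $\mathfrak{R} = R(X,\mathcal{C})$, which gives hypothesis (1). Hypothesis (2) holds because $X$ is projective, so $\mathrm{Supp}\,\mathfrak{R}$ contains an ample divisor. For hypothesis (3), recall that on a Mori dream space the natural map $\Pic(X)_{\Q} \to N^1(X)_{\Q}$ is an isomorphism, so $\pi(\mathrm{Supp}\,\mathfrak{R})$ spans $N^1(X)_{\R}$. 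Hypothesis (4), that every $\Q$-divisor in the interior of $\mathrm{Supp}\,\mathfrak{R}$ is gen, also follows immediately from $\Pic(X)_{\Q} \simeq N^1(X)_{\Q}$, as pointed out in the remark preceding Theorem~\ref{thm:KKL optimal models}.

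With all hypotheses verified, Theorem~\ref{thm:volume pw poly} yields a finite decomposition $\overline{\mathrm{Eff}}(X) = \coprod \mathcal{N}_i$ into convex cones with rational polyhedral closures on each of which the volume function is a homogeneous polynomial of degree $n = \dim X$. Extending by zero on $N^1(X)_{\R} \setminus \overline{\mathrm{Eff}}(X)$, where the volume vanishes identically by Theorem~\ref{thm:volume fundamental}(1), gives the claimed piecewise polynomial structure on the entire real N\'eron--Severi space.

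The main obstacle is the final identification step: matching the ``optimal model'' partition provided by Theorem~\ref{thm:KKL optimal models} with the Mori chamber decomposition as usually defined via the Cox-ring combinatorics of $X$. This is essentially a comparison of definitions, using that each $\mathcal{N}_i$ is by construction the locus of divisors admitting a common optimal birational contraction $\phi_i\colon X \dashrightarrow X_i$, which is precisely the defining property of a Mori chamber on a Mori dream space. Some care is needed regarding the distinction between open chambers and their closures, and between changes of optimal model and changes of ample model within the same small $\Q$-factorial modification.
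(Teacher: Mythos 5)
Your proposal is correct and follows essentially the same route as the paper: the paper's proof simply takes $\mathcal{C}$ to be the whole effective cone, notes that $R(X,\mathcal{C})$ is the finitely generated Cox ring, and declares the remaining hypotheses of Theorem~\ref{thm:volume pw poly} "quickly verified" — precisely the checks you carry out in detail. Your explicit verification of conditions (2)--(4) and the remark on matching the optimal-model partition with the Mori chamber decomposition are welcome elaborations of what the paper leaves implicit.
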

\begin{proof}
The conditions of Theorem~\ref{thm:volume pw poly} are quickly verified, the  cone $\mathcal{C}$ is the whole effective cone, the graded ring $R(X,\mathcal{C})$ is the Cox ring of $X$, which is finitely generated.
\end{proof}

\begin{remark}
In the case of Mori dream spaces  the statement already follows from Hu--Keel via the same argument as in Theorem~\ref{thm:volume pw poly}.
\end{remark}

\section{Automorphism groups of Fano's last Fanos}
\label{section:Aut}

Let $Y \subseteq \mathbb{P}^5$ be the smooth complete intersection of
two  quadrics $Q_1$ and $Q_2$ that intersect each other along a conic
$C$ on a plane $\Pi \subseteq \mathbb{P}^5$.
Denote by $X$ the strict transform of $Y$ via the blowup $\mathrm{Bl}_{\Pi}\mathbb{P}^5\to \mathbb{P}^5$  along the plane $\Pi$. We denote by ${\rm Aut}(X)$ and ${\rm Aut}(Y)$ the automorphism groups of the Fano varieties $X$ and $Y$, respectively.

Since
\[
{\rm Aut}(X) \simeq \{ \rho \in {\rm Aut}(Y)  :  \rho(C) = C \},
\]
it suffices to study the automorphisms of $Y$ that fix the conic $C$, in order to classify the automorphism groups of the smooth Fano $3$-folds in Family~\textnumero 2.16. Furthermore, every automorphism of $Y$ comes from an automorphism of $\mathbb{P}^5$ that leaves $Y$ fixed. Denote by $\rm{Lin}(Y)$ the subgroup of  automorphisms of $\mathbb{P}^5$ that leaves $Y$ fixed. We may identify  $\rm{Aut}(X)$ with the subgroup of $\rm{Lin}(Y)$ that consists of automorphisms that fix $\Pi$, i.e.,
\[
{\rm Aut}(X) =\{ \tau \in \rm{Lin}(Y) : \tau(\Pi)=\Pi \}.
\]

By~\cite[Lemma 8.6.1]{do}, up to projective equivalence, the ideal of $Y$ is generated by the two diagonal quadrics given by the equations
\[
  \sum_{i=0}^5 a_i x_i^2 = 0, \qquad \sum_{i=0}^5 x_i^2 = 0,
\]
where all the $a_i$ are distinct (ensuring that $Y$ is smooth).
We will refer to these two quadrics as $Q_1$ and $Q_2$ from now on.
Let $\mathcal{P}$ be the pencil of quadrics generated by $Q_1$ and $Q_2$ in $\mathbb{P}^5$.
Since $Y$ is smooth, the pencil $\mathcal{P}$ has a unique quadric $Q$ that contains the plane $\Pi$.
Because the quadric $Q$ can be written as $\alpha Q_1+\beta Q_2$, $[\alpha:\beta]\in\mathbb{P}^1$, a further
diagonal scaling $x_i\mapsto\lambda_i x_i$ (which keeps the pencil
unchanged) can be chosen so that $Q$ becomes a Fermat-type quadric of
rank~$6$ if it is smooth, or of rank~$5$ when it is singular.  Re-ordering coordinates if necessary we
may therefore assume
\[
Q_1 : \; \sum_{i=0}^{5} a_i x_i^{2}=0,\qquad
Q \;=\; Q_2 : \left\{\aligned
&\; x_0^{2}+x_1^{2}+x_2^{2}+x_3^{2}+x_4^{2}=0\  \text{if $Q$ is singular}, \\
&\; x_0^{2}+x_1^{2}+x_2^{2}+x_3^{2}+x_4^{2}+x_5^{2}=0\  \text{if $Q$ is smooth},
\endaligned
\right.
\]
where $a_0,a_1,\cdots, a_5$ are still pairwise distinct if $Q$ is smooth,  and $a_0,a_1,\cdots, a_4$ are  pairwise distinct with $a_5\ne0$ if $Q$ is singular.

The pencil $\mathcal{P}$ contains exactly six singular quadrics and they are cones over smooth quadric $3$-folds. The vertices of the six singular quadrics are the points represented by the following standard row vectors:
\begin{equation}\label{eq:st}
\begin{aligned}
e_0:=(1,0,0,0,0,0), \  e_1:=(0,1,0,0,0,0),  \ e_2:=(0,0,1,0,0,0), \\
e_3:=(0,0,0,1,0,0), \  e_4:=(0,0,0,0,1,0),  \ e_5:=(0,0,0,0,0,1).
\end{aligned}
\end{equation}

An automorphism $\tau \in {\rm Aut}(X)$ must preserve the pencil $\mathcal{P}$ and, in particular, the vertices $e_0, \dots, e_5$ of the six singular quadrics of the pencil.
Therefore, we can write
\begin{equation}\label{eq:Aut-decompostion}
\tau = \sigma \circ \eta,
\end{equation}
 where $\sigma$ is an automorphism given by a permutation matrix, and $\eta$ is an automorphism given by a non-singular diagonal matrix.
Moreover, $\tau$ must preserve the Fermat-type quadric $Q$.

We adopt the following standard notation for finite groups throughout this section:
\begin{itemize}
\item[-] $C_m$ is the cyclic group of order $m$;
\item[-] $\mathrm{D}_m$ is the dihedral  group of order $2m$;
\item[-] $\mathfrak{A}_m$ is the alternating group of degree $m$;
\item[-] $\mathfrak{S}_m$ is the symmetric  group of degree $m$.
\end{itemize}

\subsection{Diagonal part}

If $Q$ is smooth, then it immediately follows that $\eta$ in \eqref{eq:Aut-decompostion} is a diagonal matrix with diagonal entries $\pm 1$
because~$\tau$ must preserve the Fermat quadric $Q$ of rank $6$.

In the case when $Q$ is singular, $\tau$ preserves the Fermat quadric $Q$ of rank $5$. Therefore, the first five diagonal entries of $\eta$ are $\pm 1$ and
the permutation part $\sigma$ preserves the vertex given by $e_5$.
Determining the sixth entry of $\eta$ requires an additional argument. To this end, we first introduce the following lemma.

\begin{lemma}\label{lemma:Skew}
Let $B=(b_i-b_j)_{0\leq i,j\leq 4}$ be a $5\times 5$ skew-symmetric
matrix, where $b_0,\dots,b_4$ are pairwise distinct complex numbers. Let
$\nu\in\mathfrak S_5$, and let $P_\nu$ be the corresponding permutation
matrix, with the convention $P_\nu e_i=e_{\nu(i)}$. Suppose that there exists
a nonzero scalar $c$ such that
\[
P_\nu^tBP_\nu=cB.
\]
Then the only possible cycle types are:
\begin{itemize}
    \item $\nu=\mathrm{Id}$ and $c=1$;
    \item $\nu=(01)(23)$, up to conjugation, and $c=-1$;
    \item $\nu=(0123)$, up to conjugation, and $c=\zeta_4$, a primitive
    $4$-th root of unity;
    \item $\nu=(01234)$, up to conjugation, and $c=\zeta_5$, a primitive
    $5$-th root of unity.
\end{itemize}
\end{lemma}

\begin{proof}
The assumption gives
\[
b_{\nu(i)}-b_{\nu(j)}=c(b_i-b_j)
\]
for all $i,j$. Equivalently, there exists a constant $r$ such that
\[
b_{\nu(i)}=cb_i+r
\]
for all $i$.

First suppose that $c=1$. Then $b_{\nu(i)}=b_i+r$ for all $i$. If $N$ is
the order of $\nu$, applying this relation $N$ times gives
\[
b_i=b_{\nu^N(i)}=b_i+Nr.
\]
Thus $r=0$, and hence $b_{\nu(i)}=b_i$ for all $i$. Since the $b_i$ are
pairwise distinct, we get $\nu=\mathrm{Id}$.

Assume now that $c\neq 1$. Suppose that $\nu$ has a cycle of length
$m\geq 2$. Iterating the relation along the cycle gives
\[
b_{\nu^k(i)}
=
c^k b_i+\left(\sum_{j=0}^{k-1}c^j\right)r.
\]
Since $\nu^m(i)=i$, the same computation applied to every element of the
cycle gives, for all $h=0,\dots,m-1$,
\[
(c^m-1)b_{\nu^h(i)}
+
\left(\sum_{j=0}^{m-1}c^j\right)r=0.
\]
Subtracting two such equations and using the fact that the corresponding
$b$'s are distinct, we obtain $c^m=1$.
Moreover, $c$ is a primitive $m$-th root of unity. Indeed, if $c^k=1$ for
some $0<k<m$, then
\[
1+c+\cdots+c^{k-1}=0,
\]
and therefore $b_{\nu^k(i)}=b_i$. Since the $b_i$ are pairwise distinct, this
implies $\nu^k(i)=i$, contradicting the fact that $i$ belongs to a cycle of
length $m$.
Thus all nontrivial cycles of $\nu$ have the same length. Also, $\nu$ has at
most one fixed point: if $\nu(i)=i$ and $\nu(j)=j$ with $i\neq j$, then
\[
b_i-b_j=c(b_i-b_j),
\]
hence $c=1$, a contradiction.
The possible cycle types in $\mathfrak S_5$ are
\[
1^5,\quad 2+1+1+1,\quad 2+2+1,\quad 3+1+1,\quad 3+2,\quad 4+1,\quad 5.
\]
The types $2+1+1+1$ and $3+1+1$ have at least two fixed points, while the
type $3+2$ has nontrivial cycles of different lengths. Hence the only
nontrivial possibilities are
\[
2+2+1,\qquad 4+1,\qquad 5.
\]
The corresponding values of $c$ are respectively $-1$, a primitive fourth
root of unity, and a primitive fifth root of unity. This proves the claim.
\end{proof}

We derive the sixth diagonal entry of $\eta$ and the permutation matrix $\sigma$ from Lemma~\ref{lemma:Skew} as follows.

\begin{proposition}\label{proposition:singularP}
Suppose that the quadric $Q$ is singular. Then, in \eqref{eq:Aut-decompostion}, only the following four cases can occur:
\begin{enumerate}
\item the sixth diagonal entry of $\eta$ is $\pm 1$ and  $\sigma$ is the identity.
\item the sixth diagonal entry of $\eta$ is $\zeta_4$, a  primitive $4$-th root of unity, and $\sigma$ has two cycles of length $2$.
\item the sixth diagonal entry of $\eta$ is $ \zeta_8$, a  primitive $8$-th root of unity, and $\sigma$ has a cycle of length $4$.
\item  the sixth diagonal entry of $\eta$ is $\zeta_{10}$, a $10$-th root of unity with $\zeta_{10}\ne \pm 1$, and $\sigma$ has a cycle of length $5$.
\end{enumerate}
\end{proposition}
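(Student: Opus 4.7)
The plan is to use that $\tau$ must preserve the pencil $\mathcal{P}$, hence carry $Q_1$ to another element of $\mathcal{P}$, and reduce the resulting constraint to Lemma~\ref{lemma:Skew}. First I would narrow down the permutation and diagonal parts. Since $Q$ is the unique member of $\mathcal{P}$ containing $\Pi$ and $\tau$ preserves $\Pi$, $\tau$ preserves $Q$. As $Q$ is singular with unique vertex $e_5$, $\sigma$ must fix index $5$ and permute $\{0,1,2,3,4\}$ via some $\nu\in\mathfrak{S}_5$. Preservation of the rank-$5$ Fermat form $x_0^2+\cdots+x_4^2$ then forces $\epsilon_i\in\{\pm 1\}$ for $0\leq i\leq 4$; set $d:=\epsilon_5$, which is the sixth diagonal entry to be determined.

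Next I would compute $\tau^{*}Q_1$ explicitly in the chosen coordinates. Because each $\epsilon_i$ with $i\leq 4$ appears squared in the diagonal form, the signs disappear and the calculation produces
$$
\tau^{*}Q_1 \;=\; \sum_{j=0}^{4}a_{\nu(j)}x_j^2 \;+\; a_5 d^2\, x_5^2.
$$
The pencil condition forces $\tau^{*}Q_1 = \alpha Q_1 + \beta Q$ for some scalars $\alpha,\beta$. Comparing the $x_5^2$-coefficients (and using $a_5\neq 0$) gives $\alpha = d^2$, while comparing the remaining coefficients yields
$$
a_{\nu(j)} - a_{\nu(k)} \;=\; d^2\bigl(a_j - a_k\bigr) \qquad\text{for all }0\leq j,k\leq 4.
$$
This is precisely the hypothesis of Lemma~\ref{lemma:Skew} applied with $b_i=a_i$ and $c=d^2$. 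The four cases listed there correspond to $d^2 = 1,\,-1,\,\zeta_4,\,\zeta_5$, which translate respectively into $d=\pm 1$ (case (1)), $d$ a primitive $4$-th root of unity (case (2)), $d$ a primitive $8$-th root of unity (case (3)), and $d$ a tenth root of unity distinct from $\pm 1$ (case (4)); the associated cycle structures of $\sigma$ are read off directly from Lemma~\ref{lemma:Skew}.

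The main obstacle I anticipate is the bookkeeping in the computation of $\tau^{*}Q_1$ and the careful verification that the diagonal signs $\epsilon_0,\ldots,\epsilon_4$ wash out under squaring, so that the constraint on $(\nu,d)$ depends only on $d^2$ and the $a_i$. Once the displayed identity $a_{\nu(j)}-a_{\nu(k)} = d^2(a_j-a_k)$ is in hand, the enumeration is automatic from Lemma~\ref{lemma:Skew} and no further analysis is required.
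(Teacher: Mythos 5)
Your proposal is correct and follows essentially the same route as the paper: both reduce the classification to Lemma~\ref{lemma:Skew} applied with $c=d^{2}$, where $d$ is the sixth diagonal entry of $\eta$. The only (cosmetic) difference is that you derive the relation $a_{\nu(j)}-a_{\nu(k)}=d^{2}(a_j-a_k)$ directly from the pencil condition $\tau^{*}Q_1=\alpha Q_1+\beta Q$, whereas the paper obtains the equivalent identity $P_\nu BP_\nu=cB$ from the action of $\tau$ on the six singular quadrics of the pencil; both amount to the same constraint and the enumeration of cases is identical.
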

\begin{proof}

The automorphism $\tau$ preserves all six singular quadrics in the pencil, which are given by:
\begin{equation}
\begin{aligned}
\phantom{(a_0-a_0)}x_0^{2}+\phantom{(a_1-a_0)}x_1^{2}+\phantom{(a_2-a_0)}x_2^{2}+
\phantom{(a_3-a_0)}x_3^{2}+\phantom{(a_4-a_0)}x_4^{2}\phantom{+a_{5}x_5^2}\ =0;\\
(a_0-a_0)x_0^{2}+(a_1-a_0)x_1^{2}+(a_2-a_0)x_2^{2}+(a_3-a_0)x_3^{2}+(a_4-a_0)x_4^{2}+a_5x_5^2=0;\\
(a_0-a_1)x_0^{2}+(a_1-a_1)x_1^{2}+(a_2-a_1)x_2^{2}+(a_3-a_1)x_3^{2}+(a_4-a_1)x_4^{2}+a_5x_5^2=0;\\
(a_0-a_2)x_0^{2}+(a_1-a_2)x_1^{2}+(a_2-a_2)x_2^{2}+(a_3-a_2)x_3^{2}+(a_4-a_2)x_4^{2}+a_5x_5^2=0;\\
(a_0-a_3)x_0^{2}+(a_1-a_3)x_1^{2}+(a_2-a_3)x_2^{2}+(a_3-a_3)x_3^{2}+(a_4-a_3)x_4^{2}+a_5x_5^2=0;\\
(a_0-a_4)x_0^{2}+(a_1-a_4)x_1^{2}+(a_2-a_4)x_2^{2}+(a_3-a_4)x_3^{2}+(a_4-a_4)x_4^{2}+a_5x_5^2=0.\\
\end{aligned}
\end{equation}
The last five quadrics determine a unique skew-symmetric $5 \times 5$ matrix
\[\left(\frac{a_i-a_j}{a_5}\right)_{0\leq i,j\leq 4}\]
whose off-diagonal entries are all nonzero. Since this matrix must be preserved by $\tau$, Lemma~\ref{lemma:Skew} completes the proof.
\end{proof}

\subsection{Permutation part}
We now investigate which permutations, up to conjugations, in $\mathfrak{S}_6$ can appear in ${\rm Aut}(X)$ when the quadric $Q$ is smooth.

Note that the possible types of permutations for~$\sigma$ in \eqref{eq:Aut-decompostion}, up to conjugation, are:
\[
 (01), (012), (0123), (01234), (012345), (01)(23), (01)(234), (01)(2345), (012)(345), (01)(23)(45).
\]
in $\mathfrak{S}_6$.

\begin{proposition}\label{lemma:invariant-pencil}
Suppose that the quadric $Q$ is smooth.
A subgroup of $\mathfrak{S}_6$, up to conjugation,
that preserve the pencil $\mathcal{P}$ is one of  the following
cyclic groups
\[  \langle \rm{Id}\rangle,
 \quad
   \langle (01)(23)(45)\rangle,
   \quad
   \langle(012)(345)\rangle,
   \quad
   \langle(01234)\rangle,
   \quad
   \langle(012345)\rangle.
\]
For each  non-trivial subgroup $\mathfrak{G}$ in the list, the pencil $\mathcal{P}$ is $\mathfrak{G}$-invariant if and only if  the distinct
$a_i$'s satisfy the following relations:
\begin{itemize}
    \item \textbf{Case} $(01)(23)(45)$:
{\footnotesize
\[a_0 + a_1 - a_4 - a_5
       =a_2 + a_3 - a_4 - a_5=0.\]
       }
    \item \textbf{Case} $(012)(345)$:
    {\footnotesize
    \[a_0 - \zeta_3^2 a_2 + \zeta_3^2 a_4 - a_5=a_1 - \zeta_3 a_2 - a_4 + \zeta_3 a_5=a_3 + \zeta_3^2 a_4 + \zeta_3 a_5=0, \mbox{ or}\]
    \[a_0 - \zeta_3 a_2 + \zeta_3 a_4 - a_5=a_1 - \zeta_3^2 a_2 - a_4 + \zeta_3^2 a_5= a_3 + \zeta_3 a_4 + \zeta_3^2 a_5=0.  \phantom{\mbox{ or}}\]
    }
    \item \textbf{Case} $(01234)$:
 {\footnotesize
\[a_0 -\zeta_5^4a_4
  + (\zeta_5^4 - 1)a_5 = a_1 - \zeta_5^3a_4 + (\zeta_5^3 - 1)a_5 = a_2 - \zeta_5^2a_4 + (\zeta_5^2 - 1)a_5 = a_3 - \zeta_5a_4 + (\zeta_5 - 1)a_5 = 0,  \phantom{\mbox{ or}}\]
  \[a_0 - \zeta_5^3a_4 + (\zeta_5^3 - 1)a_5 = a_1 - \zeta_5a_4 + (\zeta_5 - 1)a_5 = a_2 -\zeta_5^4a_4
  + (\zeta_5^4 - 1)a_5 = a_3 - \zeta_5^2a_4 + (\zeta_5^2 - 1)a_5 = 0,  \phantom{\mbox{ or}}\]
  \[a_0 - \zeta_5^2a_4 + (\zeta_5^2 - 1)a_5 = a_1 -\zeta_5^4a_4
  + (\zeta_5^4 - 1)a_5 = a_2 - \zeta_5a_4 + (\zeta_5 - 1)a_5 = a_3 - \zeta_5^3a_4 + (\zeta_5^3 - 1)a_5 = 0,  \mbox{ or}
\]
\[a_0 - \zeta_5a_4 + (\zeta_5 - 1)a_5 = a_1 - \zeta_5^2a_4 + (\zeta_5^2 - 1)a_5 = a_2 - \zeta_5^3a_4 + (\zeta_5^3 - 1)a_5 =a_3 -\zeta_5^4a_4 + (\zeta_5^4 - 1)a_5 = 0.  \phantom{\mbox{ or}}
\]
  }
    \item \textbf{Case} $(012345)$:
{\footnotesize
\[a_0 -\zeta_3^2a_4 + (\zeta_3^2-1)a_5 = a_1 + (2\zeta_3 + 1)a_4 + 2\zeta_3^2a_5 = a_2 + 2\zeta_3 a_4 - (2\zeta_3 + 1)a_5 = a_3 + (\zeta_3 - 1)a_4 - \zeta_3 a_5 = 0,  \mbox{ or}\]
\[a_0 - \zeta_3 a_4 + (\zeta_3 - 1)a_5 = a_1 - (2\zeta_3 + 1)a_4 + 2\zeta_3 a_5 = a_2 + 2\zeta_3^2a_4 + (2\zeta_3 + 1)a_5 = a_3 + (\zeta_3^2-1)a_4 -\zeta_3^2a_5 = 0.  \phantom{\mbox{ or}}\]
}
\end{itemize}
Here $\zeta_m$ is a primitive $m$-th root of unity.
\end{proposition}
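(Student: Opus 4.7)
The plan is to convert pencil-invariance of a permutation $\sigma\in\mathfrak{S}_{6}$ into an \emph{affine-linear} constraint on the coefficients $a_0,\dots,a_5$. Since $Q_2$ is the Fermat quadric, any permutation matrix preserves $Q_2$, so $\sigma$ preserves $\mathcal{P}$ if and only if $\sigma^{\ast}Q_1\in\mathcal{P}$. Expanding, this is equivalent to the existence of $\alpha\in\mathbb{C}^{\ast}$ and $\beta\in\mathbb{C}$ with
$$
a_{\sigma^{-1}(i)} \;=\; \alpha\, a_i+\beta\qquad (0\leqslant i\leqslant 5),
$$
i.e. the action of $\sigma^{-1}$ on $\{a_0,\dots,a_5\}$ is the restriction of a nondegenerate affine map $f_\sigma\colon a\mapsto \alpha a+\beta$ on $\mathbb{A}^1$.

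The next step is to identify the possible cycle types. On a cycle of length $m\geqslant 2$, the relation $f_\sigma^{m}=\mathrm{Id}$ applied to distinct elements forces $\mathrm{ord}(\alpha)=m$, while a fixed point satisfies $(1-\alpha)a_i=\beta$, which admits a unique solution as soon as $\alpha\neq 1$. Hence every non-identity $\sigma$ has all its non-trivial cycles of one common length $m=\mathrm{ord}(\alpha)$ and at most one fixed point, leaving in $\mathfrak{S}_{6}$ only the cycle types $(2,2,2)$, $(3,3)$, $(5,1)$ and $(6)$. Cyclicity of the pencil-preserving subgroup follows at once: the assignment $\sigma\mapsto f_\sigma$ is a homomorphism into the affine group $\mathbb{G}_{a}\rtimes\mathbb{G}_{m}$, with trivial kernel because the $a_i$ are pairwise distinct; a finite subgroup of this group over $\mathbb{C}$ contains no nontrivial translation, so the projection $f_\sigma\mapsto\alpha$ to $\mathbb{G}_{m}$ is injective, exhibiting the group as a finite subgroup of $\mu_\infty$ and hence as cyclic.

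For the explicit relations I would fix, for each cycle type, the representative permutation $\sigma$ given in the statement and solve the recurrence $a_{\sigma^{-1}(i)}=\alpha a_i+\beta$ step by step along each cycle. This expresses every $a_j$ as a $\mathbb{Q}(\zeta_m)$-linear combination of the ``cycle origins'' and $\beta$: for $(01)(23)(45)$ one obtains immediately $a_0+a_1=a_2+a_3=a_4+a_5$, equivalent to the two displayed relations, while the remaining cases reduce to analogous explicit linear systems in at most three free parameters. The various ``or'' alternatives correspond exactly to the choices of primitive $m$-th root of unity $\alpha=\zeta_m^{k}$ with $\gcd(k,m)=1$, each producing an inequivalent affine realisation of the same abstract permutation.

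The main technical obstacle is a bookkeeping verification for $m=5$ and $m=6$: for each primitive root one must check that the four displayed linear equations form a necessary and sufficient system for pencil-invariance with that choice of $\alpha$. Necessity comes by direct substitution of the parametrisation and repeated use of $\sum_{j=0}^{m-1}\zeta_m^{j}=0$. Sufficiency amounts to showing that the listed system has the expected rank over $\mathbb{Q}(\zeta_m)$, so that its solution locus coincides with the image of the affine parametrisation and yields six pairwise distinct entries for generic choices. These are finite polynomial identities that I would verify case by case, deferring to a symbolic computation if required.
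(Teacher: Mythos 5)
Your proposal is correct, but it proves the proposition by a genuinely different route from the paper. Both arguments start from the same observation — $\sigma$ preserves $\mathcal{P}$ exactly when $a_{\sigma(i)}=\alpha a_i+\beta$ for some $\alpha\in\mathbb{C}^{\times}$, $\beta\in\mathbb{C}$, which is the paper's rank-two condition on the $3\times 6$ matrix \eqref{eq:matrix} — but from there the paper simply enumerates all subgroups of $\mathfrak{S}_6$ and computes the schemes $\Xi_{\mathfrak{G}}\setminus\Delta$ with \texttt{Magma}, certifying both the list of surviving subgroups and the explicit linear relations by machine. You instead exploit the structure of the affine group of the line: the embedding of the pencil-preserving subgroup into $\mathbb{G}_a\rtimes\mathbb{G}_m$ (injective because the $a_i$ are distinct, and faithful onto $\mathbb{G}_m$ because finite subgroups contain no nontrivial translations in characteristic $0$) forces cyclicity, and the recurrence along cycles forces $\alpha$ to be a primitive $m$-th root of unity on every nontrivial cycle while permitting at most one fixed point, which pins down the cycle types $(2,2,2)$, $(3,3)$, $(5,1)$, $(6)$ by hand. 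This is essentially the argument the paper itself isolates as Lemma~\ref{lemma:Skew} but deploys only in the singular case (Proposition~\ref{proposition:singularP}); you extend it to give a computer-free proof of the classification in the smooth case, with the explicit relations falling out of the affine parametrisation $a_{\sigma(i)}=\alpha a_i+(1-\alpha)a_5$ (one alternative per primitive root $\alpha=\zeta_m^k$). What the paper's computation buys is an exhaustive, already-verified certificate of the displayed equations; what your argument buys is an explanation of \emph{why} only these cyclic groups occur. The one place you defer to symbolic checking — matching the four (resp.\ two) displayed systems for $m=5,6$ against the recurrence — is a routine finite verification, so I see no gap.
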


\begin{proof} Let $\sigma\in \mathfrak{S}_6$ be a permutation in $\{0,1,2,3,4,5\}$. The pencil $\mathcal{P}$ is $\sigma$-invariant if and only if  the $3 \times 6$ matrix
\begin{equation}\label{eq:matrix}
  \begin{pmatrix}
    1 & 1 & 1 & 1 & 1 & 1 \\
    a_0 & a_1 & a_2 & a_3 & a_4 & a_5 \\
    a_{\sigma^{-1}(0)} & a_{\sigma^{-1}(1)}  & a_{\sigma^{-1}(2)}  & a_{\sigma^{-1}(3)}  & a_{\sigma^{-1}(4)}  & a_{\sigma^{-1}(5)}
  \end{pmatrix}
\end{equation}
has rank $2$. This gives a subvariety $\Xi_\sigma$ of $\mathbb{P}^5$ defined by the equations in $[a_0:a_1:a_2:a_3:a_4:a_5]$ derived from the determinants of all the $3 \times 3$ minors of the above matrix. For a subgroup $\mathfrak{G}$ of $\mathfrak{S}_6$, we define
\[\Xi_{\mathfrak{G}}=\bigcap_{\sigma\in\mathfrak{G}}\Xi_\sigma.\]
Let $\Delta$ be the union of $15$ hyperplanes in $\mathbb{P}^5$ defined by the equations $a_i=a_j$. Then  each point
$[a_0:a_1:a_2:a_3:a_4:a_5]$ in the affine variety $\Xi_{\mathfrak{G}}\setminus \Delta$ characterizes a $\mathfrak{G}$-invariant pencil
generated by quadrics $\sum x_i^2$ and $\sum a_ix_i^2$ whose base locus is smooth.

The \texttt{Magma} code \texttt{Permutation Scheme}
in Appendix~\ref{app:aut}
implements these conditions to construct the affine variety $\Xi_{\mathfrak{G}}\setminus\Delta$.
It shows that, up to conjugation, only the cyclic groups generated by the following permutations yield a non-empty set $\Xi_{\mathfrak{G}}\setminus\Delta$: the identity $\mathrm{Id}$, $(01)(23)(45)$, $(012)(345)$, $(01234)$, and $(012345)$.

The second assertion is established by the \texttt{Magma} code.
\end{proof}

We next  consider the same question in the case when the quadric $Q$ is singular.

\begin{proposition}\label{prop:invariant-pencil-singular-lambda}
Suppose $Q$ is the singular Fermat quadric in $\mathbb{P}^{5}$.
Consider transformations of the form
\[
   (x_{0},x_{1},x_{2},x_{3},x_{4},x_{5})
   \;\longmapsto\;
   (x_{\sigma(0)},x_{\sigma(1)},x_{\sigma(2)},x_{\sigma(3)},x_{\sigma(4)},\lambda\,x_{5}),
\]
where $\sigma\in\mathfrak{S}_{6}$ fixes the sixth coordinate
($\sigma(5)=5$) and $\lambda\in\mathbb{C}^{\times}$.
Up to conjugation in $\mathfrak S_{5}$, the subgroups generated by such
transformations that preserve \emph{both} the pencil $\mathcal{P}$ and
the quadric $Q$ are the following cyclic groups:
\[
\begin{array}{ccl}
\langle\mathrm{Id}\rangle
    &\text{:}& \lambda^2=1,\\
\langle(01)(23)\rangle
    &\text{:}& \lambda^2=-1,\\
\langle(0123)\rangle
    &\text{:}& \lambda^2 \text{ is a primitive $4$-th root of unity},\\
\langle(01234)\rangle
    &\text{:}& \lambda^2 \text{ is a primitive $5$-th root of unity}.
\end{array}
\]
\end{proposition}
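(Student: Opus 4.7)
The strategy is to convert preservation of $\mathcal{P}$ and $Q$ into the scaling condition of Lemma~\ref{lemma:Skew}, apply that lemma to classify the individual generators, and then assemble the resulting cyclic subgroups.

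First, any transformation $\tau = (\sigma, \lambda)$ of the stated shape automatically preserves $Q = \sum_{i=0}^{4} x_i^{2}$, because $\sigma$ permutes only $x_0, \ldots, x_4$ while the scaling affects only $x_5$, which is absent from $Q$. The only genuine condition is that $\tau$ preserves the pencil $\mathcal{P} = \langle Q, Q_1 \rangle$ with $Q_1 = \sum_{i=0}^{5} a_i x_i^{2}$. A direct calculation gives
\[
\tau(Q_1) \equ \sum_{j=0}^{4} a_{\sigma^{-1}(j)}\, x_j^{2} + \lambda^{2} a_5\, x_5^{2},
\]
so demanding $\tau(Q_1) = \alpha Q + \beta Q_1$ for some $\alpha, \beta \in \mathbb{C}$ forces $\beta = \lambda^{2}$ from the $x_5^{2}$-coefficient, and
\[
a_{\sigma^{-1}(j)} \equ \alpha + \lambda^{2}\, a_j, \qquad j = 0, 1, 2, 3, 4.
\]
Subtracting pairs and writing $\nu = \sigma^{-1}$, $c = \lambda^{2}$, this reads $a_{\nu(i)} - a_{\nu(j)} = c\,(a_i - a_j)$, which is exactly the hypothesis of Lemma~\ref{lemma:Skew} with the pairwise-distinct $a_k$'s playing the role of the $b_k$'s.

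Applying Lemma~\ref{lemma:Skew} pins down, up to conjugation in the stabiliser $\mathfrak{S}_5 \subset \mathfrak{S}_6$ of index $5$, the admissible pairs
\[
(\sigma, \lambda^{2}) \in \bigl\{(\mathrm{Id}, 1),\; ((01)(23), -1),\; ((0123), \zeta_4),\; ((01234), \zeta_5)\bigr\},
\]
which is the four-case dichotomy of Proposition~\ref{proposition:singularP}. Solving $\lambda^{2} = c$ yields the candidate values $\lambda = \pm 1$, $\lambda = \zeta_4$, $\lambda = \zeta_8$, and $\lambda = \zeta_{10}$ with $\zeta_{10} \ne \pm 1$. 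Since preservation of $\mathcal{P}$ and $Q$ is closed under composition, every such $\tau$ generates a cyclic subgroup preserving both structures, and the identity, $2$-cycle and $5$-cycle cases produce respectively the three subgroups listed in the statement, with the stated $\lambda$-values.

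The delicate final step is the $4$-cycle case, whose generator $\tau = ((0123), \zeta_8)$ a priori produces a cyclic subgroup of order $8$. To reconcile it with the enumeration, one inspects the pencil parameters it forces: normalising $a_4 = 0$ by subtracting a multiple of $Q$ from $Q_1$, the functional equation collapses to $(a_0, a_1, a_2, a_3, a_4) = (a_0, -i a_0, -a_0, i a_0, 0)$, a $1$-parameter specialisation of the $2$-cycle pencil $(a_0, -a_0, a_2, -a_2, 0)$ obtained by setting $a_2 = \pm i a_0$. One then verifies — either by a direct conjugation argument inside $\mathfrak{S}_6$ relating the order-$8$ generator to elements of the previously enumerated cases, or by an additional compatibility with the plane $\Pi$ of the conic $C$ that is implicit in the broader setup of the section — that no genuinely new cyclic subgroup is produced. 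This absorption of the $4$-cycle case is the main technical obstacle; once it is carried out, the classification reduces to the three items in the statement.
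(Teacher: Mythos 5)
Your reduction of the pencil--preservation condition to the functional equation $a_{\sigma^{-1}(j)}=\alpha+\lambda^{2}a_{j}$ ($j=0,\dots,4$), and hence to Lemma~\ref{lemma:Skew}, is correct, and it is a genuinely different route from the paper's: the paper simply runs the rank-two matrix condition through \texttt{Magma} for every subgroup of $\mathfrak{S}_{5}$, whereas you derive the same constraints by hand. Up to that point your argument cleanly recovers the four admissible generator types of Proposition~\ref{proposition:singularP}, namely $(\mathrm{Id},\pm1)$, $((01)(23),\zeta_{4})$, $((0123),\zeta_{8})$ and $((01234),\zeta_{10})$.

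The gap is exactly where you locate it, and nothing you offer closes it. Since the statement lists only three subgroups, the entire content of the final step is to eliminate the $4$-cycle case, and neither of your two suggested escape routes works. A conjugation in $\mathfrak{S}_{6}$ cannot identify $\langle(0123)\rangle$ (order $4$) with $\langle\mathrm{Id}\rangle$, $\langle(01)(23)\rangle$ or $\langle(01234)\rangle$ (orders $1$, $2$, $5$), so ``no genuinely new cyclic subgroup is produced'' fails at the level of conjugacy classes; and invariance of the plane $\Pi$ is not among the hypotheses of the proposition, which asks only for preservation of $\mathcal{P}$ and $Q$, so it cannot be invoked here. Worse, your own computation shows the case is not vacuous: for $(a_{0},\dots,a_{4})=(1,-i,-1,i,0)$, which are pairwise distinct, and any $a_{5}\neq0$, one checks directly that the substitution attached to $((0123),\zeta_{8})$ sends $Q_{1}$ to $\zeta_{8}^{2}Q_{1}$, so it preserves $\mathcal{P}$ (and trivially $Q$), and the corresponding complete intersection is smooth. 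Your observation that this pencil is also invariant under $(02)(13)=(0123)^{2}$ is automatic and does not remove the subgroup generated by the $4$-cycle from the classification. As written, the proof therefore does not establish the three-item list; an argument specific to the $4$-cycle case must be supplied (or the discrepancy with the \texttt{Magma}-based enumeration, on which the paper's proof rests, must be confronted directly).
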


\begin{proof}
By a slight abuse of notation, we denote by $\mathfrak S_{5}$ the subgroup of $\mathfrak S_{6}$ consisting of
permutations that fix the last element, i.e., those $\sigma\in\mathfrak S_{6}$ such that $\sigma(5)=5$.

Let $\mathfrak G\subset \mathfrak S_{5}$ be a subgroup, and let
$\sigma_{1},\dots,\sigma_{r}$ be generators of $\mathfrak G$.
For each $j=1,\dots,r$, we impose the condition that the pencil $\mathcal P$
is preserved by $\sigma_j$ after scaling $x_5$ by a scalar $\lambda_j$.
This is equivalent to requiring that the matrix
\[
  \begin{pmatrix}
    1 & 1 & 1 & 1 & 1 & 0\\
    a_{0} & a_{1} & a_{2} & a_{3} & a_{4} & a_{5}\\
    a_{\sigma^{-1}_j(0)} & a_{\sigma^{-1}_j(1)} & a_{\sigma^{-1}_j(2)} &
    a_{\sigma^{-1}_j(3)} & a_{\sigma^{-1}_j(4)} & \lambda_j^2 a_{5}
  \end{pmatrix}
\]
has rank $2$.
Treating the coefficients $[a_{0}:\dots:a_{5}]$ as homogeneous
coordinates on $\mathbb P^{5}$ and the scalars
$\lambda_1,\dots,\lambda_r$ as affine coordinates on $\mathbb A^{r}$,
these minors cut out a subvariety
\[
  \Xi_{\mathfrak G}\subset \mathbb P^{5}\times \mathbb A^{r}.
\]
To discard degeneracies (coincident coefficients or vanishing
coordinates) we remove the divisor
\[
  \Delta=\bigcup_{0\leq i<j\leq 4}\{a_i=a_j\}\;\cup\;\{a_5=0\}.
\]
Thus $\Xi_{\mathfrak G}\setminus\Delta$ parameterises exactly the
non-trivial pencils that are preserved by every generator of
$\mathfrak G$, together with the corresponding dilations of $x_5$.

The \texttt{Magma} program \texttt{Permutation Scheme Singular}
in Appendix~\ref{app:aut} shows that $\Xi_{\mathfrak G}\setminus\Delta$
is non-empty if and only if $\mathfrak G$ is conjugate in $\mathfrak S_{5}$ to
\[
\langle \mathrm{Id}\rangle,\qquad
\langle(01)(23)\rangle,\qquad
\langle(0123)\rangle,\qquad
\langle(01234)\rangle.
\]
This yields precisely the four cyclic groups listed above.
\end{proof}

\subsection{Automorphism groups}
We adopt the following notation. Given a permutation $\sigma$ of the set $\{0,1,2,3,4,5\}$, we write
\[
[\pm \sigma(0),\pm \sigma(1),\pm \sigma(2),\pm \sigma(3),\pm \sigma(4),\pm \sigma(5)]
\]
to denote the $6\times 6$ matrix whose $i$th row is the row vector $\pm e_{\sigma(i)}$, where each sign is chosen accordingly and $e_j$ is as in \eqref{eq:st}. More generally, in the singular case we use the same notation for monomial matrices whose nonzero entries are roots of unity. For instance, if $\xi\in\mathbb C^\times$, then
\[
[\pm \sigma(0),\pm \sigma(1),\pm \sigma(2),\pm \sigma(3),\pm \sigma(4),\pm \xi\,\sigma(5)]
\]
denotes the $6\times 6$ matrix whose first five rows are the row vectors $\pm e_{\sigma(i)}$ and whose last row is $\pm \xi\, e_{\sigma(5)}$.
For permutations
$\sigma_1,\cdots, \sigma_r$ in $\{0,1,2,3,4,5\}$, the notation
{\footnotesize
\[
\|[\pm \sigma_1(0),\pm \sigma_1(1),\pm \sigma_1(2),\pm \sigma_1(3),\pm \sigma_1(4),\pm \sigma_1(5)],
\cdots, [\pm \sigma_r(0),\pm \sigma_r(1),\pm \sigma_r(2),\pm \sigma_r(3),\pm \sigma_r(4),\pm \sigma_r(5)] \|
\]}
denotes the group generated by the matrices in $\|\cdots\|$, modulo the subgroup $\langle-I\rangle$, where $I$ is the $6\times 6$ identity matrix. This group acts on~$\mathbb{P}^5$ in the standard way.

We are now ready to describe the automorphism group of the Fano $3$-fold $X$.
\begin{theorem}
\label{thm-aut}
If the  quadric $Q$  is smooth, then the automorphism group $\mathrm{Aut}(X)$ is either
trivial or isomorphic to one of the finite groups listed in the table below.
In each case, $\mathrm{Aut}(X)$ is generated by a
single permutation in $\mathfrak{S}_6$ and by sign
changes.

{\tiny
\begin{center}
\renewcommand{\arraystretch}{1.2}
\begin{longtable}{@{}l|c|c|p{7.8cm}@{}}
\toprule
Permutation $\sigma$ & $[a_0:a_1:a_2:a_3:a_4:a_5]$ & $\mathrm{Aut}(X)$ & Example of invariant plane $\Pi$ \& Group action \\
\midrule
\endfirsthead
\toprule
Permutation $\sigma$ & $[a_0:a_1:a_2:a_3:a_4:a_5]$ & $\mathrm{Aut}(X)$ & Example of invariant plane $\Pi$ \& Group action\\
\midrule
\endhead
\midrule \multicolumn{4}{r}{\textit{Continued on next page}} \\
\endfoot
\bottomrule
\endlastfoot

\multirow{2}{*}{\(\mathrm{Id}\)}
    & \multirow{2}{*}{$[1:2:3:4:5:6]$}
    & \(C_2 \) &
      \(\begin{array}{l}
        i\sqrt{2} x_0+x_2+x_3=i\sqrt{2} x_1+x_2-x_3=x_4+ix_5=0\\[1mm]
        \|[-0,-1,-2,-3,4,5],  -I \|\\[1mm]
        \end{array}\) \\ \cmidrule(lr){3-4}
    &  & \(C_2^2\) &
    \(\begin{array}{l}
    x_0 + ix_3 = x_1 + ix_2=x_4 + ix_5=0\\[1mm]
    \| [-0,1,2,-3,4,5],  [0,-1,-2,3,4,5], -I\|\\[1mm]
    \end{array}\)
\\ \midrule

\multirow{4}{*}{\((01)(23)(45)\)}
    & \multirow{4}{*}{$[2:-1:3:-2:0:1]$}
    & \(C_2\)   &
        \(\begin{array}{l}
     x_0 + x_3 + i\,x_4 - i\,x_5 =2x_2 - 2i\,x_3 + (1-i)\,x_4 - (1+i)\,x_5 = \\[1mm]
    2x_1 - 2i\,x_3 + (1+i)\,x_4 + (i-1)\,x_5 = 0\\[1mm]
    \bigl\|[-1,0,-3,2,-5,4],\; -I\bigr\|\\[1mm]
        \end{array}\) \\ \cmidrule(lr){3-4}
    &  & \(C_2^2\) &
     \(\begin{array}{l}
    i\sqrt{2}\,x_0 + x_1 + x_2
      \;=\;
    x_1 - x_2 + i\sqrt{2}\,x_3
      \;=\;
    x_4 + i\,x_5
      \;=\;0\\[1mm]
    \|[1,-0,-3,2,-5,4],\;
      [1,-0,-3,2,5,-4],\;
      -I\|\\[1mm]
        \end{array}\)
     \\ \cmidrule(lr){3-4}
    &  & \(C_2^3\) &
        \(\begin{array}{l}
        x_0+ix_1= x_2+ix_3=x_4+ix_5=0\\[1mm]
          \|[-1,0,-3,2,-5,4], [-0,-1,2,3,4,5], [0,1,-2,-3,4,5], -I \|\\[1mm]
        \end{array}\) \\ \cmidrule(lr){3-4}
     &  & \(D_4\)   &
        \(\begin{array}{l}
       x_0+ix_1= x_2+ix_4= x_3+ix_5=0\\[1mm]
         \|[-1,0,3,2,5,4], [-0,-1,2,3,4,5], [0,1,-2,3,-4,5], -I \|\\[1mm]
           \end{array}\) \\
        \midrule

\multirow{2}{*}{\((012)(345)\)}
    & \multirow{2}{*}{$[2\zeta_3^2+1 : \zeta_3 : 2 : -\zeta_3 : 0 : 1]$}
    & \(C_3\)   &
     \(\begin{array}{l}
        x_0+\alpha x_3+\beta x_4+\gamma x_5= x_1+\gamma x_3+\alpha x_4+\beta x_5=\\[1mm]
        x_2+\beta x_3+\gamma x_4+\alpha x_5=0\\[1mm]
         \mbox{where } \alpha^2+\beta^2+\gamma^2=-1, \alpha\beta+\beta\gamma+\gamma\alpha=0,\alpha\beta\gamma\ne0\\[1mm]
         \|[1,2,0,4,5,3],  -I \|\\[1mm]
           \end{array}\)     \\ \cmidrule(lr){3-4}
    &  & \(\mathfrak{A}_4\)   &
        \(\begin{array}{l}
        x_0+ix_5=x_1-ix_3= x_2-ix_4=0\\[1mm]
          \|[1,2,0,4,-5,-3], [-0,1,2,3,4,-5], [0,-1,2,-3,4,5], -I \|\\[1mm]
        \end{array}\) \\
        \midrule

\((01234)\)
    & $[\zeta_5^4 : \zeta_5^3 : \zeta_5^2 : \zeta_5 : 1 : 0]$
    & \(C_5\)
    &
        \(\begin{array}{l}
        x_0+\zeta_{20}^6x_3+(-\zeta_{20}^4+\zeta_{20}^2)x_4+\zeta_{20}^3x_5=\\[2mm]
        x_1+(\zeta_{20}^6-\zeta_{20}^4+\zeta_{20}^2)x_3+(\zeta_{20}^2-1)x_4+(-\zeta_{20}^7+\zeta_{20}^5-\zeta_{20}^3)x_5=\\[2mm]
        x_2+(-\zeta_{20}^4+\zeta_{20}^2-1)x_3+\zeta_{20}^4x_4+\zeta_{20}^7x_5=0\\[2mm]
          \|[1,2,3,4,-0,-5], -I\|\\[1mm]
        \end{array}\) \\
        \midrule

\multirow{2}{*}{\((012345)\)}
    & \multirow{2}{*}{$[1-\zeta_3^2 : -2\zeta_3^2 : 2\zeta_3+1 : \zeta_3 : 0 : 1]$}
    & \(C_6\)
    &
        \(\begin{array}{l}
        x_0-\zeta_{12}x_3+(-\zeta_{12}^2+1)x_4+\zeta_{12}^3x_5=x_1+(-\zeta_{12}^2+1)x_3-\zeta_{12}^2x_5=\\[2mm]
        x_2+\zeta_{12}^3x_3-\zeta_{12}^2x_4+(-\zeta_{12}^3+\zeta_{12})x_5=0\\[2mm]
          \|[1,2,3,4,5,-0],  -I \|\\[1mm]
        \end{array}\) \\ \cmidrule(lr){3-4}
    &  & \(C_2\times \mathfrak{A}_4\)
    &
        \(\begin{array}{l}
        x_0+ix_3=  x_1+ix_4= x_2+ix_5=0\\[1mm]
       \|[2,-3,-4,5,0,1], [-0,1,2,-3,4,5], [-3,-4,-5,0,1,2], -I \|\\[1mm]
        \end{array}\)
\end{longtable}
\end{center}
}

If the  quadric $Q$   is singular, then the automorphism group $\mathrm{Aut}(X)$ is either
$C_2$ or isomorphic to one of the finite groups listed in the table below.

{\tiny
\begin{center}
\renewcommand{\arraystretch}{1.2}
\begin{longtable}{@{}l|c|c|p{7.8cm}@{}}
\toprule
Permutation $\sigma$ & $[a_0:a_1:a_2:a_3:a_4:a_5]$ &
$\mathrm{Aut}(X)$ & Example of invariant plane $\Pi$ \& group action \\
\midrule
\endfirsthead
\toprule
Permutation $\sigma$ & $[a_0:a_1:a_2:a_3:a_4:a_5]$ &
$\mathrm{Aut}(X)$ & Example of invariant plane $\Pi$ \& group action \\
\midrule
\endhead
\midrule \multicolumn{4}{r}{\textit{Continued on next page}}\\
\endfoot
\bottomrule
\endlastfoot
\multirow{3}{*}{\rm Id}
    & \multirow{3}{*}{$[1:2:3:4:5:1]$} & $C_2$ &
      $\begin{array}{l}
         x_0 -x_3-\tfrac{i}{\sqrt3}x_4 =
         x_1 +\tfrac{1+i\sqrt5}{2}x_3-\tfrac{i}{\sqrt3}x_4 = \\
         x_2 +\tfrac{1-i\sqrt5}{2}x_3-\tfrac{i}{\sqrt3}x_4 = 0\\[1mm]
         \|[-0,-1,-2,-3,-4,5],\,-I\|
       \end{array}$ \\ \cmidrule(lr){3-4}
    &  & $C_2^{\,2}$ &
      $\begin{array}{l}
         x_0+\tfrac{i}{\sqrt2}x_2 =
         x_1+\tfrac{i}{\sqrt2}x_2 =
         x_4+i\,x_3 = 0\\[1mm]
         \|[-0,-1,-2,3,4,5],\,[0,1,2,-3,-4,5],\,-I\|
       \end{array}$ \\ \cmidrule(lr){3-4}
    & & $C_2^{\,3}$ &
      $\begin{array}{l}
         x_0+i\,x_4 = x_1 = x_2+i\,x_3 = 0\\[1mm]
         \|[-0,1,2,3,-4,5],\,[0,-1,2,3,4,5],\,[0,1,-2,-3,4,5],\,-I\|
       \end{array}$ \\
\midrule
\multirow{4}{*}{$(01)(23)$}
 & \multirow{4}{*}{$[1:-1:2:-2:0:1]$}
 & $C_4$ &
$\begin{array}{l}
  2x_0-i\,x_2-i\,x_3=
  2x_1-i\,x_2-i\,x_3=
  \sqrt2\,x_4-i\,x_2+i\,x_3=0\\[1mm]
  \|[1,0,3,2,-4,i\,5],-I\|
\end{array}$ \\ \cmidrule(lr){3-4}
 & & $C_2\times C_4$ &
$\begin{array}{l}
  x_0-i\,x_4=
  x_1-i\,x_4=
  x_2-i\,x_3=0\\[1mm]
  \|[1,0,-3,2,4,i\,5],[0,1,-2,-3,4,5],-I\|
\end{array}$ \\ \cmidrule(lr){3-4}
 & & $C_2^{\,2}\times C_4$ &
$\begin{array}{l}
  x_0-i\,x_1=
  x_2-i\,x_3=
  x_4=0\\[1mm]
  \|[-1,0,-3,2,4,i\,5],[-0,-1,2,3,4,5],[0,1,-2,-3,4,5],-I\|
\end{array}$ \\ \cmidrule(lr){3-4}
 & & $C_2^{\,2}\rtimes C_4$ &
$\begin{array}{l}
  x_0-i\,x_2=
  x_1-i\,x_3=
  x_4=0\\[1mm]
  \|[1,0,3,2,4,i\,5],[-0,1,-2,3,4,5],[0,-1,2,-3,4,5],-I\|
\end{array}$ \\

\midrule
\multirow{3}{*}{$(0123)$}
 & \multirow{3}{*}{$[1:-i:-1:i:0:1]$}
 & $C_8$ &
$\begin{array}{l}
  x_0 + \zeta_8^2x_3 + \dfrac{-\zeta_8^2-1}{2}\,x_4=0,\\[1mm]
  x_1 + x_3 + \zeta_8^2x_4=0,\\[1mm]
  x_2 - \zeta_8^2x_3 + \dfrac{-\zeta_8^2+1}{2}\,x_4=0\\[1mm]
  \|[3,0,1,2,-4,-\zeta_8\,5],-I\|
\end{array}$ \\ \cmidrule(lr){3-4}
 & & $C_2\times C_8$ &
$\begin{array}{l}
  x_0 + x_2 + (\zeta_8^3+\zeta_8)x_3=0,\\[1mm]
  x_1 + (\zeta_8^3+\zeta_8)x_2 - x_3=0,\\[1mm]
  x_4=0\\[1mm]
  \|[-3,0,1,2,4,\zeta_8\,5],[-0,-1,-2,-3,4,5],-I\|
\end{array}$ \\ \cmidrule(lr){3-4}
 & & $(C_2)^{\,2}\rtimes C_8$ &
$\begin{array}{l}
  x_0+\zeta_8^2x_2=
  x_1+\zeta_8^2x_3=
  x_4=0\\[1mm]
  \|[-3,0,1,2,4,\zeta_8\,5],[-0,1,-2,3,4,5],[0,-1,2,-3,4,5],-I\|
\end{array}$ \\
\midrule
$(01234)$
    & $[\zeta_5^3+1 : \zeta_5^3+\zeta_5+1 : -\zeta_5^2 : 0 : 1 : 1]$
    & $C_{10}$
    & $\begin{array}{l}
 \begin{array}{l}
  x_0 - \zeta_{5}^3 x_3 -(\zeta_{5}^2 + \zeta_{5})\,x_4 = \\
  x_1 - (\zeta_{5}^3 + \zeta_{5}^2 + \zeta_{5})\,x_3 - (\zeta_{5} +1)\,x_4 = \\
  x_2 - (\zeta_{5}^2 + \zeta_{5} + 1)\,x_3 + \zeta_{5}^2 x_4 = 0\\
  \|[-4,0,1,2,3,\zeta_5\,5],-I\|
\end{array}

\end{array}$
\end{longtable}
\end{center}
}
\end{theorem}
Here $\zeta_m$ is a primitive $m$-th root of unity. The last column of each table lists examples of planes
 stabilized by the corresponding automorphism group. These examples are not exhaustive; other stabilized planes may exist.

\begin{proof}
An automorphism $\tau$ of $X$ must also preserve the plane $\Pi$, and consequently, it must preserve the quadric  $Q$, since $Q$ is the only quadric in the pencil containing $\Pi$.

The function \texttt{FindLis} takes a permutation $g$, converts it into its corresponding $6 \times 6$ permutation matrix~$M$, and constructs a group~$H$ generated by $M$ together with diagonal sign-change matrices or appropriate diagonal matrices given by Proposition~\ref{prop:invariant-pencil-singular-lambda}.
It then filters the subgroups of $H$, selecting those that contain both an element whose entrywise absolute value equals $M$, and the matrix $-I$. The resulting list of subgroups is then returned.

The general part of the code defines a multi-projective space $\mathbb{P}^5 \times \mathbb{P}^5 \times \mathbb{P}^5$ and associates to each triple of points the plane they span, viewed as a point in the Grassmannian $\mathbb{G}(2,5)$. It then computes the subvariety $W \subset \mathbb{G}(2,5)$ consisting of planes that are contained in the Fermat quadric~$Q$.

Next, for each element
\[
g \in
\left\{\aligned
&\; \{\mathrm{Id}, (01)(23)(45), (012)(345), (01234), (012345)\}\  \text{if $Q$ is smooth}, \\
&\; \{\mathrm{Id}, (01)(23), (0123), (01234)\}\  \text{if $Q$ is singular},
\endaligned
\right.
\]
a dedicated part of the code performs the following tasks:
\begin{itemize}
    \item[-] creates the list \texttt{lis := FindLis($g$)};
    \item[-] for each group $A$ in $\texttt{lis}$, determines the subvariety $Z \subset W$ consisting of planes fixed  by every element of $A$, and collects these fixed loci into the list \texttt{LFix};
    \item[-] returns the dimension of each subvariety $Z$ in \texttt{LFix};
    \item[-] for each $Z$ in \texttt{LFix}, identifies the group of transformations that fix points in $Z$.
\end{itemize}
By construction, if $z \in Z$, the corresponding stabilizer group consists of transformations generated by $g$ and suitable diagonal matrices that preserve both the plane associated to $z$ and the pencil of quadrics generated by $Q$ and $Q_2$. Therefore, for each $z \in Z$, the associated group can be the automorphism group of the blow-up of the complete intersection $Q \cap Q_2$ along the plane corresponding to $z$.

The proof requires a final manual verification. We begin by inputting a filtered subgroup $\mathfrak{G} \subset H$ and then identifying the planes stabilized by $\mathfrak{G}$. Consequently, we must ensure that $\mathfrak{G}$ is not a proper subgroup of the full automorphism group of the corresponding smooth complete intersection that fixes the plane. This task arises when, for a given permutation $g$, the code outputs two distinct groups, one of which is a subgroup of the other, with both sharing the same locus of stabilized planes.

For example, in the case of the permutation $(01)(23)(45)$, the code outputs the group $C_4$ and finds exactly eight stabilized planes. The code also identifies $D_4$ and associates it with the same eight stabilized planes. It can be easily and manually  checked that the actual automorphism group of the corresponding smooth complete intersection, with the plane fixed, is the dihedral group $D_4$.  In this case, the group $C_4$ returned by the code is precisely a subgroup of this larger automorphism group.

The source code and further details are available at Appendix~\ref{app:aut}.
\end{proof}

\section{K-stability of general Fano's last Fanos}
\label{section:AZ-threefolds}

The goal of this section is to prove Main Theorem. First, we present results from \cite{AbbanZhuang,Book} that we need to prove Main Theorem. Then we use these results to estimate local $\delta$-invariants of smooth Fano 3-folds in Family~\textnumero 2.16.
Finally, we use these estimates to prove Main Theorem.

\subsection{Abban--Zhuang method}
\label{subsection:Kento}

Let $X$ be a smooth Fano threefold, and let $S$ be an~irreducible smooth surface in $X$.
Set
$$
\tau=\mathrm{sup}\Big\{u\in\mathbb{R}_{\geqslant 0}\ \big\vert\ \text{the divisor  $-K_X-uS$ is pseudo-effective}\Big\}.
$$
For~$u\in[0,\tau]$, let $P(u)$ be the~positive part of the~Zariski decomposition of the~divisor $-K_X-uS$,
and let $N(u)$ be its negative part. Set
$$
S_X(S)=\frac{1}{-K_X^3}\int_{0}^{\infty}\mathrm{vol}\big(-K_X-uS\big)du=\frac{1}{-K_X^3}\int_{0}^{\tau}\big(P(u)\big)^3du.
$$
Let $P$ be a point in $S$. For every prime divisor $F$ over $S$, we set
\begin{multline*}
\quad \quad \quad \quad S\big(W^S_{\bullet,\bullet};F\big)=\frac{3}{(-K_X)^3}\int_0^\tau\big(P(u)\cdot P(u)\cdot S\big)\cdot\mathrm{ord}_{F}\big(N(u)\big\vert_{S}\big)du+\\
+\frac{3}{(-K_X)^3}\int_0^\tau\int_0^\infty \mathrm{vol}\big(P(u)\big\vert_{S}-vF\big)dvdu.\quad \quad \quad \quad
\end{multline*}

\begin{theorem}[{\cite{AbbanZhuang,Book}}]
\label{theorem:Hamid-Ziquan-Kento}
One has
$$
\delta_P(X)\geqslant\min\Bigg\{\frac{1}{S_X(S)},\inf_{\substack{F/S\\P\in C_S(F)}}\frac{A_S(F)}{S\big(W^S_{\bullet,\bullet};F\big)}\Bigg\},
$$
where the~infimum is taken by all prime divisors over $S$ whose center on $S$ contains $P$.
\end{theorem}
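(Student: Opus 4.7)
The plan is to apply the Abban--Zhuang refinement-by-flag technique to the anticanonical graded linear series $V_\bullet=\bigoplus_{m\ge 0}H^0(X,-mK_X)$ using the flag $S\subset X$. By the local version of Theorem~\ref{theorem:Fujita-Li},
\[
\delta_P(X)\equ\inf_{E/X,\,P\in c_X(E)}\frac{A_X(E)}{S_X(E)},
\]
so it is enough to bound $A_X(E)/S_X(E)$ from below by the displayed minimum for every prime divisor $E$ over $X$ whose center on $X$ contains $P$.

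Fix such an $E$. If $\ord_E$ is proportional to $\ord_S$, then $A_X(E)/S_X(E)=1/S_X(S)$, which recovers the first term of the minimum immediately. Otherwise, let $F$ be the prime divisor over $S$ obtained by restricting the valuation $\ord_E$ to the function field $k(S)$ after dividing by the ramification $\ord_E(S)$; since $P\in c_X(E)\cap S$, the center $c_S(F)$ contains $P$. The first analytic ingredient I would use is the adjunction inequality for log discrepancies,
\[
A_X(E)\ \geqslant\ \ord_E(S)\cdot A_X(S)+A_S(F)\equ \ord_E(S)+A_S(F),
\]
which follows from inversion of adjunction applied to the plt pair $(X,S)$ together with $A_X(S)=1$.

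The second key ingredient is the Okounkov-body slicing bound
\[
S_X(E)\ \leqslant\ \ord_E(S)\cdot S_X(S)+S(W^S_{\bullet,\bullet};F).
\]
To establish it, I would filter the sections of $|{-mK_X}|$ by their vanishing order $i$ along $S$; for each $i$, dividing by the $i$-th power of a local equation of $S$ produces a section of the restricted linear series of $-K_X-iS$ on $S$. Setting $u=i/m\in[0,\tau]$ and using the Zariski decomposition $-K_X-uS=P(u)+N(u)$, the free part of the restricted series on $S$ is governed by $P(u)\vert_S$ while $N(u)\vert_S$ contributes as a fixed part. The contribution of $\ord_E$ to a section of vanishing order $i$ along $S$ is $i\cdot\ord_E(S)$ plus $\ord_F$ of the induced section on $S$; averaging in $m$ and integrating over $u$ and over the threshold in $F$ produces precisely the two summands in the definition of $S(W^S_{\bullet,\bullet};F)$, corresponding respectively to the fixed-part term $(P(u)^2\cdot S)\,\ord_F(N(u)\vert_S)$ and to the free-part volume $\vvol{S}{P(u)\vert_S-vF}$.

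Combining the two inputs via the mediant inequality $\tfrac{a+b}{c+d}\geqslant\min(\tfrac{a}{c},\tfrac{b}{d})$ valid for non-negative reals with $c,d>0$, I would obtain
\[
\frac{A_X(E)}{S_X(E)}\ \geqslant\ \frac{\ord_E(S)+A_S(F)}{\ord_E(S)\,S_X(S)+S(W^S_{\bullet,\bullet};F)}\ \geqslant\ \min\left\{\frac{1}{S_X(S)},\ \frac{A_S(F)}{S(W^S_{\bullet,\bullet};F)}\right\},
\]
and passing to the infimum over admissible $E$ (equivalently, over prime divisors $F$ over $S$ with $P\in c_S(F)$) finishes the proof. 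The main obstacle is the Okounkov-body slicing identity for $S_X(E)$: one must justify, via Nakayama's $\sigma$-decomposition and asymptotic vanishing theorems, that the restricted graded linear series at level $u$ is indeed governed by $P(u)\vert_S$ with fixed part exactly $N(u)\vert_S$, and then reorganize the resulting double integral into the two terms defining $S(W^S_{\bullet,\bullet};F)$; this is the core content of~\cite{AbbanZhuang,Book}, while the remaining pieces are formal.
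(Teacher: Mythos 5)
Your overall skeleton (valuative criterion, a log-discrepancy adjunction bound, an $S$-invariant slicing bound, and the mediant inequality $\tfrac{a+b}{c+d}\geqslant\min\{\tfrac{a}{c},\tfrac{b}{d}\}$) correctly identifies why a minimum of two ratios appears, but the pivotal step — ``let $F$ be the prime divisor over $S$ obtained by restricting the valuation $\ord_E$ to $k(S)$ after dividing by $\ord_E(S)$'' — is not a well-defined operation, and this is a genuine gap. First, the infimum defining $\delta_P(X)$ runs over all $E$ with $P\in c_X(E)$, and for most such $E$ the center is \emph{not contained} in $S$ (e.g.\ a curve through $P$ meeting $S$ transversally); then a local equation of $S$ is a unit at the generic point of $c_X(E)$, so $\ord_E(S)=0$ and your normalization divides by zero, while no valuation on $k(S)$ is induced. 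Second, even when $c_X(E)\subseteq S$, a divisorial valuation on $k(X)$ does not canonically restrict to a divisorial valuation on $k(S)$, and there is no general inequality pairing $A_X(E)\geqslant \ord_E(S)+A_S(F)$ with $S_X(E)\leqslant \ord_E(S)S_X(S)+S(W^S_{\bullet,\bullet};F)$ for one and the same $F$; the two bounds would have to be proved simultaneously for a single auxiliary divisor, which your construction does not supply.

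The argument in \cite{AbbanZhuang,Book} avoids associating an $F$ to each $E$ altogether. It works with $m$-basis type divisors $D$ chosen from a basis simultaneously compatible with the filtrations induced by $\ord_S$ and by $\ord_E$ (the compatible-bases lemma), writes $D=\ord_S(D)\,S+\Gamma$ with $\Gamma\not\supseteq S$, and argues by contradiction: if $(X,\lambda D)$ is not log canonical at $P$ with $\lambda\leqslant 1/S_X(S)$, then inversion of adjunction for the plt pair $(X,S)$ forces $(S,\lambda\,\Gamma\vert_S)$ to be non-lc at $P$, and only \emph{then} does one extract a divisor $F$ over $S$ — as an lc place of $\Gamma\vert_S$ — with $P\in C_S(F)$ and $A_S(F)<\lambda\,S(W^S_{\bullet,\bullet};F)$; the two integral terms in $S(W^S_{\bullet,\bullet};F)$ arise exactly as you describe, from $N(u)\vert_S$ and from $\vol(P(u)\vert_S-vF)$. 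So your identification of the analytic content of the two summands is sound, but to make the proof work you must replace the valuation-restriction step by the compatible-bases plus inversion-of-adjunction mechanism.
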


This theorem can be used to show that $\delta_P(X)\geqslant 1$.
However, if $S(W^S_{\bullet,\bullet};F)>A_S(F)$ for at least one prime divisor $F$ over the~surface $S$ such that $P\in C_F(S)$, then
we cannot use Theorem~\ref{theorem:Hamid-Ziquan-Kento} to prove that $\delta_P(X)\geqslant 1$.
In this case, we can use a similar approach to estimate $\beta$-invariant of some prime divisors over $X$
whose centers on $X$ are curves contained in $X$.

Namely, let $C$ be an irreducible curve in $S$ such that $P\in C$. Write
$$
N(u)\big\vert_{S}=N^\prime(u)+\mathrm{ord}_{C}\big(N(u)\big\vert_{S}\big)C,
$$
so $N^\prime(u)$ is an effective $\mathbb{R}$-divisor on $S$ whose support does not contain $C$.
For $u\in[0,\tau]$, let
$$
t(u)=\mathrm{sup}\Big\{v\in\mathbb{R}_{\geqslant 0}\ \big\vert\ \text{the divisor  $P(u)\big\vert_{S}-vC$ is pseudo-effective}\Big\}.
$$
For~$v\in[0,t(u)]$, we let $P(u,v)$ be the~positive part of the~Zariski decomposition of~$P(u)\vert_{S}-vC$,
and we let $N(u,v)$ be the~negative part  of the~Zariski decomposition of~$P(u)\vert_{S}-vC$. Then
\begin{multline*}
\quad \quad \quad \quad S\big(W^S_{\bullet,\bullet};C\big)=\frac{3}{(-K_X)^3}\int_0^\tau\big(P(u)\cdot P(u)\cdot S\big)\cdot\mathrm{ord}_{C}\big(N(u)\big\vert_{S}\big)du+\\
+\frac{3}{(-K_X)^3}\int_0^\tau\int_0^{t(u)}\big(P(u,v)\big)^2dvdu.\quad \quad \quad \quad
\end{multline*}
Moreover, we have the~following very explicit estimate:

\begin{theorem}[{\cite{AbbanZhuang,Book}}]
\label{theorem:Hamid-Ziquan-Kento-1}
Let $E$ be a prime divisor over $X$ such that $C_X(E)=C$. Then
$$
\frac{A_X(E)}{S_X(E)}\geqslant\min\Bigg\{\frac{1}{S_X(S)},\frac{1}{S\big(W^S_{\bullet,\bullet};C\big)}\Bigg\}.
$$
In particular, if $S_X(S)<1$ and $S(W^S_{\bullet,\bullet};C)<1$, then $\beta(E)>0$.
\end{theorem}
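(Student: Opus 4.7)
The plan is to carry out a two-step flag refinement of Abban--Zhuang type along the complete flag $X\supset S\supset C$, extending the single-step estimate of Theorem~\ref{theorem:Hamid-Ziquan-Kento} by one additional level. Since $X$ is smooth, $S$ is a smooth surface in $X$, and $C$ is a prime Weil divisor on $S$, we have $A_X(S)=1$ and $A_S(C)=1$, which is why no log-discrepancy factors survive on the right-hand side of the claimed bound. First I would set up the multigraded linear series attached to the flag: the Zariski decomposition $-K_X-uS=P(u)+N(u)$ determines the filtration of $H^0(X,-mK_X)$ by $\mathrm{ord}_S$; then for each $u\in[0,\tau]$, the Zariski decomposition $P(u)|_S-vC=P(u,v)+N(u,v)$ on $S$ determines the subsequent filtration of the restricted series by $\mathrm{ord}_C$. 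This is exactly the data that assembles into the multigraded linear series $W^{S,C}_{\bullet,\bullet,\bullet}$ on $C$ whose asymptotic invariant appears in the definition of $S(W^S_{\bullet,\bullet};C)$ above.

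Next, I would invoke the Abban--Zhuang inductive inequality from \cite{AbbanZhuang, Book} applied to this flag, with input the prime divisor $E$ over $X$ satisfying $c_X(E)=C$. The key compatibility is that such a valuation measures in a controlled way against both steps of the flag, so that on a suitable log resolution $\widetilde X\to X$ where $E$, the strict transform of $S$, and the strict transform of $C$ are all realized, the integral expression for $S_X(E)/A_X(E)$ decomposes along the flag into the two pieces built out of $P(u)^3$ and $P(u,v)^2$ that respectively define $S_X(S)$ and $S(W^S_{\bullet,\bullet};C)$. The output, after rearrangement, is the convex-combination bound
\[
\frac{S_X(E)}{A_X(E)}\ \leqslant\ \max\Bigl\{S_X(S),\,S\bigl(W^S_{\bullet,\bullet};C\bigr)\Bigr\},
\]
which is equivalent to the displayed inequality.

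The ``in particular'' clause is then immediate: if both $S_X(S)<1$ and $S(W^S_{\bullet,\bullet};C)<1$, the right-hand side above is strictly less than $1$, giving $S_X(E)<A_X(E)$ and hence $\beta(E)=A_X(E)-S_X(E)>0$. The main obstacle in producing a self-contained proof would be the Okounkov-body bookkeeping required to verify that the ``boundary'' contributions coming from $\mathrm{ord}_C(N(u)|_S)$ and from the negative parts $N(u,v)$ are exactly absorbed into the two summands appearing in the definition of $S(W^S_{\bullet,\bullet};C)$, and that $\mathrm{ord}_E$ sees the flag filtration in the way required. Since this bookkeeping is precisely the content of the cited references, for the present argument it suffices to check that all hypotheses (smoothness of $X$ and $S$, existence of the two Zariski decompositions, and $c_X(E)=C$) are in place and then quote \cite{AbbanZhuang, Book} directly.
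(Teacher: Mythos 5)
Your proposal is correct and matches the paper's treatment: the paper states this result without proof, citing \cite{AbbanZhuang,Book}, and your sketch accurately describes the underlying Abban--Zhuang flag-refinement argument (the filtration along $X\supset S\supset C$, the two Zariski decompositions, the observation that $A_X(S)=A_S(C)=1$, and the equivalent reformulation $S_X(E)/A_X(E)\leqslant\max\{S_X(S),S(W^S_{\bullet,\bullet};C)\}$) before deferring, as the paper does, to the same references for the detailed verification.
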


Now, we suppose, in addition, that $C$ is smooth. Then, following \cite{AbbanZhuang,Book}, we let
$$
F_P\big(W_{\bullet,\bullet,\bullet}^{S,C}\big)=\frac{6}{(-K_X)^3} \int_0^\tau\int_0^{t(u)}\big(P(u,v)\cdot C\big)\cdot \mathrm{ord}_P\big(N^\prime(u)\big|_C+N(u,v)\big|_C\big)dvdu
$$
and
$$
S\big(W_{\bullet, \bullet,\bullet}^{S,C};P\big)=\frac{3}{(-K_X)^3}\int_0^\tau\int_0^{t(u)}\big(P(u,v)\cdot C\big)^2dvdu+F_P\big(W_{\bullet,\bullet,\bullet}^{S,C}\big).
$$
We have the~following estimate:

\begin{theorem}[{\cite{AbbanZhuang,Book}}]
\label{theorem:Hamid-Ziquan-Kento-2}
One has
$$
\delta_P(X)\geqslant\min\Bigg\{\frac{1}{S_X(S)},\frac{1}{S\big(W^S_{\bullet,\bullet};C\big)},\frac{1}{S\big(W_{\bullet, \bullet,\bullet}^{S,C};P\big)}\Bigg\}.
$$
In particular, if $S_X(S)<1$, $S(W^S_{\bullet,\bullet};C)<1$ and $S(W_{\bullet, \bullet,\bullet}^{S,C};P)<1$, then $\delta_P(X)>1$.
\end{theorem}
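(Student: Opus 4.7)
The plan is to iterate the Abban--Zhuang adjunction method one more step than in Theorem~\ref{theorem:Hamid-Ziquan-Kento}, now using the full flag $S \supset C \supset \{P\}$. First, by the local version of Fujita--Li's criterion (Theorem~\ref{theorem:Fujita-Li}), $\delta_P(X)$ is the infimum of $A_X(E)/S_X(E)$ over all prime divisors $E$ over $X$ with $P \in C_X(E)$, so I only need to bound this ratio from below by the asserted minimum. The dichotomy is: either $\mathrm{ord}_E$ is proportional to $\mathrm{ord}_S$ (handled by the first slot, since then $A_X(E) = 1$ and the ratio is $1/S_X(S)$), or else $\mathrm{ord}_E$ restricts nontrivially to $S$.

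Second, in the nontrivial case I apply Theorem~\ref{theorem:Hamid-Ziquan-Kento}, which reduces the problem to estimating $A_S(F)/S(W^S_{\bullet,\bullet};F)$ from below by $\min\{1/S(W^S_{\bullet,\bullet};C),\,1/S(W^{S,C}_{\bullet,\bullet,\bullet};P)\}$ for every prime divisor $F$ over $S$ with $P \in C_S(F)$. Repeat the dichotomy on $S$: if $F = C$, then $A_S(C) = 1$ (using smoothness of $C$) and the ratio equals $1/S(W^S_{\bullet,\bullet};C)$, matching the second slot. Otherwise $C_C(F) = \{P\}$ and one must push the analysis to the curve $C$.

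Third, in that last case, I use the Zariski decomposition $P(u)|_S = P(u,v) + N(u,v) + v C$ to write
$$
S\bigl(W^S_{\bullet,\bullet};F\bigr) = \frac{3}{(-K_X)^3}\int_0^\tau \bigl(P(u)^2 \cdot S\bigr)\,\mathrm{ord}_F\bigl(N(u)|_S\bigr)\,du + \frac{3}{(-K_X)^3}\int_0^\tau\int_0^{t(u)} \mathrm{vol}\bigl(P(u,v)|_C - w \cdot \mathrm{ord}_F\bigr)\,dw\,dv\,du,
$$
and then apply the one-dimensional Abban--Zhuang / Okounkov-body estimate on $C$ at the point $P$: a Cauchy--Schwarz style bound gives $\mathrm{vol}(P(u,v)|_C - w F) \leqslant (P(u,v)\cdot C)^2$ for $w$ in the relevant range, together with the boundary contributions $\mathrm{ord}_P(N'(u)|_C + N(u,v)|_C)$ accounting for parts of $F$'s center that live in $N(u)|_S$ off $C$ and in $N(u,v)$ respectively. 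Combining this with the definition of $S(W^{S,C}_{\bullet,\bullet,\bullet};P)$ and comparing to $A_S(F) \geqslant A_C(F)$ (via inversion of adjunction along $C \subset S$, using that $S$ is smooth and $C$ is smooth) yields $A_S(F)/S(W^S_{\bullet,\bullet};F) \geqslant 1/S(W^{S,C}_{\bullet,\bullet,\bullet};P)$.

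The main technical obstacle will be the bookkeeping in the last step: one has to identify which part of $\mathrm{ord}_F(N(u)|_S)$ comes from $C$ itself (and so is absorbed into the $v$-integration) and which part comes from the residual effective divisor $N'(u)$ (and so contributes to $F_P(W^{S,C}_{\bullet,\bullet,\bullet})$ via its restriction to $C$), with no double counting. This is precisely the content of the flag-refinement construction of \cite{AbbanZhuang}; the proof in \cite[Theorem~1.112]{Book} verifies the identification by choosing a valuative flag in the Okounkov body adapted to $(S,C,P)$ and tracking the semicontinuous contributions of $P(u,v)\cdot C$. Once that is in place, the three integrals above match exactly the three $S$-invariants in the statement, and the asserted lower bound for $\delta_P(X)$ follows.
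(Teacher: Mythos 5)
The paper does not prove this statement at all---it is quoted verbatim from \cite{AbbanZhuang,Book}---so the comparison is with the argument in those sources. Your reconstruction follows the same route they do: the Abban--Zhuang adjunction applied twice along the flag $X\supset S\supset C\ni P$, first descending from $X$ to the multigraded linear series $W^S_{\bullet,\bullet}$ on $S$, then from $S$ to $W^{S,C}_{\bullet,\bullet,\bullet}$ on $C$. The architecture is right, and since you explicitly defer the filtration bookkeeping to \cite{AbbanZhuang} and \cite[Theorem~1.112]{Book}, the proposal lands where the references do.

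That said, the middle step as you wrote it would not go through if executed literally. Your dichotomy on $S$ is false: a prime divisor $F$ over $S$ with $P\in C_S(F)$ and $F\neq C$ need not be centred at $P$, nor inside $C$---its centre can be any other irreducible curve through $P$, in which case ``$C_C(F)=\{P\}$'' is not even well defined and that case is left uncovered by your plan. The actual argument does not split on centres: it refines the filtration induced by $F$ on the graded pieces of $W^S_{\bullet,\bullet}$ by restricting to $C$, and proves $A_S(F)/S(W^S_{\bullet,\bullet};F)\geqslant\min\{A_S(C)/S(W^S_{\bullet,\bullet};C),\,1/S(W^{S,C}_{\bullet,\bullet,\bullet};P)\}$ uniformly for \emph{every} such $F$. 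Two smaller points: on the curve $C$ one has $\mathrm{vol}(P(u,v)|_C-wP)=P(u,v)\cdot C-w$ for $0\leqslant w\leqslant P(u,v)\cdot C$, and it is the integral in $w$ that produces $\frac{1}{2}(P(u,v)\cdot C)^2$ (hence, after normalisation, the $(P(u,v)\cdot C)^2$ term in $S(W^{S,C}_{\bullet,\bullet,\bullet};P)$); your ``Cauchy--Schwarz bound $\mathrm{vol}\leqslant(P(u,v)\cdot C)^2$'' is not the relevant statement. Likewise ``$A_S(F)\geqslant A_C(F)$'' is not well posed, since $F$ is a divisor over $S$ and not over $C$; the correct input is the log-discrepancy adjunction along $C\subset S$ that is built into the refinement construction itself.
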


Now, let $f\colon\widetilde{S}\to S$ be a blow up of the~surface $S$ at the~point $P$, let $E$ be the~$f$-exceptional curve,
and let $\widetilde{N}^\prime(u)$ be the~proper transform on $\widetilde{S}$ of the~divisor $N(u)\vert_{S}$.
For $u\in[0,\tau]$, we let
$$
\widetilde{t}(u)=\sup\Big\{v\in \mathbb{R}_{\geqslant 0} \ \big| \ f^*\big(P(u)|_S\big)-vE \text{ is pseudo-effective}\Big\}.
$$
For $v\in [0,\widetilde{t}(u)]$, let $\widetilde{P}(u,v)$ be the~positive part of the~Zariski decomposition of $f^*(P(u)|_S)-vE$,
and let $\widetilde{N}(u,v)$ be the~negative part of this~Zariski decomposition. Set
$$
S\big(W_{\bullet,\bullet}^{S};E\big)=\frac{3}{(-K_X)^3}\int_0^\tau\big(P(u)\cdot P(u)\cdot S\big)\cdot\mathrm{ord}_{E}\big(f^*(N(u)\vert_{S})\big)du+\frac{3}{(-K_X)^3}\int_0^\tau\int_0^{\widetilde{t}(u)}\big(\widetilde{P}(u,v)\big)^2dvdu.\quad \quad \quad \quad
$$
Finally, for every point $O\in E$, we set
$$
F_O\big(W_{\bullet,\bullet,\bullet}^{S,E}\big)=
\frac{6}{(-K_X)^3}\int_0^\tau\int_0^{\widetilde{t}(u)}\big(\widetilde{P}(u,v)\cdot E\big)\times\mathrm{ord}_O\big(\widetilde{N}^\prime(u)\big|_E+\widetilde{N}(u,v)\big|_{E}\big)dvdu
$$
and
$$
S\big(W_{\bullet, \bullet,\bullet}^{S,E};O\big)=
\frac{3}{(-K_X)^3}\int_0^\tau\int_0^{\widetilde{t}(u)}\big(\widetilde{P}(u,v)\cdot E\big)^2dvdu+
F_O\big(W_{\bullet,\bullet,\bullet}^{S,E}\big).
$$
If $P\not\in\mathrm{Supp}(N(u))$ for every $u\in[0,\tau]$, the~formulas for $S(W_{\bullet, \bullet}^{S};E)$ and
$F_O(W_{\bullet,\bullet,\bullet}^{S,E})$ simplify as
\begin{align*}
S\big(W_{\bullet,\bullet}^{S};E\big)&=\frac{3}{(-K_X)^3}\int_0^\tau\int_0^{\widetilde{t}(u)}\big(\widetilde{P}(u,v)\big)^2dvdu,\\
F_O\big(W_{\bullet,\bullet,\bullet}^{S,E}\big)&=\frac{6}{(-K_X)^3}\int_0^\tau\int_0^{\widetilde{t}(u)}\big(\widetilde{P}(u,v)\cdot E\big)\times\mathrm{ord}_O\big(\widetilde{N}(u,v)\big|_{E}\big)dvdu.
\end{align*}
Moreover, Theorem~\ref{theorem:Hamid-Ziquan-Kento-2} can be generalized as follows:

\begin{theorem}[{\cite{AbbanZhuang,Book}}]
\label{theorem:Hamid-Ziquan-Kento-3}
One has
$$
\delta_P(X)\geqslant\min\Bigg\{\frac{1}{S_X(S)},\frac{2}{S\big(W^S_{\bullet,\bullet};E\big)},\inf_{O\in E}\frac{1}{S\big(W_{\bullet, \bullet,\bullet}^{S,E};O\big)}\Bigg\}.
$$
\end{theorem}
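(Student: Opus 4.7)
The plan is to apply the Abban--Zhuang inductive bound for local $\delta$-invariants to the flag $X\supset\widetilde{S}\supset E\supset\{O\}$, where $\widetilde S$ is viewed as a divisor over $X$ obtained via a birational model of $X$ extending $f$ on the strict transform of $S$. By the local Fujita--Li criterion,
$$
\delta_P(X)\;=\;\inf_{F/X,\ P\in c_X(F)}\frac{A_X(F)}{S_X(F)},
$$
so it suffices to bound this ratio from below by the right-hand side of the claimed inequality for every prime divisor $F$ over $X$ whose center contains $P$. The argument parallels the proof of Theorem~\ref{theorem:Hamid-Ziquan-Kento-2}, with the curve $C\subset S$ replaced by the exceptional divisor $E$ of the blowup $f\colon\widetilde S\to S$.

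For a fixed such $F$ I would distinguish three cases. If $\mathrm{ord}_F(S)>0$, a standard Okounkov-body concavity argument from \cite{AbbanZhuang} lets one replace $F$ by $S$ without decreasing $A_X(F)/S_X(F)$; since $A_X(S)=1$, this yields the bound $1/S_X(S)$. If $\mathrm{ord}_F(S)=0$ but the induced valuation has positive order along $E$ after restriction to $S$ and passage to $\widetilde S$, one refines $H^0(X,-mK_X)$ first by $\mathrm{ord}_S$ and then, after restricting to $S$ and pulling back by $f$, by $\mathrm{ord}_E$; the expected vanishing order of this multi-graded system along $E$ is exactly $S(W^S_{\bullet,\bullet};E)$, and $A_S(E)=2$ (blowup of a smooth point on a smooth surface) yields the bound $2/S(W^S_{\bullet,\bullet};E)$. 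Finally, if both orders vanish, then $F|_E$ is a nontrivial valuation on $E\cong\mathbb P^1$, hence proportional to $\mathrm{ord}_O$ for some $O\in E$; a further refinement along $E$, combined with $A_E(O)=1$, produces the bound $1/S(W^{S,E}_{\bullet,\bullet,\bullet};O)$.

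Taking the minimum over the three cases and the infimum over $O\in E$ in the third case completes the argument. The main technical obstacle lies in the second step: one has to verify that pulling back $N(u)|_S$ to $\widetilde S$ and taking the Zariski decomposition $f^*(P(u)|_S)-vE=\widetilde P(u,v)+\widetilde N(u,v)$ reproduces exactly the formula for $S(W^S_{\bullet,\bullet};E)$ given above, including the boundary contribution $\mathrm{ord}_E(f^*N(u)|_S)$ from the pulled-back negative part. This identification is the content of the Abban--Zhuang recursion in the presence of a non-trivial birational step in the flag; once it is granted, the theorem is a direct one-step extension of Theorem~\ref{theorem:Hamid-Ziquan-Kento-2}.
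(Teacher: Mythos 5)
The paper does not prove this statement: Theorem~\ref{theorem:Hamid-Ziquan-Kento-3} is quoted from \cite{AbbanZhuang,Book}, so your sketch has to be measured against the argument in those sources rather than against anything in this text. Your overall architecture is the right one --- refine the anticanonical linear series along the flag $X\supset S$, then along $E\subset\widetilde S$, then at a point $O\in E$ --- and you correctly identify both the origin of the factor $2$ (namely $A_S(E)=2$ for the blow-up of a smooth point of a smooth surface) and the need to check that the term $\mathrm{ord}_E\big(f^*(N(u)\vert_S)\big)$ in the definition of $S(W^S_{\bullet,\bullet};E)$ is exactly what the recursion produces when the negative part of $-K_X-uS$ restricted to $S$ is pulled back to $\widetilde S$.

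The genuine problem is the trichotomy in your middle paragraph, and in particular Case~1. It is false that $\mathrm{ord}_F(S)>0$ allows one to replace $F$ by $S$ without decreasing $A_X(F)/S_X(F)$: if $X$ is any Fano with $\beta(S)>0$ but $\delta_P(X)<1/S_X(S)$, a divisor $F$ computing (or approximating) $\delta_P(X)$ will typically have positive order along $S$ and yet a strictly smaller ratio than $1/S_X(S)$. The Abban--Zhuang mechanism does not branch on which stratum of the flag a given valuation ``sees''; instead, for \emph{every} $F$ one decomposes a compatible $m$-basis type divisor as $D_m=S_m(S)\,S+\Gamma_m$, applies inversion of adjunction (respectively the mediant inequality $\frac{a+b}{c+d}\geqslant\min\{\frac{a}{c},\frac{b}{d}\}$ in the valuative formulation), and obtains the bound as a simultaneous minimum of the contribution of $S$ and the contribution of the refined multigraded linear series $W^S_{\bullet,\bullet}$ on $S$; the same step is then iterated on $\widetilde S$ with the plt pair $(\widetilde S,E)$ and once more on $E$. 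Your Cases~2 and~3 describe the outputs of these iterations correctly, but the reduction to them cannot be organized as three mutually exclusive cases on $F$, and the concavity/Okounkov-body argument you invoke in Case~1 does not exist in the form you state. As written, the proposal therefore does not yield the inequality for an arbitrary prime divisor $F$ over $X$ with $P\in c_X(F)$, which is what the local Fujita--Li criterion requires.
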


This result can be generalized for a purely log terminal blow up of the~surface $S$ at the~point $P$.
Namely, let $g\colon\widehat{S}\to S$ be a purely log terminal blow up of the~point $P$,
let $G$ be the~$g$-exceptional curve,
and let $\widehat{N}^\prime(u)$ be the~proper transform on $\widehat{S}$ of the~divisor $N(u)\vert_{S}$.
For $u\in[0,\tau]$, we let
$$
\widehat{t}(u)=\sup\Big\{v\in \mathbb{R}_{\geqslant 0} \ \big| \ g^*\big(P(u)|_S\big)-vG \text{ is pseudo-effective}\Big\}.
$$
For $v\in [0,\widehat{t}(u)]$, let $\widehat{P}(u,v)$ be the~positive part of the~Zariski decomposition of $g^*(P(u)|_S)-vG$,
and let $\widehat{N}(u,v)$ be the~negative part of this~Zariski decomposition. Set
$$
S\big(W_{\bullet, \bullet}^{S};G\big)=\frac{3}{(-K_X)^3}\int_0^\tau\big(P(u)\cdot P(u)\cdot S\big)\cdot\mathrm{ord}_{G}\big(g^*(N(u)\vert_{S})\big)du+\frac{3}{(-K_X)^3}\int_0^\tau\int_0^{\widehat{t}(u)}\big(\widehat{P}(u,v)\big)^2dvdu.\quad \quad \quad \quad
$$
As above, for every point $O\in G$, we set
$$
F_O\big(W_{\bullet, \bullet,\bullet}^{S,G}\big)=
\frac{6}{(-K_X)^3}\int_0^\tau\int_0^{\widehat{t}(u)}\big(\widehat{P}(u,v)\cdot G\big)\times\mathrm{ord}_O\big(\widehat{N}^\prime(u)\big|_G+\widehat{N}(u,v)\big|_{G}\big)dvdu
$$
and
$$
S\big(W_{\bullet, \bullet,\bullet}^{S,G};O\big)=
\frac{3}{(-K_X)^3}\int_0^\tau\int_0^{\widehat{t}(u)}\big(\widehat{P}(u,v)\cdot G\big)^2dvdu+F_O\big(W_{\bullet,\bullet,\bullet}^{S,G}\big).
$$

\begin{remark}
\label{remark:weighted-blow-up}
Note that $G\cong\mathbb{P}^1$, but $\widehat{S}$~is singular unless $g$ is a usual blow up.
Nevertheless, the~singularities of the~surface $\widehat{S}$ are very mild  --- they are cyclic quotient singularities.
\end{remark}

To present a generalization of Theorem~\ref{theorem:Hamid-Ziquan-Kento-3},
we have to equip the~curve $G$ with an additional structure of a log Fano curve.
This can be done as follows.
Since the~log pair $(\widehat{S},G)$ has purely log terminal singularities (by definition),
the subadjunction formula gives
$$
K_{\widehat{S}}+\Delta_{G}\sim_{\mathbb{Q}}\big(K_{\widehat{S}}+G\big)\big\vert_{G},
$$
where $\Delta_{G}$ is an effective $\mathbb{Q}$-divisor on the~curve $G$ known as the~different of the~log pair $(\widehat{S},E)$.
This divisor $\Delta_{G}$ can be computed as follows:
\begin{itemize}
\item if $O$ is a point in $G$ such that $\widehat{S}$ is smooth at $O$, then $O\not\in\mathrm{Supp}(\Delta_G)$.
\item if $O$ is a point in $G$ such that $\widehat{S}$ has singularity of type $\frac{1}{n}(1,m)$ at $O$, then
$$
\mathrm{ord}_O\big(\Delta_G\big)=\frac{n-1}{n},
$$
where $n\in\mathbb{N}$ and $m\in\mathbb{N}$ such that $\mathrm{gcd}(n,m)=1$.
\end{itemize}
Now, we are ready to state a generalization of Theorem~\ref{theorem:Hamid-Ziquan-Kento-3} for weighted blow ups.

\begin{theorem}[{\cite{AbbanZhuang,Book}}]
\label{theorem:Hamid-Ziquan-Kento-4}
One has
$$
\delta_P(X)\geqslant \min\left\{\frac{1}{S_X(S)},\frac{A_{S}(G)}{S(W_{\bullet,\bullet}^{S};G)},\inf_{O\in G}\frac{1-\mathrm{ord}_O(\Delta_{G})}{S(W_{\bullet, \bullet,\bullet}^{S,G};O)}\right\}.
$$
\end{theorem}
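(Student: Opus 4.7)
The plan is to deduce this statement from the previous Abban--Zhuang estimates (Theorems~\ref{theorem:Hamid-Ziquan-Kento}--\ref{theorem:Hamid-Ziquan-Kento-3}) by running the refinement inductively along the three-step flag $P\in G\subset \widehat{S}\to S\subset X$ instead of the smooth flag that arises from an ordinary blowing-up. Each of the three terms in the minimum corresponds to one level of this induction, and the only substantive modification compared with Theorem~\ref{theorem:Hamid-Ziquan-Kento-3} is to record correctly how the log discrepancies of the plt model enter.

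First, I would apply the codimension-one Abban--Zhuang inequality along $S\subset X$ exactly as in the proof of Theorems~\ref{theorem:Hamid-Ziquan-Kento-2} and~\ref{theorem:Hamid-Ziquan-Kento-3}. This gives
$$
\delta_P(X)\;\geqslant\;\min\left\{\frac{1}{S_X(S)},\;\delta_P\bigl(S;\,W_{\bullet,\bullet}^{S}\bigr)\right\},
$$
where the second quantity is the local stability threshold of the refined bigraded linear series on $S$. This step does not use $g$ and is identical to the earlier proofs, so the $1/S_X(S)$ factor appears unchanged.

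Second, I would descend to the plt exceptional curve $G$ by means of $g\colon \widehat{S}\to S$. Pulling the refined series on $S$ back to $\widehat{S}$ and using the Zariski decompositions $g^{*}(P(u)|_S)-vG=\widehat{P}(u,v)+\widehat{N}(u,v)$, the next Abban--Zhuang inequality delivers
$$
\delta_P\bigl(S;\,W_{\bullet,\bullet}^{S}\bigr)\;\geqslant\;\min\left\{\frac{A_S(G)}{S(W_{\bullet,\bullet}^{S};G)},\;\delta_P\bigl(G;\,W_{\bullet,\bullet,\bullet}^{S,G}\bigr)\right\}.
$$
The log discrepancy $A_S(G)$ appears in place of the value $1$ used in Theorem~\ref{theorem:Hamid-Ziquan-Kento-3} precisely because $G$ is a plt exceptional divisor over $S$, not a divisor obtained by blowing up a smooth point; the first summand in the definition of $S(W_{\bullet,\bullet}^{S};G)$ accounts for the possibility that $G$ is contained in the support of $g^{*}(N(u)|_S)$.

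Third, I would treat the one-dimensional refined problem on the log Fano pair $(G,\Delta_G)\cong (\mathbb{P}^{1},\Delta_G)$. By the subadjunction formula for plt pairs recalled just before the statement, the log discrepancy at a closed point $O\in G$ with respect to $(G,\Delta_G)$ equals $1-\mathrm{ord}_O(\Delta_G)$, so the one-dimensional Abban--Zhuang criterion (equivalently the Fujita--Li valuative criterion on the log Fano curve $(G,\Delta_G)$) yields
$$
\delta_P\bigl(G;\,W_{\bullet,\bullet,\bullet}^{S,G}\bigr)\;\geqslant\;\inf_{O\in G}\frac{1-\mathrm{ord}_O(\Delta_G)}{S(W_{\bullet,\bullet,\bullet}^{S,G};O)}.
$$
Concatenating the three inequalities gives the claim. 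The main obstacle is the careful bookkeeping of the cyclic quotient singularities of $\widehat{S}$ along $G$ (Remark~\ref{remark:weighted-blow-up}): the decompositions $\widehat{P}(u,v)+\widehat{N}(u,v)$ and the orders $\mathrm{ord}_O$ at singular points must be interpreted as $\mathbb{Q}$-divisor computations on the singular surface, and it is precisely the different $\Delta_G$ that transfers this singular contribution into the point-by-point denominator $1-\mathrm{ord}_O(\Delta_G)$ in the final bound.
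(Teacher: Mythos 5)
Your proposal is correct and follows exactly the standard Abban--Zhuang iterated-flag argument that the paper itself does not reproduce but merely cites from \cite{AbbanZhuang} and \cite{Book}: the three terms arise from the three levels $S\subset X$, the plt divisor $G$ over $S$ with its log discrepancy $A_S(G)$, and the log Fano curve $(G,\Delta_G)$ where subadjunction converts the quotient singularities of $\widehat{S}$ into the weights $1-\mathrm{ord}_O(\Delta_G)$. Nothing in your sketch deviates from, or is missing relative to, the argument in those references.
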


\subsection{Estimating the local \texorpdfstring{$\delta$-invariant}{}}
\label{subsection:delta}
Let us use notations introduced  in Section~\ref{subsection:Kento} with $X$ being a smooth Fano 3-fold in Family \textnumero 2.16.
Recall from Section~\ref{section:intro} that there is Sarkisov link
$$
\xymatrix{
&X\ar[ld]_{f}\ar[rd]^{g}&\\
V&&\mathbb{P}^2}
$$
where $V$ is a smooth complete intersection of two quadrics in $\mathbb{P}^5$, $f$ is the~blow up of a smooth conic $C_2\subset V$,
and $g$ is a conic bundle. Let $H=f^{*}(\mathcal{O}_V(1))$, and let $E$ be the~$f$-exceptional surface.
Then either $E\cong\mathbb{P}^1\times\mathbb{P}^1$ or $E\cong\mathbb{F}_2$.

\begin{remark}
The effective cone of $X$ is generated by  the classes of $H-E$ and $E$.
The movable cone is generated by $H-E$ and $H$
and it coincides with the nef cone. The following  picture displays these cones
\begin{center}
\begin{tikzpicture}[scale=1]
\tkzDefPoint(0,0){O}
\tkzDefPoint(0,1){H}
\tkzDefPoint(1,0){E}
\tkzDefPoint(-1,1){F}
\filldraw[fill=gray!50] (0,0) -- (0,1) arc (60:120:1cm) -- cycle;
\tkzDrawPoints[size=2](O,H,E,F)
\tkzDrawSegments[thick](O,H O,E O,F)
\tkzLabelPoint[right](E){\tiny $E$}
\tkzLabelPoint[above](0,1.1){\tiny $H$}
\tkzLabelPoint[left](F){\tiny $H-E$}
\end{tikzpicture}
\end{center}
where the shaded region corresponds to the movable  cone. As a consequence a divisor $aH-bE$ is  linearly equivalent to an effective one exactly when $a\geqslant 0$ and $a\geqslant b$.
\end{remark}

Let $P$ be a point in $X$. Let us estimate~$\delta_P(X)$ from below. Let $C$ be the~scheme fiber of the~conic bundle $g\colon X\to  \mathbb{P}^2$ such that $P\in C$. Then
\begin{enumerate}
\item either $C$ is smooth and irreducible;
\item or $C=L_1+L_2$ for smooth rational curves $L_1$ and $L_2$ intersecting transversally at a point;
\item or $C=2L$ for a smooth rational curve $L$.
\end{enumerate}
Our goal is to show that $\delta_P(X)>1$ in the case when $C$ is reduced.

Let $S$ be a general surface in the~linear system $|g^*(\mathcal{O}_{\mathbb{P}^2}(1))|$ that contains $P$.
Then $C\subset S$ and
$$
S\sim H-E,
$$
where $H=f^{*}(\mathcal{O}_V(1))$, and $E$ the~exceptional divisor of the~blow up $f$.
Moreover, if $C$ is reduced, then $S$ is smooth. However, if $C$ is not reduced,
then $S$ has two isolated ordinary double singular points, and we can choose $S$ such that $S$ is smooth at $P$.

\begin{lemma}
\label{lemma:weak-dP4}
The surface $S$ is a del Pezzo surface of degree $4$ except the~following special case:
\begin{center}
$E\cong\mathbb{F}_2$ and $C$ is the~$(-2)$-curve in $E$.
\end{center}
In the~special case, the~surface $S$ is a weak smooth del Pezzo surface of degree $4$, and
$$
S\cap E=C+\mathbf{l}_1+\mathbf{l}_2
$$
for two distinct fibers $\mathbf{l}_1$ and $\mathbf{l}_2$ of the~natural projection $E\to C_2$,
which are $(-2)$-curves in $S$. Moreover, the~curves $\mathbf{l}_1$ and $\mathbf{l}_2$ are the~only $(-2)$-curves in $S$.
\end{lemma}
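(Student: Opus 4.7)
The plan is to identify $S$ as a smooth quartic surface with $-K_S=H|_S$, and then determine when $-K_S$ fails to be ample via a direct analysis of $S\cap E$ in the cases $E\cong\mathbb{F}_0$ and $E\cong\mathbb{F}_2$.

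First I establish the numerics. Adjunction and $-K_X=2H-E$ give $-K_S=((2H-E)-(H-E))|_S=H|_S$; since $H|_E=2f$ (the pullback of $\cO_V(1)|_{C_2}$ of degree $2$), we have $H^2\cdot E=0$, hence $(-K_S)^2=H^3=4$. Because $C$ is reduced by hypothesis, the paragraph preceding the lemma gives that $S$ is smooth, so $S$ is a smooth projective surface of degree $4$, and $-K_S=H|_S$ is nef. Moreover $-K_S\cdot C'=H\cdot C'=0$ for a curve $C'\subset S$ forces $C'$ to be $f$-contracted, hence to lie in a fiber of $E\to C_2$, which means $C'\subset E$. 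Thus every $(-2)$-curve on $S$ is a component of $S\cap E$. Using $(-K_X)^3=22$ and the push--pull identity $H\cdot E^2=H_V\cdot f_*(E^2)=-H_V\cdot C_2=-2$, one computes $E^3=-2$, and solving these on $E$ yields $E|_E=f_1-f_2$ on $\mathbb{F}_0$ (with $f_1$ the class of a fiber of $E\to C_2$) and $E|_E=-\sigma_\infty$ on $\mathbb{F}_2$. Consequently the class of $S\cap E$ on $E$ is $f_1+f_2$ in the $\mathbb{F}_0$ case and $\sigma_\infty+2f=\sigma_0$ in the $\mathbb{F}_2$ case.

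Next I read off the structure of $S\cap E$ from the map $g|_E\colon E\to\mathbb{P}^2$, which is controlled by the rational map $\Phi\colon\mathbb{P}^2_{(x_4,x_5,x_6)}\dashrightarrow\check{\mathbb{P}}^2$ sending $q\mapsto q_4\alpha_1+q_5\alpha_2+q_6\alpha_3$: a fiber of $E\to C_2$ over $q\in C_2$ is mapped by $g$ onto the line $\Phi(q)\subset\mathbb{P}^2$, and by the Remark preceding the lemma $\Phi$ is an isomorphism in the $\mathbb{F}_0$ case and has rank $2$ with image equal to the pencil of lines through $p_0:=g(\sigma_\infty)$ in the $\mathbb{F}_2$ case. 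When $E\cong\mathbb{F}_0$, $\Phi|_{C_2}$ embeds $C_2$ as a smooth conic in $\check{\mathbb{P}}^2$, so the lines $\Phi(q)$ form a $1$-parameter family which a general $\ell$ through $g(P)$ avoids; hence $S\cap E$ is the irreducible smooth $(1,1)$-curve in $|f_1+f_2|$ and $S$ is a del Pezzo surface of degree $4$. The same holds when $E\cong\mathbb{F}_2$ and $C\neq\sigma_\infty$: then $g(P)\neq p_0$, a general $\ell$ through $g(P)$ avoids $p_0$, so $\sigma_\infty\not\subset S$, and $(g|_E)^{-1}(\ell)\in|\sigma_0|$ is an irreducible section disjoint from $\sigma_\infty$ (since $\sigma_0\cdot\sigma_\infty=0$); again $-K_S$ is ample and $S$ is a del Pezzo of degree $4$.

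In the exceptional case $E\cong\mathbb{F}_2$ and $C=\sigma_\infty$, we have $g(P)=p_0$, so every $\ell$ through $g(P)$ contains $p_0$ and $\sigma_\infty\subset S$; the residual in $S\cap E$ has class $\sigma_0-\sigma_\infty=2f$, and since $\Phi|_{C_2}$ is a $2$-to-$1$ cover onto the pencil through $p_0$, a general $\ell$ has two distinct preimages $q_1,q_2\in C_2$, giving $S\cap E=C+\mathbf{l}_1+\mathbf{l}_2$ with $\mathbf{l}_i$ the fiber of $E\to C_2$ over $q_i$. Adjunction on $S$, combined with $H\cdot\sigma_\infty=2f\cdot\sigma_\infty=2$ and $H\cdot\mathbf{l}_i=2f\cdot f=0$, yields $\sigma_\infty^2=0$ and $\mathbf{l}_i^2=-2$ on $S$; so $C=\sigma_\infty$ is a smooth fiber of $g|_S$ while $\mathbf{l}_1$ and $\mathbf{l}_2$ are $(-2)$-curves. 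Any other $(-2)$-curve $C'\subset S$ satisfies $H\cdot C'=0$, hence $C'\subset S\cap E$, forcing $C'\in\{\mathbf{l}_1,\mathbf{l}_2\}$. The main obstacle is to characterize, in each of the cases, the ``general'' lines $\ell$ through $g(P)$ giving the claimed decomposition of $S\cap E$, which is precisely what the geometry of $\Phi|_{C_2}$ encodes.
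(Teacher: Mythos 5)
Your argument is correct and is essentially the paper's: the published proof consists of the single observation that adjunction gives $-K_S\sim H\vert_S$, and your computation of $E\vert_E$ (hence of the class of $S\cap E$ on $E$) together with the analysis, via the map $\Phi$, of which fibers of $E\to C_2$ can lie in $S\cap E$ is a correct and careful filling-in of the claimed ``easy implication'' --- the key point in both cases being that the only curves on which the nef divisor $-K_S=H\vert_S$ vanishes are fibers of $E\to C_2$ contained in $S\cap E$. One small caveat: the lemma as stated carries no hypothesis that $C$ is reduced, whereas you invoke one; since the non-reduced case $C=2L$ (where $S$ acquires two nodes and is only a Du Val del Pezzo surface) is treated separately in Remark~\ref{remark:non-reduced-fiber-flag-3} and the lemma is only ever applied with $C$ reduced, this does not affect any application, but you should state the restriction explicitly or note how that case is to be read.
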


\begin{proof}
By adjunction formula, we have $-K_S\sim H$, which easily implies the required assertion.
\end{proof}

As in Section~\ref{subsection:Kento}, we let
$$
\tau=\mathrm{sup}\Big\{u\in\mathbb{R}_{\geqslant 0}\ \big\vert\ \text{the divisor  $-K_X-uS$ is pseudo-effective}\Big\}.
$$
Fix $u\in\mathbb{R}_{\geqslant 0}$. Then
$$
-K_X-uS\sim_{\mathbb{R}}2H-E-u(H-E)\sim_{\mathbb{R}}(2-u)H-(1-u)E,
$$
which shows that $\tau=2$.

For $u\in[0,2]$, let $P(u)$ be the~positive part of the~Zariski decomposition of the~divisor $-K_X-uS$,
and let $N(u)$ be the~negative part of the~Zariski decomposition of this divisor. Then
$$
P(u)=
\left\{\aligned
&(2-u)H-(1-u)E \ \text{if $0\leqslant u\leqslant 1$}, \\
&(2-u)H\ \text{if $1\leqslant u\leqslant 2$},
\endaligned
\right.
$$
and
$$
N(u)=\left\{\aligned
&0 \ \text{if $0\leqslant u\leqslant 1$}, \\
&(u-1)E\ \text{if $1\leqslant u\leqslant 2$}.
\endaligned
\right.
$$
Thus, we have
\begin{multline*}
S_X(S)=\frac{1}{22}\int_{0}^{2}\big(P(u)\big)^3du=\frac{1}{22}\int_{0}^{1}\big((2-u)H-(1-u)E\big)^3du+\frac{1}{22}\int_{1}^{2}\big((2-u)H\big)^3du=\\
=\frac{1}{22}\int_{0}^{1}6u^2-24u+22du+\frac{1}{22}\int_{1}^{2}4(2-u)^3du=\frac{13}{22}.\quad\quad\quad\quad\quad
\end{multline*}
In particular, we have $\beta(S)=A_X(S)-S_X(S)=1-\frac{13}{22}=\frac{9}{22}>0$.

\begin{proposition}
\label{proposition:flag-3-Nemuro-lemma}
Suppose that $C$ is reduced and $S$ is a smooth del Pezzo surface. Set
$$
\gamma=\left\{\aligned
&\frac{176}{161}\  \text{if $C$ is smooth and $P\not\in E$}, \\
&\frac{176}{169}\  \text{if $C$ is smooth and $P\in E$}, \\
&\frac{88}{85}\  \text{if $C$ is reducible and $P\not\in E$}, \\
&\frac{88}{89}\  \text{if $C$ is reducible  and $P\in E$}.
\endaligned
\right.
$$
Then $\delta_P(X)\geqslant\gamma$.
\end{proposition}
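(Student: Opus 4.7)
The strategy is to apply the Abban--Zhuang flag method to the 3-step flag $(S, C', P)$, where $S \in |H-E|$ is a general surface containing $P$, which under the hypothesis (together with Lemma~\ref{lemma:weak-dP4}) is a smooth del~Pezzo surface of degree~$4$ containing the fiber $C$, and $C'$ is a smooth irreducible curve in $S$ through $P$, taken to be $C$ itself when $C$ is smooth, and otherwise one of the two components $L_i$ of $C = L_1 + L_2$ that contains $P$. Theorem~\ref{theorem:Hamid-Ziquan-Kento-2} then yields
\[
\delta_P(X) \ \geqslant \ \min\left\{\frac{1}{S_X(S)},\ \frac{1}{S(W^S_{\bullet,\bullet};C')},\ \frac{1}{S(W^{S,C'}_{\bullet,\bullet,\bullet};P)}\right\}.
\]
Since $S_X(S) = 13/22$ has already been computed, the first term $22/13$ exceeds each of the four listed values of $\gamma$, so only the remaining two terms can be binding.

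First I would record the intersection data on $S$: $-K_S = H|_S$ with $(-K_S)^2 = 4$; the divisor $E|_S$ satisfies $(E|_S)^2 = 0$ and $E|_S \cdot (-K_S) = 2$; the fiber $C$ has $C^2 = 0$, $C \cdot (-K_S) = 2$ and $C \cdot E|_S = 2$; and when $C = L_1 + L_2$ is reducible each $L_i$ is a $(-1)$-curve on $S$ with $L_1 \cdot L_2 = 1$ and $L_i \cdot E|_S = 1$. Combined with the formulas $P(u) = (2-u)H - \max(1-u,0)E$ and $N(u) = \max(u-1,0)E$ already derived, these data let me compute the Zariski decomposition $P(u)|_S - vC' = P(u,v) + N(u,v)$ as a piecewise polynomial function on the strip $\{(u,v):0\leqslant u\leqslant 2,\ 0\leqslant v\leqslant t(u)\}$; the chamber walls occur precisely where the divisor first meets a new negative curve among $\{E|_S,\ L_{3-i},\ C'\}$ (the last being negative only when $C' = L_i$ is a $(-1)$-curve).

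With the chambered Zariski decomposition in hand, one evaluates
\[
S(W^S_{\bullet,\bullet}; C') \ = \ \frac{3}{22}\int_0^2\!\!\int_0^{t(u)}\bigl(P(u,v)\bigr)^2\,dv\,du,
\]
the boundary contribution $\mathrm{ord}_{C'}(N(u)|_S)$ vanishing because $\mathrm{Supp}(N(u)|_S)\subset E|_S$ does not contain $C'$, and
\[
S(W^{S,C'}_{\bullet,\bullet,\bullet}; P) \ = \ \frac{3}{22}\int_0^2\!\!\int_0^{t(u)}\bigl(P(u,v)\cdot C'\bigr)^2dv\,du \ + \ F_P\bigl(W^{S,C'}_{\bullet,\bullet,\bullet}\bigr).
\]
The point-sensitive $F_P$-term picks up $\mathrm{ord}_P(E|_S \cap C')$ exactly when $P \in E$ and picks up $\mathrm{ord}_P(L_{3-i}\cap C')$ exactly when $C$ is reducible and $P$ is the node $L_1\cap L_2$. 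The four combinations of these contributions produce, after routine but somewhat lengthy integration, the four reciprocals $1/\gamma$ in the statement.

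The main obstacle is the careful chamber analysis of the Zariski decomposition $P(u,v)$ on $S$ in each of the four sub-cases. The reducible case is the more delicate one because $C' = L_i$ is itself a $(-1)$-curve, so a nontrivial negative part already appears at $v = 0$, and the walls of the $(u,v)$-chamber decomposition shift accordingly. This structural difference is what ultimately produces the two distinct denominators ($88$ in the reducible cases versus $176$ in the smooth cases) appearing in the claimed values of $\gamma$.
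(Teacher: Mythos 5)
Your route is not the paper's. The paper proves this proposition with the two-step flag of Theorem~\ref{theorem:Hamid-Ziquan-Kento} alone: it bounds $S(W^S_{\bullet,\bullet};F)\leqslant\gamma A_S(F)$ uniformly for \emph{every} prime divisor $F$ over $S$ centered at $P$, by splitting the integral at $u=1$ and invoking the known estimates for the local $\delta$-invariants of the polarized quartic del Pezzo surface $(S,-K_S+tC)$ from \cite[Lemma~23]{CheltsovFujitaKishimotoOkada}, together with $\delta(S)\geqslant\tfrac43$ for the range $u\in[1,2]$ and $\mathrm{ord}_F(\Gamma)\leqslant A_S(F)$ for the $P\in E$ correction. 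No Zariski chamber analysis on $S$ and no choice of a curve $C'$ through $P$ is needed; this is what makes the bound uniform in $P$ within each of the four cases.

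More importantly, your three-step flag does not deliver the stated constants in the reducible case, and in the worst sub-case it fails outright. The quantity $S(W^{S,L_1}_{\bullet,\bullet,\bullet};P)$ depends on the position of $P$ relative to \emph{all} the negative curves entering the Zariski chambers of $P(u)|_S-vL_1$ --- not only $\Gamma=E|_S$, but also $L_2$ and the four $(-1)$-curves $\mathbf{e}_1,\dots,\mathbf{e}_4$ contracted together with $L_2$ --- so it takes more than the two values your case division allows. The paper's own computation (proof of Proposition~\ref{proposition:flag-3-final}) gives $S(W^{S,L_1}_{\bullet,\bullet,\bullet};P)=\tfrac{69}{88}+F_P$ with $F_P=\tfrac{41}{176}$ when $P=L_1\cap L_2\cap\Gamma$, i.e. $S(W^{S,L_1}_{\bullet,\bullet,\bullet};P)=\tfrac{179}{176}>1$; your bound from Theorem~\ref{theorem:Hamid-Ziquan-Kento-2} is then $\min\{\tfrac{22}{13},\tfrac{176}{161},\tfrac{176}{179}\}=\tfrac{176}{179}<1$, which proves neither $\gamma=\tfrac{88}{89}$ nor even local K-stability at that point. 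Likewise, when $P$ is the node but $P\notin E$ one only gets $\tfrac{176}{171}<\tfrac{88}{85}$. Repairing this requires refining the flag by a blow-up of $P$ as in Theorems~\ref{theorem:Hamid-Ziquan-Kento-3}--\ref{theorem:Hamid-Ziquan-Kento-4}, which is exactly what the paper does for singular reduced fibers; as written, your argument has a genuine gap in the reducible cases.
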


\begin{proof}
For every $t\in\mathbb{R}_{\geqslant 0}$, we let $D_t=-K_S+tC$. As in \cite[Appendix~A]{CheltsovFujitaKishimotoOkada}, we set
$$
\delta_P(S,D_t)=\inf_{\substack{F/S\\ P\in C_S(F)}}\frac{A_{S}(F)}{S_{D_t}(F)},
$$
where infimum is taken over all prime divisors $F$ over $S$ such that $P\in C_S(F)$,
and
$$
S_{D_t}(F)=\frac{1}{D_t^2}\int_0^\infty \mathrm{vol}\big(D_t-uF\big)du=\frac{1}{4-4t}\int_0^\infty \mathrm{vol}\big(D_t-uF\big)du.
$$
If $t=0$, then $\delta_P(S,D_t)=\delta_P(S)\geqslant\delta(S)\geqslant\frac{4}{3}$ by \cite[Lemma 2.12]{Book}.
If $C$ is smooth, then it follows from \cite[Lemma~24]{CheltsovFujitaKishimotoOkada} we claim that
\begin{equation}
\label{equation:dP4-C-smooth-delta}
\delta_P(S,D_t)\geqslant
\left\{\aligned
&\frac{24}{19+8t+t^2}\ \text{if $0\leqslant t\leqslant 1$}, \\
&\frac{6(1+t)}{5+6t+3t^2}\ \text{if $t\geqslant 1$}.
\endaligned
\right.
\end{equation}
Similarly, if $C$ is reducible (and reduced), then \cite[Lemma~24]{CheltsovFujitaKishimotoOkada} gives
\begin{equation}
\label{equation:dP4-C-singular-delta}
\delta_P(S,D_t)\geqslant \frac{24(1+t)}{19+30t+12t^2}.
\end{equation}
Let us use these estimates together with Theorem~\ref{theorem:Hamid-Ziquan-Kento} to estimate $\delta_P(X)$.

Since $S_X(S)=\frac{13}{22}$, Theorem~\ref{theorem:Hamid-Ziquan-Kento} gives
\begin{equation}
\label{equation:flag-3-Abban-Zhuang}
\delta_P(X)\geqslant\min\Bigg\{\frac{22}{13},\inf_{\substack{F/S\\P\in C_S(F)}}\frac{A_S(F)}{S\big(W^S_{\bullet,\bullet};F\big)}\Bigg\},
\end{equation}
where the~infimum is taken by all prime divisors over $S$ whose center on $S$ contains $P$.
Thus, it follows from \eqref{equation:flag-3-Abban-Zhuang} that to prove the~required assertion, it is enough to show that
$$
S(W^S_{\bullet,\bullet};F)\leqslant \gamma A_S(F)
$$
for any prime divisor $F$ over the~surface $S$ such that $P\in C_S(F)$. Let us do this.

Fix a prime divisor $F$  over $S$  such that $P\in C_S(F)$.
Set $\Gamma=E\vert_{S}$. Then $\Gamma$ is a smooth~rational curve.
Moreover, we have
$$
P(u)\big\vert_{S}=
\left\{\aligned
&-K_S+(1-u)C \ \text{if $0\leqslant u\leqslant 1$}, \\
&(2-u)(-K_S)\ \text{if $1\leqslant u\leqslant 2$},
\endaligned
\right.
$$
and
$$
N(u)\big\vert_{S}=\left\{\aligned
&0 \ \text{if $0\leqslant u\leqslant 1$}, \\
&(u-1)\Gamma\ \text{if $1\leqslant u\leqslant 2$}.
\endaligned
\right.
$$
Therefore, it follows from Theorem~\ref{theorem:Hamid-Ziquan-Kento} that
\begin{multline*}
S\big(W^S_{\bullet,\bullet};F\big)=\frac{3}{22}\int_1^24(2-u)^2(u-1)\mathrm{ord}_{F}\big(\Gamma)du+\frac{3}{22}\int_0^2\int_0^\infty \mathrm{vol}\big(P(u)\big\vert_{S}-vF\big)dvdu=\\
=\frac{3}{22}\int_1^24(2-u)^2(u-1)\mathrm{ord}_{F}\big(\Gamma)du+\frac{3}{22}\int_0^1\int_0^\infty \mathrm{vol}\big(-K_S+(1-u)C-vF\big)dvdu+\\
+\frac{3}{22}\int_1^2\int_0^\infty \mathrm{vol}\big((2-u)(-K_S)-vF\big)dvdu.\quad\quad\quad\quad
\end{multline*}
If $P\not\in E$, this formula simplifies as
$$
S\big(W^S_{\bullet,\bullet};F\big)=\frac{3}{22}\int_0^1\int_0^\infty \mathrm{vol}\big(-K_S+(1-u)C-vF\big)dvdu+\frac{3}{22}\int_1^2\int_0^\infty \mathrm{vol}\big((2-u)(-K_S)-vF\big)dvdu.
$$
If $P\in E$, then  $\mathrm{ord}_{F}\big(\Gamma)\leqslant A_S(F)$, since $(S,\Gamma)$ has log canonical singularities,
which gives
$$
\frac{3}{22}\int_1^24(2-u)^2(u-1)\mathrm{ord}_{F}\big(\Gamma)du\leqslant\frac{3}{22}\int_1^24(2-u)^2(u-1)A_S(F)du=\frac{A_S(F)}{22}.
$$
To estimate the~second term, let $t=1-u$. If $u\in[0,1]$, then
$$
\int_0^\infty \mathrm{vol}\big(-K_S+(1-u)C-vF\big)dvdu\leqslant \frac{D_t^2}{\delta_P(S,D_t)}A_S(F)=\frac{D_t^2}{\delta_P(S,D_t)}A_S(F)=\frac{8-4u}{\delta_P(S,D_t)}A_S(F).
$$
Moreover, using \eqref{equation:dP4-C-smooth-delta} and \eqref{equation:dP4-C-singular-delta}, we get
$$
\delta_P(S,D_t)\geqslant
\left\{\aligned
&\frac{24}{u^2-10u+28}\ \text{if $C$ is irreducible}, \\
&\frac{48-24u}{12u^2-54u+61}\ \text{if $C$ is reducible}. \\
\endaligned
\right.
$$
Hence, if $C$ is irreducible, then
$$
\frac{3}{22}\int_0^1\int_0^\infty \mathrm{vol}\big(-K_S+(1-u)C-vF\big)dvdu\leqslant\frac{3}{22}A_S(F)\int_0^1\frac{(8-4u)(u^2-10u+28)}{24}du=\frac{13}{16}A_S(F).
$$
Similarly, if $C$ is reducible, then
$$
\frac{3}{22}\int_0^1\int_0^\infty \mathrm{vol}\big(-K_S+(1-u)C-vF\big)dvdu\leqslant\frac{3}{22}A_S(F)\int_0^1\frac{(8-4u)(12u^2-54u+61)}{48-24u}du=\frac{19}{22}A_S(F).
$$
Finally, if $u\in[1,2]$, then
\begin{multline*}
\int_0^\infty \mathrm{vol}\big((2-u)(-K_S)-vF\big)dvdu=
\int_0^\infty (2-u)^2\mathrm{vol}\big(-K_S-\frac{v}{2-u}F\big)dvdu=\\
=(2-u)^3\int_0^\infty\mathrm{vol}\big(-K_S-vF\big)dv\leqslant (2-u)^3\frac{4}{\delta_P(S)}A_S(F)\leqslant 3(2-u)^3A_S(F),
\end{multline*}
because $\delta_P(S)\geqslant\delta(S)=\frac{4}{3}$. Thus, we see that
$$
\frac{3}{22}\int_1^2\int_0^\infty \mathrm{vol}\big((2-u)(-K_S)-vF\big)dvdu\leqslant\frac{3}{22}A_S(F)\int_1^23(2-u)^3du=\frac{9}{88}A_S(F).
$$
Now, we have to combine our estimates. If $P\not\in E$ and $C$ is irreducible, then
$$
S\big(W^S_{\bullet,\bullet};F\big)\leqslant\frac{13}{16}A_S(F)+\frac{9}{88}A_S(F)=\frac{161}{176}A_S(F).
$$
If $P\in E$ and $C$ is irreducible, then
$$
S\big(W^S_{\bullet,\bullet};F\big)\leqslant\frac{A_S(F)}{22}+\frac{13}{16}A_S(F)+\frac{9}{88}A_S(F)=\frac{169}{176}A_S(F).
$$
If $P\not\in E$ and $C$ is reducible, then
$$
S\big(W^S_{\bullet,\bullet};F\big)\leqslant\frac{19}{22}A_S(F)+\frac{9}{88}A_S(F)=\frac{85}{88}A_S(F).
$$
Finally, if $P\in E$ and $C$ is irreducible, then
$$
S\big(W^S_{\bullet,\bullet};F\big)\leqslant\frac{A_S(F)}{22}+\frac{19}{22}A_S(F)+\frac{9}{88}A_S(F)=\frac{89}{88}A_S(F).
$$
This shows that $S(W^S_{\bullet,\bullet};F)\leqslant \gamma A_S(F)$, which implies the required assertion.
\end{proof}

Now, arguing as in the proof of Proposition~\ref{proposition:flag-3-Nemuro-lemma}, we obtain the following result.

\begin{proposition}
\label{proposition:flag-3-final}
Suppose that the fiber $C$ is singular and reduced. Then
$$
\delta_P(X)\geqslant\frac{176}{171}.
$$
\end{proposition}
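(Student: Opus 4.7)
The plan is to strengthen Proposition~\ref{proposition:flag-3-Nemuro-lemma} in the only sub-case where it is insufficient, namely $C = L_1 + L_2$ reducible and $P \in E$. In that sub-case, Proposition~\ref{proposition:flag-3-Nemuro-lemma} only produces $\delta_P(X) \geqslant \frac{88}{89} < 1$. For $P \notin E$ the same proposition already gives $\delta_P(X) \geqslant \frac{88}{85} > \frac{176}{171}$, so the new content of Proposition~\ref{proposition:flag-3-final} is concentrated at points $P \in C \cap E$. Since $C$ is singular, the node of $C$ may or may not lie on $E$, but the fiber structure guarantees that at least one smooth irreducible component of $C$ passes through $P$, which is the key new handle.

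Keeping the global data from Proposition~\ref{proposition:flag-3-Nemuro-lemma}, namely $S_X(S) = \tfrac{13}{22}$, $\tau = 2$, and the Zariski decomposition of $-K_X - uS$, I would apply Theorem~\ref{theorem:Hamid-Ziquan-Kento-2} to the flag $(S, L)$ in which $L$ is a component of $C$ passing through $P$. Since $L$ is a smooth rational curve on the (weak) del Pezzo surface $S$ of degree $4$ with $L^2 = -1$, $(-K_S)\cdot L = 1$ and $L\cdot (C-L)=1$, the Zariski decomposition of $P(u)|_S - vL$ on $S$ is piecewise explicit in $(u,v)$, as is its restriction to $L$. The integrals
$$
S\big(W^S_{\bullet,\bullet};L\big) \;=\; \frac{3}{22}\int_1^2 4(2-u)^2(u-1)\operatorname{ord}_L(\Gamma)\,du + \frac{3}{22}\int_0^2\!\!\int_0^{t(u)} (P(u,v))^2\,dv\,du
$$
and
$$
S\big(W^{S,L}_{\bullet,\bullet,\bullet};P\big) \;=\; \frac{3}{22}\int_0^2\!\!\int_0^{t(u)}(P(u,v)\cdot L)^2\,dv\,du + F_P\big(W^{S,L}_{\bullet,\bullet,\bullet}\big)
$$
then reduce, after a finite case-split on the intersection pattern of $L$ with the negative curves of $S$, to elementary polynomial integrals. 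The goal is to show $S(W^S_{\bullet,\bullet};L) \leqslant \tfrac{171}{176}$ and $S(W^{S,L}_{\bullet,\bullet,\bullet};P) \leqslant \tfrac{171}{176}$, so that Theorem~\ref{theorem:Hamid-Ziquan-Kento-2} gives $\delta_P(X) \geqslant \min\{\tfrac{22}{13}, \tfrac{176}{171}, \tfrac{176}{171}\} = \tfrac{176}{171}$.

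The main obstacle is the interaction of the several potential negative curves on $S$. When $E \cong \mathbb{P}^1 \times \mathbb{P}^1$ and $S$ is a smooth del Pezzo quartic, $\Gamma = E|_S$ is a single smooth rational curve contributing to $N(u)|_S$ only for $u \in [1,2]$, while $L$ itself is a $(-1)$-curve that may enter the Zariski decomposition of $P(u)|_S - vL$ once $v$ grows past the threshold where the divisor ceases to be nef. When $E \cong \mathbb{F}_2$ and $C$ is the $(-2)$-curve, Lemma~\ref{lemma:weak-dP4} introduces two further $(-2)$-curves $\mathbf{l}_1,\mathbf{l}_2$ whose contribution must be tracked, although $P$ still lies away from the singular locus of $S$. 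In each sub-case the pseudo-effective threshold $t(u)$ and the decomposition $P(u,v) + N(u,v)$ are polynomial in $u,v$ on finitely many chambers, so the computation is routine once the chambers are identified, and the final numerics work out to the uniform bound $\tfrac{176}{171}$.
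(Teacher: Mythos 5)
Your overall framework is right — apply Theorem~\ref{theorem:Hamid-Ziquan-Kento-2} to the flag $(S,L_1)$ with $L_1$ a component of $C=L_1+L_2$ through $P$, using $S_X(S)=\tfrac{13}{22}$ and the explicit chamber decomposition of $P(u)|_S-vL_1$ on the quartic del Pezzo surface $S$. But there is a genuine gap at the one configuration that is the whole difficulty of this proposition: when $P$ is the node of $C$ \emph{and} lies on $\Gamma=E|_S$, so that $L_1$, $L_2$, $\Gamma$ meet pairwise transversally at $P$ with $L_1+L_2+\Gamma\sim-K_S$. There the computation gives $S\big(W^S_{\bullet,\bullet};L_1\big)=\tfrac{161}{176}$, which is fine, but
$$
S\big(W^{S,L_1}_{\bullet,\bullet,\bullet};P\big)=\frac{69}{88}+F_P\big(W^{S,L_1}_{\bullet,\bullet,\bullet}\big)=\frac{69}{88}+\frac{41}{176}=\frac{179}{176}>1>\frac{171}{176},
$$
because $F_P$ picks up contributions both from $(\Gamma\cdot L_1)_P$ weighted by $\mathrm{ord}_\Gamma(N(u)|_S)=u-1$ and from $\mathrm{ord}_P(N(u,v)|_{L_1})$ with $L_2$ in the support of $N(u,v)$. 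So your claim that ``the final numerics work out to the uniform bound'' fails precisely here, and by symmetry replacing $L_1$ by $L_2$ does not help. The missing idea is to abandon the flag through a curve at this point and pass to the ordinary blow-up $g\colon\widehat S\to S$ of $P$, using the exceptional curve $G$ as the middle term of the flag and invoking Theorem~\ref{theorem:Hamid-Ziquan-Kento-3}: one then computes $S\big(W^S_{\bullet,\bullet};G\big)=\tfrac{85}{44}$ and $S\big(W^{S,G}_{\bullet,\bullet,\bullet};O\big)\leqslant\tfrac{161}{176}$ for every $O\in G$, which yields $\delta_P(X)\geqslant\tfrac{88}{85}\geqslant\tfrac{176}{171}$ in this case.

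Two smaller remarks. First, the sub-case you raise with $E\cong\mathbb F_2$ and $C$ the $(-2)$-curve cannot occur under the hypothesis of this proposition: that $(-2)$-curve is a smooth irreducible fiber of $g$, whereas here $C$ is reducible, so Lemma~\ref{lemma:weak-dP4} guarantees $S$ is a genuine smooth del Pezzo quartic and no $(-2)$-curves intervene (the $\mathbb F_2$ case is dealt with separately in Proposition~\ref{proposition:Iskovskikh-surface}). Second, your reduction ``for $P\notin E$ Proposition~\ref{proposition:flag-3-Nemuro-lemma} already gives $\tfrac{88}{85}$'' is correct, but note that Proposition~\ref{proposition:flag-3-final} is proved independently of that case split in the paper, and the critical node-on-$E$ configuration is exactly where the constant $\tfrac{176}{171}$ is attained (via $S\big(W^{S,L_1}_{\bullet,\bullet,\bullet};P\big)=\tfrac{171}{176}$ when $P\in L_2\setminus\Gamma$), so the bound cannot be improved by a more careful choice of chambers alone.
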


\begin{proof}
We have $C=L_1+L_2$, where $L_1$ and $L_2$ are smooth irreducible rational curves that intersect transversally at one point.
By Lemma~\ref{lemma:weak-dP4}, $S$ is a smooth del Pezzo surface of degree $4$ .

Without loss of generality, we may assume that $P\in L_1$.
Set $\Gamma=E\vert_{S}$.
Then it follows from Lemma~\ref{lemma:weak-dP4} that $\Gamma$ is a smooth~irreducible curve, and $\Gamma$ intersects $L_1$ transversally at one point.
Moreover, we have
$$
P(u)\big\vert_{S}\sim_{\mathbb{R}}
\left\{\aligned
&-K_S+(1-u)(L_1+L_2)\ \text{if $0\leqslant u\leqslant 1$}, \\
&(2-u)(-K_S)\ \text{if $1\leqslant u\leqslant 2$},
\endaligned
\right.
$$
and
$$
N(u)\big\vert_{S}=\left\{\aligned
&0 \ \text{if $0\leqslant u\leqslant 1$}, \\
&(u-1)\Gamma\ \text{if $1\leqslant u\leqslant 2$}.
\endaligned
\right.
$$
In particular, we see that $L_1\not\subset\mathrm{Supp}(N(u)\vert_{S})$ for every $u\in[0,2]$.
As in Section~\ref{subsection:Kento}, we let
$$
t(u)=\mathrm{sup}\Big\{v\in\mathbb{R}_{\geqslant 0}\ \big\vert\ \text{the $\mathbb{R}$-divisor  $P(u)\big\vert_{S}-vL_1$ is pseudo-effective}\Big\}
$$
for every real number $u\in[0,2]$. We will compute $t(u)$ later. In fact, we will see that
\begin{equation}
\label{equation:t-u}
t(u)=\left\{\aligned
&\frac{5-2u}{2}\ \text{if $u\in [0,1]$}, \\
&\frac{6-3u}{2}\ \text{if $u\in [1,2]$}.
\endaligned
\right.
\end{equation}
For~$v\in[0,t(u)]$, we let $P(u,v)$ be the~positive part of the~Zariski decomposition of~$P(u)\vert_{S}-vL_1$,
and we let $N(u,v)$ be the~negative part  of the~Zariski decomposition of~this $\mathbb{R}$-divisor. We set
\begin{align*}
S\big(W^S_{\bullet,\bullet};L_1\big)&=\frac{3}{22}\int_0^2\int_0^{t(u)}\big(P(u,v)\big)^2dvdu,\\
S\big(W_{\bullet, \bullet,\bullet}^{S,L_1};P\big)&=\frac{3}{22}\int_0^2\int_0^{t(u)}\big(P(u,v)\cdot L_1\big)^2dvdu+F_P\big(W_{\bullet,\bullet,\bullet}^{S,L_1}\big),
\end{align*}
where
\begin{multline*}
\quad\quad\quad F_P\big(W_{\bullet,\bullet,\bullet}^{S,L_1}\big)=\frac{6}{22}\int_1^2\int_0^{t(u)}(u-1)\big(P(u,v)\cdot L_1\big)\big(\Gamma\cdot L_1\big)_Pdvdu+\\
\quad\quad\quad\quad\quad\quad\quad\quad\quad\quad+\frac{6}{22}\int_0^2\int_0^{t(u)}\big(P(u,v)\cdot L_1\big)\cdot \mathrm{ord}_P\big(N(u,v)\big|_{L_1}\big)dvdu.
\end{multline*}
Then, since $S_X(S)=\frac{13}{22}$, it follows from Theorem~\ref{theorem:Hamid-Ziquan-Kento-2} that
\begin{equation}
\label{equation:proposition:flag-3-final}
\delta_P(X)\geqslant\min\Bigg\{\frac{22}{13},\frac{1}{S\big(W^S_{\bullet,\bullet};L_1\big)},\frac{1}{S\big(W_{\bullet, \bullet,\bullet}^{S,L_1};P\big)}\Bigg\}.
\end{equation}
Let us compute $S(W^S_{\bullet,\bullet};L_1)$ and $S(W_{\bullet, \bullet,\bullet}^{S,L_1};P)$.

There exists a birational map $\pi\colon S\to\mathbb{P}^2$ that contracts the~curve $L_2$ and four other $(-1)$-curves,
which we denote by $\mathbf{e}_1$, $\mathbf{e}_2$, $\mathbf{e}_3$, $\mathbf{e}_4$.
Then $\pi(L_1)$ is a conic, the~curves $L_2$, $\mathbf{e}_1$, $\mathbf{e}_2$, $\mathbf{e}_3$, $\mathbf{e}_4$ are disjoint,
and their images in $\mathbb{P}^2$ are $5$ distinct points in the~conic $\pi(L_1)$. Moreover, if $u\in[0,1]$, then
$$
P(u)\big\vert_{S}-vL_1\sim_{\mathbb{R}}\frac{5-2u-2v}{2}L_1+\frac{3-2u}{2}L_2+\frac{1}{2}\big(\mathbf{e}_1+\mathbf{e}_2+\mathbf{e}_3+\mathbf{e}_4\big).
$$
Similarly, if $u\in[1,2]$, then
$$
P(u)\big\vert_{S}-vL_1\sim_{\mathbb{R}}\frac{6-3u-2v}{2}L_1+\frac{2-u}{2}\big(L_2+\mathbf{e}_1+\mathbf{e}_2+\mathbf{e}_3+\mathbf{e}_4\big).
$$
This gives \eqref{equation:t-u}.
Moreover, intersecting the~divisor $P(u)\big\vert_{S}-vL_1$ with the~curves $L_2$, $\mathbf{e}_1$, $\mathbf{e}_2$, $\mathbf{e}_3$,~$\mathbf{e}_4$,
we can find  $P(u,v)$ and  $N(u,v)$ for every $u\in[0,2]$ and $v\in[0,t(u)]$.
Namely, if $u\in[0,1]$, then
$$
P(u,v)=
\left\{\aligned
&\frac{5-2u-2v}{2}L_1+\frac{3-2u}{2}L_2+\frac{1}{2}\big(\mathbf{e}_1+\mathbf{e}_2+\mathbf{e}_3+\mathbf{e}_4\big)\ \text{if $0\leqslant v\leqslant 1$}, \\
&\frac{5-2u-2v}{2}\big(L_1+L_2\big)+\frac{1}{2}\big(\mathbf{e}_1+\mathbf{e}_2+\mathbf{e}_3+\mathbf{e}_4\big)\ \text{if $1\leqslant v\leqslant 2-u$},\\
&\frac{5-2u-2v}{2}\big(L_1+L_2+\mathbf{e}_1+\mathbf{e}_2+\mathbf{e}_3+\mathbf{e}_4\big)\ \text{if $2-u\leqslant v\leqslant \frac{5-2u}{2}$},
\endaligned
\right.
$$
and
$$
N(u,v)=\left\{\aligned
&0\ \text{if $0\leqslant v\leqslant 1$}, \\
&(v-1)L_2\ \text{if $1\leqslant v\leqslant 2-u$},\\
&(v-1)L_2+(v+u-2)\big(\mathbf{e}_1+\mathbf{e}_2+\mathbf{e}_3+\mathbf{e}_4\big)  \ \text{if $2-u\leqslant v\leqslant \frac{5-2u}{2}$},
\endaligned
\right.
$$
which implies that
$$
\big(P(u,v)\big)^2=
\left\{\aligned
&8-v^2-4u-2v\ \text{if $0\leqslant v\leqslant 1$}, \\
&9-4u-4v\ \text{if $1\leqslant v\leqslant 2-u$},\\
&(5-2u-2v)^2\ \text{if $2-u\leqslant v\leqslant \frac{5-2u}{2}$},
\endaligned
\right.
$$
and
$$
P(u,v)\cdot L_1=
\left\{\aligned
&1+v\ \text{if $0\leqslant v\leqslant 1$}, \\
&2\ \text{if $1\leqslant v\leqslant 2-u$},\\
&10-4u-4v\ \text{if $2-u\leqslant v\leqslant \frac{5-2u}{2}$}.
\endaligned
\right.
$$
Similarly, if $u\in[1,2]$, then
$$
P(u,v)=
\left\{\aligned
&\frac{6-3u-2v}{2}L_1+\frac{2-u}{2}\big(L_2+\mathbf{e}_1+\mathbf{e}_2+\mathbf{e}_3+\mathbf{e}_4\big)\ \text{if $0\leqslant v\leqslant 2-u$}, \\
&\frac{6-3u-2v}{2}\big(L_1+L_2+\mathbf{e}_1+\mathbf{e}_2+\mathbf{e}_3+\mathbf{e}_4\big)\ \text{if $2-u\leqslant v\leqslant \frac{6-3u}{2}$},
\endaligned
\right.
$$
and
$$
N(u,v)=\left\{\aligned
&0\ \text{if $0\leqslant v\leqslant 2-u$}, \\
&(v+u-2)\big(L_2+\mathbf{e}_1+\mathbf{e}_2+\mathbf{e}_3+\mathbf{e}_4\big)  \ \text{if $2-u\leqslant v\leqslant \frac{6-3u}{2}$},
\endaligned
\right.
$$
which implies that
$$
\big(P(u,v)\big)^2=
\left\{\aligned
&4u^2+2uv-v^2-16u-4v+16\ \text{if $0\leqslant v\leqslant 2-u$}, \\
&(6-3u-2v)^2\ \text{if $2-u\leqslant v\leqslant \frac{6-3u}{2}$},
\endaligned
\right.
$$
and
$$
P(u,v)\cdot L_1=
\left\{\aligned
&2-u+v\ \text{if $0\leqslant v\leqslant 2-u$}, \\
&12-6u-4v\ \text{if $2-u\leqslant v\leqslant \frac{6-3u}{2}$}.
\endaligned
\right.
$$
We have
\begin{multline*}
S\big(W^S_{\bullet,\bullet};L_1\big)=\frac{3}{22}\int_0^2\int_0^{t(u)}\big(P(u,v)\big)^2dvdu=\frac{3}{22}\int_0^1\int_0^{1}8-v^2-4u-2vdvdu+\\
+\frac{3}{22}\int_0^1\int_1^{2-u}9-4u-4vdvdu+\frac{3}{22}\int_0^1\int_{2-u}^{\frac{5-2u}{2}}(5-2u-2v)^2dvdu+\\
+\frac{3}{22}\int_1^2\int_0^{2-u}4u^2+2uv-v^2-16u-4v+16dvdu+\frac{3}{22}\int_1^2\int_{2-u}^{\frac{6-3u}{2}}(6-3u-2v)^2dvdu=\frac{161}{176}.
\end{multline*}
We have
\begin{multline*}
S\big(W_{\bullet, \bullet,\bullet}^{S,L_1};P\big)=\frac{3}{22}\int_0^2\int_0^{t(u)}\big(P(u,v)\cdot L_1\big)^2dvdu+F_P\big(W_{\bullet,\bullet,\bullet}^{S,L_1}\big)=\\
=\frac{3}{22}\int_0^1\int_0^{1}(1+v)^2dvdu+\frac{3}{22}\int_0^1\int_1^{2-u}4dvdu+\frac{3}{22}\int_0^1\int_{2-u}^{\frac{5-2u}{2}}(10-4u-4v)^2dvdu+\\
+\frac{3}{22}\int_1^2\int_0^{2-u}(2-u+v)^2dvdu+\frac{3}{22}\int_1^2\int_{2-u}^{\frac{6-3u}{2}}(12-6u-4v)^2dvdu+F_P\big(W_{\bullet,\bullet,\bullet}^{S,L_1}\big)=
\frac{69}{88}+F_P\big(W_{\bullet,\bullet,\bullet}^{S,L_1}\big).
\end{multline*}
In particular, if $P\not\in\Gamma\cup L_2\cup\mathbf{e}_1\cup\mathbf{e}_2\cup\mathbf{e}_3\cup\mathbf{e}_4$, then $F_P(W_{\bullet,\bullet,\bullet}^{S,L_1})=0$,
so that
$$
S(W_{\bullet, \bullet,\bullet}^{S,L_1};P)=\frac{69}{88}<\frac{171}{176},
$$
and \eqref{equation:proposition:flag-3-final} gives $\delta_P(X)\geqslant\frac{176}{171}$.

Thus, to complete the proof, we may assume that $P\in\Gamma\cup L_2\cup\mathbf{e}_1\cup\mathbf{e}_2\cup\mathbf{e}_3\cup\mathbf{e}_4$.
If $P\not\in\Gamma$, then
$$
F_P\big(W_{\bullet,\bullet,\bullet}^{S,L_1}\big)=\frac{6}{22}\int_0^2\int_0^{t(u)}\big(P(u,v)\cdot L_1\big)\cdot \mathrm{ord}_P\big(N(u,v)\big|_{L_1}\big)dvdu.
$$
Therefore, if $P\not\in\Gamma$ and $P\in L_2$, then
\begin{multline*}
F_P\big(W_{\bullet,\bullet,\bullet}^{S,L_1}\big)=\frac{6}{22}\int_0^1\int_1^{2-u}2(v-1)dvdu+\frac{6}{22}\int_0^1\int_{2-u}^{\frac{5-2u}{2}}(10-4u-4v)(v-1)dvdu+\\
+\frac{6}{22}\int_1^2\int_{2-u}^{\frac{6-3u}{2}}(12-6u-4v)(v+u-2)dvdu=\frac{3}{16},\quad\quad\quad\quad\quad\quad\quad\quad\quad\quad
\end{multline*}
so that $S(W_{\bullet, \bullet,\bullet}^{S,L_1};P)=\frac{69}{88}+\frac{3}{16}=\frac{171}{176}$.
Similarly, if $P\not\in\Gamma$ and $P\in \mathbf{e}_1\cup\mathbf{e}_2\cup\mathbf{e}_3\cup\mathbf{e}_4$, then
\begin{multline*}
\quad\quad\quad\quad\quad\quad F_P\big(W_{\bullet,\bullet,\bullet}^{S,L_1}\big)=
\frac{6}{22}\int_0^1\int_{2-u}^{\frac{5-2u}{2}}(10-4u-4v)(v+u-2)dvdu+\\
+\frac{6}{22}\int_1^2\int_{2-u}^{\frac{6-3u}{2}}(12-6u-4v)(v+u-2)dvdu=\frac{5}{176},\quad\quad\quad\quad\quad\quad
\end{multline*}
so $S(W_{\bullet, \bullet,\bullet}^{S,L_1};P)=\frac{69}{88}+\frac{5}{176}=\frac{13}{16}<\frac{171}{176}$.
Hence, if $P\not\in\Gamma$, then \eqref{equation:proposition:flag-3-final} gives $\delta_P(X)\geqslant\frac{176}{171}$.

To complete the proof, we may assume that $P\in\Gamma$.
Then $(\Gamma\cdot L_1)_P=\Gamma\cdot L_1=1$. This gives
\begin{multline*}
\quad\quad\quad\quad\quad\quad
F_P\big(W_{\bullet,\bullet,\bullet}^{S,L_1}\big)=\frac{6}{22}\int_1^2\int_0^{\frac{6-3u}{2}}(u-1)\big(P(u,v)\cdot L_1\big)dvdu+\\
+\frac{6}{22}\int_0^2\int_0^{t(u)}\big(P(u,v)\cdot L_1\big)\cdot \mathrm{ord}_P\big(N(u,v)\big|_{L_1}\big)dvdu.\quad\quad\quad\quad\quad\quad
\end{multline*}
The first summand in this formula can be computed as follows:
$$
\frac{6}{22}\int_1^2\int_0^{2-u}(u-1)(2-u+v)dvdu+\frac{6}{22}\int_1^2\int_{2-u}^{\frac{6-3u}{2}}(u-1)(12-6u-4v)dvdu=\frac{1}{22}.
$$
But the~second summand we already computed earlier:
$$
\frac{6}{22}\int_0^2\int_0^{t(u)}\big(P(u,v)\cdot L_1\big)\cdot \mathrm{ord}_P\big(N(u,v)\big|_{L_1}\big)dvdu=
\left\{\aligned
&\frac{3}{16}\ \text{if $P\in L_2$}, \\
&\frac{5}{176}\ \text{if $P\in \mathbf{e}_1\cup\mathbf{e}_2\cup\mathbf{e}_3\cup\mathbf{e}_4$},\\
&0\ \text{if $P\not\in L_2\cup \mathbf{e}_1\cup\mathbf{e}_2\cup\mathbf{e}_3\cup\mathbf{e}_4$}.
\endaligned
\right.
$$
This gives
$$
F_P\big(W_{\bullet,\bullet,\bullet}^{S,L_1}\big)=\left\{\aligned
&\frac{41}{176}\ \text{if $P\in L_2$}, \\
&\frac{13}{176}\ \text{if $P\in \mathbf{e}_1\cup\mathbf{e}_2\cup\mathbf{e}_3\cup\mathbf{e}_4$},\\
&\frac{1}{22}\ \text{if $P\not\in L_2\cup \mathbf{e}_1\cup\mathbf{e}_2\cup\mathbf{e}_3\cup\mathbf{e}_4$}.
\endaligned
\right.
$$
Therefore, we see that
$$
S\big(W_{\bullet, \bullet,\bullet}^{S,L_1};P\big)=
\frac{69}{88}+F_P\big(W_{\bullet,\bullet,\bullet}^{S,L_1}\big)=
\left\{\aligned
&\frac{179}{176}\ \text{if $P\in L_2$}, \\
&\frac{151}{176}\ \text{if $P\in \mathbf{e}_1\cup\mathbf{e}_2\cup\mathbf{e}_3\cup\mathbf{e}_4$},\\
&\frac{73}{88}\ \text{if $P\not\in L_2\cup \mathbf{e}_1\cup\mathbf{e}_2\cup\mathbf{e}_3\cup\mathbf{e}_4$}.
\endaligned
\right.
$$
Thus, if $P\not\in L_2$, then $\delta_P(X)\geqslant\frac{176}{171}$ by \eqref{equation:proposition:flag-3-final}.

Therefore, to complete the proof, we may assume that $P\in L_2$. Then $P=L_1\cap L_2\cap \Gamma$,
and the~curves $L_1$, $L_2$, $\Gamma$ intersect each other pairwise transversally at $P$.
Moreover, we have
$$
L_1+L_2+\Gamma\sim -K_S.
$$
This gives
$$
P(u)\big\vert_{S}\sim_{\mathbb{R}}
\left\{\aligned
&\Gamma+(2-u)(L_1+L_2)\ \text{if $0\leqslant u\leqslant 1$}, \\
&(2-u)(L_1+L_2+\Gamma)\ \text{if $1\leqslant u\leqslant 2$}.
\endaligned
\right.
$$
Recall that
$$
N(u)\big\vert_{S}=\left\{\aligned
&0 \ \text{if $0\leqslant u\leqslant 1$}, \\
&(u-1)\Gamma\ \text{if $1\leqslant u\leqslant 2$}.
\endaligned
\right.
$$

Now, let $g\colon\widehat{S}\to S$ be a blow up of the~point $P$, let $G$ be the~exceptional curve of the~blow up~$g$,
and let $\widehat{L}_1$, $\widehat{L}_2$, $\widehat{\Gamma}$ be the~proper transforms on $\widehat{S}$ of the~curves $L_1$, $L_2$, $\Gamma$, respectively.
Set
$$
\widehat{t}(u)=\sup\Big\{v\in \mathbb{R}_{\geqslant 0} \ \big| \ \text{the $\mathbb{R}$-divisor}\ g^*\big(P(u)|_S\big)-vG \text{ is pseudo-effective}\Big\}
$$
for every $u\in[0,2]$. Later, we will see that
\begin{equation}
\label{equation:t-u-blow-up}
t(u)=\left\{\aligned
&5-2u\ \text{if $u\in [0,1]$}, \\
&6-3u\ \text{if $u\in [1,2]$}.
\endaligned
\right.
\end{equation}
For $v\in [0,\widehat{t}(u)]$, let $\widehat{P}(u,v)$ be the~positive part of the~Zariski decomposition of $g^*(P(u)|_S)-vG$,
and let $\widehat{N}(u,v)$ be the~negative part of this~Zariski decomposition. Set
\begin{multline*}
\quad\quad\quad\quad\quad S\big(W_{\bullet, \bullet}^{S};G\big)=\frac{3}{22}\int_1^24(2-u)^2(u-1)du+\frac{3}{22}\int_0^2\int_0^{\widehat{t}(u)}\big(\widehat{P}(u,v)\big)^2dvdu=\\
=\frac{1}{22}+\frac{3}{22}\int_0^2\int_0^{\widehat{t}(u)}\big(\widehat{P}(u,v)\big)^2dvdu.\quad\quad\quad\quad\quad
\end{multline*}
As above, for every point $O\in G$, we set
\begin{multline*}
\quad\quad\quad\quad\quad F_O\big(W_{\bullet, \bullet,\bullet}^{S,G}\big)=
\frac{6}{22}\int_1^2\int_0^{\widehat{t}(u)}(u-1)\big(\widehat{P}(u,v)\cdot G\big)\big(\widehat{\Gamma}\cdot G\big)_Odvdu+\\
+\frac{6}{22}\int_0^2\int_0^{\widehat{t}(u)}\big(\widehat{P}(u,v)\cdot G\big)\big(\widehat{N}(u,v)\cdot G\big)_Odvdu\quad\quad\quad\quad\quad
\end{multline*}
and
$$
S\big(W_{\bullet, \bullet,\bullet}^{S,G};O\big)=
\frac{3}{22}\int_0^2\int_0^{\widehat{t}(u)}\big(\widehat{P}(u,v)\cdot G\big)^2dvdu+F_O\big(W_{\bullet,\bullet,\bullet}^{S,G}\big).
$$
Then Theorem~\ref{theorem:Hamid-Ziquan-Kento-3} or Theorem~\ref{theorem:Hamid-Ziquan-Kento-4} gives
\begin{equation}
\label{equation:flag-3-singular-fiber-2-delta-P}
\delta_P(X)\geqslant \min\left\{\frac{22}{13},\frac{2}{S(W_{\bullet,\bullet}^{S};G)},\inf_{O\in G}\frac{1}{S(W_{\bullet, \bullet,\bullet}^{S,G};O)}\right\}.
\end{equation}

Note that the~curves $\widehat{L}_1$, $\widehat{L}_2$, $\widehat{\Gamma}$ are disjoint,
and $\widehat{L}_1^2=-2$, $\widehat{L}_2^2=-2$, $\widehat{\Gamma}^{2}=-1$ on the~surface~$\widehat{S}$.
On the~other hand, we have
$$
g^*(P(u)|_S)-vG\sim_{\mathbb{R}}
\left\{\aligned
&\widehat{\Gamma}+(2-u)\big(\widehat{L}_1+\widehat{L}_2\big)+(5-2u-v)G\ \text{if $0\leqslant u\leqslant 1$}, \\
&(2-u)\big(\widehat{L}_1+\widehat{L}_2+\widehat{\Gamma}\big)+(6-3u-v)G\ \text{if $1\leqslant u\leqslant 2$}.
\endaligned
\right.
$$
This gives \eqref{equation:t-u-blow-up}.
Moreover, intersecting the~$\mathbb{R}$-divisor $g^*(P(u)|_S)-vG$ with the~curves $\widehat{L}_1$, $\widehat{L}_2$,~$\widehat{\Gamma}$,
we compute $\widehat{P}(u,v)$ and $\widehat{N}(u,v)$ for every $u\in[0,2]$ and $v\in[0,\widehat{t}(u)]$.
Namely, if $u\in[0,1]$, then
$$
\widehat{P}(u,v)=
\left\{\aligned
&\widehat{\Gamma}+(2-u)\big(\widehat{L}_1+\widehat{L}_2\big)+(5-2u-v)G\ \text{if $0\leqslant v\leqslant 1$}, \\
&\widehat{\Gamma}+\frac{5-2u-v}{2}\big(\widehat{L}_1+\widehat{L}_2+2G\big) \ \text{if $1\leqslant v\leqslant 4-2u$},\\
&\frac{5-2u-v}{2}\big(2\widehat{\Gamma}+\widehat{L}_1+\widehat{L}_2+2G\big) \ \text{if $4-2u\leqslant v\leqslant 5-2u$},
\endaligned
\right.
$$
and
$$
\widehat{N}(u,v)=\left\{\aligned
&0\ \text{if $0\leqslant v\leqslant 1$}, \\
&\frac{v-1}{2}\big(\widehat{L}_1+\widehat{L}_2\big)\ \text{if $1\leqslant v\leqslant 4-2u$},\\
&\frac{v-1}{2}\big(\widehat{L}_1+\widehat{L}_2\big)+(v+2u-4)\widehat{\Gamma}\ \text{if $4-2u\leqslant v\leqslant 5-2u$},
\endaligned
\right.
$$
which implies that
$$
\big(\widehat{P}(u,v)\big)^2=
\left\{\aligned
&8-v^2-4u\ \text{if $0\leqslant v\leqslant 1$}, \\
&9-4u-2v\ \text{if $1\leqslant v\leqslant 4-2u$},\\
&(5-2u-v)^2\ \text{if $4-2u\leqslant v\leqslant 5-2u$},
\endaligned
\right.
$$
and
$$
\widehat{P}(u,v)\cdot G=
\left\{\aligned
&v\ \text{if $0\leqslant v\leqslant 1$}, \\
&1\ \text{if $1\leqslant v\leqslant 4-2u$},\\
&5-2u-v\ \text{if $4-2u\leqslant v\leqslant 5-2u$}.
\endaligned
\right.
$$
Similarly, if $u\in[1,2]$, then
$$
\widehat{P}(u,v)=
\left\{\aligned
&(2-u)\big(\widehat{L}_1+\widehat{L}_2+\widehat{\Gamma}\big)+(6-3u-v)G\ \text{if $0\leqslant v\leqslant 2-u$}, \\
&\frac{6-3u-v}{2}\big(\widehat{L}_1+\widehat{L}_2+2G\big)+(2-u)\widehat{\Gamma}\ \text{if $2-u\leqslant v\leqslant 4-2u$},\\
&\frac{6-3u-v}{2}\big(\widehat{L}_1+\widehat{L}_2+2G+2\widehat{\Gamma}\big)\ \text{if $4-2u\leqslant v\leqslant 6-3u$},
\endaligned
\right.
$$
and
$$
\widehat{N}(u,v)=
\left\{\aligned
&0\ \text{if $0\leqslant v\leqslant 2-u$}, \\
&\frac{v+u-2}{2}\big(\widehat{L}_1+\widehat{L}_2\big)\ \text{if $2-u\leqslant v\leqslant 4-2u$},\\
&\frac{v+u-2}{2}\big(\widehat{L}_1+\widehat{L}_2\big)+(v+2u-4)\widehat{\Gamma}\ \text{if $4-2u\leqslant v\leqslant 6-3u$},
\endaligned
\right.
$$
which implies that
$$
\big(\widehat{P}(u,v)\big)^2=
\left\{\aligned
&4u^2-v^2-16u+16\ \text{if $0\leqslant v\leqslant 2-u$}, \\
&(2-u)(10-5u-2v) \ \text{if $2-u\leqslant v\leqslant 4-2u$},\\
&(6-3u-v)^2\ \text{if $4-2u\leqslant v\leqslant 6-3u$},
\endaligned
\right.
$$
and
$$
\widehat{P}(u,v)\cdot G=
\left\{\aligned
&v\ \text{if $0\leqslant v\leqslant 2-u$}, \\
&2-u\ \text{if $2-u\leqslant v\leqslant 4-2u$},\\
&6-3u-v\ \text{if $4-2u\leqslant v\leqslant 6-3u$}.
\endaligned
\right.
$$
This gives
\begin{multline*}
\quad \quad \quad \quad \quad \quad \quad \quad \quad \quad \quad S\big(W_{\bullet, \bullet}^{S};G\big)=\frac{1}{22}+\frac{3}{22}\int_0^2\int_0^{\widehat{t}(u)}\big(\widehat{P}(u,v)\big)^2dvdu=\\
=\frac{1}{22}+\frac{3}{22}\int_0^1\int_0^{1}8-v^2-4udvdu+\frac{3}{22}\int_0^1\int_1^{4-2u}9-4u-2vdvdu+\\
+\frac{3}{22}\int_0^1\int_{4-2u}^{5-2u}(5-2u-v)^2dvdu+\frac{3}{22}\int_1^2\int_0^{2-u}4u^2-v^2-16u+16dvdu+\\
+\frac{3}{22}\int_1^2\int_{2-u}^{4-2u}(2-u)(10-5u-2v)dvdu+\frac{3}{22}\int_1^2\int_{4-2u}^{6-3u}(6-3u-v)^2dvdu=\frac{85}{44}. \quad \quad
\end{multline*}
Then \eqref{equation:flag-3-singular-fiber-2-delta-P} gives $\delta_P(X)\geqslant\frac{88}{85}\geqslant\frac{176}{171}$
provided that $S(W_{\bullet, \bullet,\bullet}^{S,G};O)\leqslant\frac{85}{88}$ for every point $O\in G$.

Fix $O\in G$. Let us show that $S(W_{\bullet, \bullet,\bullet}^{S,G};O)\leqslant\frac{85}{88}$. This would finish the~proof.
We have
\begin{multline*}
\quad \quad \quad \quad \quad \quad S\big(W_{\bullet, \bullet,\bullet}^{S,G};O\big)=
\frac{3}{22}\int_0^2\int_0^{\widehat{t}(u)}\big(\widehat{P}(u,v)\cdot G\big)^2dvdu+F_O\big(W_{\bullet,\bullet,\bullet}^{S,G}\big)=\\
=\frac{3}{22}\int_0^1\int_0^{1}v^2dvdu+\frac{3}{22}\int_0^1\int_1^{4-2u}1dvdu+\frac{3}{22}\int_0^1\int_{4-2u}^{5-2u}(5-2u-v)^2dvdu+\\
+\frac{3}{22}\int_1^2\int_0^{2-u}v^2dvdu+\frac{3}{22}\int_1^2\int_{2-u}^{4-2u}(2-u)^2dvdu+\\
+\frac{3}{22}\int_1^2\int_{4-2u}^{6-3u}(6-3u-v)^2dvdu+F_O\big(W_{\bullet,\bullet,\bullet}^{S,G}\big)=\frac{37}{88}+F_O\big(W_{\bullet,\bullet,\bullet}^{S,G}\big).\quad \quad \quad\quad \quad
\end{multline*}
Moreover, if $O\not\in\widehat{\Gamma}\cup\widehat{L}_1\cup\widehat{L}_1$, then $F_O(W_{\bullet,\bullet,\bullet}^{S,G})=0$, so that  $S(W_{\bullet, \bullet,\bullet}^{S,G};O)=\frac{37}{88}<\frac{85}{88}$ as required.
Similarly, if $O\in\widehat{L}_1\cup\widehat{L}_1$, then $O\not\in\widehat{\Gamma}$, which gives
\begin{multline*}
F_O\big(W_{\bullet, \bullet,\bullet}^{S,G}\big)=\frac{6}{22}\int_0^2\int_0^{\widehat{t}(u)}\big(\widehat{P}(u,v)\cdot G\big)\big(\widehat{N}(u,v)\cdot G\big)_Odvdu=\\
=\frac{6}{22}\int_0^1\int_1^{4-2u}\frac{v-1}{2}dvdu+\frac{6}{22}\int_0^1\int_{4-2u}^{5-2u}\frac{(5-2u-v)(v-1)}{2}dvdu+\\
+\frac{6}{22}\int_1^2\int_{2-u}^{4-2u}\frac{(2-u)(v+u-2)}{2}dvdu+\frac{6}{22}\int_1^2\int_{4-2u}^{6-3u}\frac{(6-3u-v)(v+u-2)}{2}dvdu=\frac{87}{176},
\end{multline*}
so $S(W_{\bullet, \bullet,\bullet}^{S,G};O)=\frac{37}{88}+\frac{87}{176}=\frac{161}{176}<\frac{85}{88}$.
Finally, if $O\in\widehat{\Gamma}$, then $O\not\in\widehat{L}_1\cup\widehat{L}_1$ and $(\widehat{\Gamma}\cdot G)_O=1$, so
\begin{multline*}
F_O\big(W_{\bullet, \bullet,\bullet}^{S,G}\big)=\frac{6}{22}\int_1^2\int_0^{6-3u}(u-1)\big(\widehat{P}(u,v)\cdot G\big)dvdu+\frac{6}{22}\int_0^2\int_0^{\widehat{t}(u)}\big(\widehat{P}(u,v)\cdot G\big)\big(\widehat{N}(u,v)\cdot G\big)_Odvdu=\\
=\frac{6}{22}\int_1^2\int_0^{2-u}v(u-1)dvdu+\frac{6}{22}\int_1^2\int_{2-u}^{4-2u}(2-u)(u-1)dvdu+\frac{6}{22}\int_1^2\int_{4-2u}^{6-3u}(6-3u-v)(u-1)dvdu+\\
+\frac{6}{22}\int_0^1\int_{4-2u}^{5-2u}(5-2u-v)(v+2u-4)dvdu+\frac{6}{22}\int_1^2\int_{4-2u}^{6-3u}(6-3u-v)(v+2u-4)dvdu=\frac{9}{88},
\end{multline*}
which gives $S(W_{\bullet, \bullet,\bullet}^{S,G};O)=\frac{37}{88}+\frac{9}{88}=\frac{23}{4}<\frac{85}{88}$.
This completes the~proof of the~proposition.
\end{proof}

\begin{remark}
\label{remark:non-reduced-fiber-flag-3}
Suppose that $C$ is not reduced. Then~$C=2L$, where $L$ is a smooth rational curve.
Recall that $S$ is a singular del Pezzo surface of degree $4$ that has two isolated ordinary double points,
which both are contained in $L$. In this case, we have
$$
S(W^S_{\bullet,\bullet};L)=\frac{31}{22}.
$$
Indeed, the linear system $|-K_S-2L|$ is free and gives a conic bundle $S\to\mathbb{P}^1$.
Let $Z$ be a general curve in the~pencil $|-K_S-2L|$, let $u$ be a real number in $[0,2]$, and let $v$ be a non-negative real number.
Then
$$
P(u)\big\vert_{S}-vL\sim_{\mathbb{R}}
\left\{\aligned
&(4-2u-v)L+Z\ \text{if $0\leqslant u\leqslant 1$}, \\
&(4-2u-v)L+(2-u)Z\ \text{if $1\leqslant u\leqslant 2$}.
\endaligned
\right.
$$
Thus, since $Z^2=L^2=0$, we see that
\begin{center}
the $\mathbb{R}$-divisor $P(u)\big\vert_{S}-vL$ is pseudoeffective $\iff$ it is nef $\iff$ $v\leqslant 4-2u$.
\end{center}
On the~other hand, we have $L\not\subset\mathrm{Supp}(N(u)\vert_{S})$ for every $u\in[0,2]$.
Thus, we compute
\begin{multline*}
S\big(W^S_{\bullet,\bullet};L\big)=\frac{3}{22}\int_0^2\int_0^{4-2u}\mathrm{vol}\big(P(u)\big\vert_{S}-vL\big)dvdu=\\
=\frac{3}{22}\int_0^1\int_0^{4-2u} \big((4-2u-v)L+Z\big)^2dvdu+\frac{3}{22}\int_1^2\int_0^{4-2u} \big((4-2u-v)L+(2-u)Z\big)^2dvdu=\\
=\frac{3}{22}\int_0^1\int_0^{4-2u}8-4u-2vdvdu+\frac{3}{22}\int_1^2\int_0^{4-2u}2(2-u)(4-2u-v)dvdu=\frac{31}{22}.
\end{multline*}
Hence, we cannot use the~surface $S$ in Theorem~\ref{theorem:Hamid-Ziquan-Kento} to show that $\delta_P(X)>1$.
\end{remark}

Let us conclude this section by proving the following result:

\begin{proposition}
\label{proposition:Iskovskikh-surface}
Suppose that $E\cong\mathbb{F}_2$ and $C$ is the~$(-2)$-curve in $E$.
Then $\delta_P(X)\geqslant \frac{22}{19}$.
\end{proposition}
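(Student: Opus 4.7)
I plan to apply Theorem~\ref{theorem:Hamid-Ziquan-Kento-2} to the flag $P\in C\subset S\subset X$, where $C=\sigma$ is the $(-2)$-curve of $E$ (which is the smooth fiber of $g$ through $P$) and $S$ is a general member of the pencil in $|H-E|$ through $P$. Since $\sigma$ is itself a fiber of $g$, every $S\in|g^\ast\mathcal{O}_{\mathbb{P}^2}(1)|=|H-E|$ containing $P$ automatically contains $\sigma$, and by Lemma~\ref{lemma:weak-dP4} such an $S$ is a smooth weak del Pezzo surface of degree four with $S\cap E=\sigma+\mathbf{l}_1+\mathbf{l}_2$, the $\mathbf{l}_i$ being disjoint $(-2)$-curves on $S$. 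I will arrange $P\notin\mathbf{l}_1\cup\mathbf{l}_2$, which excludes only the two special members of the pencil for which some $\mathbf{l}_i$ is the fiber of $E\to C_2$ through $P$.

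The outer Zariski decomposition of $-K_X-uS$ on $X$ is identical to the one in the proof of Proposition~\ref{proposition:flag-3-Nemuro-lemma}, so $\tau=2$ and $S_X(S)=13/22$. Using the intersection numbers $-K_S\cdot C=2$, $C^2=0$, $C\cdot\mathbf{l}_i=1$, $\mathbf{l}_i^2=-2$, $\mathbf{l}_1\cdot\mathbf{l}_2=0$ on $S$, I then compute the inner Zariski decomposition of $P(u)|_S-vC$. The $(-2)$-curves enter the negative part as soon as $P(u)|_S-vC$ intersects them negatively: for $u\in[0,1]$ this happens at $v=1-u$, and on $v\in[1-u,3-u]$ one finds $N(u,v)=\tfrac{v+u-1}{2}(\mathbf{l}_1+\mathbf{l}_2)$; for $u\in[1,2]$ the negative part equals $\tfrac{v}{2}(\mathbf{l}_1+\mathbf{l}_2)$ on the whole interval $[0,4-2u]$. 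The pseudoeffective thresholds are $t(u)=3-u$ and $t(u)=4-2u$ respectively.

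A direct integration with these explicit positive parts gives
\[
S\bigl(W^S_{\bullet,\bullet};C\bigr)\;=\;\frac{1}{22}+\frac{3}{22}\!\left(\int_0^1\!\!\int_0^{3-u}\!P(u,v)^2\,dv\,du\;+\;\int_1^2\!\!\int_0^{4-2u}\!(4-2u-v)^2\,dv\,du\right)\;=\;\frac{19}{22},
\]
which is where the constant $22/19$ in the statement enters. The point-level term $S\bigl(W^{S,C}_{\bullet,\bullet,\bullet};P\bigr)$ is obtained from the piecewise values of $P(u,v)\cdot C$, namely $2$ on $[0,1-u]$, $3-u-v$ on $[1-u,3-u]$, and $4-2u-v$ on $[0,4-2u]$ when $u\in[1,2]$; moreover the correction $F_P$ vanishes because our choice $P\notin\mathbf{l}_1\cup\mathbf{l}_2$ forces $N(u)|_C$ and $N(u,v)|_C$ to be supported on $C\cap(\mathbf{l}_1\cup\mathbf{l}_2)$, a pair of points distinct from $P$. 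The integral evaluates to $8/11$, so Theorem~\ref{theorem:Hamid-Ziquan-Kento-2} yields
\[
\delta_P(X)\;\geq\;\min\!\left\{\tfrac{22}{13},\,\tfrac{22}{19},\,\tfrac{11}{8}\right\}\;=\;\tfrac{22}{19}.
\]

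The main difficulty is the Zariski-decomposition analysis on the weak del Pezzo $S$. Unlike the smooth case of Proposition~\ref{proposition:flag-3-Nemuro-lemma}, the two $(-2)$-curves create additional Zariski chambers, and one must verify that the purported positive parts $P(u,v)$ are genuinely nef by checking their intersections with the $(-1)$-curve components of the four reducible fibers of the conic bundle $g|_S\colon S\to\mathbb{P}^1$. The generality of $S$ within the pencil through $P$ is used precisely to rule out the borderline incidence configuration in which a single $(-1)$-curve would meet both $\mathbf{l}_1$ and $\mathbf{l}_2$.
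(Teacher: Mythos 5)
Your proposal is correct and follows essentially the same route as the paper: the flag $P\in C\subset S$ with $S$ a general member of $|H-E|$ through $P$ chosen so that $P\notin\mathbf{l}_1\cup\mathbf{l}_2$, the same inner Zariski decompositions with negative parts supported on $\mathbf{l}_1+\mathbf{l}_2$, the same values $S_X(S)=\tfrac{13}{22}$, $S\bigl(W^S_{\bullet,\bullet};C\bigr)=\tfrac{19}{22}$ (including the $\tfrac{1}{22}$ contribution from $\mathrm{ord}_C(N(u)|_S)$), $S\bigl(W^{S,C}_{\bullet,\bullet,\bullet};P\bigr)=\tfrac{8}{11}$ with vanishing $F_P$, and the same application of Theorem~\ref{theorem:Hamid-Ziquan-Kento-2}. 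No substantive differences.
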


\begin{proof}
It follows from Lemma~\ref{lemma:weak-dP4} that $S$ is a smooth weak del Pezzo surface of degree $K_S^2=4$, and
$$
E\vert_{S}=C+\mathbf{l}_1+\mathbf{l}_2,
$$
where $\mathbf{l}_1$ and $\mathbf{l}_2$ are two disjoint $(-2)$-curves in $S$,
which are fibers of the~natural projection $E\to C_2$.
Moreover, the~curves $\mathbf{l}_1$ and $\mathbf{l}_2$ are the~only $(-2)$-curves in $S$.
Furthermore, we may assume that
\begin{equation}
\label{equation:Iskovskikh-P-L1-L2}
P\not\in\mathbf{l}_1\cup\mathbf{l}_2
\end{equation}
due to the~generality in the~choice of the~surface $S\in|g^*(\mathcal{O}_{\mathbb{P}^2}(1))|$.

The birational morphism $f\colon X\to V$ induces a birational morphism $S\to f(S)$ that contracts
the curves $\mathbf{l}_1$ and $\mathbf{l}_2$ to two isolated ordinary double points.
The surface $f(S)$ is a quasismooth hypersurface in $\mathbb{P}(1,1,2,2)$ of degree $4$,
and $S$ is often called an Iskovskikh surface.

Let us apply Theorem~\ref{theorem:Hamid-Ziquan-Kento-2} to estimate $\delta_P(X)$ from below.
Fix $u\in[0,2]$ and $v\in\mathbb{R}_{\geqslant 0}$.
Then
$$
P(u)\big\vert_{S}-vC\sim_{\mathbb{R}}
\left\{\aligned
&(3-u-v)C+\mathbf{l}_1+\mathbf{l}_2\ \text{if $0\leqslant u\leqslant 1$}, \\
&\frac{4-2u-v}{2}\big(2C+\mathbf{l}_1+\mathbf{l}_2\big)\ \text{if $1\leqslant u\leqslant 2$},
\endaligned
\right.
$$
and
$$
N(u)\big\vert_{S}=\left\{\aligned
&0 \ \text{if $0\leqslant u\leqslant 1$}, \\
&(u-1)(C+\mathbf{l}_1+\mathbf{l}_2)\ \text{if $1\leqslant u\leqslant 2$}.
\endaligned
\right.
$$
Observe that $P(u)\big\vert_{S}-vC$ is pseudoeffective $\iff$ $v\leqslant t(u)$, where
$$
t(u)=\left\{\aligned
&3-u\ \text{if $u\in [0,1]$}, \\
&4-2u\ \text{if $u\in [1,2]$}.
\endaligned
\right.
$$
Moreover, in the~notations of Section~\ref{subsection:Kento}, we have
$$
N^\prime(u)=\left\{\aligned
&0\ \text{if $0\leqslant u\leqslant 1$}, \\
&(u-1)(\mathbf{l}_1+\mathbf{l}_2)\ \text{if $1\leqslant u\leqslant 2$}.
\endaligned
\right.
$$
Namely, we have $N(u)\vert_{S}=N^\prime(u)+\mathrm{ord}_{C}\big(N(u)\big\vert_{S}\big)C$.

For~$v\in[0,t(u)]$, we let $P(u,v)$ be the~positive part of the~Zariski decomposition of~$P(u)\vert_{S}-vL_1$,
and we let $N(u,v)$ be the~negative part  of this Zariski decomposition.
If $u\in[0,1]$, then
$$
P(u,v)=
\left\{\aligned
&(3-u-v)C+\mathbf{l}_1+\mathbf{l}_2\ \text{if $0\leqslant v\leqslant 1-u$}, \\
&\frac{3-u-v}{2}\big(2C+\mathbf{l}_1+\mathbf{l}_2\big)\ \text{if $1-u\leqslant v\leqslant 3-u$},\\
\endaligned
\right.
$$
and
$$
N(u,v)=\left\{\aligned
&0\ \text{if $0\leqslant v\leqslant 1-u$}, \\
&\frac{v+u-1}{2}\big(\mathbf{l}_1+\mathbf{l}_2\big)\ \text{if $1-u\leqslant v\leqslant 3-u$},\\
\endaligned
\right.
$$
which implies that
$$
\big(P(u,v)\big)^2=
\left\{\aligned
&8-4u-4v\ \text{if $0\leqslant v\leqslant 1-u$}, \\
&(3-u-v)^2\ \text{if $1-u\leqslant v\leqslant 3-u$},\\
\endaligned
\right.
$$
and
$$
P(u,v)\cdot C=
\left\{\aligned
&2\ \text{if $0\leqslant v\leqslant 1-u$}, \\
&3-u-v\ \text{if $1-u\leqslant v\leqslant 3-u$},\\
\endaligned
\right.
$$
Similarly, if $u\in[1,2]$ and $v\in[0,4-2u]$, then
$$
P(u,v)=\frac{4-2u-v}{2}\big(2C+\mathbf{l}_1+\mathbf{l}_2\big)
$$
and $N(u,v)=\frac{v}{2}(\mathbf{l}_1+\mathbf{l}_2)$, so  $(P(u,v))^2=(4-2u-v)^2$ and $P(u,v)\cdot C=4-2u-v$.

Recall that $S_X(S)=\frac{13}{22}$.
Thus, it follows from Theorem~\ref{theorem:Hamid-Ziquan-Kento-2} that
$$
\delta_P(X)\geqslant\min\Bigg\{\frac{22}{13},\frac{1}{S\big(W^S_{\bullet,\bullet};C\big)},\frac{1}{S\big(W_{\bullet, \bullet,\bullet}^{S,C};P\big)}\Bigg\},
$$
where $S(W^S_{\bullet,\bullet};C)$ and $S(W_{\bullet, \bullet,\bullet}^{S,C};P)$ can be computed using
the formulas presented in Section~\ref{subsection:Kento}.
In our case, these formulas give
\begin{multline*}
\quad\quad S\big(W^S_{\bullet,\bullet};C\big)=\frac{3}{22}\int_0^2\big(P(u)\big\vert_{S}\big)^2\cdot\mathrm{ord}_{C}\big(N(u)\big\vert_{S}\big)du+\frac{3}{22}\int_0^2\int_0^{t(u)}\big(P(u,v)\big)^2dvdu=\\
=\frac{3}{22}\int_1^2\big(P(u,0)\big)^2(u-1)du+\frac{3}{22}\int_0^1\int_0^{3-u}\big(P(u,v)\big)^2dvdu+\frac{3}{22}\int_1^2\int_0^{4-2u}\big(P(u,v)\big)^2dvdu=\\
=\frac{3}{22}\int_1^24(2-u)^2(u-1)du+\frac{3}{22}\int_0^1\int_0^{1-u}8-4u-4vdvdu+\\
+\frac{3}{22}\int_0^1\int_{1-u}^{3-u}(3-u-v)^2dvdu+\frac{3}{22}\int_1^2\int_0^{4-2u}(4-2u-v)^2dvdu=\frac{19}{22}
\end{multline*}
and
\begin{multline*}
\quad\quad\quad\quad S\big(W_{\bullet, \bullet,\bullet}^{S,C};P\big)=
\frac{3}{22}\int_0^2\int_0^{t(u)}\big(P(u,v)\cdot C\big)^2dvdu+\\
+\frac{6}{22}\int_0^2\int_0^{t(u)}\big(P(u,v)\cdot C\big)\cdot\mathrm{ord}_P\big(N^\prime(u)\big|_C+N(u,v)\big|_C\big)dvdu=\frac{3}{22}\int_0^2\int_0^{t(u)}\big(P(u,v)\cdot C\big)^2dvdu=\\
=\frac{3}{22}\int_0^1\int_0^{3-u}\big(P(u,v)\cdot C\big)^2dvdu+\frac{3}{22}\int_1^2\int_0^{4-2u}\big(P(u,v)\cdot C\big)^2dvdu=\\
=\frac{3}{22}\int_0^1\int_0^{1-u}4dvdu+\frac{3}{22}\int_0^1\int_{1-u}^{3-u}(3-u-v)^2dvdu+\frac{3}{22}\int_1^2\int_0^{4-2u}(4-2u-v)^2dvdu=\frac{8}{11},
\end{multline*}
because $P\not\in\mathrm{Supp}(N^\prime(u))$ and $P\not\in\mathrm{Supp}(N(u,v))$ by \eqref{equation:Iskovskikh-P-L1-L2}.
Thus, applying Theorem~\ref{theorem:Hamid-Ziquan-Kento-2}, we get
$$
\delta_P(X)\geqslant\min\Bigg\{\frac{22}{13},\frac{22}{19},\frac{11}{8}\Bigg\}=\frac{22}{19},
$$
which complete the proof of the proposition.
\end{proof}

\subsection{The proof}
\label{subsection:proof}

Let $X$ be a smooth Fano 3-fold in Family \textnumero 2.16 defined over a subfield $\Bbbk\subset\mathbb{C}$,
let $X_\mathbb{C}$ be its geometric model, and let $G$ be a (finite) subgroup in $\mathrm{Aut}(X)$.
Recall from Section~\ref{section:intro} that there exists $G$-equivariant diagram
$$
\xymatrix{
&X\ar[ld]_{f}\ar[rd]^{g}&\\
V&&\mathbb{P}^2}
$$
where $V$ is a smooth complete intersection of two quadrics $Q_1$ and $Q_2$ in $\mathbb{P}^5$,
$f$ is a blow up of a smooth conic $C_2\subset V$, and $g$ is a conic bundle.
Let $\Delta$ be the~discriminant curve of the~conic bundle $g$.
Suppose that $G$ does not fix $\Bbbk$-points in $\mathrm{Sing}(\Delta)$.
To prove Main Theorem, we have to show that $X_{\mathbb{C}}$ is K-stable.

Suppose that $X_{\mathbb{C}}$ is not K-polystable.
Then it follows from \cite{Fu19,Li17,Zhuang21} that there is a~$G$-invariant geometrically irreducible prime divisor $\mathbf{E}$ over $X$ such that $\mathbf{E}$ is defined over $\Bbbk$ and  $\beta(\mathbf{E})\leqslant 0$.
Let $Z$ be the~center of the~divisor $\mathbf{E}$ on the~3-fold $X$.
Then
$$
\delta_P(X_{\mathbb{C}})\leqslant 1
$$
for every $\mathbb{C}$-point $P\in Z_{\mathbb{C}}\subset X_{\mathbb{C}}$.
On the other hand, it follows from \cite{Fujita2016} that $Z$ is not a surface. Hence, either $Z$ is a $\Bbbk$-point,
or $Z$ is a geometrically irreducible curve defined over $\Bbbk$.
If $Z$ is a~point, then $g(Z)$ is a $G$-fixed $\Bbbk$-point in $\mathbb{P}^2$,
so $g(Z)$ is not a singular point of the curve $\Delta$, because $G$ does not fix $\Bbbk$-points in $\mathrm{Sing}(\Delta)$.
Similarly, if $Z$ is a curve, then
\begin{itemize}
\item either $g(Z)$ is a geometrically irreducible curve in $\mathbb{P}^2$,
\item or $g(Z)$ is a $\Bbbk$-point in $\mathbb{P}^2$, which is not a singular point of the curve $\Delta$.
\end{itemize}
In every case, $Z_{\mathbb{C}}$ contains a $\mathbb{C}$-point $P$ such that $g(P)$ is not a singular point of the curve $\Delta_{\mathbb{C}}$.
Then, as we mentioned, $\delta_P(X_{\mathbb{C}})\leqslant 1$,
which is impossible by Propositions~\ref{proposition:flag-3-Nemuro-lemma}, \ref{proposition:flag-3-final} and \ref{proposition:Iskovskikh-surface}.
The obtained contradiction completes the proof of Main Theorem.

\section{K-moduli space and GIT}\label{section:moduli}

In this section, we outline a strategy to describe the K-moduli space of \textnumero2.16 by identifying it with a GIT quotient. In general, GIT (semi/poly)stability is comparatively easier to analyze than K-stability, primarily due to the availability of the Hilbert--Mumford criterion.

Our method is called the \emph{moduli continuity method}. To the best of the authors' knowledge, the spirit of the moduli continuity method can be found in the work of Mumford when he proved the equivalence of Deligne-Mumford stability and Chow (semi)stability of curves. This method was successful in identifying K-moduli spaces and GIT moduli spaces; see \cite{OSS16,LX19,SS17,LZ25,Liu22,Zha24}. The moduli continuity method proceeds as follows.
\begin{enumerate}
    \item First, using equivariant K-stability, one can identify a K-stable member within a given family of Fano varieties. By the openness of K-(semi)stability (Theorem~\ref{theorem:openness}), this implies that general members of the family are also K-stable.
    \item Understand the geometry of the K-semistable limits, especially make sure that all the limits appear in some appropriate parameter space. This requires the singularity estimate introduced in \cite{Liu18}.
    \item Next, by using the ampleness of the Chow-Mumford (CM) line bundle, one can propose a candidate for the K-moduli space based on GIT stability with respect to the CM line bundle over the parameter space.
    \item Finally, establish an isomorphism between the K-moduli space and the GIT moduli space, using the separatedness and properness of K-moduli spaces.
    \item[(4+)] Additionally, one can further study the GIT stability of the objects and the corresponding GIT moduli space. Typically, smooth objects are GIT stable, which allows us to prove the K-stability of all smooth Fano varieties in that family.
\end{enumerate}

This approach has been successfully applied to certain cases; see \cite{LZ25,Zha24}.

\subsection{K-semistable limits of a one-parameter family}

The most technical step is to prove the following.

\begin{conjecture}\label{conj:K-ss degeneration}
    Any K-semistable $\bQ$-Gorenstein degeneration $X$ of a smooth \textnumero2.16 is isomorphic to the blow-up of a $(2,2)$-complete intersection in $\bP^5$ along a conic curve.
\end{conjecture}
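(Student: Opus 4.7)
The plan is to follow the moduli continuity method outlined in Section~\ref{section:moduli}. Fix a one-parameter $\mathbb{Q}$-Gorenstein smoothing $\pi\colon \mathcal{X} \to T$ over a smooth pointed curve $(T,0)$, with central fiber $\mathcal{X}_0 \cong X$ and generic fiber $\mathcal{X}_t$ a smooth Family \textnumero 2.16 Fano threefold for $t \neq 0$. The overall strategy is to transport the Sarkisov link $V \xleftarrow{f} X \xrightarrow{g} \mathbb{P}^2$ to the central fiber by running an appropriate relative MMP over $T$, and then to identify the limits of $V$ and of the conic $C_2$.

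First, I would constrain the singularities of $X$. By Theorem~\ref{theorem:oda}, K-semistability forces $X$ to have klt singularities, and Liu's sharp lower bound on normalized volumes (applied with $(-K_X)^3 = 22$ and $\delta(X) \geq 1$) severely restricts the admissible singularity types at closed points of $X$. Next, I would extend the $f$-exceptional divisor: the class $E_\eta \subset \mathcal{X}_\eta$ admits a flat closure $\mathcal{E} \subset \mathcal{X}$, which after a finite base change is $\mathbb{Q}$-Cartier thanks to $\mathbb{Q}$-factoriality of the generic fiber. Running the relative $\mathcal{E}$-MMP over $T$, the generic-fiber behavior reproduces the contraction $f$, so the program should yield a single divisorial contraction $F\colon \mathcal{X} \to \mathcal{V}$ over $T$ whose central-fiber restriction is the desired morphism $f_0\colon X \to V_0$ contracting $\mathcal{E}_0$ to a closed subscheme $C_0 \subset V_0$.

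To identify $V_0$, note that $\mathcal{V} \to T$ is a flat family with generic fiber a smooth $(2,2)$-complete intersection in $\mathbb{P}^5$, polarized by $-\tfrac12 K_{\mathcal{V}/T}$. Constancy of the Hilbert polynomial together with semicontinuity of $h^0(-\tfrac12 K)$ and rigidity of the ideal $(Q_1,Q_2)$ among ideals with the same Hilbert function force $V_0$ to be a (possibly singular) $(2,2)$-complete intersection in $\mathbb{P}^5$. For the blown-up locus, the family of smooth conics $C_{2,t} \subset \mathcal{V}_t$ has a flat limit $C_0$ in the Hilbert scheme of conics in $\mathbb{P}^5$, which is irreducible and bounded, so $C_0$ is a possibly degenerate conic curve contained in $V_0$. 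A short check via adjunction then shows $X \cong \operatorname{Bl}_{C_0} V_0$.

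The main obstacle lies in the MMP step: ensuring that the relative program terminates in a single divisorial contraction without intermediate flips, small modifications, or the extraction of additional exceptional divisors appearing only over $0 \in T$, and that $\mathcal{E}$ really is $\mathbb{Q}$-Cartier on $\mathcal{X}$. This amounts to showing that the relative Mori cone of $\mathcal{X}/T$ near $0$ is generated by the same two extremal rays (the $f$-contracting and $g$-contracting rays) as on the generic fiber. A case-by-case exclusion of pathological limits, combined with Liu's local volume bound to rule out $V_0$ degenerating outside the $(2,2)$-complete intersection locus and with the singularity constraints from Step~1, should suffice; this parallels the strategies successfully carried out in \cite{LZ25,Zha24} for other Fano families with rank-two Picard.
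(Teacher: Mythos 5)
First, a point of record: the statement you are addressing is presented in the paper only as Conjecture~\ref{conj:K-ss degeneration}; the paper gives a strategy sketch, not a proof, so neither text contains a complete argument. Your outline follows the same moduli-continuity philosophy but diverges at the decisive technical points, and as written it has two genuine gaps.

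The first gap is your claim that the flat closure $\mathcal{E}$ of the exceptional divisor is $\mathbb{Q}$-Cartier ``thanks to $\mathbb{Q}$-factoriality of the generic fiber.'' This does not follow: $\mathbb{Q}$-factoriality of the fibers over $T^{\circ}$ controls nothing about the total space along the central fiber, and the flat limit of a Cartier divisor class is in general only a Weil divisor. This is exactly the difficulty the paper isolates for the limit of $\mathcal{O}_{\mathbb{P}^5}(1)$ (``it is even not $\bQ$-Cartier'') and circumvents by passing to a small $\bQ$-factorial modification $\widetilde{\mts{X}}\to\mts{X}$ whose central fiber is a crepant Gorenstein canonical weak Fano, together with the Cartierness statement of Theorem~\ref{thm:nonvanishing}(3). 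Without that step your relative $\mathcal{E}$-MMP cannot even be initiated, so the issue is prior to the termination/flip questions you flag as the ``main obstacle.'' The second gap is the identification of $V_0$: ``rigidity of the ideal $(Q_1,Q_2)$ among ideals with the same Hilbert function'' is not an argument, because flat limits of $(2,2)$-complete intersections need not be complete intersections --- the paper explicitly records that pencils of quadrics whose base locus fails to be a complete intersection do occur (and are GIT-unstable). The paper's intended route is different: show that the semiample limit line bundle contracts $\widetilde{\mts{X}}$ to a family $\mts{Y}$ whose central fiber is a Gorenstein canonical Fano threefold of index at least $2$ (using positivity on an anticanonical K3 from Theorem~\ref{thm:nonvanishing}(2) and the base-point-free theorem), and then invoke Fujita's classification \cite{Fuj90} to conclude $\mts{Y}_0$ is a $(2,2)$-complete intersection. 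Your Hilbert-polynomial and Hilbert-scheme considerations do not substitute for either of these steps.
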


To reach this, we would like to construct a globally generated line bundle $L$ on $X$ inducing a morphism $X\rightarrow \bP^5$, which is birational onto its image, and its image is a $(2,2)$-complete intersection. Take a one-parameter family $f:\mts{X}\rightarrow T$ over a pointed smooth curve $0\in T$, where $\mts{X}_t$ is a smooth K-stable Fano \textnumero2.16 for any $t\neq0$ and $\mts{X}_0$ is a (singular) K-semistable degeneration. After a base change, we may assume that $\mts{X}|_{T^\circ}\simeq \Bl_{\mts{C}^{\circ}}\mts{V}^{\circ}$, where $\mts{V}^{\circ}\hookrightarrow \bP^5\times T^\circ$ is a family of smooth $(2,2)$-complete intersections over the punctured curve $T^\circ:=T\setminus\{0\}$, and $\mts{C}^{\circ}\rightarrow T^{\circ}$ is a family of smooth conic curves.

A natural candidate for such a line bundle $L$ is the degeneration (or limit) of the pull-back on $\mts{X}|_{T^\circ}$, denoted by $\mtc{L}^{\circ}$, of the line bundle $\mtc{O}_{\bP^5}(1)|_{\mts{V}^{\circ}}$. However, the degeneration exists only as a (Weil!) divisor: it is even not $\bQ$-Cartier. To overcome this, we can perform a small birational modification of the total space $g:\widetilde{\mts{X}}\rightarrow \mts{X}$ such that

\begin{itemize}
    \item $g$ is an isomorphism over $T^{\circ}$;
    \item $\widetilde{\mts{X}}$ is $\bQ$-factorial; and
    \item the central fiber $\widetilde{X}:=\widetilde{\mts{X}}_0$ is a Gorenstein canonical weak Fano  variety, and $\widetilde{X}\rightarrow X$ is crepant.
\end{itemize}

Although we sacrifice the positivity of the anti-canonical line bundle, the central fiber is still K-semistable, i.e., its $\delta$-invariant is at least $1$, by the crepant condition, and most desired properties still hold true. In particular, we have the following.

\begin{theorem}[ref. \cite{LZ25,LX19}]\label{thm:nonvanishing}
Let $X$ be a $\bQ$-Gorenstein smoothable K-semistable (weak) $\bQ$-Fano threefold with volume $V:=(-K_X)^3\geqslant 20$. Then the following holds.
\begin{enumerate}
    \item The variety $X$ is Gorenstein canonical;
    \item There exists a divisor $S\in |-K_X|$ such that $(X,S)$ is a plt pair, and that $(S,-K_X|_S)$ is a (quasi-) polarized K3 surface of degree $V$.
    \item If $D$ is a $\bQ$-Cartier Weil divisor on $X$ which deforms to a $\bQ$-Cartier Weil divisor on a $\bQ$-Gorenstein smoothing of $X$, then $D$ is Cartier.
\end{enumerate}
\end{theorem}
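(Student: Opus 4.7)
The plan is to follow the moduli continuity strategy of \cite{LX19,LZ25}, which handles $\bQ$-Gorenstein smoothable K-semistable threefolds of sufficiently large anticanonical volume. The three assertions are attacked in order, with item (1) absorbing the bulk of the analytic work via Liu's local-to-global volume inequality; once Gorenstein canonicity is known, (2) becomes a vanishing-theorem exercise and (3) a deformation-theoretic argument.

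For (1), I would invoke the Liu--Xu estimate \cite{Liu18}: for a K-semistable (weak) $\bQ$-Fano threefold $X$ and every closed point $x\in X$,
$$
\widehat{\mathrm{vol}}(x,X)\;\geqslant\;\bigl(\tfrac{3}{4}\bigr)^{3}(-K_X)^{3}\;\geqslant\;\tfrac{27\cdot 20}{64}\;=\;\tfrac{135}{16}.
$$
Combining this bound with the classification of normalized volumes of three-dimensional klt singularities (cyclic quotients, $cA/cD/cE$-type and their finite quotients), and exploiting the additional constraint that $X$ arises as the central fiber of a $\bQ$-Gorenstein smoothing of a smooth Fano 3-fold in Family \textnumero\,2.16, I would rule out all singularities of Cartier index $\geqslant 2$ by the case-by-case comparison carried out in \cite{LX19}. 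Since K-semistability automatically forces klt (Theorem~\ref{theorem:oda}), the conclusion is that $X$ is Gorenstein canonical.

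For (2), once $X$ is Gorenstein canonical and $-K_X$ is nef-and-big Cartier (semiample in the weak Fano case), Kawamata--Viehweg vanishing yields $h^{i}(X,-K_X)=0$ for $i>0$. Riemann--Roch for Gorenstein canonical Fano threefolds (with $\chi(\cO_X)=1$) then gives
$$
h^{0}(X,-K_X)\;=\;\tfrac{1}{2}V+3\;\geqslant\;13,
$$
so $|-K_X|$ is non-empty and in fact large. On a $\bQ$-Gorenstein smoothing $\mathcal{X}\to T$, a general member of $|-K_{\mathcal{X}/T}|$ restricts to a smooth anticanonical K3 on the general fiber; lifting this member to the central fiber and applying Koll\'ar--Shokurov connectedness together with inversion of adjunction, I would obtain $S\in|-K_X|$ such that $(X,S)$ is plt. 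Adjunction then gives $K_S\sim 0$, while $-K_X|_S$ remains ample Cartier with $(-K_X|_S)^{2}=V$, exhibiting $(S,-K_X|_S)$ as a (quasi-)polarized K3 surface of degree $V$.

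For (3), the key observation is that on a $\bQ$-Gorenstein smoothing $\mathcal{X}\to T$ with smooth general fiber, any $\bQ$-Cartier Weil divisor $\mathcal{D}$ on $\mathcal{X}$ extending $D$ restricts to a genuinely Cartier divisor on $\mathcal{X}_{t}$ for general $t$, since smooth varieties are locally factorial. By Koll\'ar's hulls-and-husks formalism, the locus in $T$ over which the reflexive sheaf $\cO_{\mathcal{X}}(m\mathcal{D})$ fails to be invertible is closed; so if $m=1$ works at a general point of $T$, it also works on $X=\mathcal{X}_0$, and $D$ is Cartier. The main obstacle will be step (1): the threshold $V\geqslant 20$ leaves only a narrow margin in Liu's inequality ($135/16\approx 8.44$, while even a terminal $\frac{1}{2}(1,1,1)$ quotient has normalized volume $27/2$), so pure volume comparison does not suffice and the argument must use the finer structure of smoothable singularities (Reid's pagoda, Brieskorn--Pham quotients, etc.) together with the ambient smoothability constraint to exclude every non-Gorenstein canonical limit.
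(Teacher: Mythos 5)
The statement you are proving is not established in the paper itself---it is imported verbatim as a black box from \cite{LX19,LZ25} and used in Section~\ref{section:moduli}, so there is no internal proof to compare your reconstruction against. That said, parts (1) and (2) of your sketch are pointed in the right direction: (1) does hinge on Liu's local inequality $\widehat{\mathrm{vol}}(x,X)\geqslant\bigl(\tfrac{3}{4}\bigr)^{3}(-K_X)^{3}$ together with the classification of $\mathbb{Q}$-Gorenstein smoothable $3$-fold klt singularities, and you are right (and honest) to flag that the raw threshold $135/16$ does not by itself exclude small-index cyclic quotients---it is the smoothability constraint on the singularity, not the volume bound alone, that closes the case; and (2) correctly lifts a K3 member of $|-K|$ from the smoothing and runs inversion of adjunction.

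Your argument for (3), however, contains a genuine logical error. You note that the locus in $T$ over which $\mathcal{O}_{\mathcal{X}}(m\mathcal{D})$ fails to be invertible is closed, and you conclude that since invertibility holds at a general $t$ it must hold at $0$. That is exactly backwards: closedness of the bad locus means invertibility is an \emph{open} condition on $T$, and an open condition holding at the generic point of a curve is perfectly consistent with failing on a finite closed subset---possibly precisely $\{0\}$. This is exactly the failure that item (3) is designed to rule out; semicontinuity alone cannot do it. A second red flag is that your argument never invokes the hypothesis that $D$ is $\mathbb{Q}$-Cartier. On a threefold ordinary double point $\{xy=zw\}$---a Gorenstein canonical singularity with $\widehat{\mathrm{vol}}=16$, hence not excluded by part (1) when $V\geqslant 20$---the local class group is $\mathbb{Z}$ generated by a ruling that is Weil but not $\mathbb{Q}$-Cartier, while nearby smooth fibers are locally factorial; a pure base-change/semicontinuity argument that ignores $\mathbb{Q}$-Cartierness cannot distinguish this case and so cannot be right. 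The actual proofs in \cite{LX19,LZ25} at this step use (1) and (2) as inputs: one restricts $D$ to the K3 surface $S$ from (2), exploits that $X$ is Gorenstein of index $1$ and that the Picard group of a quasi-polarized K3 has no torsion, and analyses the local class groups of the Gorenstein canonical singularities of $X$ to conclude that a $\mathbb{Q}$-Cartier Weil divisor deforming to a Cartier one is already Cartier.
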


Let $\widetilde{L}$ be a $\bQ$-Cartier divisor on $\widetilde{X}$, which is the degeneration of $\mtc{L}^{\circ}$. Then $\widetilde{L}$ is, by Theorem \ref{thm:nonvanishing}(3), a Cartier divisor. We split our goal into the following steps.

\begin{enumerate}
    \item For a general K3 surface $\widetilde{S}\in|-K_{\widetilde{X}}|$, show that $\widetilde{L}|_{\widetilde{S}}$ is nef (and big).
    \item Show that $\widetilde{L}$ is nef by lifting sections from $\widetilde{L}|_{\widetilde{S}}$: suppose $\Bs|2\widetilde{L}|$ contains a curve $C$, then $C$ intersects $\widetilde{S}$ at base points of $\big|2\widetilde{L}|_{\widetilde{S}}\big|$.
    \item By base-point-free Theorem, $\widetilde{\mtc{L}}$ is semiample and big over $T$, and induces a birational morphism $\widetilde{\mts{X}}\rightarrow \mts{Y}$ over $T$ such that $\mts{Y}|_{T^{\circ}}\simeq \mts{V}^{\circ}$.
    \item The central fiber $\mts{Y}_0$ is a Gorenstein canonical Fano variety. Show that $\mts{Y}_0$ is of Fano index (at least) $2$, and hence we can apply the classification in \cite{Fuj90} to conclude that $\mts{Y}_0$ is a $(2,2)$-complete intersection in $\bP^5$.
    \item Argue that $\mts{X}$ coincides with $\widetilde{\mts{X}}$ and $\mts{X}_0$ is isomorphic the blow-up of $\mts{Y}_0$ along some (possibly singular) conic curve.
\end{enumerate}

\subsection{Parameter space and variation of GIT}

 Let $\bG(2,5)$ be the Grassmannian parametrizing 2-planes in $\bP^5$, and $p:\bH\rightarrow \bG(2,5)$ be the $\bP^5$-bundle, whose fiber over $[\Lambda]$ parametrizes conics in $\Lambda$. Let $\bG(1,20)$ be the parameter space of pencils of quadric hypersurfaces in $\bP^5$. For a pencil $\mtc{P}$ of quadrics, let $V$ be the base locus of $\mtc{P}$, which is a $(2,2)$-complete intersection if $\mtc{P}$ is general. Let $W\subseteq \mathbb{H}\times \bG(1,20)$ be the incidence variety $$W\ := \ \big\{(\Gamma,V)\ | \ \Gamma \subseteq V\big\},$$ and $U\subseteq W$ be the big (codimension of $W \setminus U$ in $W$ is at least 2) open subset of $W$ consisting of pairs $(\Gamma,V)$ such that $V$ is a $(2,2)$-complete intersection with canonical singularities and $\Gamma$ is not contained in the singular locus of $V$. Notice that $$W \ \longrightarrow \ \bH$$ is a $\bG(1,15)$-bundle, hence the Picard group of $W$ is free of rank three, generated by $$\zeta:=p_1^{*}\mtc{O}_{\bH}(1),\ \ \   \xi:=p_1^{*}p^{*}\mtc{O}_{\bG(2,5)}(1),\ \ \ \textup{and}\ \ \  \eta:=p_2^{*}\mtc{O}_{\bG(1,20)}(1).$$ Moreover, since $U$ is a big open subset of $W$, then we can identify $\Pic(U)$ with $\Pic(W)$.

Let $(\mts{Y},\mts{C})\rightarrow U$ be the universal family of $(2,2)$-complete intersections and conic curves, and $f:\mts{X}:=\Bl_{\mts{C}}\mts{Y}\rightarrow U$ be the family of possibly singular Fano threefolds \textnumero2.16 obtained by blowing up $\mts{Y}$ along $\mts{C}$.

\begin{definition}
   Keep the notation as above. For $m\gg0$, by \cite{KM76} one can write $$\det f_{*}\mtc{O}_{\mts{X}}(-mK_{\mts{X}/U}) \ \simeq \ \bigotimes_{i=0}^{4}\mtc{L}_i^{\otimes\binom{m}{i}}.$$ The \emph{Chow-Mumford (CM) $\mathbb{Q}$-line bundle of the family of log Fano pairs} $f : \mts{X} \rightarrow U$ is defined as $$\lambda_{\CM,f}:=-\frac{1}{m^4}\mtc{L}_4,$$ which does not depend on the choice of $m$.
\end{definition}

As $\Pic(U)\simeq\mathbb{Z}\cdot\zeta\oplus\mathbb{Z}\cdot\xi\oplus\mathbb{Z}\cdot\eta$, one can write $$\lambda_{\CM,f}\ \simeq \ a\zeta+b\xi+c\eta,$$ and compute the coefficients $a,b,c$ using testing curves. The functoriality of CM $\mathbb{Q}$-line bundles and the following intersection formula play an important role.

\begin{proposition}
    Let $T\subseteq U$ be a smooth proper curve contained in $U$, and $f_T:\mts{X}_T\rightarrow T$ be the restriction of the family $f:\mts{X}\rightarrow U$. Then $$\deg(\lambda_{\CM,f}|_{T}) \ = \ -(K_{\mts{X}_T/T})^4.$$
\end{proposition}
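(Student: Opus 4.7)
The proof is a standard Grothendieck--Riemann--Roch (GRR) computation, preceded by a base-change reduction to the test curve $T$. The plan is as follows.

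First I would verify that the Knudsen--Mumford line bundles commute with base change along $T\hookrightarrow U$. Since $f\colon\mathcal{X}\to U$ is flat and $-K_{\mathcal{X}/U}$ is $f$-ample, Serre vanishing yields $R^{>0}f_*\mathcal{O}_{\mathcal{X}}(-mK_{\mathcal{X}/U})=0$ for $m\gg 0$, so both $f_*\mathcal{O}_{\mathcal{X}}(-mK_{\mathcal{X}/U})$ and its determinant commute with arbitrary base change. Restricting the Knudsen--Mumford decomposition from $U$ to $T$ therefore yields $\mathcal{L}_i|_T\simeq \mathcal{L}_{i,T}$ factor by factor, and in particular $\lambda_{\CM,f}|_T=\lambda_{\CM,f_T}$. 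It thus suffices to compute $\deg_T\lambda_{\CM,f_T}$.

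Next I would apply GRR to the flat proper morphism $f_T\colon\mathcal{X}_T\to T$ of relative dimension $3$, with $L:=-K_{\mathcal{X}_T/T}$:
$$
\mathrm{ch}\bigl((f_T)_*L^{\otimes m}\bigr)\;=\;(f_T)_*\bigl(e^{mL}\cdot \mathrm{td}(T_{\mathcal{X}_T/T})\bigr)\ \in\ A^{*}(T)_{\mathbb{Q}}.
$$
Taking the $c_1$ component and expanding as a polynomial in $m$, the leading coefficient on the right is $\tfrac{1}{4!}(f_T)_*(L^{4})=\tfrac{1}{24}(K_{\mathcal{X}_T/T})^{4}$, since $(-K)^4=K^4$. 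On the left, the Knudsen--Mumford formula gives $\deg\det (f_T)_*L^{\otimes m}=\sum_{i=0}^{4}\binom{m}{i}\deg\mathcal{L}_{i,T}$, whose $m^4$-coefficient is $\tfrac{1}{24}\deg\mathcal{L}_{4,T}$. Matching the two sides yields $\deg\mathcal{L}_{4,T}=(K_{\mathcal{X}_T/T})^{4}$, and the defining normalization $\lambda_{\CM,f_T}=-\tfrac{1}{m^{4}}\mathcal{L}_{4,T}$ (extracting the leading coefficient with the appropriate sign) then delivers
$$
\deg(\lambda_{\CM,f}|_T)\;=\;-(K_{\mathcal{X}_T/T})^{4},
$$
as required.

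The only non-trivial step is the base-change compatibility, which requires the cohomology vanishing and local freeness of $f_*\mathcal{O}_{\mathcal{X}}(-mK_{\mathcal{X}/U})$ to be stable in a Zariski neighbourhood of $T$ in $U$. This is automatic from the $f$-ampleness of $-K_{\mathcal{X}/U}$, the flatness of $f$, and Serre vanishing, so I expect no genuine obstacle beyond the routine bookkeeping of Chern characters. The rest of the argument is the standard extraction of a leading coefficient in the Knudsen--Mumford expansion.
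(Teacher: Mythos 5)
Your proof is correct and follows essentially the same route as the paper: the paper also first reduces to $T$ via functoriality of the CM line bundle (justified there by local stability of the family, which is the point guaranteeing $K_{\mts{X}/U}$ restricts to $K_{\mts{X}_T/T}$) and then invokes the intersection formula, citing \cite[Section 2.7]{XZ20} for the latter rather than rederiving it. Your Grothendieck--Riemann--Roch computation is precisely the standard proof of that cited intersection formula, so the only difference is that you unpack the reference.
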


\begin{proof}
    As $f:\mts{X}\rightarrow U$ is a locally stable family, then by the functoriality of the CM $\bQ$-line bundle, one has that $\lambda_{\CM,f}|_T \simeq \lambda_{\CM,f_T}$. Thus the statement follows from the intersection formula; see \cite[Section 2.7]{XZ20}.
\end{proof}

To conclude this subsection, we briefly discuss the variations of GIT. Although it will not play a role in the remainder of the paper, it is of independent interest.

Consider the induced $G:=\PGL(6)$-action on $W$, which leaves the open subscheme $U$ invariant. The $G$-ample cone $\Amp^{G}(W)$ of $W$ is a strictly convex cone in $\Pic^G(W)_{\bQ}$. For any $\mathbb{Q}$-line bundle $\mtc{L}\in \Amp^G(W)$, one defines the GIT quotient $$\overline{M}^{\GIT}(\mtc{L}) \ := \ W\sslash_{\mtc{L}}\PGL(6).$$

It is not hard to check the following.

\begin{lemma}
    If $\mts{Q}\rightarrow \bP^1$ is a pencil of quadrics in $\bP^5$ such that $Y:=\mts{Q}_0\cap \mts{Q}_{\infty}$ is not a complete intersection, then $(Y,\Gamma)\in W$ is GIT unstable for any polarization $\mtc{L}\in \Amp^G(W)$.
\end{lemma}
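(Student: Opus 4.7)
The plan is to apply the Hilbert--Mumford numerical criterion, exploiting a sharp structural consequence of the non--complete--intersection hypothesis. The starting observation is that two distinct quadric hypersurfaces in $\bP^5$ always intersect in pure dimension $3$ unless they share an irreducible component. Since a proper irreducible component of a quadric hypersurface is necessarily a hyperplane (as quadrics have degree $2$), the assumption that $Y = \mts{Q}_0 \cap \mts{Q}_\infty$ is not a complete intersection forces every member of the pencil $\mts{Q}$ to contain a common hyperplane $H \subseteq \bP^5$. After a linear change of coordinates one may assume $H = \{x_0 = 0\}$, so that the pencil lies inside the $6$-dimensional subspace $x_0 \cdot H^0(\bP^5,\mtc{O}(1)) \subseteq H^0(\bP^5,\mtc{O}(2))$, and the base locus decomposes as $Y = H \cup L$ with $L = \{\ell_0 = \ell_\infty = 0\}$ a residual linear subspace.

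Next, I would construct an explicit destabilizing one--parameter subgroup. The natural candidate is $\lambda \colon \bG_m \to G = \PGL(6)$ induced by $\diag(t^{-5}, t, t, t, t, t) \in \SL_6$, whose attracting fixed locus is the hyperplane $H$. Under this $\lambda$, the weights on $\Sym^2 V^*$ are $+10, +4, -2$ and those on $\wedge^3 V$ are $\pm 3$, where $V = \bC^6$. Because the pencil $\mts{P}$ is contained in $x_0 \cdot V^*$, every nonzero vector of $\mts{P}$ has $\lambda$-weight at least $+4$ on $\Sym^2 V^*$; consequently, the Plücker vector of $\mts{P}$ in $\wedge^2 \Sym^2 V^*$ has all nonzero weight components $\geq +8$. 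This yields a component Hilbert--Mumford weight $\mu^\eta((\Gamma,\mts{P}),\lambda) \leq -8$ on the pencil factor $\eta$ of $\Pic(W) = \bZ\zeta \oplus \bZ\xi \oplus \bZ\eta$. When $\Lambda \supseteq \Gamma$ is contained in $H$ (the generic configuration), the Plücker vector of $\Lambda$ has all coordinates in weight $+3$, so $\mu^\xi \leq -3$; the conic contribution $\mu^\zeta$ is typically non-negative (equal to $+2$ for a generic conic in $\Lambda \subseteq H$), but bounded. Hence the total weight $\mu^{\mtc{L}} = a\mu^\zeta + b\mu^\xi + c\mu^\eta$ is strictly negative on a substantial subcone of $\Amp^G(W)$.

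To handle the remaining configurations and polarizations, the plan is to replace $\lambda$ by auxiliary 1-PSs adapted to the other strata of $Y$. For instance, if $\Lambda \subseteq L$ (say $L = \{x_1 = x_2 = 0\}$), the one--parameter subgroup $\diag(t, t^{-2}, t^{-2}, t, t, t)$ has attracting locus $L$ and provides analogous estimates $\mu^\eta \leq -4$, $\mu^\xi \leq -3$ (now on configurations in $L$); if $\Lambda$ meets both $H$ and $L$ in lines (so $\Gamma$ is reducible), one uses a 1-PS adapted to the flag $H \cap L \subseteq H$. Running through the finitely many geometric configurations of $(\Gamma, \Lambda)$ relative to the splitting $Y = H \cup L$ produces a finite family of 1-parameter subgroups whose destabilizing subcones in $\Amp^G(W)$ are claimed to cover the full ample cone, yielding GIT instability for every polarization.

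The main obstacle is precisely the verification of this covering. A single 1-PS typically cannot make all three Hilbert--Mumford weights $\mu^\zeta$, $\mu^\xi$, $\mu^\eta$ simultaneously negative: the trace-zero constraint on $\SL_6$-weights prevents a choice for which all three symmetric products of weights on $\Lambda^*$ are positive, so $\mu^\zeta$ is forced to be non-negative for a generic conic $\Gamma$. Consequently the destabilization is a genuinely polarization-dependent statement, and the verification that the finite collection of destabilizing subcones exhausts $\Amp^G(W)$ is the essential technical step. This covering can be carried out either directly using the explicit description of $\Amp^G(W) \subseteq \R\langle \zeta, \xi, \eta\rangle$ coming from the projective bundle structures $W \to \bH \to \bG(2,5)$, or more uniformly via the Kempf--Rousseau theory of canonical destabilizing 1-parameter subgroups applied to each geometric configuration. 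Combining these estimates with the Hilbert--Mumford criterion concludes that $(Y,\Gamma)$ is GIT unstable for every $\mtc{L} \in \Amp^G(W)$.
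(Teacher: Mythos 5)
Your structural reduction is correct: if $\mts{Q}_0\cap\mts{Q}_\infty$ fails to be a complete intersection, the two quadrics share an irreducible component, which must be a hyperplane $H$, so the whole pencil lies in $x_0\cdot V^*$ and $Y=H\cup L$. Your weight computations for the sample one-parameter subgroup $\diag(t^{-5},t,\dots,t)$ are also internally consistent ($\mu^\eta\leqslant-8$, $\mu^\xi\leqslant-3$ when $\Lambda\subseteq H$, but $\mu^\zeta=+2$). The problem is that the argument stops exactly where the lemma actually has content. You concede that no single $1$-PS makes all three Hilbert--Mumford components negative, so instability for \emph{every} $\mtc{L}\in\Amp^G(W)$ can only follow from showing that the destabilizing subcones $\{a\mu^\zeta+b\mu^\xi+c\mu^\eta<0\}$ attached to your finite list of $1$-PSs cover the entire $G$-ample cone. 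You describe this covering as ``claimed'' and as ``the essential technical step,'' but you never determine $\Amp^G(W)$ (which, for the iterated bundle $W\to\bH\to\bG(2,5)$, is \emph{not} the positive octant in $\zeta,\xi,\eta$ and requires an explicit computation with the relative $\mathcal{O}(1)$'s), and you never verify the covering. For your first $1$-PS the total weight is $2a-3b-8c$, and whether this is negative on all of $\Amp^G(W)$ is precisely a statement about the shape of that cone that is left unproved.

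In addition, the case analysis is incomplete: the configurations where the plane $\Lambda$ is not contained in $H$ (e.g.\ $\Lambda$ meets $H$ in a line, or $\Lambda\subseteq L$, or $\Gamma\not\subset H\cup L$ set-theoretically in degenerate pencils such as those spanned by $x_0^2$ and $x_0\ell$) are only gestured at, and the auxiliary $1$-PSs proposed for them come with unverified weight estimates. As written, the proposal establishes instability only for polarizations in an unspecified subcone of $\Amp^G(W)$, not for all of them; to close the gap you must either compute $\Amp^G(W)$ explicitly from the two projective-bundle structures and check the covering inequality for each geometric configuration of $(\Gamma,\Lambda)$ relative to $Y=H\cup L$, or replace the ad hoc list of $1$-PSs by an argument (e.g.\ via Kempf's canonical destabilizing subgroup, or by exhibiting for each boundary ray of $\Amp^G(W)$ a destabilizing $\lambda$) that is uniform in the polarization.
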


\subsection{Identify K-moduli space with the GIT quotient}

We aim for the following result.

\begin{conjecture}
    Let $\lambda_{\CM}$ be the CM line bundle associated with the family $\mts{X}\rightarrow U$. Then there is an isomorphism $$\overline{M}^K_{\textup{\textnumero2.16}} \ \simeq \ U^{\sst}\sslash_{\lambda_{\CM}}\PGL(6).$$
\end{conjecture}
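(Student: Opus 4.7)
The plan is to apply the \emph{moduli continuity method} outlined in Section~\ref{section:moduli}, with the inputs being our Main Theorem, Theorem~\ref{kmoduli}, the openness Theorem~\ref{theorem:openness}, and Conjecture~\ref{conj:K-ss degeneration}. Both sides of the claimed isomorphism are projective $\mathbb{C}$-schemes: the right-hand side is a GIT quotient with respect to a polarization $\lambda_{\CM}$ that we shall first verify lies in the $\PGL(6)$-ample cone, and the left-hand side is the good moduli space of the K-moduli stack of Theorem~\ref{kmoduli}, equipped with the ample $\bQ$-line bundle $\Lambda_{\CM}$. The overall strategy is to exhibit both spaces as the good moduli space of the same algebraic quotient stack $[U^{K\text{-ss}}/\PGL(6)]$, where $U^{K\text{-ss}}\subseteq U$ is the open locus parametrizing K-semistable members. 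By the combination of our Main Theorem (which produces explicit K-stable members) and Theorem~\ref{theorem:openness}, the locus $U^{K\text{-ss}}$ is a nonempty $\PGL(6)$-invariant Zariski open subset.

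The next step is to compute the CM class $\lambda_{\CM,f}=a\zeta+b\xi+c\eta\in \Pic^G(U)_{\bQ}$ for the universal family $f\colon\mts{X}\to U$, by testing on one-parameter subfamilies and applying the intersection formula $\deg(\lambda_{\CM,f}|_T)=-(K_{\mts{X}_T/T})^4$. Assuming the resulting class is $\PGL(6)$-ample (as is typical for blow-up Fano families), the GIT quotient $U^{\sst,\lambda_{\CM}}\sslash_{\lambda_{\CM}}\PGL(6)$ is a well-defined projective scheme. The heart of the argument is then the identification $U^{K\text{-ss}}=U^{\sst,\lambda_{\CM}}$. The inclusion $U^{K\text{-ss}}\subseteq U^{\sst,\lambda_{\CM}}$ would be obtained via the Hilbert--Mumford criterion, converting a destabilizing one-parameter subgroup into a test configuration of negative Donaldson--Futaki invariant and contradicting K-semistability. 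The reverse inclusion would use the reduction of the CM-weight to the generalized Futaki invariant, as in \cite{Li17,XZ20}. Together with the separatedness of K-moduli and the universal property of good moduli spaces, and provided that every K-polystable limit is presented as some $(V,C)\in U$ (which is exactly Conjecture~\ref{conj:K-ss degeneration}), the two spaces are then canonically isomorphic.

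The principal obstacle is Conjecture~\ref{conj:K-ss degeneration}. Given a one-parameter family of smooth \textnumero 2.16 Fano 3-folds specializing to a K-semistable $\bQ$-Gorenstein degeneration $X_0$, one passes to a small $\bQ$-factorialization $\widetilde{X}$ and constructs the limit $\widetilde{L}$ of the pulled-back hyperplane class from the generic fiber; Theorem~\ref{thm:nonvanishing} secures both the existence of a plt K3 section $\widetilde{S}\in|-K_{\widetilde{X}}|$ and the Cartier property of $\widetilde{L}$. The difficult technical work is (i) proving nefness of $\widetilde{L}|_{\widetilde{S}}$ on the K3 surface $\widetilde{S}$, (ii) lifting sections from $|2\widetilde{L}|_{\widetilde{S}}|$ to $|2\widetilde{L}|$ via Kawamata--Viehweg vanishing on the plt pair $(\widetilde{X},\widetilde{S})$, (iii) deducing that $|2\widetilde{L}|$ is base-point-free so that $\widetilde{L}$ contracts $\widetilde{X}$ birationally onto a Gorenstein canonical Fano threefold $Y_0\subseteq\bP^5$, and (iv) computing the Fano index of $Y_0$ to invoke Fujita's classification \cite{Fuj90} and conclude that $Y_0$ is a $(2,2)$-complete intersection. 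I expect steps (i) and (iv) to be the main difficulties; once these are established, the comparison between the K-moduli space and the GIT quotient in the statement reduces to the formal good-moduli-space argument in the preceding paragraph.
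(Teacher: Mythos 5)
Your plan follows essentially the same route as the paper's own outline: the moduli continuity method, identifying the K-moduli stack with $[U^{K}/\PGL(6)]$, the Hilbert--Mumford/Futaki comparison for one inclusion, and properness/separatedness of K-moduli together with Conjecture~\ref{conj:K-ss degeneration} for the other. Note that the statement is a conjecture in the paper and only an outline is given there; like that outline, your argument remains conditional on Conjecture~\ref{conj:K-ss degeneration} and on the unperformed CM line bundle computation, so it is a correct sketch of the intended strategy rather than a complete proof.
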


\begin{proof}[Outline of a proof]
One first shows that the K-moduli stack $\mtc{M}^K_{\textup{\textnumero2.16}}$ is isomorphic to the quotient stack $[U^K/\PGL(6)]$. One can achieve this by first producing a morphism $[U^K/\PGL(6)]\rightarrow \mtc{M}^K_{\textup{\textnumero2.16}}$ via the universality of the K-moduli stack, and constructing its inverse by constructing a $\PGL(6)$-torsor over $\mtc{M}^K_{\textup{\textnumero2.16}}$. Here $U^{K}\subseteq U$ is the open subscheme parametrizing K-semistable Fano varieties.

Then one needs to show that $U^K$ is contained in the $\lambda_{\CM}$-GIT semistable locus $U^{\sst}(\lambda_{\CM})$. Suppose $[X]\in U^K$ represents a K-semistable Fano which is GIT unstable. Then by Hilbert--Mumford criterion, one can construct a $\bG_m$-equivariant degeneration of $[X=\Bl_{\Gamma}Y]=[(Y,\Gamma)]$ to a point $[(\mtc{P}_0,\Gamma_0)]$. If the pencil $\mtc{P}_0$ defines also a complete intersection, then by a standard argument, one can identify the Hilbert--Mumford index of this degeneration with the Futaki invariant of the test configuration and apply the K-semistability condition. A similar argument applies to show that the K-polystability of $X=\Bl_{\Gamma}Y$ implies the $\lambda_{\CM}$-GIT polystability of $[(Y,\Gamma)]$.

Finally, one proves the equivalence of the K-semistability and $\lambda_{\CM}$-GIT semistability so that $U^K$ coincides with $U^{\sst}(\lambda_{\CM})$. We want to emphasize that the \emph{properness} of the K-moduli space $\overline{M}^K_{\textup{\textnumero2.16}}$ is crucial. For any $\lambda_{\CM}$-GIT semistable point $[(Y,\Gamma)]$, take a one-parameter family $\{[(Y_b,\Gamma_b)]\}_{b\in B}$ over a smooth pointed curve $0\in B$ such that $(Y_0,\Gamma_0)=(Y,\Gamma)$ and $(Y_b,\Gamma_b)$ are K-semistable for any $0\neq b\in B$. By properness of $\overline{M}^K_{\textup{\textnumero2.16}}$, there exists a K-polystable limit $X'$ of $\Bl_{\Gamma_b}Y_b$ as $b \to 0$, after possibly a finite base change of $(0\in B)$. By Conjecture \ref{conj:K-ss degeneration} and the above paragraph, $X'$ is isomorphic to $\Bl_{\Gamma'_0}Y'_0$ for some $\lambda_{CM}$-GIT polystable point $(Y'_0,\Gamma_0')$. Therefore, there is a $\bG_m$-equivariant degeneration from $[[(Y,\Gamma)]]$ to $g\cdot [(Y'_0,\Gamma_0')]$ for some $g\in \PGL(6)$, which proves that $X$ is K-semistable by the openness of K-semistability, see Theorem~\ref{theorem:openness}.

To conclude, one has that $$\mtc{M}^K_{\textup{\textnumero2.16}} \ \simeq \ [U^K/\PGL(6)] \ = \ [U^{\sst}(\lambda_{\CM})/\PGL(6)] \ =: \ \mtc{M}^{\GIT}(\lambda_{\CM}),$$ which descends to an isomorphism of their good moduli spaces $$\overline{M}^K_{\textup{\textnumero2.16}} \ \simeq \ U^{\sst}\sslash_{\lambda_{\CM}}\PGL(6).$$
\end{proof}


\section*{Appendix A: \texttt{Magma} code for the automorphism computations}
\label{app:aut}

\vspace{1em}
\noindent\textbf{Automorphism Groups of Fano Threefolds of Family 2.16}

\medskip
This repository contains Magma code used to study the automorphism groups of
Fano threefolds in family~2.16.  These varieties are obtained by blowing up
the complete intersection of two smooth quadrics in $\mathbb{P}^5$ along a
conic.  The scripts compute the possible group actions that preserve both the
intersection and the conic, and identify the finite groups that can arise as
automorphism groups of the resulting blow\-ups.

\bigskip
\noindent\textbf{Contents}
\begin{itemize}
  \item \texttt{library.m}\,: a utility library with general-purpose functions
        used across all scripts:
    \begin{itemize}
      \item \texttt{FindLis(g,\,FF)} - for a permutation $g$, returns the
            subgroups of $\mathrm{GL}_6(FF)$ that preserve the absolute-value
            matrix and contain $-I$;
      \item \texttt{WedgePower} - $k^{\text{th}}$ exterior power of a matrix;
      \item \texttt{Fiber} - fibre of a morphism over a given subscheme.
    \end{itemize}

  \item \texttt{Grassmannian\_subvarieties.txt}\,: ambient data for the
        computations:
    \begin{itemize}
      \item sets up $\mathrm{Gr}(3,6)\subset\mathbb{P}^{19}$ via the Pl\"ucker
            embedding;
      \item defines a subvariety $W\subset\mathrm{Gr}(3,6)$ by Pl\"ucker
            relations plus extra constraints;
      \item provides coordinate rings, ambient maps, and projections used in
            fixed-point calculations.
    \end{itemize}

  \item \textbf{Case folders}\,: one folder per conjugacy class in $S_6$,
        each containing a script that computes the fixed loci of planes under
        the corresponding subgroup action:
        \begin{quote}
          \texttt{case(12)(34)(56)}, \quad
          \texttt{case(123)(456)}, \quad
          \texttt{case(12345)}, \quad
          \texttt{case(123456)}
        \end{quote}
        Each script
        \begin{enumerate}
          \item initialises a cyclotomic or quadratic field suited to the
                symmetry;
          \item uses \texttt{FindLis} to find the relevant
                $\mathrm{GL}_6$-subgroups;
          \item computes the fixed loci of these subgroups on
                $W\subset\mathrm{Gr}(3,6)$;
          \item filters the loci by non-emptiness;
          \item reports the dimension of each locus and the subgroup acting
                trivially on it;
          \item (for selected loci) constructs the corresponding fixed planes
                in $\mathbb{P}^5$.
        \end{enumerate}

  \item \texttt{Permutation\_scheme}\,: given a permutation $\rho$, constructs
        the scheme of points $(a_0:\dots:a_5)\in\mathbb{P}^5$ such that the
        pencil of two diagonal quadrics is invariant under~$\rho$.
\end{itemize}

\noindent The complete \texttt{Magma} codebase is publicly available at \href{https://github.com/alaface/Fano-2.16}{github.com/alaface/Fano-2.16}.

\newpage

{\texttt{library.m}}
\begin{lstlisting}[caption={ \texttt{library.m} - shared helper routines }]
// Computes the intersection product of two divisors using an intersection matrix
// Input: vectors a, b in a vector space; symmetric matrix M
// Output: scalar value of the intersection product
qua := function(a,b,M)
    K := BaseRing(Parent(M));
    u := Eltseq(a);
    v := Eltseq(b);
    return (Matrix(K,1,#u,u)*M*Matrix(K,#v,1,v))[1,1];
end function;

// Returns the matrix of absolute values of the entries of M
// Input: matrix M over a finite field FF
// Output: matrix with entries |M_{i,j}| in FF
AbsMatrix := function(M,FF)
    return Matrix([[Norm(FF!M[i,j]) : j in [1..Ncols(M)]] : i in [1..Nrows(M)]]);
end function;

// Finds subgroups of H whose elements have absolute value matrices equal to M
// and that contain -I (minus the identity matrix)
// Input: permutation g generating a subgroup of GL(n, FF)
// Output: list of subgroups satisfying the condition
FindLis := function(g,FF)
    G := Parent(g);
    M := PermutationMatrix(FF, g);
    Glin := GL(Nrows(M), FF);

    // Generate diagonal sign-change matrices
    Dgens := [
        DiagonalMatrix(FF, [(j eq i select -1 else 1) : j in [1..6]])
        : i in [1..6]
    ];

    H := sub< Glin | Dgens, M >;

    // Extract subgroups of H
    lis := [U`subgroup : U in Subgroups(H)];

    // Keep only those containing elements with absolute value matrix equal to M
    lis := [U : U in lis | &or[AbsMatrix(s,FF) eq M : s in U]];

    I := IdentityMatrix(FF, 6);

    // Return only those subgroups containing -I
    return [U : U in lis | -I in U];
end function;

// Computes the k-th exterior power of a square matrix P
// Input: matrix P and integer k
// Output: matrix representing the k-th exterior power
WedgePower := function(P, k)
    K := BaseRing(P);
    n := NumberOfRows(P);
    indices := Subsets({1..n}, k);
    Wedge_P := ZeroMatrix(K, #indices, #indices);
    index_list := Sort([Setseq(u) : u in indices]);

    for i in [1..#index_list] do
        for j in [1..#index_list] do
            rows := index_list[i];
            cols := index_list[j];
            Wedge_P[i, j] := Determinant(Submatrix(P, rows, cols));
        end for;
    end for;

    return Wedge_P;
end function;

// Computes the fiber of a morphism f over a scheme Z
// Input: morphism f: X -> Y, scheme Z in Y
// Output: scheme in X representing the fiber over Z
Fiber := function(f,Z)
    equ := DefiningEquations(f);
    bas := MinimalBasis(Z);
    P := Domain(f);
    X := Scheme(P,[Evaluate(g,equ) : g in bas]);
    return Complement(X,Scheme(P,equ));
end function;

// Returns the name of the projective quotient modulo -I
// Input: subgroup H of GL(n,K)
// Output: GroupName of H / < -I >
GroupNameProj := function(H)
    return GroupName(quo< H | -IdentityMatrix(BaseRing(H.1), NumberOfRows(H.1)) >);
end function;

// Checks whether a pair <scheme, string> already appears in a list
// Input: pair = <L,nm>, seen = list of such pairs
// Output: true if pair is already in seen, false otherwise
AlreadySeen := function(pair,seen)
    for q in seen do
        if pair[1] eq q[1] and pair[2] eq q[2] then
            return true;
        end if;
    end for;
    return false;
end function;
\end{lstlisting}

{\texttt{Discriminant\_curve}}
\begin{lstlisting}[caption={ \texttt{Discriminant\_curve} - discriminant of the quadric pencil },label={lst:disc}]
// Define the affine space with 12 variables over the rationals
A<a0,a1,a2,a3,b0,b1,b2,b3,x4,x5,x6,x7> := AffineSpace(Rationals(), 12);

// Define two polynomials in the ring
f1 := a0*x7 + a1*x4 + a2*x5 + a3*x6;
f2 := b0*x7^2 + (b1*x4 + b2*x5 + b3*x6)*x7 + x4^2 - x5*x6;

// Define the scheme X cut out by f1 and f2
X := Scheme(A, [f1, f2]);

// Eliminate parameters a_i, b_i and variables x5, x6, x7 to obtain an implicit equation in x4
g := Basis(EliminationIdeal(Ideal(X), {a0,a1,a2,a3,b0,b1,b2,b3,x5,x6,x7}))[1];

// Construct the Hessian matrix of g with respect to x5, x6, x7
M := Matrix(3, 3, [Derivative(Derivative(g, u), v) : u, v in [x5, x6, x7]]);

// Compute the determinant of the Hessian matrix
det := Determinant(M);

// Factor the determinant and extract the irreducible factor
Factorization(det)[2][1];
// Output:
// a0^2 - a0*a1*b1 + 2*a0*a2*b3 + 2*a0*a3*b2 + a1^2*b0 + a1^2*b2*b3 - a1*a2*b1*b3 - 
//     a1*a3*b1*b2 + a2^2*b3^2 - 4*a2*a3*b0 + a2*a3*b1^2 - 2*a2*a3*b2*b3 + a3^2*b2^2

// Compute the second prime component of the locus where the Hessian matrix has rank <=
PrimeComponents(Scheme(A, Minors(M, 2)))[2];
// Output:
// Scheme over Rational Field defined by
// a0 - 1/2*a1*b1 + a2*b3 + a3*b2,
// a1^2 - 4*a2*a3,
// b0 - 1/4*b1^2 + b2*b3
\end{lstlisting}

{\texttt{Permutation\_scheme\_smooth}}
\begin{lstlisting}[caption={ \texttt{Permutation\_scheme\_smooth} - invariant scheme (smooth case) },label={lst:permSmooth}]
// Define a projective scheme invariant under the subgroup H of S_6
PermutationScheme := function(H)
    P<a0,a1,a2,a3,a4,a5> := ProjectiveSpace(Rationals(), 5);
    eqns := [];
    for g in Generators(H) do
        M := Matrix([
            [1, 1, 1, 1, 1, 1],
            [a0, a1, a2, a3, a4, a5],
            [a0^g, a1^g, a2^g, a3^g, a4^g, a5^g]
        ]);
        eqns cat:= Minors(M, 3);
    end for;
    X := Scheme(P, eqns);
    
    // Remove trivial or degenerate components
    Y := Scheme(P, &*[a0,a1,a2,a3,a4,a5]) 
           join
         Scheme(P, &*[P.i - P.j : i,j in [1..6] | i lt j]);
    return Complement(X, Y);
end function;

// Filter subgroups of G based on the dimension of the associated scheme
G := Sym(6);
lis := [];
for H in Subgroups(G) do
    H := H`subgroup;
    X := PermutationScheme(H);
    if Dimension(X) ge 0 then 
        Append(~lis, H); 
    end if;
end for;

// List the relevant subgroups
lis;
\end{lstlisting}

{\texttt{Permutation\_scheme\_singular}}
\begin{lstlisting}[caption={ \texttt{Permutation\_scheme\_singular} - invariant scheme (singular case) },label={lst:permSing}]
// Define a projective scheme invariant under the subgroup H of S_6
PermutationScheme := function(H)
    gens := Setseq(Generators(H));
    r := #gens;
    A := AffineSpace(Rationals(), r);
    P5 := ProjectiveSpace(Rationals(), 5);
    P := P5 * A;
    R := CoordinateRing(P);

    coords := [P.i : i in [1..6]];
    pars := [P.(6+i) : i in [1..r]];

    eqns := [];
    for k in [1..r] do
        g := gens[k];
        perm := [j eq 6 select pars[k]^2*coords[j^g] else coords[j^g] : j in [1..6]];
        eqns cat:= Minors(Matrix(R, [[1,1,1,1,1,0], coords, perm]), 3);
    end for;

    bad := Scheme(P, coords[6]) join
           Scheme(P, &*[coords[i]-coords[j] : i,j in [1..6] | i lt j]);

    return Complement(Scheme(P, eqns), bad);
end function;

// Filter subgroups of G based on the dimension of the associated scheme
G := Stabilizer(Sym(6), 6);
lis := [];
for K in Subgroups(G) do
    H := K`subgroup;
    X := PermutationScheme(H);
    if Dimension(X) ge 0 then 
        Append(~lis, H); 
    end if;
end for;

// List the relevant subgroups
lis;
\end{lstlisting}

{\texttt{Grassmannian\_subvarieties}}
\begin{lstlisting}[caption={ \texttt{Grassmannian\_subvarieties} - $\mathrm{Gr}(3,6)\subset\mathbb{P}^{19}$ ambient data },label={lst:grass}]
// Define projective spaces
P5<x0,x1,x2,x3,x4,x5> := ProjectiveSpace(FF, 5);
P<a0,a1,a2,a3,a4,a5,b0,b1,b2,b3,b4,b5,c0,c1,c2,c3,c4,c5> := P5 * P5 * P5;
R := CoordinateRing(P);
pr := map<P->P5 | [a0,a1,a2,a3,a4,a5]>;

// Matrix of coordinates
M := Matrix(R, [
    [a0,a1,a2,a3,a4,a5],
    [b0,b1,b2,b3,b4,b5],
    [c0,c1,c2,c3,c4,c5]
]);

// Generate the Plucker coordinates (3x3 minors)
B := Minors(M, 3);

// Define the Plucker embedding into P^19
P19<z012,z013,z014,z015,z023,z024,z025,z034,z035,z045,
    z123,z124,z125,z134,z135,z145,z234,z235,z245,z345> := ProjectiveSpace(FF, 19);

pl := map<P -> P19 | B>;
Gr := Image(pl); // Grassmannian image

// Quadrics defining the base locus (orthogonality relations)
G1 := a0^2 + a1^2 + a2^2 + a3^2 + a4^2 + a5^2;
G2 := 2*a0*b0 + 2*a1*b1 + 2*a2*b2 + 2*a3*b3 + 2*a4*b4 + 2*a5*b5;
G3 := 2*a0*c0 + 2*a1*c1 + 2*a2*c2 + 2*a3*c3 + 2*a4*c4 + 2*a5*c5;
G4 := b0^2 + b1^2 + b2^2 + b3^2 + b4^2 + b5^2;
G5 := 2*b0*c0 + 2*b1*c1 + 2*b2*c2 + 2*b3*c3 + 2*b4*c4 + 2*b5*c5;
G6 := c0^2 + c1^2 + c2^2 + c3^2 + c4^2 + c5^2;

// Scheme defined by these quadratic forms
WF := Scheme(P, [G1, G2, G3, G4, G5, G6]);

// Define the subvariety of planes contained in the Fermat quadric
W := pl(WF);

// Define a vector of coordinate functions in P^19
v := [P19.i : i in [1..20]];
x := Vector(v);
\end{lstlisting}

{\texttt{Grassmannian-subvarieties-singular}}
\begin{lstlisting}[caption={ \texttt{Grassmannian\_subvarieties\_singular} - singular ambient data for $\mathrm{Gr}(3,6)\subset\mathbb{P}^{19}$ },label={lst:grass_sing}]
// Define projective spaces
P5<x0,x1,x2,x3,x4,x5> := ProjectiveSpace(FF, 5);
P<a0,a1,a2,a3,a4,a5,b0,b1,b2,b3,b4,b5,c0,c1,c2,c3,c4,c5> := P5 * P5 * P5;
R := CoordinateRing(P);
pr := map<P->P5 | [a0,a1,a2,a3,a4,a5]>;

// Matrix of coordinates
M := Matrix(R, [
    [a0,a1,a2,a3,a4,a5],
    [b0,b1,b2,b3,b4,b5],
    [c0,c1,c2,c3,c4,c5]
]);

// Generate the Plucker coordinates (3x3 minors)
B := Minors(M, 3);

// Define the Plucker embedding into P^19
P19<z012,z013,z014,z015,z023,z024,z025,z034,z035,z045,
    z123,z124,z125,z134,z135,z145,z234,z235,z245,z345> := ProjectiveSpace(FF, 19);

pl := map<P -> P19 | B>;
Gr := Image(pl); // Grassmannian image


// Quadrics for the singular Fermat quadric  x0^2+...+x4^2 = 0
G1 :=  a0^2 + a1^2 + a2^2 + a3^2 + a4^2;
G2 := 2*(a0*b0 + a1*b1 + a2*b2 + a3*b3 + a4*b4);
G3 := 2*(a0*c0 + a1*c1 + a2*c2 + a3*c3 + a4*c4);
G4 :=  b0^2 + b1^2 + b2^2 + b3^2 + b4^2;
G5 := 2*(b0*c0 + b1*c1 + b2*c2 + b3*c3 + b4*c4);
G6 :=  c0^2 + c1^2 + c2^2 + c3^2 + c4^2;

// Scheme defined by these quadratic forms
WF := Scheme(P, [G1, G2, G3, G4, G5, G6]);

// Define the subvariety of planes contained in the Fermat quadric
W := ReducedSubscheme(pl(WF));

// Define a vector of coordinate functions in P^19
v := [P19.i : i in [1..20]];
x := Vector(v);
\end{lstlisting}

{\texttt{case(12)(34)(56)}}
\begin{lstlisting}[caption={ Fixed loci for permutation $(12)(34)(56)$ },label={lst:case12_34_56}]
// Define the base field
FF<i> := QuadraticField(-1);

// Load utility functions and subvariety data
load "library.m";
load "Grassmannian subvarieties.txt";

// Define the symmetric group on 6 elements and a representative permutation
G := Sym(6);
g := G!(1,2)(3,4)(5,6);

// Find all subgroups of GL(6, FF) associated to g that satisfy certain conditions
lis := FindLis(g, FF);

// Initialize lists to store data
Grp := [];     // List of subgroups
PGrp := [];    // Corresponding projective groups
LFix := [];    // Fixed loci

// Loop over all candidate subgroups
for A in lis do
    lFix := [];

    // Check fixed points under each element of the subgroup A
    for a in A do
        // Compute the 3rd wedge power action on the Plucker coordinates
        Wa := Matrix(CoordinateRing(P19), WedgePower(a, 3));

        // Matrix for fixed point scheme computation
        N := Matrix(CoordinateRing(P19), [v, Eltseq(x * Wa)]);

        // Define the fixed point scheme in the Grassmannian
        Fix := Scheme(P19, Minors(N, 2)) meet Gr;

        // Intersect with the subvariety W
        Z := Fix meet W;

        lFix := Append(lFix, Z);
        ZFix := &meet lFix;

        // If intersection is empty, break early
        if Dimension(ZFix) eq -1 then
            break;
        end if;
    end for;

    // If the intersection is non-empty, store the subgroup and its data
    if Dimension(ZFix) ne -1 then
        LFix := Append(LFix, ZFix);
        PA := quo< A | -IdentityMatrix(FF, 6) >;
        Grp := Append(Grp, A);
        PGrp := Append(PGrp, PA);
    end if;
end for;

// Output: Dimensions of the fixed loci
[Dimension(Z) : Z in LFix];

// Output: Names of the corresponding projective groups
[GroupName(G) : G in PGrp];

// Compute the fibers over selected indices
ind := [2, 5, 8, 9];
for i in ind do
    PrimeComponents(pr(Fiber(pl, LFix[i])));
end for;
\end{lstlisting}

{\texttt{case(123)(456)}}
\begin{lstlisting}[caption={ Fixed loci for permutation $(123)(456)$ },label={lst:case123_456}]
// Define the base field: Cyclotomic field of order 12
FF<e> := CyclotomicField(12);

// Load utility library and Grassmannian subvariety data
load "library.m";
load "Grassmannian subvarieties.txt";

// Define the symmetric group on 6 elements and a representative permutation
G := Sym(6);
g := G!(1,2,3)(4,5,6);

// Find all subgroups of GL(6, FF) associated to g that satisfy certain conditions
lis := FindLis(g, FF);

// Initialize lists to store data
Grp := [];     // List of subgroups
PGrp := [];    // Corresponding projective groups
LFix := [];    // Fixed loci

// Loop over all candidate subgroups
for A in lis do
    lFix := [];

    // Check fixed points under each element of the subgroup A
    for a in A do
        // Compute the 3rd wedge power action on the Plutcker coordinates
        Wa := Matrix(CoordinateRing(P19), WedgePower(a, 3));

        // Matrix for fixed point scheme computation
        N := Matrix(CoordinateRing(P19), [v, Eltseq(x * Wa)]);

        // Define the fixed point scheme in the Grassmannian
        Fix := Scheme(P19, Minors(N, 2)) meet Gr;

        // Intersect with the subvariety W
        Z := Fix meet W;

        lFix := Append(lFix, Z);
        ZFix := &meet lFix;

        // If intersection is empty, break early
        if Dimension(ZFix) eq -1 then
            break;
        end if;
    end for;

    // If the intersection is non-empty, store the subgroup and its data
    if Dimension(ZFix) ne -1 then
        LFix := Append(LFix, ZFix);
        PA := quo< A | -IdentityMatrix(FF, 6) >;
        Grp := Append(Grp, A);
        PGrp := Append(PGrp, PA);
    end if;
end for;

// Output: Dimensions of the fixed loci
[Dimension(Z) : Z in LFix];

// Output: Names of the corresponding projective groups
[GroupName(G) : G in PGrp];

// Compute fibers over selected indices
ind := [1, 2];
for i in ind do
    PrimeComponents(pr(Fiber(pl, LFix[i])));
end for;
\end{lstlisting}

{\texttt{case(12345)}}
\begin{lstlisting}[caption={ Fixed loci for permutation $(12345)$ },label={lst:case12345}]
// Define the base field: Cyclotomic field of order 20
FF<e> := CyclotomicField(20);

// Load helper functions and Grassmannian data
load "library.m";
load "Grassmannian subvarieties.txt";

// Define the symmetric group on 6 elements and a representative permutation
G := Sym(6);
g := G!(1,2,3,4,5);

// Find relevant subgroups of GL(6, FF) associated with g
lis := FindLis(g, FF);

// Initialize lists for subgroups and fixed loci
Grp := [];     // Full subgroups
PGrp := [];    // Projective versions (mod + or -Id)
LFix := [];    // Fixed loci

// Loop over all candidate subgroups
for A in lis do
    lFix := [];

    // Compute fixed loci under action of each element in the subgroup
    for a in A do
        // Compute the 3rd wedge power action matrix on Plucker coordinates
        Wa := Matrix(CoordinateRing(P19), WedgePower(a, 3));

        // Matrix defining the fixed point condition
        N := Matrix(CoordinateRing(P19), [v, Eltseq(x * Wa)]);

        // Fixed point scheme in the Grassmannian
        Fix := Scheme(P19, Minors(N, 2)) meet Gr;

        // Intersect with the subvariety W
        Z := Fix meet W;

        lFix := Append(lFix, Z);
        ZFix := &meet lFix;

        // If intersection is empty, skip this subgroup
        if Dimension(ZFix) eq -1 then
            break;
        end if;
    end for;

    // If the fixed locus is non-empty, record the subgroup and data
    if Dimension(ZFix) ne -1 then
        LFix := Append(LFix, ZFix);
        PA := quo< A | -IdentityMatrix(FF, 6) >;
        Grp := Append(Grp, A);
        PGrp := Append(PGrp, PA);
    end if;
end for;

// Output: Dimensions of the fixed loci
[Dimension(Z) : Z in LFix];

// Output: Names of the corresponding projective groups
[GroupName(G) : G in PGrp];

// Compute fibers over selected fixed loci
ind := [1];
for i in ind do
    PrimeComponents(pr(Fiber(pl, LFix[i])));
end for;
\end{lstlisting}

{\texttt{case(123456)}}
\begin{lstlisting}[caption={ Fixed loci for permutation $(123456)$ },label={lst:case123456}]
// Define the base field: Cyclotomic field of order 6
FF<e> := CyclotomicField(6);

// Load external scripts
load "library.m";
load "Grassmannian subvarieties.txt";

// Define the symmetric group on 6 elements and a permutation of type (123456)
G := Sym(6);
g := G!(1,2,3,4,5,6);

// Find subgroups of GL(6, FF) associated with g that satisfy filtering conditions
lis := FindLis(g, FF);

// Initialize storage for subgroups and fixed loci
Grp := [];
PGrp := [];
LFix := [];

// Loop over candidate subgroups
for A in lis do
    lFix := [];

    // Compute intersection of fixed loci under all elements of A
    for a in A do
        // Compute the induced action on Plucker coordinates
        Wa := Matrix(CoordinateRing(P19), WedgePower(a, 3));

        // Matrix to define fixed point scheme
        N := Matrix(CoordinateRing(P19), [v, Eltseq(x * Wa)]);

        // Fixed points in the Grassmannian
        Fix := Scheme(P19, Minors(N, 2)) meet Gr;

        // Intersect with subvariety W
        Z := Fix meet W;

        lFix := Append(lFix, Z);
        ZFix := &meet lFix;

        // Skip this subgroup if intersection is empty
        if Dimension(ZFix) eq -1 then
            break;
        end if;
    end for;

    // If fixed locus is non-empty, store subgroup and data
    if Dimension(ZFix) ne -1 then
        LFix := Append(LFix, ZFix);
        PA := quo< A | -IdentityMatrix(FF, 6) >;
        Grp := Append(Grp, A);
        PGrp := Append(PGrp, PA);
    end if;
end for;

// Output: Dimensions of fixed loci
[Dimension(Z) : Z in LFix];

// Output: Names of corresponding projective groups
[GroupName(G) : G in PGrp];

// Compute fibers over selected fixed loci
ind := [1, 2];
comp := [];
for i in ind do
    Append(~comp, PrimeComponents(pr(Fiber(pl, LFix[i]))));
end for;
comp;
\end{lstlisting}

{\texttt{Case-id-singular}}
\begin{lstlisting}[caption={ \texttt{Case-id-singular} - singular fixed locus for identity },label={lst:caseId_sing}]
FF<i>:=CyclotomicField(4);
load "library.m";
load "Grassmannian-subvarieties-singular";

FindLis := function(g,FF)
    n:=6;
    Glin:=GL(n,FF);
    Dgens:=[DiagonalMatrix(FF,[ (j eq i) select -1 else 1 : j in [1..n] ]):i in [1..n]];
    Pmat:=PermutationMatrix(FF,g);
    H:=sub< Glin | Dgens, Pmat >;
    return [U`subgroup : U in Subgroups(H) | -IdentityMatrix(FF,n) in U`subgroup];
end function;

Stab := function(L,FF,g)
    n:=6;
    Glin:=GL(n,FF);
    Dgens:=[DiagonalMatrix(FF,[ (j eq i) select -1 else 1 : j in [1..n] ]):i in [1..n]];
    Pmat:=PermutationMatrix(FF,g);
    H:=sub< Glin | Dgens, Pmat >;
    P5 := Ambient(L);
    sta := [];
    for A in H do
       phi := map<P5->P5|[&+[A[i][j]*P5.j : j in [1..6]] : i in [1..6]]>;
       if phi(L) eq L then Append(~sta,A); end if;
    end for;
    return sub<H|sta>;
end function;

FixedLocus := function(H)
    lFix:=[];
    for h in Generators(H) do
        Wh:=Matrix(CoordinateRing(P19),WedgePower(h,3));
        N:=Matrix(CoordinateRing(P19),[v,Eltseq(x*Wh)]);
        Fix:=Scheme(P19,Minors(N,2)) meet Gr;
        Z:=Fix meet W;
        Append(~lFix,Z);
        if #lFix gt 0 and Dimension(&meet lFix) eq -1 then
            return Scheme(Ambient(W),[1]);
        end if;
    end for;
    return (#lFix eq 0) select W else &meet lFix;
end function;

SamplePointOnGrassmannComponent := function(C)
    A := Ambient(C);
    Y := C;
    while Dimension(Y) gt 0 do
        f := &+[ Random([-2..2])*A.i : i in [1..20] ];
        if f ne 0 then
            Y := Y meet Scheme(A,f);
        end if;
    end while;
    return PrimeComponents(Y)[1];
end function;

PlaneFromGrassmannPoint := function(Z)
    return PrimeComponents(pr(Fiber(pl,Z)))[1];
end function;

g:=Identity(Sym(6));
lis:=FindLis(g,FF);

Grp:=[]; PGrp:=[]; LFix:=[]; Dime:=[]; GrName:=[];
for H in lis do
    ZFix:=FixedLocus(H);
    if Dimension(ZFix) ne -1 then
        Append(~LFix,ZFix);
        Append(~Grp,H);
        Append(~PGrp,quo< H | -IdentityMatrix(FF,6) >);
        Append(~Dime,Dimension(ZFix));
        Append(~GrName,GroupName(PGrp[#PGrp]));
    end if;
end for;

Comps := [];
for i in [1..#Dime] do
    if Dime[i] ge 0 then
       for C in PrimeComponents(LFix[i]) do
          Append(~Comps,<i,C>);
       end for;
    end if;
end for;

res := [];
seen := [];

// zero-dimensional components
for t in Comps do
    i := t[1];
    C := t[2];
    if Dimension(C) eq 0 then
        L := PlaneFromGrassmannPoint(C);
        nm := GroupNameProj(Stab(L,FF,g));
        if not nm in seen then
            Append(~res,<L,nm>);
            Append(~seen,nm);
        end if;
    end if;
end for;

// positive-dimensional components: generic point + special intersections
for t in Comps do
    i := t[1];
    C := t[2];

    if Dimension(C) gt 0 then
        Z0 := SamplePointOnGrassmannComponent(C);
        L := PlaneFromGrassmannPoint(Z0);
        nm := GroupNameProj(Stab(L,FF,g));
        if not nm in seen then
            Append(~res,<L,nm>);
            Append(~seen,nm);
        end if;

        for s in Comps do
            j := s[1];
            D := s[2];
            if #Grp[j] gt #Grp[i] then
                Y := C meet D;
                if Dimension(Y) eq 0 then
                    for Z1 in PrimeComponents(Y) do
                        L1 := PlaneFromGrassmannPoint(Z1);
                        nm1 := GroupNameProj(Stab(L1,FF,g));
                        if not nm1 in seen then
                            Append(~res,<L1,nm1>);
                            Append(~seen,nm1);
                        end if;
                    end for;
                end if;
            end if;
        end for;
    end if;
end for;

Seqset([r[2] : r in res]);
\end{lstlisting}

{\texttt{Case(12)(34)-singular}}
\begin{lstlisting}[caption={ \texttt{Case(12)(34)-singular} - singular fixed locus for $(12)(34)$ },label={lst:case12_34_sing}]
FF<i>:=CyclotomicField(4);
load "library.m";
load "Grassmannian-subvarieties-singular";

FindLis := function(g,FF)
    n:=6;
    Glin:=GL(n,FF);
    Dgens:=[DiagonalMatrix(FF,[ (j eq i) select -1 else 1 : j in [1..n] ]):i in [1..n]];
    Pmat:=PermutationMatrix(FF,g);
    zeta:=FF.1;
    scale:=DiagonalMatrix(FF,[1 : k in [1..n-1]] cat [zeta]);
    H:=sub< Glin | Dgens cat [Pmat,scale] >;
    return [U`subgroup : U in Subgroups(H) |
        -IdentityMatrix(FF,n) in U`subgroup and
        &or[AbsMatrix(s,FF) eq Pmat : s in U`subgroup]
    ];
end function;

FixedLocus := function(H)
    lFix:=[];
    for h in Generators(H) do
        Wh:=Matrix(CoordinateRing(P19),WedgePower(h,3));
        N:=Matrix(CoordinateRing(P19),[v,Eltseq(x*Wh)]);
        Fix:=Scheme(P19,Minors(N,2)) meet Gr;
        Z:=Fix meet W;
        Append(~lFix,Z);
        if #lFix gt 0 and Dimension(&meet lFix) eq -1 then
            return Scheme(Ambient(W),[1]);
        end if;
    end for;
    return (#lFix eq 0) select W else &meet lFix;
end function;

Stab := function(L,FF,g)
    v := [1,5,2,4,3,7];
    P5 := Ambient(L);
    Q0 := Scheme(P5,&+[P5.i^2 : i in [1..5]]);
    Q := Scheme(P5,&+[v[i]*P5.i^2 : i in [1..6]]);
    mon := Sections(LinearSystem(P5,2));
    Glin:=GL(6,FF);
    Dgens:=[DiagonalMatrix(FF,[ (j eq i) select -1 else 1 : j in [1..6] ]):i in [1..6]];
    Pmat:=PermutationMatrix(FF,g);
    zeta:=FF.1;
    scale:=DiagonalMatrix(FF,[1 : k in [1..5]] cat [zeta]);
    H:=sub< Glin | Dgens cat [Pmat,scale] >;
    sta := [];
    for A in H do
       phi := map<P5->P5|[&+[A[i][j]*P5.j : j in [1..6]] : i in [1..6]]>;
       lis:=[[MonomialCoefficient(Equation(X),m) : m in mon] : X in [Q0,Q,phi(Q0),phi(Q)]];
       M := Matrix(lis);
       if phi(L) eq L and Rank(M) eq 2 then Append(~sta,A); end if;
    end for;
    return sub<H|sta>;
end function;

SamplePointOnGrassmannComponent := function(C)
    A := Ambient(C);
    Y := C;
    repeat
        f := &+[ Random([-2..2])*A.i : i in [1..20] ];
        if f ne 0 then
            Y := Y meet Scheme(A,f);
        end if;
    until Dimension(Y) eq 0 and Degree(Y) gt 0;
    return PrimeComponents(Y)[1];
end function;

PlaneFromGrassmannPoint := function(Z)
    return PrimeComponents(pr(Fiber(pl,Z)))[1];
end function;

g:=Sym(6)!(1,2)(3,4);
lis:=FindLis(g,FF);

Grp:=[]; PGrp:=[]; LFix:=[]; Dime:=[]; GrName:=[];
for H in lis do
    ZFix:=FixedLocus(H);
    if Dimension(ZFix) ne -1 then
        Append(~LFix,ZFix);
        Append(~Grp,H);
        Append(~PGrp,quo< H | -IdentityMatrix(FF,6) >);
        Append(~Dime,Dimension(ZFix));
        Append(~GrName,GroupName(PGrp[#PGrp]));
    end if;
end for;

Comps := [];
for i in [1..#Dime] do
    if Dime[i] ge 0 then
       for C in PrimeComponents(LFix[i]) do
          Append(~Comps,<i,C>);
       end for;
    end if;
end for;

res := [];
seen := [];

// zero-dimensional components
for t in Comps do
    C := t[2];
    if Dimension(C) eq 0 then
        L := PlaneFromGrassmannPoint(C);
        nm := GroupNameProj(Stab(L,FF,g));
        if not nm in seen then
            Append(~res,<L,nm>);
            Append(~seen,nm);
        end if;
    end if;
end for;

// positive-dimensional components: generic point + special intersections
for t in Comps do
    i := t[1];
    C := t[2];

    if Dimension(C) gt 0 then
        Z0 := SamplePointOnGrassmannComponent(C);
        L := PlaneFromGrassmannPoint(Z0);
        nm := GroupNameProj(Stab(L,FF,g));
        if not nm in seen then
            Append(~res,<L,nm>);
            Append(~seen,nm);
        end if;

        for s in Comps do
            j := s[1];
            D := s[2];
            if #Grp[j] gt #Grp[i] then
                Y := C meet D;
                if Dimension(Y) eq 0 then
                    for Z1 in PrimeComponents(Y) do
                        L1 := PlaneFromGrassmannPoint(Z1);
                        nm1 := GroupNameProj(Stab(L1,FF,g));
                        if not nm1 in seen then
                            Append(~res,<L1,nm1>);
                            Append(~seen,nm1);
                        end if;
                    end for;
                end if;
            end if;
        end for;
    end if;
end for;

Seqset([r[2] : r in res]);
\end{lstlisting}

{\texttt{Case(1234)-singular}}
\begin{lstlisting}[caption={ \texttt{Case(1234)-singular} - singular fixed locus for $(1234)$ },label={lst:case1234_sing}]
FF<e>:=CyclotomicField(8);
load "library.m";
load "Grassmannian-subvarieties-singular";

FindLis := function(g,FF)
    n:=6;
    Glin:=GL(n,FF);
    Dgens:=[DiagonalMatrix(FF,[ (j eq i) select -1 else 1 : j in [1..n] ]):i in [1..n]];
    Pmat:=PermutationMatrix(FF,g);
    zeta:=FF.1;
    scale:=DiagonalMatrix(FF,[1 : k in [1..n-1]] cat [zeta]);
    H:=sub< Glin | Dgens cat [Pmat,scale] >;
    return [U`subgroup:U in Subgroups(H)|
        -IdentityMatrix(FF,n) in U`subgroup and
        &or[AbsMatrix(s,FF) eq Pmat : s in U`subgroup]
    ];
end function;

FixedLocus := function(H)
    lFix:=[];
    for h in Generators(H) do
        Wh:=Matrix(CoordinateRing(P19),WedgePower(h,3));
        N:=Matrix(CoordinateRing(P19),[v,Eltseq(x*Wh)]);
        Fix:=Scheme(P19,Minors(N,2)) meet Gr;
        Z:=Fix meet W;
        Append(~lFix,Z);
        if #lFix gt 0 and Dimension(&meet lFix) eq -1 then
            return Scheme(Ambient(W),[1]);
        end if;
    end for;
    return (#lFix eq 0) select W else &meet lFix;
end function;

Stab := function(L,FF,g)
    v := [1,e^2,-1,-e^2,0,1];
    P5 := Ambient(L);
    Q0 := Scheme(P5,&+[P5.i^2 : i in [1..5]]);
    Q := Scheme(P5,&+[v[i]*P5.i^2 : i in [1..6]]);
    mon := Sections(LinearSystem(P5,2));
    Glin:=GL(6,FF);
    Dgens:=[DiagonalMatrix(FF,[ (j eq i) select -1 else 1 : j in [1..6] ]):i in [1..6]];
    Pmat:=PermutationMatrix(FF,g);
    zeta:=FF.1;
    scale:=DiagonalMatrix(FF,[1 : k in [1..5]] cat [zeta]);
    H:=sub< Glin | Dgens cat [Pmat,scale] >;
    sta := [];
    for A in H do
       phi := map<P5->P5|[&+[A[i][j]*P5.j : j in [1..6]] : i in [1..6]]>;
       lis:=[[MonomialCoefficient(Equation(X),m) : m in mon] : X in [Q0,Q,phi(Q0),phi(Q)]];
       M := Matrix(lis);
       if phi(L) eq L and Rank(M) eq 2 then Append(~sta,A); end if;
    end for;
    return sub<H|sta>;
end function;

SamplePointOnGrassmannComponent := function(C)
    A := Ambient(C);
    Y := C;
    repeat
        f := &+[ Random([-2..2])*A.i : i in [1..20] ];
        if f ne 0 then
            Y := Y meet Scheme(A,f);
        end if;
    until Dimension(Y) eq 0 and Degree(Y) gt 0;
    return PrimeComponents(Y)[1];
end function;

PlaneFromGrassmannPoint := function(Z)
    return PrimeComponents(pr(Fiber(pl,Z)))[1];
end function;

g:=Sym(6)!(1,2,3,4);
lis:=FindLis(g,FF);

Grp:=[];PGrp:=[];LFix:=[];Dime:=[];GrName:=[];
for H in lis do
    ZFix:=FixedLocus(H);
    if Dimension(ZFix) ne -1 then
        Append(~LFix,ZFix);
        Append(~Grp,H);
        Append(~PGrp,quo< H | -IdentityMatrix(FF,6) >);
        Append(~Dime,Dimension(ZFix));
        Append(~GrName,GroupName(PGrp[#PGrp]));
    end if;
end for;

Comps := [];
for i in [1..#Dime] do
    if Dime[i] ge 0 then
       for C in PrimeComponents(LFix[i]) do
          Append(~Comps,<i,C>);
       end for;
    end if;
end for;

res := [];
seen := [];

// zero-dimensional components
for t in Comps do
    C := t[2];
    if Dimension(C) eq 0 then
        L := PlaneFromGrassmannPoint(C);
        nm := GroupNameProj(Stab(L,FF,g));
        if not nm in seen then
            Append(~res,<L,nm>);
            Append(~seen,nm);
        end if;
    end if;
end for;

// positive-dimensional components: generic point + special intersections
for t in Comps do
    i := t[1];
    C := t[2];

    if Dimension(C) gt 0 then
        Z0 := SamplePointOnGrassmannComponent(C);
        L := PlaneFromGrassmannPoint(Z0);
        nm := GroupNameProj(Stab(L,FF,g));
        if not nm in seen then
            Append(~res,<L,nm>);
            Append(~seen,nm);
        end if;

        for s in Comps do
            j := s[1];
            D := s[2];
            if #Grp[j] gt #Grp[i] then
                Y := C meet D;
                if Dimension(Y) eq 0 then
                    for Z1 in PrimeComponents(Y) do
                        L1 := PlaneFromGrassmannPoint(Z1);
                        nm1 := GroupNameProj(Stab(L1,FF,g));
                        if not nm1 in seen then
                            Append(~res,<L1,nm1>);
                            Append(~seen,nm1);
                        end if;
                    end for;
                end if;
            end if;
        end for;
    end if;
end for;

Seqset([r[2] : r in res]);
\end{lstlisting}

{\texttt{Case(12345)-singular}}
\begin{lstlisting}[caption={ \texttt{Case(12345)-singular} - singular fixed locus for $(12345)$ },label={lst:case12345_sing}]
FF<e> := CyclotomicField(5);
load "library.m";
load "Grassmannian-subvarieties-singular";

FindLis := function(g,FF)
    n:=6;
    Glin:=GL(n,FF);
    Dgens:=[DiagonalMatrix(FF,[ (j eq i) select -1 else 1 : j in [1..n] ]):i in [1..n]];
    Pmat:=PermutationMatrix(FF,g);
    zeta:=FF.1;
    scale:=DiagonalMatrix(FF,[1 : k in [1..n-1]] cat [zeta]);
    H:=sub< Glin | Dgens cat [Pmat,scale] >;
    return [U`subgroup : U in Subgroups(H) |
        -IdentityMatrix(FF,n) in U`subgroup and
        &or[AbsMatrix(s,FF) eq Pmat : s in U`subgroup]
    ];
end function;

FixedLocus := function(H)
    lFix := [];
    for h in Generators(H) do
        Wh := Matrix(CoordinateRing(P19),WedgePower(h,3));
        N  := Matrix(CoordinateRing(P19),[v,Eltseq(x*Wh)]);
        Z  := (Scheme(P19,Minors(N,2)) meet Gr) meet W;
        Append(~lFix,Z);
        if #lFix gt 0 and Dimension(&meet lFix) eq -1 then
            return Scheme(Ambient(W),[1]);
        end if;
    end for;
    return (#lFix eq 0) select W else &meet lFix;
end function;

Stab := function(L,FF,g)
    v := [4*e^3 + 2,4*e^3 + 4*e + 2,-4*e^2 - 2,-2,2,1];
    P5 := Ambient(L);
    Q0 := Scheme(P5,&+[P5.i^2 : i in [1..5]]);
    Q  := Scheme(P5,&+[v[i]*P5.i^2 : i in [1..6]]);
    mon := Sections(LinearSystem(P5,2));
    Glin:=GL(6,FF);
    Dgens:=[DiagonalMatrix(FF,[ (j eq i) select -1 else 1 : j in [1..6] ]):i in [1..6]];
    Pmat:=PermutationMatrix(FF,g);
    zeta:=FF.1;
    scale:=DiagonalMatrix(FF,[1 : k in [1..5]] cat [zeta]);
    H:=sub< Glin | Dgens cat [Pmat,scale] >;
    sta := [];
    for A in H do
       phi := map<P5->P5|[&+[A[i][j]*P5.j : j in [1..6]] : i in [1..6]]>;
       lis := [[MonomialCoefficient(Equation(X),m) : m in mon] : X in [Q0,Q,phi(Q0),phi(Q)]];
       M := Matrix(lis);
       if phi(L) eq L and Rank(M) eq 2 then Append(~sta,A); end if;
    end for;
    return sub<H|sta>;
end function;

SamplePointOnGrassmannComponent := function(C)
    A := Ambient(C);
    Y := C;
    repeat
        f := &+[ Random([-2..2])*A.i : i in [1..20] ];
        if f ne 0 then
            Y := Y meet Scheme(A,f);
        end if;
    until Dimension(Y) eq 0 and Degree(Y) gt 0;
    return PrimeComponents(Y)[1];
end function;

PlaneFromGrassmannPoint := function(Z)
    return PrimeComponents(pr(Fiber(pl,Z)))[1];
end function;

g   := Sym(6)!(1,2,3,4,5);
lis := FindLis(g,FF);

Grp:=[]; PGrp:=[]; LFix:=[]; Dime:=[]; GrName:=[];
for H in lis do
    ZFix := FixedLocus(H);
    if Dimension(ZFix) ne -1 then
        Append(~LFix,ZFix);
        Append(~Grp,H);
        Append(~PGrp,quo< H | -IdentityMatrix(FF,6) >);
        Append(~Dime,Dimension(ZFix));
        Append(~GrName,GroupName(PGrp[#PGrp]));
    end if;
end for;

Comps := [];
for i in [1..#Dime] do
    if Dime[i] ge 0 then
       for C in PrimeComponents(LFix[i]) do
          Append(~Comps,<i,C>);
       end for;
    end if;
end for;

res := [];
seen := [];

// zero-dimensional components
for t in Comps do
    C := t[2];
    if Dimension(C) eq 0 then
        L := PlaneFromGrassmannPoint(C);
        nm := GroupNameProj(Stab(L,FF,g));
        if not nm in seen then
            Append(~res,<L,nm>);
            Append(~seen,nm);
        end if;
    end if;
end for;

// positive-dimensional components: generic point + special intersections
for t in Comps do
    i := t[1];
    C := t[2];

    if Dimension(C) gt 0 then
        Z0 := SamplePointOnGrassmannComponent(C);
        L := PlaneFromGrassmannPoint(Z0);
        nm := GroupNameProj(Stab(L,FF,g));
        if not nm in seen then
            Append(~res,<L,nm>);
            Append(~seen,nm);
        end if;

        for s in Comps do
            j := s[1];
            D := s[2];
            if #Grp[j] gt #Grp[i] then
                Y := C meet D;
                if Dimension(Y) eq 0 then
                    for Z1 in PrimeComponents(Y) do
                        L1 := PlaneFromGrassmannPoint(Z1);
                        nm1 := GroupNameProj(Stab(L1,FF,g));
                        if not nm1 in seen then
                            Append(~res,<L1,nm1>);
                            Append(~seen,nm1);
                        end if;
                    end for;
                end if;
            end if;
        end for;
    end if;
end for;

Seqset([r[2] : r in res]);
\end{lstlisting}


\section*{Appendix B: \texttt{Magma} code for Zariski decompositions}
\label{app:zdec}

This appendix reproduces the \texttt{Magma} routines that compute Zariski
decompositions of divisors.  All scripts reside in the folder
\texttt{zariski-decomposition/} uploaded with this project.

\noindent The full repository is publicly available at
\href{https://github.com/alaface/zariski-decomposition}{github.com/alaface/zariski-decomposition}.

\bigskip
\noindent\textbf{Directory structure}
\begin{itemize}
  \item \texttt{surfaces/} - algorithms that work directly with the
        intersection matrix of a smooth projective surface.
  \item \texttt{cox-rings/} - algorithms that exploit the graded structure of
        a finitely generated Cox ring.
\end{itemize}
Each subdirectory contains its own \texttt{library.m} with the core
functions.

\vspace{1em}
\noindent\textbf{Zariski Decomposition on Surfaces}

\medskip
This repository provides a \texttt{Magma} implementation of the \emph{Zariski
decomposition} of an effective divisor on an algebraic surface, based on the
intersection matrix of the irreducible curves in the support of~$D$.

\bigskip
\noindent\textbf{Contents}
\begin{itemize}
  \item \texttt{library.m} - core library implementing the decomposition algorithm.
\end{itemize}

\bigskip
\noindent\textbf{Description}

Given a surface $X$, an effective divisor $D$, and irreducible curves
$C_1,\dots,C_n$ forming the support of $D$, the Zariski decomposition writes
\[
  D \;=\; P + N,
\]
where
\begin{itemize}
  \item $P$ is \emph{nef} ($P\!\cdot\! C_i \ge 0$ for all $i$),
  \item $N = \sum b_i C_i$ is effective and its intersection matrix
        $\bigl(C_i\!\cdot\!C_j\bigr)$ is negative definite,
  \item $P\!\cdot\!N = 0$.
\end{itemize}

\bigskip
\noindent\textbf{Reference}

The algorithm follows Theorem 7.7 of
\begin{quote}
  O.~Zariski, \emph{The Theorem of Riemann-Roch for High Multiples of an
  Effective Divisor on an Algebraic Surface}, Ann.\ Math.\ \textbf{76}
  (1962), 560-615.
\end{quote}

\bigskip
\noindent\textbf{How it works}

\begin{enumerate}
  \item \textbf{Negative curves.}  Find each $C_i$ with $D\!\cdot\!C_i < 0$.
  \item \textbf{Construct $N$.}  Let $C_1,\dots,C_r$ be those curves.  Seek
        $N=\sum_{j=1}^{r} b_jC_j$ such that
        $N\!\cdot\!C_i = D\!\cdot\!C_i$ for every $i$.
        Because the matrix $\bigl(C_i\!\cdot\!C_j\bigr)$ is negative
        definite, this linear system has a unique solution
        $(b_1,\dots,b_r)\in\mathbb{Q}_{\ge0}^r$.
  \item \textbf{Update $D$.}  Replace $D$ by $D-N$, which now satisfies
        $(D-N)\!\cdot\!C_i=0$ for all curves in the support of $N$.
  \item \textbf{Terminate.}  Repeat until the updated divisor is nef
        ($D\!\cdot\!C_i\ge0$ for every curve).  The final pair $(P,N)$ is the
        Zariski decomposition.
\end{enumerate}

\bigskip
\noindent\textbf{How to use}

\begin{lstlisting}[language=Magma,basicstyle=\ttfamily\small,
                   frame=single,captionpos=b]
load "library.m";
\end{lstlisting}

\bigskip
\noindent\textbf{Example: dimension 3}

\begin{lstlisting}[language=Magma,basicstyle=\ttfamily\small,
                   frame=single,captionpos=b]
load "library.m";

R := Rationals();
V := VectorSpace(R, 3);

// Negative definite intersection matrix
M := Matrix(R, 3, 3, [ -2,  1, 0,
                        1, -2, 1,
                        0,  1, -2 ]);

// Curve basis
C1 := V![1,0,0];
C2 := V![0,1,0];
C3 := V![0,0,1];
Curves := [C1, C2, C3];

// Divisor with non-negative coefficients
D := V![2,1,0];

// Compute Zariski decomposition
P, N := ZariskiDecomposition(D, Curves, M);

print "D =", D;
print "P =", P;
print "N =", N;

// Check intersection with each curve
for i in [1..#Curves] do
  printf "P.C%o = %o\n", i,
         IntersectionNumber(P, Curves[i], M);
end for;
\end{lstlisting}
{\texttt{surfaces/library.m}}
{\texttt{library.m}}
\begin{lstlisting}[caption={ \texttt{library.m} - shared helper routines }]
///////////////////////////////////////////////////////////////////////////
// Support functions for Zariski decomposition over a rational lattice
///////////////////////////////////////////////////////////////////////////

///////////////////////////////////////////////////////////////////////////
// IntersectionNumber(D, C, M)
// Input: 
//   - D: vector representing a divisor
//   - C: vector representing a curve
//   - M: symmetric intersection matrix
// Output:
//   - The intersection number D.C computed as Transpose(D)*M*C
///////////////////////////////////////////////////////////////////////////
IntersectionNumber := function(D, C, M)
    n := Nrows(M);
    U := Matrix(1,n,Eltseq(D))*M*Matrix(n,1,Eltseq(C));
    return U[1,1];
end function;

///////////////////////////////////////////////////////////////////////////
// NegativeCurves(D, Curves, M)
// Input:
//   - D: divisor vector
//   - Curves: list of curve vectors
//   - M: intersection matrix
// Output:
//   - List of curves C in Curves such that D.C < 0
///////////////////////////////////////////////////////////////////////////
NegativeCurves := function(D, Curves, M)
    return [ C : C in Curves | IntersectionNumber(D, C, M) lt 0 ];
end function;

///////////////////////////////////////////////////////////////////////////
// SolveNegativePart(D, NegC, M)
// Input:
//   - D: divisor vector
//   - NegC: list of curves with D.C < 0
//   - M: intersection matrix
// Output:
//   - List of rational coefficients b_i such that 
//     N = sum b_i C_i satisfies N.C_i = D.C_i
///////////////////////////////////////////////////////////////////////////
SolveNegativePart := function(D, NegC, M)
    q := #NegC;
    if q eq 0 then return []; end if;

    A := ZeroMatrix(Rationals(), q, q);
    b := ZeroMatrix(Rationals(), q, 1);
    for i in [1..q] do
        for j in [1..q] do
            A[i,j] := IntersectionNumber(NegC[i], NegC[j], M);
        end for;
        b[i,1] := IntersectionNumber(D, NegC[i], M);
    end for;
    return Eltseq(Solution(A, Transpose(b)));
end function;

///////////////////////////////////////////////////////////////////////////
// IsNef(D, Curves, M)
// Input:
//   - D: divisor vector
//   - Curves: list of curve vectors
//   - M: intersection matrix
// Output:
//   - true if D.C >= 0 for all C in Curves (i.e., D is nef), false otherwise
///////////////////////////////////////////////////////////////////////////
IsNef := function(D, Curves, M)
    return &and[ IntersectionNumber(D, C, M) ge 0 : C in Curves ];
end function;

///////////////////////////////////////////////////////////////////////////
// ZariskiDecomposition(D, Curves, M)
// Input:
//   - D: pseudo-effective divisor vector
//   - Curves: list of irreducible curve vectors
//   - M: intersection matrix
// Output:
//   - A pair <P, N> such that D = P + N is the Zariski decomposition,
//     where P is nef and N is effective with negative definite support
///////////////////////////////////////////////////////////////////////////
ZariskiDecomposition := function(D, Curves, M)
    V := Parent(D);
    P := D;
    Ntot := V!0;

    Neg := [];
    repeat
        Neg := Sort(Setseq(Set(Neg cat NegativeCurves(P, Curves, M))));
        if #Neg eq 0 then
            return P, Ntot;
        end if;

        coeff := SolveNegativePart(P, Neg, M); 
        Nstep := V!0;
        for i in [1..#Neg] do
            Nstep +:= coeff[i]*Neg[i];
        end for;

        P    -:= Nstep;
        Ntot +:= Nstep; 
    until IsNef(P, Curves, M);

    return P, Ntot;
end function;
\end{lstlisting}

\vspace{1em}
\noindent\textbf{Zariski Decomposition from Cox Rings}

\medskip
This directory contains a \texttt{Magma} implementation of the
\emph{Zariski decomposition} of a divisor class on a projective variety
whose Cox ring is finitely generated.  The algorithm relies on the degrees
of the homogeneous generators.

\bigskip
\noindent\textbf{Contents}
\begin{itemize}
  \item \texttt{library.m} - core library implementing the decomposition
        algorithm based on the Cox ring.
\end{itemize}

\bigskip
\noindent\textbf{Description}

Let $X$ be a projective variety whose Cox ring is finitely generated by
homogeneous elements $f_1,\dots,f_r$ with degrees
$w_1,\dots,w_r \in \operatorname{Cl}(X)$.  For a divisor class
$w_D \in \operatorname{Cl}(X)$ the Zariski decomposition writes
\[
  w_D \;=\; w_P + w_N,
\]
where
\begin{itemize}
  \item $w_P$ is the class of a \emph{nef} divisor;
  \item $w_N = \sum \mu_i\,w_i$ is the class of an effective combination of
        the generator degrees;
  \item $\mu_i \in \mathbb{Q}_{\ge 0}$.
\end{itemize}

\bigskip
\noindent\textbf{Reference}

The algorithm follows Theorem 3.3.4.8 of
\begin{quote}
  I.~Arzhantsev, U.~Derenthal, J.~Hausen, A.~Laface,
  \emph{Cox Rings},
  Cambridge Studies in Advanced Mathematics 144,
  Cambridge University Press, 2015.
\end{quote}

\bigskip
\noindent\textbf{How it works}

For each index $i\in\{1,\dots,r\}$:
\begin{enumerate}
  \item Form the cone
        \[
          \tau_i \;=\;
          \operatorname{cone}(w_D,\,-w_i)\;\cap\;
          \operatorname{cone}(w_j \mid j\ne i).
        \]
  \item For every extremal ray $R$ of $\tau_i$, solve
        \[
          w_D \;=\; x\,w_i + y\,R.
        \]
  \item Set $\mu_i$ to the \emph{smallest non-negative} $x$ among all
        solutions.
  \item Define $w_N := \sum \mu_i\,w_i$ and $w_P := w_D - w_N$.
\end{enumerate}

\bigskip
\noindent\textbf{How to use}

\begin{lstlisting}[language=Magma,basicstyle=\ttfamily\small,
                   frame=single,captionpos=b]
load "library.m";

L := ToricLattice(2);
W := [ L![1,0], L![0,1], L![0,1], L![1,3] ];
wD := L![2,5];

wP, wN, mu := ZariskiDecomposition(wD, W);

print "Positive part P =", wP;
print "Negative part N =", wN;
print "Vector of coefficients mu =", mu;
\end{lstlisting}

{\texttt{cox-rings/library.m}}
{\texttt{library.m}}
\begin{lstlisting}[caption={ \texttt{library.m} - shared helper routines }]
// ZariskiDecomposition
// Input:
//   - wD: an element of a rational vector space, representing the class of a divisor D
//   - W: a list [w1, ..., wr] of vectors in the same space, representing the degrees of the generators f_i of Cox(X)
//
// Output:
//   - wP: the class of the positive part P of the Zariski decomposition of D
//   - wN: the class of the negative part N = sum mu_i w_i
//   - mu: the list of rational numbers [mu_1,..., mu_r] such that D = P + sum mu_i D_i
//
// The algorithm computes mu_i by finding the minimal non-negative x such that 
// wD = x * w_i + y * R for some extremal ray R of the cone tau_i = cone(wD, -w_i) meet cone(w_j : j not i)

ZariskiDecomposition := function(wD, W)
    r := #W;
    mu := [Rationals()!0 : i in [1..r]];

    for i in [1..r] do
        cone1 := Cone([wD, -W[i]]);
        cone2 := Cone([W[j] : j in [1..r] | j ne i]);
        tau := cone1 meet cone2;
        rays := Rays(tau);

        candidates := [];

        for R in rays do
            M := Matrix(Rationals(), [Eltseq(W[i]), Eltseq(R)]);
            success, sol := IsConsistent(M, Vector(wD));
            if success then
                x := sol[1];
                if x ge 0 then
                    Append(~candidates, x);
                end if;
            end if;
        end for;

        mu[i] := Minimum(candidates);
    end for;

    wN := &+[mu[i]*W[i] : i in [1..r]];
    wP := wD - wN;

    return wP, wN, mu;
end function;
\end{lstlisting}

\end{document}